\newcommand{\D}{\mathbf D}
\def\ep{\epsilon}
\def\Tr{{\rm Tr}}
\def\e{{\rm e}}
\def\sq2{\sqrt{2}}
\def\t2{{\mathbb T}^2}
\def\s2{{\mathbb S}^2}
\def\N{\mathbb{N}}
\def\R{\mathbb{R}}
\def\Z{\mathbb{Z}}
\def\C{\mathbb{C}}
\def\Lap{\triangle}
\def\ep{\epsilon}
\def\Tr{{\rm Tr}}
\def\e{{\rm e}}
\def\sq2{\sqrt{2}}
\def\t2{{\mathbb T}^2}
\def\s2{{\mathbb S}^2}
\def\N{\mathbb{N}}
\def\R{\mathbb{R}}
\def\Z{\mathbb{Z}}
\def\C{\mathbb{C}}
\def\Lap{\triangle}
\def\hto0{\xrightarrow{\hbar\to 0}}
\def\rto0{\xrightarrow{r\to 0}}
\providecommand{\norm}[1]{\lVert#1\rVert}
\newcommand{\HH}{\mathcal H}
\newcommand{\cE}{\mathcal E}
\newcommand{\cH}{\mathcal H}
\newcommand{\cO}{\mathcal O}
\newcommand{\cP}{\mathcal P}
 \newcommand{\dia}{\varepsilon}
\newcommand{\IR}{\mathbb{R}} 
\newcommand{\Op}{\operatorname{Op}}
\newcommand{\supp}{\operatorname{supp}}
\newcommand{\X}{{\mathbf X_{\Gamma}}}
\renewcommand{\le}{\leqslant}
\renewcommand{\ge}{\geqslant}
\newcommand{\E}{\mathbb E}
\newcommand{\PP}{\mathbb P}
\newcommand{\Ss}{{\mathbb S}}
\newcommand{\Hh}{{\mathbb H}}
\newcommand{\Mg}{(M,g)}
\newcommand{\pdo}{\psi DO}
\newcommand{\G}{\mathcal{G}}
\newcommand{\dbar}{\bar\partial}
\newcommand{\ddbar}{\partial\dbar}
\newcommand{\cU}{{\mathcal U}}
\renewcommand{\Re}{{\operatorname{Re\,}}}
\renewcommand{\Im}{{\operatorname{Im\,}}}
\newcommand{\half}{{\frac{1}{2}}}
\renewcommand{\phi}{\varphi}
\newcommand{\dcal}{\mathcal{D}}
\newcommand{\ecal}{\mathcal{E}}
\newcommand{\ical}{\mathcal{I}}
\newcommand{\hcal}{\mathcal{H}}
\newcommand{\lcal}{\mathcal{L}}
\newcommand{\mcal}{\mathcal{M}}
\newcommand{\ncal}{\mathcal{N}}
\newcommand{\pcal}{\mathcal{P}}
\newcommand{\scal}{\mathcal{S}}
\newcommand{\oit}{\operatorname{ it}}
\newcommand{\al}{\alpha}
\newcommand{\ga}{\gamma}
\renewcommand{\H}{{\mathbf H}}
\newcommand{\qcal}{\mathcal{Q}}
\newcommand{\sa}{\sigma_A}
\newcommand{\varphijk}{\varphi_{j_k}}
\newcommand{\la}{\lambda}
\newcommand{\de}{\delta}
\newtheorem{theo}{{\sc Theorem}}[section]
\newtheorem{cor}[theo]{{\sc Corollary}}
\newtheorem{conj}[theo]{{\sc Conjecture}}
\newtheorem{lem}[theo]{{\sc Lemma}}
\newtheorem{theoconj}{{\sc Theorem/Conjecture}}
\newtheorem{prop}[theo]{{\sc Proposition}}
\newtheorem{prob}[theo]{{\sc Problem}}
\newenvironment{rem}{\medskip\noindent{\it Remark:\/} }{\medskip}
\newenvironment{defin}{\medskip\noindent{\it Definition:\/} }{\medskip}
\title[Local and Global Analysis of  Eigenfunctions on Riemannian manifolds   ]
{Local and Global Analysis of  Eigenfunctions on Riemannian
manifolds  }
\author{Steve Zelditch }
\address{Department of Mathematics, Johns Hopkins University, Baltimore,
MD 21218, USA} \email{zel@math.jhu.edu}
\thanks{Research partially supported by  NSF grant
 DMS-0603850.}
\date{\today}
\begin{document}


\maketitle

\begin{abstract} This is a survey on eigenfunctions of the Laplacian on Riemannian manifolds (mainly compact
and without boundary). We discuss both local results obtained by analyzing eigenfunctions on small balls, and global
results obtained by  wave equation methods. Among the main topics are nodal sets, quantum limits, and $L^p$ norms of global
eigenfunctions. The emphasis is on the connection between the behavior of eigenfunctions and the dynamics of the geodesic flow,
reflecting the relation between quantum mechanics and the underlying classical mechanics.
We also discuss the analytic continuation of eigenfunctions of real analytic Riemannian manifolds $(M, g)$ to the
complexification of $M$ and its applications to nodal geometry. Besides eigenfunctions, we also consider quasi-modes
and random linear combinations of eigenfunctions with close eigenvalues. Many examples are discussed.

Key Words: Laplacian, eigenvalues and eigenfunctions, quasi-mode,  wave equation, frequency function, doubling estimate,
nodal set, quantum limit, $L^p$ norm, geodesic flow,  quantum complete integrable, ergodic, Anosov, Riemannian random wave.

AMS subject classification: 34L20, 35P20, 35J05, 35L05, 53D25, 58J40, 58J50, 60G60.

\end{abstract}

\tableofcontents

\section*{Introduction}
The aim of this article is to survey both classical and recent
results on the eigenfunctions
\begin{equation}\label{EP}  \Delta_g\; \phi_{\lambda_j} =  \lambda_j^2\; \phi_{\lambda_j} \;\;\; \end{equation}
of the (positive) Laplacian $\Delta_g$ on a (mainly compact) Riemannian
manifold $(M, g)$. We concentrate on the boundaryless case $\partial M = \emptyset$ for simplicity;
 when $\partial M \not=0$ we impose standard
boundary conditions. When $(M, g)$ is compact, the spectrum is
discrete and we arrange the eigenvalues in non-decreasing order
$\lambda_0 < \lambda_1 \leq \lambda_2 \uparrow \infty$. We denote
by $\{\phi_{\lambda_j}\}$ an orthonormal basis of eigenfunctions
with respect to the inner product $\langle \phi_{\lambda_j},
\phi_{\lambda_k} \rangle = \int_M \phi_{\lambda_j}(x)
\phi_{\lambda_k}(x) dV_g$.  The `topography' of an  eigenfunction
ideally encompasses  the  the shapes of pits and peaks in the
graph of $\phi_{\lambda}$, the geometry and connectivity of
`excursion sets' $\{x: \phi_{\lambda_j} (x) > h(\lambda_j)\}$, the
$L^p$ norms and distribution function of $\phi_{\lambda_j}$, the
distribution of its nodal sets and other level sets, the number of
nodal components of different sizes, the number and distribution
of its  critical points, and the concentration, oscillation and
vanishing order properties of $\phi_{\lambda_j}$,  often
encapsulated by the  so-called quantum limits (or microlocal
defect measures), i.e. limits of quantum expectation values
$\langle A \phi_{\lambda_j}, \phi_{\lambda_k}\rangle$ as the eigenvalues tend to
infinity.

Eigenfunctions of Laplacians arise in physics as modes of periodic
vibration of drums and membranes. They also represent stationary
states of a free quantum particle on a Riemannian manifold. More
generally, eigenfunctions of Schr\"odinger operators represent
stationary energy  states of atoms and molecules in quantum
mechanics \cite{Sch}. The topography of modes of vibration and
stationary states began with Ernst Chladni \cite{C,C2} , who
raised the prospect of `visualizing sound' by bowing plates and
observing the patterns of nodal lines (zero sets) of these modes.
Over the last thirty years, chemists and physicists have used
computers rather than bowed plates to visualize the energy states
of atoms. Some computer graphics of eigenfunctions may be found in
such articles as \cite{SHM,H}. In commemoration of the 200th
anniversary of Chladni's diagrams, a recent volume \cite{SS} has
appeared which connects his work with that of contemporary
physicists, chemists and mathematicians.

In mathematics, studies of eigenfunctions tend to fall into two
categories:
\begin{itemize}

\item (i) analyses of ground states, i.e. $\phi_{\lambda_0}$ and $
\phi_{\lambda_1}$;

\item  (ii) analyses of high frequency (or semi-classical)  limits
of eigenfunctions, i.e. the limit as $\lambda_j \to \infty$.

\end{itemize}
Behavior of ground states is very relevant to behavior of highly
excited states,  since an eigenfunction $\phi_{\lambda}$ is always
the ground state Dirichlet eigenfunction in any of its nodal
domains. But our  main emphasis in this survey is on the high
frequency behavior of eigenfunctions rather than on the ground
states.

Studies of high frequency behavior eigenfunctions also fall into
two categories:

\begin{itemize}

\item Local results, which often hold for any solution of (\ref{EP}) on
a (small) ball $B_r(x)$, often $r = O(\lambda^{-1})$,  irrespective of whether the eigenfunction
extends to a global eigenfunction on $M$.   Doubling estimates and
vanishing order estimates in terms of the frequency function, exponential decay bounds and nodal
volume estimates often fall into this category. These methods often apply to large
classes of functions: harmonic functions, polynomials, eigenfunctions, and more general
solutions of elliptic equations.

\item Global results,  which do use this global extension to $M$.
The  typical global assumption is that   $\phi_{\lambda}$ is an
eigenfunction of the wave group $U_t = e^{i t \sqrt{\Delta}}$.
Global properties generally reflect the relation of the wave group
and geodesic flow, particularly the long time behavior of waves
and geodesics on the manifold.

\end{itemize}

We aim to cover both sides of the subject. There already exists a
very well written survey  of the local aspects (the book of Q. Han
and F.H. Lin \cite{H}), so we give more details  on the global
aspects. But one of our purposes in this survey is to state the
main local and global results so that the reader can compare
approaches. As we have recently written a survey on inverse
spectral problems \cite{Z1}, we concentrate on eigenfunctions  and
do not discuss eigenvalue asymptotics very much.

 The  global behavior
of eigenfunctions can only be fully understood by making a phase
space analysis, where the phase space is the co-tangent bundle
$T^*M$ or an energy surface $S^*_g M$. For example, one often
wishes to construct highly localized eigenfunctions or approximate
eigenfunctions (quasi-modes) of $\Delta_g$  or to prove that they
do not exist. Zonal spherical harmonics on the standard sphere (or
any surface of revolution) are examples of eigenfunctions which
are most highly localized at a point $p$ (the poles). But it is
more illuminating to observe that such modes are actually
concentrating on certain Lagrangian submanifolds in phase space
(namely, the one obtained by flowing out $S^*_{p} M$ under the
geodesic flow). With this picture in mind,  one would not expect
to find modes which are highly concentrated at a point unless
there exists a $g^t$ invariant Lagrangian manifold which has a
large singularity over that point,  and  unless the eigenfunction
concentrates in the phase space (microlocal) sense on this
Lagrangian manifold.  One  similarly expect
 modes which are extremal for lower $L^p$ norms to
concentrate along elliptic  closed geodesics. In the case of
integrable systems (see \S \ref{QCI}) one can prove that such
expectations are correct. One further  expects such  special modes
in the integrable case to be extremals for concentration.

To obtain global  phase space results relating the behavior of
eigenfunctions to the behavior of geodesics,  it is necessary to
use microlocal analysis, i.e. the calculus of pseudo-differential
and Fourier integral operators.  Microlocal analysis is in part
the mathematically precise formulation of the semi-classical limit
in quantum mechanics: Pseudo-differential operators are
`quantizations' $Op(a)$ of functions on the phase space $T^* M$
while Fourier integral operators are quantizations of Hamiltonian
flows (and more general canonical transformations). To motivate
the phase space analysis, consider that one often studies the
concentration and oscillation properties of eigenfunctions through
the linear functionals $\rho_{\lambda_j} (A) = \langle A
\phi_{\lambda_j}, \phi_{\lambda_j} \rangle$ on the space of zeroth
order pseudo-differential operators $A$. The possible limits of
the family $\{\rho_{\lambda_j} \}$,   known as quantum limits or
microlocal defect measures, are probability measures on $S^*M$
which are invariant under the geodesic flow. It is difficult but
important to determine or at least constrain the possible limits.
This aim would not even be visible without a phase space
perspective.

We briefly  review some of the basic methods and results of
microlocal analysis in \S \ref{METHODS}. Due to space limitations,
we cannot go over the definitions and methods  in much detail. For
background in microlocal analysis, we  refer  to \cite{EZ,DSj,GSj}
and for systematic treatises to  \cite{HoI-IV} (see especially
Volumes III and IV) and to \cite{SV} (in particular for boundary
problems). Other articles with expositions of   the wave kernel of a Riemannian
manifold are   \cite{Be,Su,Ta2,Z4}. Since
microlocal methods and results  are fundamental to the global
theory, we use them in somewhat detailed arguments in the second
half of this survey. Although the ideas and methods may be
unfamiliar to some geometric analysts interested in
eigenfunctions, they are quite geometric and concrete,   and in a
somewhat less precise form are  common-place among physicists and
chemists.  While writing this survey, the author found it
interesting to compare the range and power of the local and global
methods, and hopes that experts in each side of the subject will
find the other side of the story stimulating.

We would like to thank H. Christianson, Q. Han, H. Hezari,  D. Mangoubi,  S. Nonnenmacher,  M. Sodin,  J. Toth and the
referee  for
corrections on earlier versions of this survey.

\section{\label{BASIC} Basic Definitions and Notations}

 The
Laplacian $\Delta_g$ of a Riemannian Riemannian $(M, g)$ is the
self-adjoint operator associated to the Dirichlet form $Q(f) =
\int_M |df|_g^2 dV_g$ where $dV_g$ is the volume form of $(M, g)$
and $|\cdot|_g$ is the metric on one-forms. It is given in local
coordinates by the expression,
\begin{equation} \label{DELTADEF} \Delta = -  \frac{1}{\sqrt{g}}\sum_{i,j=1}^n
\frac{\partial}{\partial x_i} \left( g^{ij} \sqrt{g}
\frac{\partial}{\partial x_j} \right).  \end{equation} It is a natural
geometric operator in the sense that  $\Delta_g$ commutes with all
isometries $h $ of $(M, g)$ where $h$ acts on functions by
translation $f^h(x) = f(h^{-1} x)$.

The eigenvalue problem (\ref{EP}) is a stationary version of the
homogeneous wave equation
\begin{equation} \Box_g u(t, x) = 0, \;\;\mbox{where} \;\; \Box_g =
\frac{\partial^2}{\partial t^2} + \Delta_g. \end{equation}  The
eigenvalue problem (\ref{EP}) is  equivalent to finding  the
periodic solutions $e^{i \lambda_j t} \phi_{\lambda_j} (x)$ in
time.

The reader should carefully note two conventions we use throughout this article:

\begin{enumerate}

\item We define the Laplacian to be positive in (\ref{DELTADEF}). This is opposite to the convention of many authors,
but saves us from writing many minus signs in wave and heat operators;

\item We denote eigenvalues by $\lambda^2$ in (\ref{EP}),
so that $\lambda$ is the eigenvalue of $\sqrt{\Delta}$. This again saves
many square root signs in wave operators. Physicists often denote $\lambda$ by $k$.

\end{enumerate}

\subsection{Planck's constant and eigenvalue asymptotics}

The eigenvalue problem for  fixed $\lambda$  is a model elliptic
equation. A standard fact  (elliptic regularity) is that
$\phi_{\lambda} \in C^{\infty}(M)$ for any $C^{\infty}$ metric $g$
and that  $\phi_{\lambda} \in C^{\omega}(M)$ (real analytic) when
$(M, g)$ is real analytic.

However, it is more illuminating to regard $\lambda$ as the
inverse of a semi-classical parameter $$\lambda = \hbar^{-1}, $$
where  $\lambda$ is regarded as an `operator' of order $1$. Often
one writes $$\lambda^{-2} \Delta - 1 = h^2 \Delta - 1,$$ in
particular when constructing approximate eigenfunctions and normal
forms.

\subsection{Spectral kernels}

 The individual eigenfunctions are very difficult
to study directly.  One  generally approaches them through various
kernel functions, i.e. Schwartz kernels of functions of
$\Delta_g$. A basic one is  the spectral projections kernel,
\begin{equation} E(\lambda, x, y) = \sum_{j: \lambda_j \leq
\lambda} \phi_j(x) \phi_j(y). \end{equation} Semi-classical
asymptotics is the study of the $\lambda \to \infty$ limit of the
spectral data $\{\phi_j, \lambda_j\}$  or of $E(\lambda, x, y)$.
The (Schwartz) kernel of the wave group $U_t = e^{i t
\sqrt{\Delta}}$  can be represented in terms of the spectral data
by
$$U_t(x,y) = U(t, x, y) :=  \sum_j e^{it \lambda_j} \phi_j(x) \phi_j(y),$$
or equivalently as the Fourier transform $\int_{\R} e^{i t
\lambda} dE(\lambda, x, y)$ of the spectral projections. (Note that we sometimes
write $U_t(x, y)$ and sometimes $U(t, x, y)$ for the wave kernel, whichever
notation is more convenient in its context.)  Hence
spectral asymptotics is often studied through the large time
behavior of the wave group. It is more or less equivalent to study
the resolvent kernel
$$G(\lambda, x,y) = \sum_j  \frac{\phi_j(x) \phi_j(y)}{\lambda_j^2 - \lambda^2}$$
for $\lambda \in \C$ lying on horizontal or logarithmic curves in
$\C$.

To obtain relations between geometry and eigenfunctions, it is necessary to give approximate
formulae for these kernel functions in terms of geometric invariants. Such formulae originate
in works of Hadamard and Riesz and are often called   Hadamard-Riesz parametrices for manifolds
without boundary. Other useful parametrices have been constructed by Lax and H\"ormander.
The  constructions are  reviewed in \S \ref{WAVEKERNEL}, based on the exposition in \cite{D.G,Be}.
Another excellent reference is \cite{HoI-IV}.

We only discuss wave kernel parametrices  for manifolds without boundary.
Parametrices for the wave kernel in the boundary case  are very difficult due to `grazing rays'.
For further discussion we refer to \cite{HoI-IV,SV}.

\subsection{Geodesic flow}

The geodesic flow is the  Hamiltonian flow of $|\xi|_g$
  on the
cotangent bundle $T^*M$ of $M$, equipped with its canonical
symplectic form $\sum_i dx_i \wedge d\xi_i$.
 By definition, $g^t(x, \xi) = (x_t,
\xi_t)$, where $(x_t, \xi_t)$ is the terminal tangent vector at
time $t$ of the unit speed geodesic starting at $x$ in the
direction $\xi$. Here and below, we often identify $T^*M$ with the
tangent bundle $TM$ using the metric to simplify the geometric
description. The geodesic flow preserves the energy surfaces
$\{|\xi|_g = E\}$, i.e.  the co-sphere bundles $S^*_E M$.
 Due to the homogeneity of $H$, the
flow on any energy surface $\{|\xi|_g = E\}$ is equivalent to that
on the co-sphere bundle $S^*M$.

This definition of the geodesic flow makes $g^t$ homogeneous of
degree one, and is slightly different from the geometer's
definition as the Hamiltonian flow of $|\xi|_g^2$. The homogeneous
geodesic flow is not well-defined on the zero section of $T^*M$,
and so it is punctured out. The punctured cotangent bundle is
denoted $T^*M \backslash 0$.

\subsection{Closed geodesics}

By a closed geodesic $\gamma$  one means a periodic orbit of $g^t$
on $S^*_g M$. The period is the length $L_{\gamma}$  of $\gamma$
viewed as a curve in $M$ (i.e. projected to $M$), thus
$g^{L_{\gamma}}(x, \xi) = (x, \xi)$ for all $(x, \xi) \in \gamma$.
The set of all periodic  points $(x, \xi)$, of all possible
periods, is often called `the set of closed geodesics'. A basic
dichotomy in spectral theory is as follows:

\begin{enumerate}

\item  Aperiodic;  The Liouville measure of the closed
 orbits of $g^t$  is zero; or

\item  Positive measure of periodic orbits: the Liouville measure
of the closed orbits is positive.

\item  Periodic : If the entire geodesic flow is periodic,  $g^T =
id$ for some $T>0$, the metric is said to be Zoll.  The common
Morse index of the $T$-periodic geodesics will be denoted by
$\beta$.

\end{enumerate}

In the real analytic case, $(M, g)$ is automatically of type either (1) or (3),
 since a positive measure of closed geodesics implies
that all geodesics are closed.  In the $C^{\infty}$
case, it is simple to construct examples with a positive but not
full  measure of closed geodesics (e.g. a pimpled sphere).

\subsection{Jacobi fields and linear Poincar\'e map along a closed
geodesic}

We recall that Jacobi's equation for a normal vector field $Y$
along $\gamma$ is $Y'' + R(T, Y) T = 0$ where $T = \dot{\gamma}$
and where $R$ is the curvature tensor.  We denote the space of
normal Jacobi fields along a closed geodesic $\gamma$ by
${\mathcal J}_{\gamma}^{\bot}$. It is a symplectic vector space of
dimension $2(\dim M-1)$  with respect to the Wronskian
$$\omega(X,Y) = g(X, \frac{D}{ds}Y) - g(\frac{D}{ds}X, Y).$$
The linear Poincare map $P_{\gamma}$ is  the (real) linear
symplectic monodromy map on $({\mathcal
J}_{\gamma}^{\bot},\omega)$ defined by $P_{\gamma} Y(t) = Y(t +
L_{\gamma}).$

 To diagonalize it, we
 complexify $P_{\gamma}$  to obtain a linear complex symplectic map on
the space
 ${\mathcal J}_{\gamma}^{\bot}\otimes \C$ of complex normal Jacobi fields.
 Since $P_{\gamma}^{\C} \in Sp({\mathcal J}_{\gamma}^{\bot}\otimes \C,\omega)$ (the
 symplectic group),
its spectrum $\sigma (P_{\gamma}^{\C})$ is stable under inverse
and complex conjugation: thus, if $\rho \in \sigma
(P_{\gamma}^{\C})$, then also $ \rho^{-1}, \bar{\rho},
\bar{\rho}^{-1} \in \sigma (P_{\gamma}^{\C})$. The closed geodesic
is  {\it non-degenerate} if
$$\rho_1^{m_1} \dots \rho_n^{m_n} = 1 \Rightarrow m_i = 0\;\;\;\; (\forall i, m_i \in
\N ).$$ In particular, the eigenvalues are simple and $\pm 1
\notin \sigma (P_{\gamma}^{\C}).$ It is called   {\it elliptic} if
all eigenvalues of $P_{\gamma}$ are of modulus one. We then denote
them by  $\{ e^{\pm i \alpha_j}, j=1,...,n\}$. Thus,  $\{\alpha_1,
...,\alpha_n\}$, together with $\pi$, are independent over ${\bf
Q}$. The closed geodesic is called {\it hyperbolic} if all of the
eigenvalues are real. They then come in inverse pairs and we
denote them by  $\{ e^{\pm \mu_j}, j=1,...,n\}$. When $ \dim M >
2$, there are mixed hyperbolic and elliptic geodesics, and more
general ones   where the eigenvalues are complex and not of
modulus one. For simplicity of exposition, we only consider
elliptic and hyperbolic geodesics.

 The associated normalized eigenvectors
will be denoted $\{ Y_j, \overline{Y_j}, j=1,...,n \}$,
$$P{\gamma} Y_j = e^{i \alpha_j}Y_j \;\;\;\;\;\;P_{\gamma}\overline{Y}_j =
e^{-i\alpha_j} \overline{Y}_j \;\;\;\; \omega(Y_j, \overline{Y}_k)
= \delta_{jk}$$
 and relative to a fixed parallel
normal frame $e(s):= (e_1(s),...,e_n(s))$ along $\gamma$ they will
be written in the form $Y_j(s)= \sum_{k=1}^n y_{jk}(s)e_k(s).$

\subsection{Geodesic flow as a unitary operator}

Since the Hamiltonian flow of $|\xi|_g$ preserves the canonical
symplectic form  $\omega = \sum_j dx_j \wedge d\xi_j$ of $T^*M$,
it preserves the volume form $\omega^m$ where $m = \dim M$. It
also preserves the one form $d |\xi|$ and hence it preserves the
Liouville form $d\mu_L : = \frac{\omega^m}{d |\xi|_g}$ on $S^*M$.
By definition, the quotient is the unique $2m - 1$ form whose
wedge product with $d |\xi|_g$ equals $\omega^m$.

We define the unitary operator $V_t$ on $L^2(S^*M, d\mu_L)$
\begin{equation}\label{VT}  V_t (a) : = a \circ g^t. \end{equation}
 It is sometimes called
the Koopman operator associated to the geodesic flow.

\subsection{Spectrum and geodesic flow}

One of the principal emphases in this survey is on the relations between
the global dynamics of the geodesic flow and the eigenfunctions of the wave group.
These relations have a long tradition in quantum mechanics, and are referred
to as semi-classical analysis or study of the classical limit. There exists a large
speculative physics literature on relations between eigenfunctions and eigenvalues
and the underlying classical Hamiltonian system.
Very little is understood, even conjecturally, outside  of the following model types of geodesic flow:

\begin{itemize}

\item Ergodic or  weak mixing geodesic flows, reviewed in \S \ref{QuE}.  More quantitative results
hold if  the geodesic flow is Anosov (e.g. rates of quantum ergodicity \S \ref{QuE}; entropy
of quantum limits \S \ref{ENTROPY}).

\item Integrable systems, reviewed in \S \ref{QCI}-\ref{LPQCI}.

\item KAM systems, i.e. small perturbations of integrable systems. There are few if any results
on eigenfunctions, but there exist results on approximate eigenfunctions or quasi-modes \cite{L,CV2,Pop}.

\item Special systems  with a `divided phase space' possessing  an open set of periodic orbits \cite{MOZ}.

\item Special cases where group theory is available (quotients of reductive, or solvable, or
nilpotent groups by discrete subgroups).

\end{itemize}

\subsection{Ergodic, weak mixing and Anosov geodesic flows}

Ergodicity and weak mixing are spectral conditions on the geodesic flow.
The geodesic flow is called {\it ergodic} if the only invariant
functions, $V_t f = f$ with $f \in L^2(S^*M, d\mu_L)$ are the
constant functions. Equivalently, $1$ is an eigenvalue of
multiplicity one. It is called weak mixing if the spectrum of
$V_t$ is continuous on the orthogonal complement of the constant
functions. Mixing systems add a `smoothness' assumption on the spectral
measures and is also a spectral condition.

\subsubsection{Examples}

\begin{itemize}

\item The most familiar examples are $(M, g)$ of strictly negative sectional curvatures.
Their geodesic flows are Anosov. Model examples are compact quotients of hyperbolic space.

\item In \cite{BD}, many embedded surfaces in $\R^3$ with ergodic geodesic flow are described.
They may have any genus. There exist real analytic metrics on $S^2$ with ergodic geodesic flow.

\item Although we do not discuss manifolds with boundary in detail, there are many examples of
domains in $\R^m$ or other Riemannian manifolds with ergodic geodesic flow. The most famous
are the Bunimovich stadium (a rectangle with semi-circular ends)  and a Sinai billiard table (a rectangle
or torus with a disc removed). References to the literature are given in \cite{HZ,ZZw,GL}, where ergodicity
on the quantum level is studied.

\end{itemize}

We recall that   a geodesic flow $g^t$ is Anosov if,  for each
$\rho\in S^*_g M$, the tangent space $T_\rho S^*_g M$
splits into $g^t$ invariant sub-bundles
$$
T_\rho S^*_g M=E^u(\rho)\oplus E^s(\rho) \oplus
\IR\,X_H(\rho)\,
$$
where $E^u$ is the unstable subspace and $E^s$ the stable
subspace and where $X_H$ is the Hamiltonian vector field of the function
$H(x, \xi) = |\xi|_g$ on $T^*M$.  The unstable Jacobian $J^u(\rho)$ at the point $\rho$ is
defined as the Jacobian of the map $g^{-1}$, restricted to the
unstable subspace at the point $g^1\rho$:
$J^u(\rho)=\det\big(dg^{-1}_{|E^u(g^1\rho)}\big)$ (the unstable
spaces at $\rho$ and $g^1\rho$ are equipped with the induced
Riemannian metric)

\subsection{Completely integrable geodesic flow}

By a (classical) completely integrable system on $T^{*}M$ with $\dim M = n$, we mean
a set of $n$ independent, $C^{\infty}$ functions  $p_1, \dots,
p_n$,
 on $T^{*}M$ satisfying:

\medskip

\begin{tabular}{l} $\bullet \,\, \{ p_{i}, p_{j} \}=0$ \,\, for all $1 \leq i,j \leq n$;\\
$\bullet \,\, dp_{1} \wedge dp_{2} \wedge \cdot \cdot \cdot \wedge
dp_{n} \neq 0$ \,\, on an open dense subset of $T^{*}M.$
\end{tabular}
\medskip

The associated moment map is defined by
\begin{equation} \label{MM} \pcal  = (p_1, \dots, p_n): T^*M \rightarrow
B \subset\R^n. \end{equation}  We  refer to to the set $B$ as the
`image of the moment map,' and denote the set of regular values of $\pcal$ by
$B_{reg}$.
When   $\pcal$ is proper and $b$ is a regular value of $\pcal,$
\begin{equation} \label{CI1}
\pcal^{-1}(b) = \Lambda^{(1)}(b) \cup \cdot \cdot \cdot \cup
 \Lambda^{(m_{cl})}(b) , \;\;\;(b \in B_{reg})
\end{equation}
\noindent where each $\Lambda^{(l)}(b) \simeq T^n$ is an
$n$-dimensional Lagrangian torus.  The number  $m_{cl} (b) = \# \pcal^{-1} (b)$ of  orbits on the level set $\pcal^{-1}(b)$
 is constant on
connected components of $B_{reg}$ and the moment map (\ref{MM}) is
a fibration over each component with fiber (\ref{CI1}).

 The Hamiltonians $p_j$ generate an action of
$\R^n$ defined by
\begin{equation} \label{PHIT}  \Phi_t = \exp t_1 \Xi_{p_1} \circ \exp t_2 \Xi_{p_2} \dots \circ \exp t_n
\Xi_{p_n}. \end{equation} We  denote the  $\Phi_t$-orbits by   $\R^n \cdot (x,
\xi) $, and the isotropy group of $(x, \xi)$ by
 ${\mathcal I}_{(x, \xi)}.$  When  $\R^n \cdot (x, \xi) $ is a compact Lagrangian orbit, then ${\mathcal I}_{(x, \xi)}$
is a lattice of full rank in $\R^n$, and is  known as the `period
lattice', since it consists of the `times' $T \in \R^n$ such that
$\Phi_T |_{\Lambda^{(\ell)}(b)} = Id.$ By the Liouville-Arnold theorem, the orbits of
$\Phi_{t}$ are diffeomorphic to $\R^k \times T^m$ for some $(k,m),
k + m \leq n.$
 In  sufficiently small neighbourhoods $\Omega^{(l)}(b)$ of each component torus,
$\Lambda^{(l)}(b)$, the Liouville-Arnold theorem also gives the
existence of local action-angle variables
$(I^{(l)}_{1},...,I^{(l)}_{n},
\theta^{(l)}_{1},...,\theta^{(l)}_{n})$ in terms of which the
joint flow of $\Xi_{p_{1}},...,\Xi_{p_{n}}$ is linearized.
For convenience, we henceforth normalize the action variables
$I^{(l)}_{1},...,I^{(l)}_{n}$ so that $I^{(l)}_{j} = 0; \,
j=1,...,n$ on the torus $\Lambda^{(l)}(b)$.

Some examples of integrable systems are as follows.

\subsubsection{$T^n = \R^n/\Z^n$}  In the case of a flat torus, the $\xi_j$
variables are action variables and the moment map is the
projection $\pcal: T^* T^n \to \R^n$. The joint eigenvalues of the
action operators $\hat{I}_j = \frac{\partial }{\partial x_j}$ is
the standard integer lattice $\Z^n$ and rays are multiples of
lattice points.

\subsubsection{$S^2$} The classical action variables are $I_1 =
p_{\theta}(x, \xi) = \langle \xi, \frac{\partial}{\partial
\theta} \rangle$, known as the Clairaut integral, and $I_2 = |\xi|_g$. The
moment map $\ical = (I_1, I_2)$ maps $T^* S^2 $ to the triangular
cone $\{(x, y): y > 0, \; |x| \leq y\} \subset \R^2$.

\subsubsection{Simple surfaces of revolution}

These are metrics on $S^2$ for which the geodesic flow and
eigenfunctions  are almost as simple as for the standard sphere.
The key feature is that they are `toric integrable'.

To explain this notion, we assume that the  $S^1$ action is given
by rotations around the $x_3$-axis. We are not primarily
interested in this $S^1$ action but rather the $S^1 \times \R$
action on $T^* M \backslash 0$ generated by rotations and by the
geodesic flow.  The basic assumption defining simple surfaces of
revolution  is that the $\R \times S^1$ action simplifies to a
$T^2$ ($2$-torus) action  on $T^* S^2 \backslash 0$. That is, can
define two global action variables $I_1, I_2$ with $2 \pi
$-periodic Hamiltonian flows such that $|\xi|_g = H(I_1, I_2). $
If we  write the metric as $dr^2 + a(r)^2 d\theta^2$ in geodesic
polar coordinates centered at a fixed point, which we visualize as
the north pole, then a sufficient condition that the geodesic flow
be toric is that  the distance function to the axis of revolution
possesses precisely one local maximum. For instance, a convex
surface of revolution is simple. A simple surface of revolution
possesses precisely one $S^1$ invariant closed geodesic, which we
refer to as the equator.

The first action function is the Clairaut integral generating the
$S^1$ action, defined by $p_{\theta}(v):= \langle v,
\frac{\partial}{\partial \theta}\rangle$.  To define the second
action variable, we need to consider the moment map
$$\pcal = (|\xi|_g, p_{\theta}): T^*S^2 \rightarrow B:= \{(b_1, b_2) :
|b_2| \leq a(r_o)b_1\} \subset \R \times\R^+ $$ of the Hamiltonian
$ \R \times S^1$-action defined by the geodesic flow and by
rotation. The singular set of $\pcal$ is the closed conic set $Z:=
\{(r_o, \theta, 0, p_{\theta}): \theta \in [0, 2 \pi), p_{\theta}
\in \R\}$, i.e. $Z$ is the cone through the equatorial geodesic
(in either orientation). The map $\pcal |_{T^*S_gS^2 - Z}$ is a
trivial $S^1 \times S^1$ bundle over the open convex cone $B_o$
(the interior of $B$), so that
 $\pcal^{-1}(b)$ is an invariant torus for the geodesic flow and
 for rotation. The second action function is the function  $I_2
 (\pcal)$ given by
$$I_2 (b_1, b_2)  =\frac{1}{\pi}
\int_{r_{-}(b)}^{r_{+}(b)} \sqrt{b_1^2 - \frac{b_2^2}{a(r)^2}} dr
+ |b_2|$$ where $r_{\pm}(b)$ (with $b = (b_1, b_2)$  are the
extremal values of $r$ on the annulus $\pi \circ \pcal^{-1}(b)$
(with $\pi : S^*_g S^2 \rightarrow S^2$ the standard projection).

These action variables are best thought of in the following
classical way (Liouville, Jacobi, Arnold): For each $b\in B_o$ ,
let $H_1(F_b, \Z)$ denote the homology of the fiber $ \pcal^{-1}
(b).$ This lattice bundle is trivial since $B$ is contractible, so
there exists a smoothly varying homology basis $\{\gamma_1(b),
\gamma_2(b)\} \in H_1(\pcal^{-1} (b), \Z)$ where $\gamma_1$ are
the orbits of the $S^1$ action, and $\gamma_2$  is a fixed closed
meridian $\gamma_M$ when $b$ is on the center line $\R^+ \cdot
(1,0)$. Then the actions are the integrals of the `action form'
over this moving homology basis:
$$I_1 (b) = \int_{\gamma_1(b)} \xi dx = p_{\theta},\;\;\;\;\;\;I_2 (b) =
\int_{\gamma_2(b)} \xi dx =\frac{1}{\pi}
\int_{r_{-}(b)}^{r_{+}(b)} \sqrt{b_1^2 - \frac{b_2^2}{a(r)^2}} dr
+ |b_2|.$$  On the torus of meridians in $S^*_g S^2$, the value of
$I_2$ equals $\frac{L}{\pi}$ and it equals one on the equatorial
geodesic. So extended, $I_1, I_2$ are smooth homogeneous functions
of degree 1 on $T^*S^2$, and   generate $2\pi$-periodic Hamilton
flows.

The pair $\ical:=(I_1, I_2)$ generate a global Hamiltonian torus
($S^1 \times S^1$)-action commuting with the geodesic flow. The
singular set of $\ical$ equals ${\mathcal Z}:= \{I_2 =\pm
p_{\theta}\}$, corresponding to the equatorial geodesics.
 The map
$$\ical: T^*S^2 - {\mathcal Z} \rightarrow \Gamma_o:= \{(x,y) \in \R \times \R^+ :
|x| < y\}$$ is a trivial torus fibration.

\subsection{Quantum mechanics: Wave group and pseudo-differential operators}

The quantization of the Hamiltonian $|\xi|_g$ is the square root
$\sqrt{\Delta}$ of the positive Laplacian,    of $(M,g)$. It
generates the wave
$$U_t =
e^{i t \sqrt{\Delta}}, $$ which is a group of Fourier integral
operators which propagates singularities along geodesics. In
particular, for fixed $t, x$, the singular support of the wave
kernel $e^{i t \sqrt{\Delta}}(t, x, \cdot)$ lies on the distance
sphere centered at $x$ of radius $t$. This is the wave front of a
spherical  wave launched at $x$. Moreover, the wave front
propagates outward along geodesics normal to the distance spheres.
Thus,  the  `singular directions' lie along the geodesic rays
emanating from $x$.

Above, $\sqrt{\Delta}$, resp.  $e^{i t \sqrt{\Delta}}$ are defined
by the spectral theorem: i.e. they have  the same eigenfunctions
$\phi_{\lambda}$ as $\Delta$ on $(M, g)$ and with eigenvalues
$e^{i t \lambda_j}$ resp. $\lambda_j$.

\subsubsection{Pseudo-differential operators and expected values}

The relations between the wave group and geodesic flow are
fundamental to the global analysis and will be discussed at
greater  length in \S \ref{METHODS}. They originated in Dirac's
Principles of Quantum Mechanics \cite{Dir}  as the relations
between a classical Hamiltonian flow and its quantization as a
unitary group on a Hilbert space. The theory of
pseudo-differential and Fourier integral operators is the rigorous
mathematical framework for the somewhat heuristic ideas of Dirac's
book. We refer the reader to textbook treatments \cite{DSj,EZ,GSj}
and to the comprehensive treatise \cite{HoI-IV} for background.

We denote the class of pseudo-differential operators of order $m$
on $M$ by $\Psi^m$. We fix a quantization $a \to Op(a)$ of
pseudo-differential operators to functions $a(x, \xi) \in
C^{\infty}(T^*M \backslash 0)$ which are symbols of order $m$ in
the sense of admitting poly-homogeneous expansions, $a \sim
\sum_{j = 0}^{\infty} a_{m - j}$ where $a_{m - j}$ is homogeneous
of order $m - j$ on $T^*M \backslash 0$. It is the behavior at
infinity and not at $\xi = 0$ of a symbol which is important and
one usually cuts off the homogeneous functions in a small ball
around $0$. The principal symbol $\sigma_A$ of $A = Op(a)$ is the leading
homogeneous term $\sigma_A = a_m$.

On $\R^m$, $Op(a)$ is given by the formula
$$Op(a) f(x) = \int_{\R^m} a(x,  \xi) e^{i \langle x - y, \xi
\rangle} f(y) dy d\xi. $$ Equivalently, $Op(a)$ is defined by its
actions on exponentials, \begin{equation} \label{OPa} Op(a) e^{i
\langle x, \xi \rangle} = a(x, \xi) e^{i \langle x, \xi \rangle}.
\end{equation} Since $a$ is assumed to be a symbol, the right side is a
perturbation expansion with leading term $a_0 (x, \xi) e^{i
\langle x, \xi \rangle}$. When $a$ is independent of $\xi$ one has
a multiplication operator and when it is independent of $x$ one
has a convolution operator.

We also follow the notation of Dimassi-Sjoestrand \cite{DSj} for
operator classes:  Given an open $U \subset {\Bbb R}^{n}$, we say
that $a(x,\xi;\hbar) \in C^{\infty}(U \times {\Bbb R}^{n})$ is  in
the symbol class $S^{m,k}(U \times {\Bbb R}^{n})$, provided
$$ |\partial_{x}^{\alpha} \partial_{\xi}^{\beta} a(x,\xi;\hbar)| \leq
C_{\alpha \beta} \hbar^{-m} (1+|\xi|)^{k-|\beta|}.$$ \noindent We
say that $ a \in S^{m,k}_{cl}(U \times {\Bbb R}^{n})$ provided
there exists an asymptotic expansion:
$$ a(x,\xi;\hbar) \sim \hbar^{-m} \sum_{j=0}^{\infty} a_{j}(x,\xi)
\hbar^{j},$$ \noindent valid for $|\xi| \geq \frac{1}{C} >0$  with
$a_{j}(x,\xi) \in S^{0,k-j}(U \times {\Bbb R}^{n})$ on this set.
The  associated $\hbar$ Kohn-Nirenberg quantization is given by
$$Op_{\hbar}(a)(x,y) = (2\pi \hbar)^{-n} \int_{{\Bbb R}^{n}}
e^{i(x-y)\xi/\hbar} \,a(x,\xi;\hbar) \,d\xi.$$ As is well-known,
the  definition can be globalized to $M$ using a partition of
unity. We denote this class by  $Op_{\hbar}(S^{m,k})(T^*M) $.
 The symbol of the composition is given by the usual
formula: Given $a \in S^{m_{1},k_{1}}$ and $b \in
S^{m_{2},k_{2}}$, the composition $Op_{\hbar}(a) \circ
Op_{\hbar}(b) = Op_{\hbar}(c) + {\mathcal O}(\hbar^{\infty})$ in
$L^{2}(M)$ where locally,
$$ c(x,\xi;\hbar) \sim \hbar^{-(m_{1}+m_{2})} \sum_{|\alpha|=0}^{\infty}
\frac{{(-i\hbar)}^{|\alpha|} }{\alpha !} (\partial_{\xi}^{\alpha}
a)\cdot (
\partial_{x}^{\alpha} b).$$
For further details, we refer to \cite{DSj}.

On a manifold one patches together such local expressions using a
partition of unity. There is no unique way to do this, and in fact
there is no unique definition of $Op(a)$ on $\R^m$. The one above
is the `Kohn-Nirenberg' definition; other natural choices are the
Weyl definition and the Friedrichs positive quantization.

On special manifolds, such as symmetric spaces, one may define
 $Op(a)$ using the Fourier transform of the symmetric space in
 place of the Euclidean Fourier transform in local coordinates.
 We will discuss the definition in the case of hyperbolic space.

Pseudo-differential operators greatly enlarge the class of
operators which can be used to test eigenfunctions for their
properties. One often uses matrix elements $\langle A
\phi_{\lambda_j}, \phi_{\lambda_j} \rangle$ for this purpose. The
diagonal matrix element is viewed as the `expected value of the
observable $A$ in the state $\phi$'  in quantum mechanics. Such
quadratic forms in $\phi_{\lambda}$ often arise in the study of
eigenfunctions, usually with $Op(a) $ being just multiplication by
a function, or a gradient operator. It is useful to analyze
quadratic expressions of a more general kind.

\subsection{Modes and quasi-modes}

  Quasi-modes are approximate eigenfunctions. It is important to
discuss quasi-modes along with modes (true eigenfunctions) for the
following reasons:

\begin{itemize}

\item Most theorems and proofs  concerning  modes also apply to
quasi-modes. There are few techniques that distinguish modes from
quasi-modes.

\item It is often possible to construct quasi-modes with special
properties. In many applications, they are just as useful as
modes, and often have clearer geometric properties.

\end{itemize}

 There are several
ways to define `approximate eigenfunction'. The classical
definition of  Keller \cite{K},  Babich \cite{B} (see also \cite{BB}), Lazutkin
\cite{L1}, Arnold \cite{Ar}, Ralston \cite{Ra,Ra2},
Guillemin-Weinstein \cite{GW}, Colin de Verdi\`ere \cite{CV2} (see also Popov \cite{Pop})
  and others is that a quasi-mode
$\{\psi_k\}$ of order zero is a sequence of $L^2$-normalized functions
satisfying \begin{equation} \label{QM0} ||(\Delta - \mu_k)
\psi_k||_{L^2} = O(1),
\end{equation}  for a sequence of quasi-eigenvalues $\mu_k$. By
the spectral theorem it follows that there must exist true
eigenvalues in the interval $[\mu_k - \delta, \mu_k + \delta]$ for
some $\delta > 0$. Moreover, if $E_{k, \delta}$ denotes the
spectral projection for the Laplacian corresponding to this
interval, then
$$ ||E_{k, \delta} \psi_k - \psi_k||_{L^2} = O(k^{-1}). $$

One can refine the definition by demanding that the remainder in
(\ref{QM0}) is of order $O(\mu_k^{-s})$ and define a quasi-mode of order $s$
by
 \begin{equation} \label{QM0s} ||(\Delta - \mu_k)
\psi_k||_{L^2} = O(\mu_k^{-s}).
\end{equation}
We refer to \cite{CV2,Z9} for other modifications.

 In the `classical'
work on quasi-modes of Babich, Lazutkin, Arnol'd,  Ralston and
others, quasi-modes are often constructed as oscillatory integrals
(or Lagrangian quasi-modes)
$$\psi_{\lambda}(x) = \int_{\R^k} e^{i \lambda S(x, \xi)} a(x,
\xi) d \xi$$ with special phases and amplitudes designed so that
$|| (-\Delta - \lambda^2) \psi_{\lambda}||$ is small. An important
example is the construction of a quasi-mode associated to a stable
elliptic orbit, reviewed in \S \ref{QMGEO}. Another important
quasi-mode of this type is the quasi-mode associated to the
central rectangle of a Bunimovich stadium, or more precisely to
the Lagrangian cylinder  with boundary consisting of unit tangent
vectors to the `bouncing ball orbits' in the vertical direction in
the rectangle. A recent study of such quasi-modes and of the
possible behavior of actual modes is given in the articles
\cite{BZ,BZ2} of Burq-Zworski. However, the definition (\ref{QM0})
does not force quasi-modes to be Lagrangian and  includes `random'
combinations of eigenfunctions with eigenvalues in a small
interval around $\lambda^2$, which need not have such an
oscillatory integral structure.

 An important  example of such a
quasi-mode is a sequence of ``shrinking spectral projections",
i.e. the $L^2$-normalized projection kernels
$$\Phi_j^z(x) = \frac{\chi_{[\lambda_j, \lambda_j + \epsilon_j]}(x,
z)}{\sqrt{\chi_{[\lambda_j, \lambda_j + \epsilon_j]}(z, z)}}$$
with second point frozen at a point $z \in M$ and with width
$\epsilon_j \to 0$. Here, $\chi_{[\lambda_j, \lambda_j +
\epsilon_j]}(x, z)$ is the orthogonal projection onto the sum of
the eigenspaces $V_{\lambda}$ with $\lambda \in [\lambda_j,
\lambda_j + \epsilon_j]$.  The zonal eigenfunctions of a surface of
revolution are examples of such shrinking spectral projections for
a sufficiently small $\epsilon_j$, and when $z$ is a partial focus
such $\Phi_j^z(x)$ are generalizations of zonal eigenfunctions. On
a general Zoll manifold, shrinking spectral projections of widths
$\epsilon_j = O(\lambda_j^{-1})$ are the direct analogues of zonal
spherical harmonics, and are quasi-modes of order $1$.

\subsection{Heuristics and intutions}

There are several key intuitions to keep in mind from the outset:

\begin{itemize}

\item Local intuition: Eigenfunctions of eigenvalue $\lambda^2$
resemble polynomials of degree $\sim C  \lambda$ in terms of their
local  complexity and growth, e.g. vanishing order at zeros,
volumes of nodal hypersurfaces, growth rates on small balls.

\item Global intuition: Eigenfunctions are stationary states of
the quantization $U(t) = \exp it \sqrt{\Delta}$ of the geodesic
flow. Their high-frequency limits $\lambda \to \infty$ should
reflect the dynamics of the classical geodesic flow. When the
geodesic flow is integrable, eigenfunctions should localize on the
invariant tori (or more correctly, on level sets of the moment
map). When the geodesic flow is ergodic, eigenfunctions should be
diffuse (i.e. not localize).

\item Modes versus quasi-modes and random waves: Most results
about eigenfunctions apply to quasi-modes, i.e. linear combination
of eigenfunctions with very close by eigenvalues. More precisely,
when $|\lambda_j - \lambda_k| \leq \frac{C}{\log \lambda}$. In
integrable cases, one can spectrally separate out true
eigenfunctions from such `random waves' but in general one cannot.
In ergodic cases, eigenfunctions in many respects resemble random
waves.

\end{itemize}

The key difficulty  in relating classical to quantum mechanics (e.g. in  quantum chaos)
is that it involves a
comparison between long-time dynamical properties of $g^t$
 and $U_t$ through the symbol map and similar classical limits.
 The classical dynamics defines the `principal symbol' behavior of
 $U_t$ and the `error' $U_t A U_t^* - Op(\sigma_A \circ g^t)$
 typically grows exponentially in time. This illustrates the
 ubiquitous `exponential barrier' in the subject. The classical approximation is not
 clearly valid after the so-called `Heisenberg time' (see \S \ref{ENTROPY}).   The articles
 \cite{A,AN,ANK} have excellent discussions of these problems.

\subsection{Notational Index}

\begin{itemize}

\item $g_{ij} = g(\frac{\partial}{\partial
x_i},\frac{\partial}{\partial x_j}) $, $[g^{ij}]$ is the inverse
matrix to $[g_{ij}]$.

\item   $r = r(x,y)$  denotes the distance function of $(M, g)$.

\item $B(x_0, r) \subset M $ denotes the geodesic ball of radius
$r$ centered at $x_0$;

\item $dV_g$ denotes the volume density of $(M, g)$;

\item    $\Theta(x, y)$  denotes the volume density in normal
coordinates  at $x$, i.e. $dV_g(y) = \Theta(x, y) dy. $

\item $T^*M$ is the cotangent bundle of $M$, and $T^* M \backslash
\{0\}$ is the puncture cotangent bundle where the zero section is
deleted.

\item
  $|\xi|_g = \sqrt{\sum_{ij = 1}^n
g^{ij}(x) \xi_i \xi_j} : T^* M \backslash
\{0\} \to \R^+$  denotes the length of a
(co)-vector.

\item  The unit (co-) ball bundle is denoted $B^*M = \{(x, \xi):
|\xi|_g \leq 1\}$. Its boundary $S^*M = \{|\xi|_g = 1\}$ is  the
unit cosphere bundle;

\item $\mu$ is the {\it Liouville measure} on $S^*M$, i.e. the
surface measure $d\mu = \frac{dx d\xi}{d H}$  induced by the
Hamiltonian   $H = |\xi|_g$ and by the symplectic volume measure
$dx d\xi$
 on $T^*M$.

 \item $\omega$ is the linear functional on $C(S^*M)$ defined by
 $\omega(\sigma) = \frac{1}{\mu(S^*M)} \int_{S^*M} \sigma d\mu$. The same
 notation is used for the functional (state) on the algebra $\Psi^0(M)$
 of zeroth order pseudo-differential operators defined by $\omega(A)
 = \omega(\sigma_A)$.

\item $g^t: T^* M \backslash
\{0\} \to T^*M \backslash \{0\}$  denotes the geodesic flow,
i.e. the Hamilton flow of $|\xi|_g.$

\item
  $\gamma$
 denotes a closed geodesic, i.e. closed orbit of $g^t$ in $S^*M$.
Thus,  $\gamma(t) = g^t(x_0, \xi_0) \in S^*M$ where $g^L (x_0,
\xi_0) = (x_0, \xi_0).$ $L = L_{\gamma}$ is the period of the
closed geodesic.
 By abuse of notation, we sometimes also  use $\gamma$ to denote
its projection to $M$, where $L_{\gamma}$ is the length of
$\gamma$.

\item Geodesic loops versus closed geodesics: Viewed as curves on
$M$ both satisfy $\alpha(0) = \alpha(L)$, but the latter also
satisfy $\alpha'(0) = \alpha'(L)$.

\item  ${\rm inj}(M,g)$  denote the injectivity radius.

\item    $\Lambda (M)$  denote the $H^1$ loopspace of $M$;

\item   $\G (M)$  denote the subset of closed geodesics in
$\Lambda(M)$;

\item $\G_{[\gamma]}$  denote the set of closed geodesics in
$\G(M)$ whose free homotopy class is $[\gamma]$;

\end{itemize}

\bigskip

\section{\label{EXPLICIT} Explicitly solvable eigenfunctions}

There are only a few Riemannian manifolds $(M, g)$  where one has
explicit formulae for eigenfunctions.  In this section, we briefly
review these examples. What they have in common is that in each
case $\Delta_g$ is {\it completely integrable}, i.e. commutes with
a maximal family of (pseudo-differential operators). The joint
eigenfunctions of such quantum integrable Laplacians have very
special properties reflecting the complete integrability of the
geodesic flow of $(M, g)$.  Our intuition is that such integrable
eigenfunctions should be models of {\it extremal} behavior: for
instance,  extremal growth and  concentration. Quantum integrable
eigenfunctions will be discussed in  depth in \S \ref{QCI}. Here,
we only wish to go through the simplest examples.

\subsection{\label{EUCLIDEAN} $\R^n$}

The eigenspaces of the Laplacian on $\R^n$ are defined by
$$\ecal_{\lambda} = \{\phi_{\lambda} \in \scal'(\R^n): \;  \Delta \phi_{\lambda}  =
\lambda^2\; \phi_{\lambda} \}, $$ where $\scal'(\R^n)$ is the space of tempered
distributions. Since eigenfunctions are $C^{\infty}$ the
temperedness only constrains the growth to be polynomial, i.e.
rules out exponentially growing eigenfunctions such as $e^{\langle
\lambda x, \xi \rangle}$.

Since the Euclidean motion group  $E_{n}$ commutes with the flat
Laplacian  $\Delta = \Delta_{\R^{n}}$ it preserves the
eigenspaces. Hence the infinitesimal translations
$\frac{\partial}{\partial x_j}$ and the infinitesimal rotations
commute with $\Delta$.   The joint complex valued eigenfunctions of
the translations  are the Euclidean plane waves $e^{i \langle \xi,
x \rangle}$ with $\xi \in \R^n$. The eigenspaces $\ecal_{\lambda}
$ are infinite dimensional but are spanned in the following sense
by the  $e^{i \langle \xi, x \rangle}$ with $|\xi| = \lambda$
(i.e. of frequency $\lambda$): There is a Poisson type integral
formula for eigenfunctions: for any $\phi_{\lambda} \in \ecal_{\lambda}$ there
exists a distribution $d\mu \in \dcal'(S^{n-1})$ such that
\begin{equation} \phi_{\lambda} (x) = \int_{S^{n-1}} e^{i \lambda \langle \xi, x
\rangle} d\mu(\xi). \end{equation}    We refer to \cite{Hel} for
further background.

 Let $\phi_{\lambda} \in \ecal_{\lambda}$. Since $SO(n)$ acts on
$\ecal_{\lambda}$ we may decompose it into isotypic subspaces,
$$\ecal_{\lambda} = \bigoplus_{N = 0}^{\infty} \ecal_{\lambda}(N),
$$
where $\ecal_{\lambda}(N)$ is the subspace of eigenfunctions
transforming by the $N$th  irreducible representation of $SO(n)$,
realized by the space $\hcal_N$ of spherical harmonics of degree
$N$ on $S^{n-1}$ (see \S \ref{SPHERE}).  As this implies, the
elements of $\ecal_{\lambda}(N)$ may be expressed as sums of of
separation of variable  eigenfunctions $J_{N, n}(\lambda r)
\phi_N(\omega)$ where $\phi_N \in \hcal_N(S^{n-1})$. The radial
factor is a temperate solution of the $n$ dimensional spherical
Bessel equation
$$\left( r^2 \frac{d^2}{dr^2} + (n - 1) r \frac{d}{dr} + (r^2  -
N(N + n-1)) \right) J_{N, n}(\lambda r) = 0,
$$
and is given explicitly by
$$J_{N, n}(\lambda r) = (\lambda r)^{- \frac{n-2}{2}}
J_{|N| + \frac{n-2}{2}} (\lambda r) = C_{N + \frac{n-2}{2}}
(\lambda r)^N   \int_0^\pi \cos (\lambda r \cos \theta) \sin^{2 N
+ n-2} \theta d \theta,  $$ where  $C_{N + \frac{n-2}{2}}  =
\frac{1}{ 2^{\frac{n-2}{2}} \Gamma (\frac{n-2}{2} + \frac{1}{2})
\Gamma(\frac{1}{2})}$ and where  $J_{\nu}$ is the Bessel function
of order $\nu$,
$$J_{N + \frac{n-2}{2}}(r \lambda)  = C_{N + \frac{n-2}{2}} (\lambda r)^{N +  \frac{n-2}{2}} \int_0^\pi \cos (\lambda r
\cos \theta) \sin^{2 N + n-2} \theta d \theta. $$

The space $\ecal_{\lambda}$ carries an inner product which is
invariant under $E_{n + 1}$. An orthonormal basis is given by
\begin{equation} u_{\alpha} (x) = \int_{S^{n-1}} e^{i \lambda \langle \xi, x
\rangle} \chi_{\alpha} (\xi) dV_0(\xi), \end{equation} where
$\{\chi_{\alpha}\}$ is an orthonormal basis of spherical harmonics
of $L^2(S^{n+1})$. The notation is simplest when $n = 2$: Then an
orthonormal basis of $\ecal_{\lambda}$ is given by
$\{e_{\lambda}^{\ell}: = J_{\ell} ( \lambda r) e^{i \ell
\theta}\}_{\ell = - \infty}^{\infty}.$ Equivalently, the norm of
$u$ in $\ecal_{\lambda}$ is the norm of the distribution $d\mu$ in
$L^2(S^{n - 1})$. We denote by $\ecal^{(2)}_{\lambda}$ the
subspace of elements of finite norm.

In view of the infinite dimensionality of $\ecal_{\lambda}$, it is
possible to construct eigenfunctions with very special properties.
First, we consider the orthogonal projection onto
$\ecal^{(2)}_{\lambda}$, which is given by the Bessel kernel
\begin{equation} E_{\lambda} f (x) = \int_{\R^n}
J_{\frac{n-2}{2}}(\lambda |x - y| ) f(y) dy. \end{equation} In the
notation of \cite{HoI-IV}  (vol III, Chapter XVII), the spectral
projection for $\Delta_{\R^n}$ for the spectral interval $[0,
\lambda^2]$ is given by
\begin{equation} e_0(x, \lambda^2) = (2 \pi)^{-n} \int_{|\xi| <
\lambda} e^{i \langle x, \xi \rangle} d \xi. \end{equation} Hence,
the spectral projection onto $\ecal_{\lambda}$ is given by
\begin{equation} \frac{d}{d \lambda} \; e_0(x, \lambda^2) = (2 \pi)^{-n} \int_{|\xi|
= \lambda} e^{i \langle x, \xi \rangle} d S, \end{equation} where
$S$ is the standard surface measure.

If we fix $y$ we obtain a normalized
 Euclidean coherent state
$\phi_{\lambda}^y = J_{\frac{n-2}{2}}(\lambda |x - y| )$.  By the
standard asymptotics of Bessel's function, $|\phi_{\lambda}^y  |
\sim (r(\cdot, y) \lambda)^{-1/2}$ as $r(\cdot, y) \lambda \to
\infty$. Thus, $\phi_{\lambda}^y$ peaks at $x = y$ and decays at a
rate $r(x, y)^{-1/2}$ away from the peak point. Also, in the high
frequency limit it decays like $\lambda^{-1/2}$ for fixed $x \not=
y$.


\subsection{Flat tori}

A flat torus is a
 compact quotients $\R^n/ L$ where $L$ is
lattice such as $\Z^n.$  The Laplacian $\Delta$ of the flat metric
again commutes with the $n$ vector fields
$\frac{\partial}{\partial x_j}$. In the compact case, an
orthonormal basis of joint eigenfunctions $\phi_{\lambda}$   is
provided by  the exponentials $ e^{i \langle \lambda, x \rangle}$
 where $\lambda \in L^*$, the
dual lattice. The corresponding Laplace eigenvalue is
$|\lambda|^2$.  Usually (for instance when studying nodal sets),
we prefer the real orthonormal basis $\sin \langle \lambda, x
\rangle, \cos \langle \lambda, x \rangle. $ A key feature of these
eigenfunctions is that they are  linear combinations of a finite
number (two) functions of the form  $a(x) e^{i S(x)/h}$ with $h
=|\lambda |^{-1}$. Such functions are known as WKB modes or
Lagrangian states. The  heuristic
 scaling (\ref{SCALING}) is exactly true, with
 $$e^{i \langle \lambda, x_0 + \frac{u}{|\lambda|} \rangle} = e^{i \langle \lambda, x_0
 \rangle}
e^{i \langle \frac{\lambda}{|\lambda|}, u \rangle}, $$ i.e. with
$dT^{x_0}_{\lambda} = e^{i \langle \lambda, x_0 \rangle} \delta_{
 \frac{\lambda}{|\lambda|}}. $ The `phase function' $S(x) = \langle x,
\frac{\lambda}{|\lambda|} \rangle$ is the generating function of a
Lagrangian submanifold $\Lambda = \{(x, \xi =
\frac{\lambda}{|\lambda|}) \}$, which is a Lagrangian torus in
 $T^*_g (\R^n/ L)$. The eigenfunctions have norm one, $|e^{i
 \langle x, \lambda \rangle}| = 1$. We also note that
 $$\langle Op(a)  e_{\lambda}, e_{\lambda} \rangle = \int_{\R^n/L} a(x,
 \lambda) dx = \int_{T_{\lambda}} a d\mu_L. $$

\subsection{\label{SPHERE} Standard Sphere}

Eigenfunctions of the Laplacian  $\Delta_{S^n}$ on the standard
sphere  $S^n$  are restrictions of harmonic homogeneous
polynomials on $\R^{n+1}$.

Let $\Delta_{\R^{n+1}} = - ( \frac{\partial^2}{\partial x_1^2} +
\cdots + \frac{\partial^2}{\partial x_{n+1}^2})$ denote the
Euclidean Laplacian. In  polar coordinates $(r, \omega)$ on
$\R^{n+1}$, we have  $\Delta_{\R^{n+1}} = - \left( \frac{\partial^2}{\partial r^2} +
\frac{n}{r} \frac{\partial}{\partial r}\right) + \frac{1}{r^2}
\Delta_{S^{n}}. $ A polynomial $P(x) = P(x_1, \dots, x_{n+1})$ on
$\R^{n+1}$  is called:

\begin{itemize}

\item  homogeneous of degree $k$ if $P(r x ) = r^k P(x).$ We
denote the space of such polynomials by $\pcal_k$. A basis is
given by the monomials $$x^{\alpha} = x_1^{\alpha_1} \cdots
x_{n+1}^{\alpha_{n+1}}, \;\;\; |\alpha| = \alpha_1 + \cdots +
\alpha_{n+1} = k.$$

\item Harmonic if $\Delta_{\R^{n+1}} P(x) = 0.$ We denote the
space of harmonic homogeneous polynomials of degree $k$ by
$\hcal_k$.

\end{itemize}

Suppose that $P(x)$ is a homogeneous harmonic polynomial of degree
$k$ on $\R^{n+1}$. Then,
$$\begin{array}{l} 0 = \Delta_{\R^{n+1}} P  = - \{\frac{\partial^2}{\partial r^2} + \frac{n}{r}
\frac{\partial}{\partial r} \}  r^k P(\omega)   + \frac{1}{r^2}
\Delta_{S^{n}} P (\omega)  \\ \\
\implies \Delta_{S^{n}} P (\omega) = (k (k - 1) + n k) P(\omega).
\end{array}
$$
Thus, if we restrict $P(x)$ to the unit sphere $S^n$ we obtain an
eigenfunction of eigenvalue $k (n + k - 1). $
 Let $\hcal_k \subset L^2(S^n)$ denote the space of spherical harmonics
of degree $k$.  Then:
\begin{itemize}

\item $L^2(S^n) = \bigoplus_{k = 0}^{\infty} \hcal_k$. The sum is
orthogonal.

\item $Sp(\Delta_{S^n}) = \{ \lambda_k^2 = k ( n + k - 1) \}. $

\item $\dim \hcal_k$ is given by $$d_k = { n + k - 1  \choose k}
  - { n + k - 3 \choose k -
2}$$

\end{itemize}

The Laplacian $\Delta_{S^n}$ is quantum integrable. For
simplicity, we restrict to $S^2$. Then the  group $SO(2) \subset
SO(3)$ of rotations around the $x_3$-axis commutes with the
Laplacian. We denote its infinitesimal generator by $L_3 =
\frac{\partial }{i
\partial \theta}$. The standard basis of spherical harmonics
is given by the joint eigenfunctions $(|m| \leq k$)
$$\left\{ \begin{array}{l} \Delta_{S^2} Y^k_m = k ( k + 1) Y^k_m; \\ \\
 \frac{\partial }{i \partial \theta} Y^k_m  = m Y^k_m. \end{array} \right.$$

Two basic spherical harmonics are:

\begin{itemize}

\item The highest weight spherical harmonic $Y^k_k$. As a
homogeneous polynomial it is given up to a normalizing constant by
$ (x_1 + i x_2)^k$ in $\R^3$ with coordinates $(x_1, x_2, x_3)$.
It is a  `Gaussian beam' along the equator $\{x_3 = 0\}$, and is
also a quasi-mode associated to this stable elliptic orbit. These
general notions will be discussed in \S \ref{QMGEO}.
\medskip

\item The zonal spherical harmonic $Y^k_0$. It may be expressed in
terms of the orthogonal projection $\Pi_k: L^2(S^2) \to \hcal_k. $

\end{itemize}

We now explain the last statement:  For any $n$, the  kernel  $\Pi_k(x, y)$ of
$\Pi_k$ is defined by
$$\Pi_k f(x) = \int_{S^n} \Pi_k(x, y) f(y) dS(y), $$
where $dS$ is the standard surface measure.  If $\{Y^k_m\}$ is an
orthonormal basis of $\hcal_k$ then
$$\Pi_k(x, y) = \sum_{m = 1}^{d_k} Y^k_m(x) Y^k_m(y). $$
Thus for each $y$, $\Pi_k(x, y) \in \hcal_k.$ We can $L^2$
normalize this function by dividing by the square root of
$$||\Pi_k(\cdot, y)||_{L^2}^2 = \int_{S^n} \Pi_k(x, y) \Pi_k(y, x)
dS(x) = \Pi_k(y, y). $$

We note that $\Pi_k(y, y) = C_k$ since it is rotationally
invariant and $O(n + 1)$ acts transitively on $S^n$. Its integral
is $\dim \hcal_k$, hence, $\Pi_k(y, y) = \frac{1}{Vol(S^n)} \dim
\hcal_k.$ Hence the normalized projection kernel with `peak' at
$y_0$ is
$$Y^k_0(x) = \frac{\Pi_k(x, y_0) \sqrt{Vol(S^n)}}{\sqrt{\dim
\hcal_k}}.$$ Here, we put $y_0$ equal to the north pole $(0, 0
\cdots, 1).$  The resulting function is called a zonal spherical
harmonic since it is invariant under the group $O(n + 1)$ of
rotations fixing $y_0$.

One can rotate $Y^k_0(x) $ to $Y^k_0(g \cdot x)$ with $g \in
O(n+1)$ to place the `pole' or `peak point' at any point in $S^2$.

\subsection{\label{SR} Surface of revolution} By a surface of revolution is
meant a surface, necessarily  $M = S^2$ or $M = \R^2/\Z^2$, whose
metric $g$  is  invariant under an $S^1$ action by isometries. In
this case, one can separate variables to analyze eigenfunctions,
i.e. the joint eigenfunctions of $\Delta_g$ and
$\frac{\partial}{\partial \theta}$ have the form  $e^{in \theta}
\phi_{n, j}(r)$. But this is not necessarily the best way to
analyze eigenfunctions. In \S \ref{QCISIMPLE}, we will discuss how
to express $\Delta$ in terms of action operators and how to obtain
results on eigenfunctions that would be awkward if one used
separation of variables. In the case of simple surfaces of
revolution, the existence of a `global Birkhoff normal form' for
$\Delta_g$ gives much more control than separating variables.

\subsection{$\H^{n}$}

Much of the discussion of \S \ref{EUCLIDEAN} has an analogue on
hyperbolic space. For the sake of simplicity we assume $n = 2$, so
that the group of isometries is $SL(2, \R)$.  Then the analogue of
Euclidean plane waves are the horocyclic (or hyperbolic) plane
waves $e^{(i \lambda + 1) \langle z, b \rangle }$ where $\langle
z, b \rangle$ is the function on $\H^2 \times B$ (with $B =
S^{1}$ the ideal boundary of $\H^2$) equal to the signed
distance from $0$ to the horocycle passing through $z $ and $b$.
 Equivalently,
$$e^{\langle z, b \rangle} = \frac{1 - |z|^2}{|z - b|^2} = P_\D(z, b),$$
 where  $P_\D(z, b)$ is the Poisson kernel of the unit disc. (We
 caution again that $e^{\langle z, b \rangle}$ is written $e^{2\langle z, b
 \rangle}$ in \cite{H, Z2}).
Helgason defines a non-Euclidean Fourier transform by means of
these plane waves, and there exist analogues of the objects and
results of \S \ref{EUCLIDEAN}. Further one can define a covariant
calculus of pseudo-differential operators by using the
non-Euclidean Fourier transform by replacing (\ref{OPa}) by
\begin{equation} \label{HYPOPa} Op(a)e^{(i \lambda + 1) \langle z, b \rangle
}= a(z, b, \lambda) e^{(i \lambda + 1) \langle z, b \rangle }.
\end{equation}

The most interesting aspect  of $\H^2$ lies in its quotients by
discrete groups $\Gamma \subset SL(2, \R)$. Eigenfunctions on the
quotient $\H^2 \backslash \Gamma$ are the same as automorphic
eigenfunctions on $\H^2$ satisfying $\phi(\gamma z) = \phi(z)$.
Helgason has introduced a generalized Poisson formula for
eigenfunctions
 of exponential growth in the sense that  there
exists $C>0$ such that $|\phi(z)|\leq C e^{Cd_\D(0, z)}$ for all
$z$. We assume  $\Lap \phi= \lambda^2 \phi$ and following a
traditional notation put $\lambda^2=\frac14+r^2$.  Then in
(\cite{H}, Theorems 4.3 and 4.29, it is proved that there exists a
distribution $ T_{ir, \phi_{ir}} \in {\mathcal D}'(B)$ such that
\begin{equation} \label{HELGAFOR} \phi_{ir}(z) = \int_B e^{(\frac12 + ir) \langle z,
b \rangle } T_{ir, \phi_{ir}} (db), \end{equation}  for all $z\in\D$. The
distribution is unique if $\frac{1}{2} + ir \not= 0, -1,
-2,\cdots$.

 The distribution $T_{ir,
\phi_{ir}}$ is called the  boundary value of $\phi_{ir}$ and may
be obtained from $\phi_{ir}$ in several explicit ways. One is to
expand the eigenfunction into the ``Fourier series",
\begin{equation} \label{GENSPH}  \phi_{ir}(z)  = \sum_{ n \in \Z} a_{n}
\Phi_{r, n}(z), \end{equation} in the disc model  in terms of the
generalized spherical functions $\Phi_{r, n}$ defined by
(\cite{H}, Theorem 4.16)
\begin{equation}  e^{(\frac12+i r) \langle z, b \rangle } = \sum_{ n \in \Z}
\Phi_{r, n}(z) b^n,\;\;\; b \in B.
\end{equation} Then (cf. \cite{H}, p. 113)
\begin{equation} \label{HELBV}  T_{ir, \phi_{ir}}(db) =  \sum_{n \in \Z} a_{n} b^n |db|.
\end{equation}
A second way is that, at least when $\Re (ir) > 0$, the boundary
value is given by the limit (\cite{H}, Theorem 4.27)
$$\lim_{d(0, z) \to \infty} e^{(\frac{1}{2} + ir) d(0, z)} \phi_{ir}(z) =
c(ir) T_{ir, \phi_{ir}}, $$ where $c$ is the Harish-Chandra
$c$-function and $d(0, z)$ is the hyperbolic distance.

In particular, eigenfunctions on the quotient $\H^2 \backslash
\Gamma$ are generalized  Poisson transforms of boundary values. We
fix an orthonormal basis $\{\phi_{ir_j}\}$ of eigenfunctions on
$\H^2 \backslash \Gamma$ and denote their boundary values by
$T_{ir_j}$. As observed in \cite{Z2}, when $\phi_{ir_j}$ is a
$\Gamma$-invariant eigenfunction, the boundary values
$T_{ir_j}(db)$ have the following invariance property:
\begin{equation}\label{CONFORMAL} \begin{array}{ll} \phi_{ir_j}(\gamma z) = \phi_{ir_j} (z) & \implies
e^{(\frac12+ir_j)\langle \gamma z, \gamma b \rangle} T_{ir_j}(d
\gamma b) =
e^{(\frac12+ir_j)\langle z, b \rangle} T_{ir_j} (d b)\\ &  \\
& \implies T_{ir_j}( d\gamma b) = e^{- (\frac12+ir_j) \langle
\gamma \cdot 0, \gamma \cdot b \rangle} T_{ir_j} (d b) \end{array}
\end{equation}
This follows from the  uniqueness of the Helgason representation
 and by the identity $\langle \gamma z, \gamma b \rangle = \langle z, b \rangle + \langle \gamma 0, \gamma b \rangle$.
 Conversely, any distribution with such a  $\Gamma$-invariance
 property defines an eigenfunction on $\H^2 \backslash
\Gamma$. It follows that the study of eigenfunctions in this
setting is equivalent to the study of such boundary values. It is
proved in \cite{GO} that the boundary values are derivatives of
$C^{\frac{1}{2}}$ functions in the co-compact case. Similar
results are proved in \cite{MS} with  graphs of the boundary
values. The boundary values will be discussed again in \S \ref{PS}
in relation to quantum chaos and in \S \ref{ANALYTIC} in relation
to analytic continuation.

\subsection{\label{DISC} The Euclidean unit disc $D$} Although our emphasis is on
manifolds without boundary, we review the standard orthonormal
basis of eigenfunctions for the disc.

The standard   orthonormal basis  of real valued Neumann
eigefunctions is given in polar coordinates by $\phi_{m, n}(r,
\theta) = C_{m, n} \sin m \theta J_m(j'_{m, n} r), $ (resp. $
C_{m, n} \cos m \theta J_m(j'_{m, n} r)$) where $j'_{m,n }$ is the
$n$th critical point of the Bessel function $J_m$ and where
$C_{m,n}$ is the normalizing constant. The $\Delta$-eigenvalue is
$\lambda_{m, n}^2 = (j'_{m, n})^2$. The parameter $m$ is referred
to as the angular momentum. Dirichlet eigenfunctions have a
similar form with $j'_{m,n}$ replaced by the $n$th zero $j_{m, n}$
of $J_m$. Nodal loops  correspond to zeros of the radial factor
while open nodal lines correspond to zeros of the angular factor.

If we fix $m$ and let $\lambda_{m, n} \to \infty$ we obtain a
sequence of eigenfunctions of bounded angular momentum but high
energy. At the opposite extreme are the whispering gallery modes
which concentrate along the boundary. These are eigenfunctions of
maximal angular momentum (with given energy), and $\lambda_m \sim
m$. As discussed in \cite{BB}, they are asymptotically given by
the real and imaginary parts of  $e^{i \lambda_m s}
Ai_p(\rho^{-1/3} \lambda_m^{2/3} y)$. Here, $Ai_p(y):= Ai(- t_p +
y)$ where  $Ai$ is  the Airy function and $\{- t_p\}$ are its
negative zeros. Also, $s$ is arc-length along $\partial D$, $\rho$
is a normalizing constant and   $y = 1 - r$.

\subsection{\label{ELLIPSE} An ellipse}

Ellipses have several special sequences of eigenfunctions. One is
a sequence of eigenfunctions concentrating in a Gaussian fashion
along the minor axis. Such eigenfunctions are known as Gaussian
beams, or as bouncing ball modes, and quite some effort has gone
into their generalizations to more general domains and to
manifolds without boundary.  Hence, we briefly review the
existence of exact eigenfunction of this kind. We will discuss the
construction of approximate eigenfunctions of such Gaussian beams
in  \S \ref{QMGEO}.

One can separate variables in special coordinates on the ellipse
and obtain exact, if rather esoteric, formulae for eigenfunctions
of the Dirichlet or Neumann problem. We express an ellipse in the
form $x^2 + \frac{y^2}{1 - a^2} = 1, \;\;\; 0 \leq a < 1, $ with
foci  at $(x, y) = (\pm a, 0)$. We define elliptical coordinates
$(\phi,\rho)$ by $(x, y) = (a \cos \phi \cosh \rho, a \sin \phi
\sinh \rho). $ Here, $0 \leq \rho \leq \rho_{\max} = \cosh^{-1}
a^{-1}, \;\; 0 \leq \phi \leq 2 \pi. $ The lines $\rho = const$
are confocal ellipses and the lines $\phi = const$ are confocal
hyperbolae. The foci occur at $\phi = 0, \pi$ while the origin
occurs at $\rho = 0, \phi = \frac{\pi}{2}. $ The eigenvalue
problem  separates into a pair of Mathieu equations,
\begin{equation} \label{mathieu}
\left\{ \begin{array}{l} \partial_{\phi}^2 G_{m,n} - c^2 \cos^2
\phi G_{m,n}= -
\lambda_{m,n}^{2} G_{m,n} \\ \\
\partial_{\rho}^2 F_{m,n} - c^2 \cosh^2 \rho F_{m,n} =
\lambda_{m,n}^{2} F_{m,n} \end{array} \right.
\end{equation}
where $c$ is a certain parameter. The eigenfunctions have the form
$\Psi_{m, n}(\phi, \rho) = C_{m,n} F_{m,n}(\rho) \cdot
G_{m,n}(\phi)$ where, $F_{m,n}(\rho) = \,Ce_m(\rho, \frac{k_n
c}{2})$ and $ G_{m,n}(\phi) = ce_m(\phi, \frac{k_n c}{2})$ (and
their sin analogues). Here,   $ce_m, Ce_m$ are special Mathieu
functions (cf. \cite{C} (3.10)-(3.2)).  The Neumann or Dirichlet
boundary conditions determine the eigenvalue parameters $k_n c$.

There exists a  special sequence of eigenfunctions which are like  Gaussian
beams along the minor axis. For this special sequence,  $G =
G_{m,n}$ are asymptotic to ground state Hermite functions. More
precisely,
\begin{equation} \label{asymptotics1}
G_{m,n}(\phi;\lambda_{m,n}) = c_{m,n}(\lambda_{m,n})e^{-
\lambda_{m,n} \cos^{2} \phi }  ( 1 + {\mathcal
O}(\lambda_{m,n}^{-1})),
\end{equation}
while
\begin{equation} \label{asymptotics2}
F_{m,n}(\rho;\lambda_{m,n}) = e^{i \lambda_{m,n} \int_{0}^{\rho}
\sqrt{ \cosh^{2} x + 1} dx} a_{+}(\rho;\lambda_{m,n}) + e^{-i
\lambda_{m,n} \int_{0}^{\rho} \sqrt{ \cosh^{2} x + 1} dx}
a_{-}(\rho;\lambda_{m,n})
\end{equation}
where $a_{\pm}(\rho;\lambda_{m,n}) \sim \sum_{j=0}^{\infty}
a_{\pm,j}(\rho) \lambda_{m,n}^{-j}$ are determined by the
Dirichlet or Neumann boundary conditions. Moreover, from the
$L^{2}$-normalization condition  $\int_{I}
|\Psi_{m,n}(\rho,\frac{\pi}{2})|^{2} d\rho = 1$  it follows that
$c_{m,n}(\lambda_{m,n}) \sim \lambda_{m,n}^{1/4}.$

 From (\ref{asymptotics1}) and (\ref{asymptotics2}), the Gaussian beams  are roughly  asymptotic to superpositions of $e^{\pm i
k s} e^{- \lambda_{m,n} y^2}$ (cf. \cite{BB}),  where  $s$ denotes
arc-length along the bouncing ball orbit and $y$ denotes the Fermi
normal coordinate. It follows that outside a tube of any given
radius $\epsilon > 0$, the Gaussian beam decays on the order
$O(e^{- \lambda_{m,n} \epsilon^2})$.

Before leaving this example, we should point out two further
interesting sequences of eigenfunctions. As in the case of the
disc, there exists a `whispering gallery' sequence which
concentrates on the boundary of the domain. As with Gaussian
beams, there exists a generalization of this sequence to any
convex smooth domain in the form of `quasi-modes'. A second
interesting sequence becomes highly enhanced at the two foci.

\section{Local behavior of eigenfunctions}

By the local behavior of eigenfunctions, we often mean methods and
results which pertain to all solutions of $\Delta u =  \lambda^2
u$ on a ball $B(x_0, R)$, not just to solutions which extend to
global eigenfunctions on $(M, g)$. More generally, we consider results
which are obtained from a small ball analysis and which use   covering arguments  to draw
global conclusions from local arguments. But we emphasize that
 some aspects of eigenfunctions discussed in  this section are truly global, for instance the estimate
of the frequency function in terms of the eigenvalue. Thus, we do not aim  to  segregate local from global results in this
section, but do try to indicate when a result assumes that the eigenfunction is global.

Local properties include:
\begin{itemize}

\item Frequency function estimates; we also include Carleman estimates,
although they often require integration of global eigenfunctions over all of $M$;

\item Vanishing order estimates at points;

\item Doubling estimates;

\item Bernstein estimates;

\item Lower bounds on masses in small balls.

\item Local structure of nodal sets.

\end{itemize}

For background and references on general elliptic estimates, see
\cite{GiTr,HL}.

\subsection{\label{HARMCONE} Eigenfuntions and  harmonic functions on a cone}

One can easily convert eigenfunctions $\phi_{\lambda}$  to harmonic functions in a
space of one higher dimension.   Two closely related options are:

\begin{itemize}

\item Form the cone $\R_+ \times M $ and consider the metric
$\hat{g} = dr^2 + r^2 g$. Let $\hat{\phi_{\lambda}} = r^{\alpha} u$ where
$$\alpha = \frac{1}{2} \left( \sqrt{4 \lambda + (n-1)^2} - (n-1)
\right). $$ Let $\hat{\Delta}$ be the Laplacian on the cone. Then,
$$\hat{\Delta} \hat{\phi_{\lambda}} = 0. $$

\item Form $\R_+ \times M$ and consider $e^{\lambda t} \phi_{\lambda} $.
Then $(\partial_t^2 + \Delta) (e^{- \lambda  t} \phi_{\lambda} ) = 0. $

\end{itemize}

This approach was  first  taken in \cite{GaL,Lin} and used further
in \cite{Ku,CM}, among other places. It is a useful approach when
the frequency function is employed, since the latter has its best
properties for harmonic functions. In \cite{CM} it allowed for the
use of Harnack inequalities on balls of $\R_+ \times M$ where
$\phi_{\lambda}(x) $ is positive.

\subsection{\label{FREQ} Frequency function}

The frequency function $N(a, r)$ of a function $u$  is a local
measure of its `degree'   as a polynomial like function in $B_r(a)$. More precisely, it controls the
local growth rate of $u$.
In the case of harmonic functions, it is given by
\begin{equation} \label{FF} N(a, r) = \frac{r D(a,r)}{H(a,r)},
\end{equation} where
$$\;\; H(a,r) = \int_{\partial
B_r(a)} u^2 d \sigma, \;\; D(a,r) = \int_{B_r(a)} |\nabla u|^2 dx.
$$
A well-written detailed treatment of the frequency function and
its applications can be found in \cite{H,Ku}, following the
original treatments in \cite{GaL,GaL2,Lin}.

To motivate the frequency function, let us calculate it in the
special case of a global harmonic function $u$ on $\R^n.$
 Then $u$  may be  decomposed  into a
sum of  homogeneous components of non-negative integral order:
$$u = \sum_{N = 0}^{\infty} u_N, \;\;\; u_N (r \omega) = a_N  r^N
\phi_N(\omega). $$ Then  $\phi_N$ is a spherical harmonic of
degree $N$ on $S^{n-1}$, and we choose $a_N$ so that its  $L^2$
norm is  equal to one. From the fact that the $\phi_N$ are
orthogonal on $S^{n-1}$, one easily calculates that
\begin{equation} \label{ALM2} N(r) = \frac{\sum_{N = 0}^{\infty} N |a_N|^2 r^{2N}
}{\sum_{N = 0}^{\infty} |a_N|^2 r^{2N} }= \frac{d}{2 \; d \log r}
\log \sum_{N = 0}^{\infty} |a_N|^2 r^{2N},
\end{equation}
i.e. \begin{equation} \label{ALM} \frac{d}{dr} \left( \log
\frac{H(r)}{r^{n-1}} \right) = 2 \frac{D(r)}{H(r)}. \end{equation}
 Here, one also uses that $\int_{B_r} |\nabla u|^2 =
\int_{\partial B_r} u \partial_{\nu} u $, where $\partial_{\nu}$
is the unit normal. From the formula it is immediate that if $u$
is homogeneous of degree $N$, i.e. has only one component $a_N r^N
\phi_N$, then $N(r)$ is the constant function $N(r) \equiv N$
equal to its degree.

We note that  $ N(r)$ is analogous to the average energy of a statistical ensemble with
  partition function $Z(r) =  \sum_{N
= 0}^{\infty} |a_N|^2 r^{N}$. We recall a partition function for the
canonical ensemble at temperature $\beta$ is given by
$Z(\beta) = \int e^{- \beta E} d \omega(E)$ where $\omega(E)$
is the density of states. The average energy is then $\langle E \rangle
= - \frac{d}{d\beta} \log Z(\beta) $.   Monotonicity of the average energy
 follows from the  fact   its
derivative $\langle (E - \langle E \rangle)^2 \rangle
= - \frac{d^2}{d\beta^2} \log Z(\beta)$  is the
variance of the energy.

Frequency functions may also be defined for eigenfunctions. At
least two variations have been studied: (i) where the
eigenfunctions are converted  into harmonic
functions on the cone $\R^+ \times M$ as in \S \ref{HARMCONE};
(ii) where a frequency function adapted to eigenfunctions on $M$
is defined.

We first consider method (i) in the case of an  eigenfunction
$\phi_{\lambda}$ on $S^{n}$. The associated harmonic function on
the cone is precisely the homogeneous harmonic  polynomial on
$\R^{n+1}$ which restricted to $S^n$  gives $\phi_{\lambda}$. By
the previous calculation, $N(0, r) \equiv N$ where $\lambda = N(N
+ n - 1)$. We note that on the cone $\R^{n + 1}$, the ball of
radius $r$ has the form $[0, r] \times S^n$, i.e the frequency
function is global on $S^{n}$. On a general manifold, the
analogous global calculation is cleanest if we define $N(r)$  with
$B_r(0)$ everywhere replaced by $[0, r] \times M$. If we
`harmonize' an eigenfunction $\phi_{\lambda} \to r^{\alpha}
\phi_{\lambda}$ as in \S \ref{HARMCONE},  we obtain $N(r) \equiv \alpha$.

The second method is to define a frequency function on balls of
$M$ itself.
  The generalization to eigenfunctions
(\ref{EP}) is as follows (see
 \cite{GaL,GaL2,Ku}). Fix a point $a \in M$ and choose geodesic normal
coordinates centered at $a$ so that $a = 0$. Put
$$\mu(x) = \frac{g_{ij} x_i x_j}{|x|^2}, $$
 and put
\begin{equation}\label{HAR}  D(a, r) :=   \int_{B_r} \left( g^{ij} \frac{\partial
\phi_{\lambda}}{\partial x_i} \frac{\partial \phi_{\lambda}}{\partial x_j} + \lambda^2 \phi_{\lambda}^2
\right) dV, \;\;\;\;\mbox{resp.}\;\; H(a, r) : = \int_{\partial
B_r} \mu \phi_{\lambda}^2,
\end{equation}
By the divergence theorem, one has
\begin{equation} D(a, r) = \int_{\partial B_r} \phi_{\lambda} \frac{\partial
\phi_{\lambda}}{\partial \nu}. \end{equation}  Define the frequency function of $\phi_{\lambda}$  by
\begin{equation} \label{GENFREQ} N(a, r): = \frac{r D (r)}{H(r)}.
\end{equation} As in the case of harmonic
functions, the  main properties of the frequency function of an
eigenfunction are a certain monotonicity in $r$ in small balls of radius $O(\frac{1}{\lambda})$  (see Theorem
\ref{MONO})  and the fact that $N(a, r)$ is commensurate with
$N(b, r)$ when $a$ and $b$ are close.

Simple examples show that, despite its name,  the frequency function measures local growth but not frequency of
oscillations of eigenfunctions, and therefore is not necessarily comparable to $\lambda$.
 For instance, the frequency function of $\sin n x$ in a ball or radius $\leq \frac{C}{|n|}$ is
bounded. An example considered in \cite{DF} are the global eigenfunctions $e^{s x} \sin t y$ on $\R^2$ of
$\Delta$-eigenvalue  $s^2 - t^2$ and  frequency function  of size $s$. One could let $s, t \to \infty$
with $s^2 - t^2$ bounded and obtain a high frequency function but a low eigenvalue.  However, as discussed in \cite{L} (p. 291),
if one forms the harmonic function from a global eigenfunction as in \S \ref{HARMCONE}, then one has $N(0, 2) \leq C \lambda$
where $C$ depends only on the metric. This is a global estimate since a ball centered at $0$ in the cone will cover all of $M$.
An application is given  in Theorem \ref{DOUBLE}.

Let us work out the frequency function for a global eigenfunction
on $\R^n$, parallel to the discussion above for harmonic
functions. We use the notation of \S \ref{EUCLIDEAN}. Since the
tempered solution is unique up to constant multiples, we have
$\ecal_{\lambda}(N) \simeq \hcal_N(S^{n-1}), $ as with harmonic
functions. In the notation \S \ref{EUCLIDEAN}  of may therefore
write
$$\phi_{\lambda}(r \omega) = \sum_{N = 0}^{\infty} a_N J_{N, n}(\lambda r)
\phi_N(\omega), $$ where as before $||\phi_N||_{L^2((S^{n-1})} =
1$. Then,
$$r D(0, r) = \lambda r^n \sum_{N = 0}^{\infty} |a_N |^2  J_{N, n}' J_{N,n} (\lambda r)
, \;\; H(0, r) = r^{n -1}  \sum_{N = 0}^{\infty} |a_N |^2 ( J_{N,
n})^2(\lambda r),$$ and $$\begin{array}{lll} N(r, 0) = \lambda \frac{\sum_{N =
0}^{\infty} |a_N |^2 r J_{N, n}' J_{N, n} (\lambda r)}{\sum_{N =
0}^{\infty} |a_N |^2 ( J_{N, n})^2(\lambda r)} & = & \frac{ r}{2} \frac{d}{dr} \log  \sum_{N =
0}^{\infty} |a_N |^2 ( J_{N, n})^2(\lambda r). \end{array}$$
Thus, the frequency function is again analogous to an average energy, but
with respect to a partition function $ Z(r) = \sum_{N =
0}^{\infty} |a_N |^2 ( J_{N, n})^2(\lambda r)$ which no longer has the canonical form (i.e. it
involves Bessel functions rather than powers). Unlike $r^n$,  Bessel functions oscillate on the
scale $\frac{1}{\lambda}$  and
hence the frequency function is generally not
monotone non-decreasing in  $r$;  but it is monotone for $r$ in a small
interval $[0, r_0(\lambda)]$  with $r_0(\lambda) = \frac{a}{\lambda}$ for some $a > 0$. The following
weaker monotonicity result is however good enough for applications
to vanishing order and doubling estimates.

\begin{theo} \label{MONO}   Theorem 2.3 of \cite{GaL} (see also \cite{GaL2,Lin,H} and
 \cite{Ku} (Th. 2.3, 2.4);.  There exists $C > 0$ such that
$e^{C r} (N(r) + \lambda^2 + 1)$ is a non-decreasing function of
$r$ in some interval $[0, r_0(\lambda)]$.
\end{theo}

Another basic fact is that the frequency of $\phi_{\lambda}$ in $B_r(a)$ is
comparable to its frequency in $B_R(b)$ if $a, b$ are close and
$r, R$ are close. More precisely, there exists $N_0(R) << 1$ such
that if $N(0, 1) \leq N_0(R)$, then $\phi_{\lambda}$ does not vanish in $B_R$,
while if $N(0, 1) \geq N_0(R)$, then
\begin{equation} \label{NCOMP} N(p, \frac{1}{2}(1 - R)) \leq C \; N(0, 1), \;\;
\forall p \in B_R.
\end{equation}

\subsection{Doubling estimate, vanishing order estimate and lower bound estimate}

Doubling  estimates and vanishing order estimates give
quantitative versions of unique continuation theorems.  Given a
partial differential operator $L$ and a solution $u$ of $L u = 0$,
the unique continuation problems asks whether  $u$ is uniquely
determined in a ball  $B$ by its values in a smaller set $E
\subset B$? That is, if  $u \equiv 0$ in $E$, must $u \equiv
0 $ in $B$? In the limiting case where  $K = \{x_0\}$ is  a point,
if $u$ vanishes to high enough order $k$ at $x_0$ must $u \equiv
0$? A related question is the `proximity to zero' of $u$ in the sense of Nevanlinna theory,
i.e. how small can $\sup_{x \in B} |u(x)|$ be in a ball $B$ (see \S \ref{SCL})?
 These questions are answered by frequency function estimates
and by Carleman estimates.

An early doubling inequalities was proved by Bernstein
 for polynomials of one variable:
\begin{equation} \label{REALBERN} \max_{- R \leq x \leq R} |p_N(x)| \leq R^N \; \max_{-1 \leq x \leq 1} |p_N(x)| \end{equation}
for any polynomial of degree $N$.  A generalization known as Remez's inequality allows one to compare
the growth on an interval  to growth on any measurable set in the interior.
Let $\sigma \in \R_+$ and  denote by  $\pcal_N(\sigma)$  to be those polynomials $p_N$ of degree $N$
such that $||p_N||_{E} \leq 1$ for some subset $E \subset [-1, 1 + \sigma]$. Then
$$\sup_{p_N \in \pcal_N(\sigma)} ||p||_{[-1, 1 + \sigma]} \leq ||T_N||_{[-1, 1 + \sigma]}. $$
Here, $||f||_E = \sup_E |f(x)|$ and $T_N$ is Tchebychev's polynomial.

In the case of harmonic functions on $\R^n$ one may set
$$\bar{H}(a, r) = \frac{H(a,r)}{r^{n-1}}. $$
The monotonicity of $N(a, r)$ immediately implies the doubling
formula
\begin{equation} \bar{H}(a, 2 R) = \bar{H}(a, R) \exp \left(
\int_R^{2 R} \frac{2 N(a, r)}{r} d r \right) \leq 4^{N(a, 1 - a)}
\bar{H}(a, R). \end{equation} Integrating in $R$ and using
(\ref{NCOMP}) gives
\begin{equation} \frac{1}{Vol(B_{\frac{3}{4} }(a))} \int_{B_{\frac{3}{4}} (a)} u^2 dx
\leq C(n) 4^{2 N(0, 1)} \frac{1}{Vol B_{\frac{1}{2}} (a)}
\int_{B_{\frac{1}{2}} (a)} u^2.
\end{equation}

As mentioned in \S \ref{FREQ}, the frequency function of a global eigenfunction
may be estimated in terms of the eigenvalue.

\begin{theo} \label{DOUBLE}\cite{DF,Lin} and \cite{H} (Lemma 6.1.1) Let $\phi_{\lambda}$ be a global
eigenfunction of  a $C^{\infty}$ $(M, g)$
there exists $C = C(M, g)$ and $r_0$  such that for $0 < r < r_0$,
$$\frac{1}{Vol(B_{2r}(a))} \int_{B_{2r}(a)} |\phi_{\lambda}|^2
dV_g \leq e^{C \lambda} \frac{1}{Vol(B_{r}(a))} \int_{B_{r}(a)}
|\phi_{\lambda}|^2 dV_g. $$

Further,
\begin{equation} \max_{B(p, r)} |\phi_{\lambda}(x)| \leq
\left(\frac{r}{r'} \right)^{C \lambda} \max_{x \in B(p, r')}
|\phi_{\lambda}(x)|, \;\; (0 < r' < r). \end{equation}

\end{theo}

The doubling estimates imply  the vanishing order estimates. Let
$a \in M$ and suppose that $u(a) = 0$. By the vanishing order
$\nu(u, a)$ of $u$ at $a$ is meant the largest positive integer
such that $D^{\alpha} u(a) = 0$ for all $|\alpha| \leq \nu$. The vanishing order
of an eigenfunction at each zero is
of course finite since eigenfunctions cannot vanish to infinite
order without being identically zero. The following estimate is a quantitative
version of this fact.

\begin{theo} \label{VO} (see \cite{DF}; \cite{Lin} Proposition 1.2 and Corollary 1.4; and \cite{H} Theorem 2.1.8.)
 Suppose that $M$ is compact and of dimension $n$. Then there exist constants $C(n), C_2(n)$ depending only on the dimension such that
the  the vanishing order $\nu(u, a)$ of $u$ at $a \in M$ satisfies
$\nu(u, a) \leq C(n) \; N(0, 1) + C_2(n)$ for all $a \in
B_{1/4}(0)$. In the case of a global  eigenfunction, $\nu(\phi_{\lambda},
a) \leq C(M, g) \lambda.$
\end{theo}

In the case of harmonic functions, one may write $u = P_{\nu} +
\psi_{\nu}$ where $P_{\nu}$ is a homogeneous harmonic polynomial
of degree $\nu$ and where $\psi_{\nu}$ vanishes to order $\nu + 1$ at
$a$. We note that highest weight spherical harmonics $C_n (x_1 + i
x_2)^N$ on $S^2$ are examples which vanish at the maximal order of
vanishing at the poles $x_1 = x_2 = 0, x_3 = \pm 1$.

\subsection{\label{SCL} Semi-classical Lacunas}

 For the purposes of this article, we
define a `semi-classical lacuna' to be an open subset $U \subset M$ for which there exist
 a sequence
$\{\phi_{\lambda_{j_k}}\}$ of $L^2$-normalized eigenfunctions $(M,
g)$ and constants  $C,
a
> 0$ so that
$$\int_U |\phi_{\lambda_{j_k}}|^2 dV_g \leq C e^{- a
\lambda_{j_k}}. $$ The sup-norm could be used in place of the $L^2$ norm. Another descriptive term is
`exponential trough'.

 Lacunae often arise as `classically forbidden regions' of quantum
 mechanical systems, and in fact we do not know of any other examples.  Consider for instance a semi-classical Schr\"odinger operators $h^2 \Delta +
V$ for which the  classical energy level $\xi^2 + V(x) = E$
projects over a compact subset $K_E = \{x: V(x) \leq E\} \subset
M$. Then eigenfunctions of $h^2 \Delta + V$ with eigenvalues
$E_j(h) \in [E - O(h), E + O(h)]$ decay exponentially outside
$K_E$ at a rate given by $O(e^{- \frac{1}{h} d(x, K_E)})$ where
$d(x, K_E)$ is the distance from $x$ to $K_E$ in the Agmon metric,
i.e. the metric $(E - V(x)) dx^2$.

Classically forbidden regions also occur on compact Riemannian manifolds with integrable
geodesic flow.  For instance,
on the round sphere,  we consider sequences $\{Y^N_m\}$  of joint
eigenfunctions for which $m/N \to E$. As will be discussed below,
this sequence concentrates on the Lagrangian torus $\Lambda_E
\subset S^*S^2$ for which $\langle \frac{\xi}{|\xi|},
\frac{\partial}{\partial \theta} \rangle = E. $  This torus
projects to an $S^1$ invariant  annulus $K_E$ on $S^2$. This annulus is
the `classically allowed region' and its complement is the
classically forbidden region. It is not hard to show that
$|Y^N_m(x)| \leq e^{- N d_A(x, K_E)}$ in this example, where $d_A$
is a suitable Agmon distance. We will discuss this and related
examples in more detail in \S \ref{QCI}.

To our knowledge, there are no converse results characterizing
lacunae in terms of classically allowed or forbidden regions. For
instance, can lacunae occur if the geodesic flow of $(M, g)$ is
ergodic? Theorem \ref{QE} shows that lacunae cannot occur in the full density sequence of
`ergodic eigenfunctions', but might occur in a possible sparse subsequence.

A more refined notion is that of  microlocal lacunae, i.e. open
subsets $U \subset S^*_g M$ such that $\langle \chi(x, D)
\phi_{\lambda_{j_k}}, \phi_{\lambda_{j_k}} \rangle \leq C_R
\lambda_{j_k}^{- R}$ for all $R > 0$ and all $\chi(x, D) \in
\Psi^0(M)$ which are microsupported in $U$. In the real analytic
case, one can insist on exponential decay, but in the general
$C^{\infty}$ case one could always add a smoothing operator to
$\chi(x, D)$ to ruin exponential decay.

One could also define lacunae depending on $\lambda$. Namely, one could term a sublevel set of
the form $\{x: |\phi_{\lambda}(x)| \leq C e^{- A \lambda}\}$ a lacunae if its
 volume  $Vol\{x: |\phi_{\lambda}(x)| \leq C e^{- A \lambda}\}$  is bounded below by some constant $\epsilon > 0$
  for some $C, A$.  Eigenfunctions with a semi-classical lacuna clearly have this property, but
   there could exist other examples. It would be interesting to know if
 ergodic eigenfunctions  can have lacunae of this form, or even if their  sublevel sets can be  larger than a $\frac{1}{\lambda}$ tube
around the nodal set.

The doubling estimates and Carleman estimates  give quantitative lower bounds on the exponential
decay rate of eigenfunctions in balls as the eigenvalue tends to infinity and show that the rate
is never faster than in the definition above of semi-classical lacunae:

\begin{cor}\cite{DF} \label{DFED}   Suppose that $M$ is compact and that $\phi_{\lambda}$ is a
global eigenfunction,  $\Delta \phi_{\lambda} = \lambda^2 \phi_{\lambda}$.
 Then $$ \max_{x \in B(p, h)} |\phi_{\lambda}(x)| \geq C'
e^{- C \lambda}.$$ \end{cor}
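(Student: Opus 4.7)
The plan is to combine the $L^2$-normalization of $\phi_\lambda$ with the doubling estimate of Theorem \ref{DOUBLE}, propagating a uniform pointwise lower bound by a Harnack-type chain of overlapping balls. No analytic input beyond Theorem \ref{DOUBLE} is needed; the corollary is essentially a quantitative unique-continuation statement packaged via iteration.

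First I would extract a global pointwise lower bound from the normalization: since $\int_M |\phi_\lambda|^2 \, dV_g = 1$ and $M$ is compact, $\|\phi_\lambda\|_{L^\infty(M)} \geq c_0 := 1/\sqrt{\Vol(M, g)}$, so there exists $x_0 \in M$ with $\max_{B(x_0, h)} |\phi_\lambda| \geq c_0$ for every $h > 0$, with $c_0$ independent of $\lambda$.

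Second, I would read the second inequality of Theorem \ref{DOUBLE} in reverse as a propagation of smallness: whenever $0 < r' < r < r_0$,
\begin{equation*}
\max_{B(q, r')} |\phi_\lambda| \;\geq\; \left(\frac{r}{r'}\right)^{-C\lambda} \max_{B(q, r)} |\phi_\lambda|.
\end{equation*}
Fixing a pair $(r', r)$ with $r/r' = 2$ and $r < r_0$, each application costs a factor $2^{-C\lambda}$. By compactness of $M$, I would join $x_0$ to an arbitrary $p \in M$ by a chain of centers $x_0, x_1, \ldots, x_N = p$ with $d(x_i, x_{i+1}) < r'$, and with $N = N(M, g, r')$ bounded independently of $\lambda$. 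Since then $B(x_{i+1}, r') \subset B(x_i, r)$, iterating the inequality $N$ times gives $\max_{B(p, r')} |\phi_\lambda| \geq c_0 \, 2^{-NC\lambda}$. Taking $h \geq r'$ (or handling $h < r'$ by one additional doubling step at smaller scales) yields the claimed bound $\max_{B(p, h)} |\phi_\lambda| \geq C' e^{-C\lambda}$ with $C, C'$ depending only on $(M, g)$ and $h$.

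The main obstacle is the bookkeeping of the chain: one must ensure every ball in the chain has radius below the threshold $r_0$ of Theorem \ref{DOUBLE}, that each doubled ball $B(x_i, r)$ actually contains the next center $x_{i+1}$, and that the number of chain steps $N$ can be bounded uniformly in $p$ (and in particular is independent of $\lambda$). These are standard Harnack-chain considerations that depend only on the compact geometry of $(M, g)$; once they are in place the exponent $C$ in the final estimate comes out as the product of the doubling exponent from Theorem \ref{DOUBLE} with the (bounded) length of the chain.
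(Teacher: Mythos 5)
Your argument is correct for a fixed radius $h$, and it is essentially the derivation the paper intends: the corollary is presented as a consequence of Theorem \ref{DOUBLE}, and the standard route (as in \cite{DF} and \cite{H}) is exactly the normalization-plus-chain-of-balls iteration you describe. Two small points. First, for forward propagation of the lower bound you want the containment $B(x_i, r') \subset B(x_{i+1}, r)$ and you should apply the doubling inequality at the center $x_{i+1}$, giving $\max_{B(x_{i+1}, r')} |\phi_{\lambda}| \geq 2^{-C\lambda} \max_{B(x_{i+1}, r)} |\phi_{\lambda}| \geq 2^{-C\lambda} \max_{B(x_i, r')} |\phi_{\lambda}|$; the containment you wrote, $B(x_{i+1}, r') \subset B(x_i, r)$, propagates the estimate from $p$ back to $x_0$ rather than the other way. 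Since $d(x_i, x_{i+1}) < r'$ is symmetric both containments hold, so this is only a relabeling of the chain. Second, note that elsewhere in this paper $h$ usually denotes the wavelength scale $\lambda^{-1}$; if $B(p,h)$ is meant to shrink at that rate, your ``one additional doubling step'' becomes about $\log_2(r'\lambda)$ halvings and the iteration only yields $e^{-C\lambda \log \lambda}$. The sharp bound $e^{-C\lambda}$ on wavelength-scale balls is the harder content of the Donnelly--Fefferman Carleman estimates alluded to in the surrounding text; for fixed $h$ (with constants depending on $h$) your proof is complete.
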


As an illustration, Gaussian beams such as highest weight
spherical harmonics decay at a rate $e^{- C \lambda d(x, \gamma)}$
away from a stable elliptic orbit $\gamma$. Hence if the closure
of an open set is disjoint from $\gamma$, one has a uniform
exponential decay rate which saturate the lower bounds. To our knowledge, it is unknown whether
semi-classical lacunae can occur in more general situations than `classically forbidden regions'.
It even seems to be unknown whether semi-classical lacunae can occur on  $(M, g)$ with classically chaotic
(i.e. highly ergodic) geodesic flows.

\subsection{Three ball inequalities and propagation of smallness}

These inequalities generalize  Hadamard's three circle theorem and  Nevanlinna's two constants theorem.
We only briefly mention the results   and refer to \cite{M,Ku} for discussion of the
 work of Nadirashvili, Korevaar-Meyers and others
and for further background.

Let $\Omega$ be a domain in $\bold R^n$ $(n\geq 2)$, let
$\Omega_0\subset\Omega$ be a non-empty subdomain, and let
$E\subset \Omega$ be a non-empty compact set. Define
$\|u\|_A=\sup_{x\in A}|u(x)|$. Then
there exists a constant $\alpha=\alpha(E,\Omega_0,\Omega)\in(0,1]$
such that $\|u\|_E\leq
\|u\|^\alpha_{\Omega_0}\|u\|^{1-\alpha}_\Omega$ for all
complex-valued harmonic functions $u$ on $\Omega$. If
$\|u\|_{\Omega} \leq C$ and if $\|u\|_{\Omega_0} << 1$, the
smallness of $u$ on the subdomain $\Omega_0$ propagates to any
compact subset $E \subset \Omega$.

In particular, one has a three spheres inequality (see (\ref{HAR}) for notation)

\begin{theo} (see \cite{Ku})  Let $0 < r_1 < r_2 < r_3$. Then,
$$H(r_2) \leq C_1 \; \left(\frac{r_1}{r_2} \right)^{C_2 \lambda}
\; H(r_1)^{\frac{\alpha_0}{\alpha_0 + \beta_0}} \;
H(r_3)^{\frac{\beta_0}{\alpha_0 + \beta_0}}, $$ where
$$\alpha_0 = \log \frac{r_3}{r_2}, \;\; \beta_0 = C \log
\frac{r_2}{r_1}. $$
\end{theo}

\subsection{Bernstein inequalities}

There are a variety of types of inequalities known as Bernstein inequalities. The original inequalities
were proved for polynomials of one variable;
  a survey is given in \cite{RY}. Further
results in this direction (for analytic functions) are in \cite{Bru1,Bru2}. We will discuss these further
in \S \ref{ANALYTIC}.

Among the classic Bernstein inequalities are the gradient estimates on polynomials of degree $N$:

\begin{enumerate}

\item $|p_N'(x)| \leq N (1 - x^2)^{-1/2} ||p_N||_{[-1,1]} $ for $x \in [-1,1]$ (the Bernstein-Markov inequality);
it  implies that $\int_{-1}^1 |p_N'(x)| dx \leq \pi   N  ||p_N||_{[-1,1].} $

\item  $||p_N'(x)||_{[-1,1]} \leq N^2  ||p_N||_{[-1,1]} $ for $x \in [-1,1]$  (Markov's inequality);

\item $|p_N'(x)| \leq N (1 - x^2)^{-1/2} \left( ||p_N||_{[-1,1]}^2 - p_N(x)^2 \right) $ for $x \in [-1,1]$.

\end{enumerate}

The following result of Donnelly-Fefferman generalizes Bernstein's gradient inequality to eigenfunctions:

\begin{theo} \cite{DF3} Local eigenfunctions of a Riemannian manifold satisfy:

\begin{enumerate}

\item $L^{2}$ Bernstein estimate:
\begin{equation} \left( \int_{B(p,r)} |\nabla \phi_{\lambda}|^2 dV
\right)^{1/2} \leq \frac{C \lambda}{r} \left( \int_{B(p,r)} |
\phi_{\lambda}|^2 dV \right)^{1/2}. \end{equation}

\item $L^{\infty}$ Bernstein estimate: There exists $K > 0$ so that
\begin{equation}  \max_{x \in B(p, r)} |\nabla \phi_{\lambda}(x)|
\leq \frac{C \lambda^K}{r}  \max_{x \in B(p, r)}
|\phi_{\lambda}(x)|. \end{equation}

\item Dong's improved bound:
$$\max_{B_r(p)}|\nabla \phi_{\lambda}|\leq{C_1\sqrt{\lambda}\over r}\max_{B_r(p)}|\phi_{\lambda}| $$ for $ r\leq
C_2\lambda^{-1/4}.$

\end{enumerate}

\end{theo}

\subsection{Carleman inequalities}

Carleman inequalities are weighted  integral inequalities which
are an alternative to frequency function estimates in giving
quantitative unique continuation results. We only indicate some results
in this section and refer to the original articles \cite{Ar,DF,Ta} and more expository
articles \cite{Ta2,I,JL,EZ} for further discussion. As discussed in
\cite{JL}, the  idea of Carleman estimates is to use weights
$\psi$ which are largest on the set from which one wants
uniqueness (or other features) to propagate. On the other hand,
there is a constraint on $\psi$ in order that the Carleman
estimate be true: it needs to be convex in a suitable sense.

We first follow \cite{Ta2,I}. Let
$$A(x, \partial) = \sum_{j, k} a_{jk}(x) \partial_j \partial_k + \sum_j a_j
\partial_j + a(x). $$
One searches for   weights $\phi = e^{\lambda \psi}$ so that the
weighted $L^2$ Carleman estimate holds: \begin{equation}
\label{CARLE} \int_{\Omega} e^{2 \tau \phi} \left(\tau^3 |u|^2 +
\tau |u|^2 \right) dx \leq C_1 \left( \int_{\Omega} e^{2 \tau
\phi} |A u|^2 + \int_{\partial \Omega} e^{2 \tau \phi} \left(
\tau^3 |u|^2 + \tau |\nabla u|^2 \right) dx\right), \end{equation}
for all $\tau > C_1$ and all $u \in H_{(2)}(\Omega)$.

\begin{theo} \cite{I} If $\psi$ is pseudo-convex with respect to $A$ on
$\overline{\Omega}$, then there exist constants $C_1(\lambda),
C_2$ so that for $\lambda > C_2$ and $\tau > C_1(\lambda)$, the
Carleman estimate (\ref{CARLE}) holds. \end{theo}

Here, a function $\psi$ is called pseudo-convex with respect to
$A$ on $\overline{\Omega}$ if
$$A(x, \xi) = 0; \; \sum_j \frac{\partial A}{\partial \xi_j} (x,
\xi) \partial_j \psi(x) = 0, \;\; \mbox{for}\;\; x \in
\overline{\Omega}, \xi \in \R^n \backslash \{0\}, $$ imply that
$$\sum \partial_j \partial_k \psi \frac{\partial A}{\partial \xi_j}
\frac{\partial A}{\partial \xi_k} + \sum \left( ( \partial_k
\frac{\partial A}{\partial \xi_j}) \frac{\partial A}{\partial
\xi_k} - \partial_j A \frac{\partial^2 A}{\partial \xi_j \partial
\xi_k} \right) \partial_j \psi > 0$$ and if
$$A(x, \nabla \psi(x)) \not= 0, \;\; \forall x \in
\overline{\Omega}.$$

\begin{enumerate}

\item If $A$ is elliptic then any $\psi \in
C^2(\overline{\Omega})$ with $\nabla \psi \not= 0$ in
$\overline{\Omega}$ is pseudo-convex with respect to $A$ on
$\overline{\Omega}$.

\end{enumerate}

We are particularly interested in the  Helmholtz operator $A =
\Delta - \lambda^2$, and wish to  estimate the dependence of the
constants on $\lambda$. In this case one can put $\phi(x) = r(x,
b)^2$.

\begin{theo} \cite{I}, Theorem 3.1: Let $\phi(x) = |x - b|^2$.
Then
$$\int_{\Omega} e^{2 \tau \phi} \left(\tau^3 |u|^2 + \tau |u|^2
\right) dx \leq C \left( \int_{\Omega} e^{2 \tau \phi} |(\Delta -
k^2) u|^2 dx + \int_{\partial \Omega} e^{2 \tau \phi} \left(\tau^3
|u|^2 + \tau |\nabla u|^2 dS \right)  \right)$$ for $\tau > C$ and
all real $k$ and $u \in H_{(2)}(\Omega)$. \end{theo}

There is a microlocal interpretation of the convexity condition.
Let $P(x, D)$ be a pseudo-differential operator, and  let $h \in
[0, h_0)$ be a small parameter, and put
\begin{equation} P_{\phi} = e^{\frac{\phi}{h}} P e^{-
\frac{\phi}{h}}. \end{equation} Then,
$$P_{\phi} = P(x, D + \frac{i}{h} \nabla \phi). $$
We denote the principal symbol of $P_{\phi}$ by $p_{\phi}$. The
Carleman weight condition (in the  H\"ormander formulation) is
\begin{equation} \label{HC} p_{\phi}(x, \xi) = 0 \implies \{\Re
p_{\phi}, \Im p_{\phi} \}(x, \xi) > 0, \end{equation} and in the
boundary case,
\begin{equation} \partial_{\nu} \phi (x) \not= 0. \end{equation}
$$||P_{\phi} f||^2 = ||Op(\Re p_{\phi} )  f||^2 + ||Op(\Im p_{\phi}) f||^2 + (f, [Op(\Re p_{\phi} ), Op(\Im p_{\phi} )] f).$$
 One  has, $$[Op(\Re p_{\phi} ), Op(\Im p_{\phi} ) ] >>   0\;\;\; \mbox{
modulo terms depending on}\;\; Op(\Re p_{\phi} ), Op(\Im p_{\phi}
),
$$ and this is the condition $\{\Re
p_{\phi}, \Im p_{\phi} \}(x, \xi) > 0$.

In \cite{DF,DF2} (see also \cite{JL}), Donnelly-Fefferman use
singular weight functions to prove vanishing order estimates and
doubling estimates (Theorem \ref{DFED}). Their Carleman inequalities involve a function $\bar{r}$
which is comparable to the distance $r$. Let $u \in C_c(B(p, h))$ and let $r(x) = d(x, p)$.
Put $\bar{r}(x) = \int_0^{r(x)} e^{- \nu s^2} ds = r(x) + O( r(x)^3))$ for some constant $\nu >> 0$.

\begin{theo} \cite{Ar} \cite{DF3} (Lemma A) There exist constants $C, B, \delta$ such that,
 if $\alpha > C (1 + \lambda)$ and if $u$ vanishes in a ball $B(p, \delta) \subset B(p, h)$,
then

\begin{equation} \int_{B(p, h)}  \bar{r}^{- 2 \alpha} |(\Delta -
\lambda^2) u|^2 dV \geq B \alpha^4  \int_{B(p, \delta + \frac{c
\delta}{\alpha})}  \bar{r}^{-2 \alpha - 4 } u^2  dV.
\end{equation}
\end{theo}

\subsection{\label{KROGER} Geometric comparision inequalities}

We now state a theorem due to P. Kr\"oger and Bacry-Qian which compares eigenfunctions of $(M, g)$
to eigenfunctions of model operators $L_{R, m}$ on an interval $(a, b)$.  We follow the exposition in \cite{BQ}.  The
model operators  are essentially the radial
parts of the Laplacian in geodesic polar coordinates on spaces of constant curvature . To motivate them, we note that the radial part equals

\begin{itemize}

 \item $\frac{d^2}{dr^2} + \frac{m - 1}{r} \frac{d}{dr}$ on $[0, \infty]$ on  $\R^m$;
\item $\frac{d^2}{dr^2} + (m - 1) \cot r \frac{d}{dr}$ on $[0, \pi]$ on the standard sphere $S^m$. A better comparison
interval is to shift the pole to the center of the interval to obtain
$\frac{d^2}{dr^2} - (m - 1) \tan r \frac{d}{dr}$ on $[- \frac{\pi}{2} ,\frac{ \pi}{2}]$;

\item $\frac{d^2}{dr^2} + (m - 1) \coth r \frac{d}{dr}$ on $[0, \infty]$ on hyperbolic space $\hcal^m$.

\end{itemize}

We put
 $$I_1 \cup I_2 \cup I_3 = (0, \infty) \cup (- \infty, \infty) \cup  (- \infty, 0) . $$

 The model operators  are defined as follows:
\begin{enumerate}

\item If $R > 0, m > 1$,
$$L_{r, m}(v) (x) = v''(x) - \sqrt{R(m-1)} \tan \left(\sqrt{\frac{R}{m-1}} x \right) v'(x)$$

on $[ - \frac{\pi}{2} \sqrt{\frac{R}{m-1}}, \frac{\pi}{2} \sqrt{\frac{R}{m-1}} ]. $

\item If $R < 0, m > 1$ then
$$\begin{array}{ll} L_{r, m}(v) (x) = v''(x) + \sqrt{- R(m-1)} \coth \left(\sqrt{\frac{- R}{m-1}} x \right) v'(x), \; \;&  \mbox{on} \; I_1 \cup I_3\\ & \\ L_{r, m}(v) (x) = v''(x) + \sqrt{- R(m-1)} \tanh \left(\sqrt{\frac{- R}{m-1}} x \right) v'(x), \; \;&  \mbox{on} \; I_2,
\end{array}
$$

\item If $R = 0, m > 1$ then,
$$\begin{array}{ll}L_{0, m} v (x)= v''(x) + \frac{m-1}{x} & \; \mbox{on}\; I_1 \cup I_3 \\ & \\
L_{0, m} v (x)= v''(x)  & \; \mbox{on}\; I_2, \end{array}$$

\end{enumerate}

\begin{theo} Let $(M, g)$ be a compact Riemannian manifold of dimension $m$ with Ricci curvature bounded below
by $R$. Let $\phi_{\lambda}$ be an eigenfunction of eigenvalue $\lambda^2$, let $(a,b) \subset \R$ be a finite
interval,  and let $v_{\lambda}$ be a solution
of $$L_{R, m} v  = - \lambda^2 v, \;\; \mbox{on}\; (a,b)$$ satisfying:

\begin{itemize}

\item $ v'(a) = v'(b) = 0. $

\item $v' \not= 0$ on $(a,b)$.

\item $[\min \phi_{\lambda}, \max \phi_{\lambda} ] \subset [\min v_{\lambda}, \max v_{\lambda}]. $
\end{itemize}

Then $$|\nabla (v_{\lambda}^{-1} \circ \phi_{\lambda}) | \leq 1. $$
\end{theo}

In other words, consider an interval $(a, b)$ so that  $\lambda^2$ is the lowest eigenvalue of the Neumann problem
for $L_{R, m}$ on $(a,b)$ and let $v_{\lambda}$ be the corresponding ground state eigenfunction. Then if the range
of $\phi_{\lambda}$ is contained in the range of $v_{\lambda}$ then at any points $x \in M$ and $y \in (a, b)$
such that $\phi_{\lambda}(x) = v_{\lambda}(y)$, $|\nabla \phi_{\lambda}(x)| \leq |\nabla v_{\lambda}(y)|.$

\subsection{Symmetry of positive and negative sets}

Let $M$ be a smooth compact manifold and let $\phi_{\lambda}$ be a
 real nonconstant eigenfunction of the Laplacian on $M$. Let $\phi_{\lambda}^+$, resp.  $\phi_{\lambda}^-$
  denote the positive and the negative part of $\phi_{\lambda}$. Since $\int_M \phi_{\lambda} dV_g = 0$, it
  is obvious that $\int_M \phi^+_{\lambda} dV_g = \int_M \phi_{\lambda}^- dV_g$. This represents a symmetry
  in $L^1$ between the positive and negative parts.  On the other hand, eigenfunctions are not necessarily
  symmetric in this sense for higher $L^p$ norms.
  The article \cite{JN2} of Jakobson-Nadriashvili discusses the extent of such symmetries and proves:

\begin{theo}\cite{JN2}   Then for  $p\ge1$ there exists a positive constant $C$, depending only on $p$ and $M$ such that $1/C\le\|\phi_{\lambda}^+\|_{L^p}/\|\phi_{\lambda}^-\|_{L^p}\le C$.  \end{theo}

\subsection{Alexandroff-Bakelman-Pucci-Cabr\'e inequality}

The following is Cabr\'e's improvement of the Alexandroff-Bakelman-Pucci estimate. In the following, $||u ||$ denotes the $L^2$ norm in $\Omega$.

\begin{theo} (see \cite{Cab}, Theorem 1.4)
There exists a constant $C = C(M, g)$ independent of $\Omega$ so that, for  any subdomain $\Omega \subset M$,
$$||u||_{L^{\infty(}\Omega)} \leq \limsup_{x \to \partial \Omega} |u| + C   |\Omega|^{\frac{1}{2}}  ||\Delta u||. $$
Hence for any smooth function vanishing on $\partial \Omega$, $||u||_{L^{\infty(}\Omega)} \leq  C   |\Omega|^{\frac{1}{2}}  ||\Delta u||$.
\end{theo}

\begin{cor} If $\phi_{\lambda}$ satisfies
$\Delta u_{\lambda} =  \lambda^2 u$ and  $\phi_{\lambda} |_{\partial \Omega} = 0$, then
$||\phi_{\lambda}||_{L^{\infty(}\Omega)} \leq   C \lambda^2   |\Omega|^{\frac{1}{2}}  || \phi_{\lambda}||. $
\end{cor}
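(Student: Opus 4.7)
The corollary is an immediate specialization of the theorem, so my plan is to simply apply it to $u = \phi_{\lambda}$ and then reduce $\|\Delta \phi_{\lambda}\|$ to a multiple of $\|\phi_{\lambda}\|$ via the eigenvalue equation. The three small ingredients I need to check are: (i) $\phi_{\lambda}$ qualifies as a ``smooth function vanishing on $\partial \Omega$'' so that the second (cleaner) form of the theorem applies; (ii) the $L^2$ norm of $\Delta \phi_{\lambda}$ over $\Omega$ is exactly $\lambda^2 \|\phi_{\lambda}\|$; (iii) the constant $C$ coming out of the corollary is the same $C = C(M,g)$ as in the theorem and does not depend on $\Omega$.

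First I would recall that solutions of $\Delta u = \lambda^2 u$ in an open set with zero Dirichlet boundary values are $C^{\infty}$ up to the boundary by standard elliptic regularity (the operator $\Delta - \lambda^2$ is elliptic and the boundary data is smooth), so $\phi_{\lambda}$ indeed satisfies the smoothness hypothesis of Cabr\'e's estimate. Since $\phi_{\lambda}|_{\partial\Omega} = 0$, the boundary term $\limsup_{x \to \partial \Omega}|\phi_{\lambda}(x)|$ in the first form of the theorem vanishes, so we are precisely in the situation of the second (boundary-free) inequality
\[
\|\phi_{\lambda}\|_{L^{\infty}(\Omega)} \;\le\; C\,|\Omega|^{1/2}\,\|\Delta \phi_{\lambda}\|_{L^2(\Omega)}.
\]

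Next I would substitute the eigenvalue identity. By (\ref{EP}) (with the positive-Laplacian convention fixed in \S\ref{BASIC}) we have $\Delta \phi_{\lambda} = \lambda^2 \phi_{\lambda}$ pointwise in $\Omega$, hence
\[
\|\Delta \phi_{\lambda}\|_{L^2(\Omega)} \;=\; \lambda^2\,\|\phi_{\lambda}\|_{L^2(\Omega)} \;=\; \lambda^2\,\|\phi_{\lambda}\|.
\]
Plugging this into the displayed inequality yields
\[
\|\phi_{\lambda}\|_{L^{\infty}(\Omega)} \;\le\; C\,\lambda^2\,|\Omega|^{1/2}\,\|\phi_{\lambda}\|,
\]
which is the claim.

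There is essentially no obstacle here; the only matter worth flagging for the reader is conceptual rather than technical, namely that the $\lambda^2$ on the right side arises precisely because we are measuring the eigenvalue (not the frequency $\lambda$), consistent with convention (2) of \S\ref{BASIC}. Everything else is a direct plug-in, and no new estimate beyond the theorem is needed.
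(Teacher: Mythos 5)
Your proposal is correct and is exactly the intended argument: the paper states this corollary without proof precisely because it is the immediate substitution $u = \phi_{\lambda}$, $\|\Delta \phi_{\lambda}\| = \lambda^2 \|\phi_{\lambda}\|$ into the second form of Cabr\'e's estimate. Your extra checks (smoothness up to the boundary, vanishing of the boundary term, $\Omega$-independence of $C$) are all sound and consistent with what the theorem provides.
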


An interesting application of this inequality
to extrema of eigenfunctions on  nodal domains is given in  \cite{PS}.

\subsection{Bers scaling near zeros}

By Theorem \ref{VO},  we know that at each $x_0 \in
\ncal_{\phi_{\lambda}}$, the order of vanishing $k$ of
$\phi_{\lambda}$ is finite.

\begin{prop} \cite{Bers,HW2} Assume that $\phi_{\lambda}$ vanishes to order $k$ at
$x_0$. Let $\phi_{\lambda}(x) = \phi_k^{x_0} (x) + \phi^{x_0}_{k +
1} + \cdots$ denote the $C^{\infty}$ Taylor expansion of
$\phi_{\lambda}$ into homogeneous terms in normal coordinates $x$
centered at $x_0$.  Then $\phi_k^{x_0}(x)$ is a Euclidean harmonic
homogeneous polynomial of degree $k$.
\end{prop}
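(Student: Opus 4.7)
The plan is to exploit the special form of the metric in Riemannian normal coordinates to compare the Laplacian $\Delta_g$ with the Euclidean Laplacian $\Delta_0 = -\sum_i \partial_i^2$, and then match homogeneous components in the Taylor expansion. The key point is that in normal coordinates centered at $x_0$ one has $g_{ij}(x) = \delta_{ij} + O(|x|^2)$ (since the Christoffel symbols vanish at the origin), so $g^{ij}(x) = \delta^{ij} + O(|x|^2)$ and $\sqrt{g}(x) = 1 + O(|x|^2)$. Inserting this into (\ref{DELTADEF}) I would write
\begin{equation*}
\Delta_g = \Delta_0 + L, \qquad L = \sum_{i,j} h^{ij}(x) \partial_i\partial_j + \sum_k b^k(x)\partial_k,
\end{equation*}
where $h^{ij}(x) = O(|x|^2)$ and $b^k(x) = O(|x|)$.

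Next I would plug the Taylor expansion $\phi_\lambda = \phi_k^{x_0} + \phi_{k+1}^{x_0} + \cdots$ (whose existence and homogeneity properties follow from $\phi_\lambda \in C^\infty$ together with the hypothesis that the order of vanishing at $x_0$ is exactly $k$) into the eigenvalue equation $\Delta_g \phi_\lambda = \lambda^2 \phi_\lambda$, and compare terms of each homogeneous degree about $x_0$. The Euclidean piece $\Delta_0 \phi_k^{x_0}$ is homogeneous of degree $k-2$. The perturbation piece $L\phi_k^{x_0}$ produces only terms of degree $\ge k$: $h^{ij}\partial_i\partial_j \phi_k^{x_0}$ has degree $\ge 2 + (k-2) = k$, and $b^k \partial_k \phi_k^{x_0}$ has degree $\ge 1 + (k-1) = k$. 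Similarly $L$ acting on any higher-order Taylor term $\phi_{k+\ell}^{x_0}$ produces terms of degree $\ge k + \ell$, while $\Delta_0 \phi_{k+\ell}^{x_0}$ has degree $k + \ell - 2 \ge k - 1$ for $\ell \ge 1$. On the right-hand side, $\lambda^2 \phi_\lambda$ begins in degree $k$.

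Collecting the contributions of degree $k-2$, only $\Delta_0 \phi_k^{x_0}$ appears, and it must equal zero. Hence $\phi_k^{x_0}$ is a homogeneous polynomial of degree $k$ annihilated by the Euclidean Laplacian, which is exactly the statement that it is a Euclidean harmonic homogeneous polynomial of degree $k$. (The fact that $\phi_k^{x_0}$ is nonzero follows from the assumption that the vanishing order is exactly $k$.)

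I do not foresee a real obstacle here: the only subtlety is verifying the orders of vanishing of $h^{ij}$ and $b^k$ in normal coordinates with care, but this is a standard property of the normal-coordinate expansion. The proof is essentially a perturbation argument: relative to the Euclidean Laplacian, both the curvature corrections inside $L$ and the spectral term $\lambda^2 \phi_\lambda$ are lower-order at $x_0$, so only the Euclidean principal part survives at the lowest nonzero degree of the Taylor expansion.
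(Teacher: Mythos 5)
Your proof is correct and is essentially the argument the paper sketches: both rest on the fact that in Riemannian normal coordinates $\Delta_g$ equals the flat Laplacian plus correction terms whose coefficients vanish to order $2$ (in the second-order part) and order $1$ (in the first-order part) at $x_0$, so that only $\Delta_0 \phi_k^{x_0}$ survives at the lowest homogeneous degree $k-2$, while $\lambda^2\phi_\lambda$ only enters at degree $k$. The paper packages this as a rescaling $x \mapsto x_0 + u/\lambda$ followed by an appeal to the local behavior of flat eigenfunctions near a zero, whereas your direct degree-matching in the Taylor expansion is a more self-contained rendering of the same idea.
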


To prove this, one substitutes  the homogeneous expansion into the
equation  $\Delta \phi_{\lambda} = \lambda^2 \phi_{\lambda}$ and
rescales $x \to \lambda x$, i.e. one  applies the dilation operator
\begin{equation}D_{\lambda}^{x_0} \phi_{\lambda}(u ) = \phi(x_0 + \frac{u}{\lambda}). \end{equation}
The rescaled eigenfunction is an eigenfunction of the locally
rescaled Laplacian $$ \Delta^{x_0}_{\lambda} : = \lambda^{-2}  D_{\lambda}^{x_0}
\Delta_g (D_{\lambda}^{x_0} )^{-1} = \sum_{j = 1}^n
\frac{\partial^2}{\partial u_j^2} + \cdots $$  in Riemannian
normal coordinates $u$ at $x_0$ but now with eigenvalue $1$,
\begin{equation}\begin{array}{l}  D_{\lambda}^{x_0}  \Delta_g (D_{\lambda}^{x_0} )^{-1} \phi(x_0 + \frac{u}{\lambda}) =
\lambda^2 \phi(x_0 + \frac{u}{\lambda}) \\ \\
\implies \Delta^{x_0}_{\lambda} \phi(x_0 + \frac{u}{\lambda}) =
\phi(x_0 + \frac{u}{\lambda}).  \end{array} \end{equation}
Since
$\phi(x_0 + \frac{u}{\lambda})$ is, modulo lower order terms, an
eigenfunction of a standard flat Laplacian on $\R^n$, it behaves near a zero as
a sum of homogeneous Euclidean harmonic polynomials.

 The Bers scaling is used by S.Y. Cheng (see also earlier results of Hartman-Wintner  \cite{HW,Ch1,Ch2})
to prove that at a singular point of $\phi_{\lambda}$ in dimension two, the nodal line
branches in $k$ curves at $x_0$ with equal angles between the curves.  For further
applications, see \cite{Bes}. Unfortunately, no analogue of the Bers  scaling
seems to exist  at critical points

\subsection{Heuristic scaling at non-zero points} One may ask what becomes of the scaling
of eigenfunctions on
 the length scale  of `Planck's constant' $h = \lambda_j^{-1}$ around  a point where the
 eigenfunction does not vanish. Since
$\phi(x_0 + \frac{u}{\lambda})$ is, modulo lower order terms, an
eigenfunction of a standard flat Laplacian on $\R^n$, it is
reasonable to think  that asymptotically  $\phi_{\lambda}$ behaves
like  a sum of plane waves, i.e. that there exists a distribution $dT_{\lambda}$ on the unit sphere in
momentum space such that
\begin{equation} \label{SCALING} \phi_{\lambda}(x_0 + \frac{u}{\lambda}) \sim \int_{|\xi| = 1} e^{i \langle \xi, u \rangle} d T_{\lambda}^{x_0} (\xi).
\end{equation}
 This is not a  rigorous definition of $dT_{\lambda}$ since $\sim$ has  not been precisely defined.

However, examples suggest that there do exist rigorous plane wave approximations.
For instance, if we rescale the Helgason representation (\ref{HELGAFOR}) for an eigenfunction on a hyperbolic quotient, we obtain
$$\phi_{\lambda}(z_0 + \frac{u}{\lambda}) = \int_B e^{(i \lambda + 1) \langle z_0 + \frac{u}{\lambda}, b \rangle} dT_{\lambda}. $$
We have  $\langle z_0 + \frac{u}{\lambda}, b \rangle = \langle z_0, b \rangle  + \frac{u}{\lambda} \cdot b + O(\lambda^{-1})$
where $u \cdot b $ denotes the Euclidean inner product of $u$ with the element of $S^1$ represented by $b \in B = S^1$. Hence,
 $e^{(i \lambda + 1) \langle z_0, b \rangle} dT_{\lambda}$ is a plausible candidate for the  distribution. In this heuristic discussion,
  we neglect the remainder estimate which
 requires an estimate of the $\lambda$ dependence of $dT_{\lambda}$.

\section{\label{SMOOTHNODAL} Nodal sets on $C^{\infty}$ Riemannian manifolds}

In this section we review results on the nodal, critical and
singular sets of eigenfunctions in the smooth case. In \S \ref{ANALYTIC}  we
review  the much stronger results in the real analytic case.

The nodal set of an eigenfunction $\phi_{\lambda}$  is the zero set
\begin{equation} Z_{\phi_{\lambda}} = \{x \in M: \phi_{\lambda}(x) = 0\}. \end{equation}
The global structure of the nodal set is determined by integrals $\int_{Z_{\phi_{\lambda}}} f d\hcal^{n-1}$
of continuous functions (or characteristic functions of nice sets) over $Z_{\phi_{\lambda}}$. This seems very
difficult, so we study first the local structure of the set, e.g. its Hausdorf dimension and local Hausdorf measure.
In \S \ref{ANALYTIC} we present some global results on the nodal set when the geodesic flow is ergodic.

The following theorem,  due to Br\"uning after an observation of   R. Courant
  (see  \cite{Br})), is used to obtain lower bounds
on volumes of nodal sets:

\begin{theo}\label{COURANT}  If $(M^n, g)$ is a $C^{\infty}$ compact Riemannian
manifold without boundary, then there exists $C(M, g) > 0$ so
that, in each ball of radius $\geq \frac{C}{\lambda}$ there exists
a point where $\phi_{\lambda}$ vanishes. \end{theo}

\begin{proof} Fix  $x_0, r$ and consider $B(x_0, r)$. If
$\phi_{\lambda}$ has no zeros in $B(x_0,r)$, then $B(x_0, r) \subset D_{j; \lambda}$
must be contained in the interior of a nodal domain $ D_{j; \lambda}$ of $\phi_{\lambda}$.
Now $\lambda^2 = \lambda_1^2 (D_{j; \lambda})$ where $ \lambda_1^2 (D_{j; \lambda})$ is the smallest
Dirichlet eigenvalue for the nodal domain. By domain monotonicity of the lowest Dirichlet eigenvalue (i.e.
$\lambda_1(\Omega)$ decreases as $\Omega$ increases),
$\lambda^2 \leq \lambda_1^2 (D_{j; \lambda}) \leq \lambda_1^2(B(x_0, r)).$ To complete the proof
we show that  $\lambda_1^2(B(x_0, r)) \leq \frac{C}{r^2}$ where $C$ depends only on the  metric. This is proved
by comparing $\lambda_1^2(B(x_0, r)) $   for the metric
$g$ with the lowest Dirichlet Eigenvalue  $\lambda_1^2(B(x_0, c r ); g_0)$ for the Euclidean ball $B(x_0, c r; g_0)$
 centered at $x_0$
of radius $ c r$ with Euclidean metric $g_0 $ equal to $g$ with coefficients  frozen at $x_0$; $c$ is chosen
so that $ B(x_0, c r; g_0) \subset B(x_0, r, g)$. Again by domain monotonicity, $\lambda_1^2 (B(x_0, r, g))
\leq \lambda_1^2 ( B(x_0, c r; g)) $ for $c < 1$. By comparing
 Rayleigh quotients $\frac{\int_{\Omega} |df|^2 dV_g}{\int_{\Omega} f^2 dV_g}$ one easily sees that
 $\lambda_1^2 ( B(x_0, c r; g)) \leq C \lambda_1^2 ( B(x_0, c r; g_0))$ for some $C$ depending only on the metric.
 But by explicit calculation with Bessel functions, $\lambda_1^2 ( B(x_0, c r; g_0)) \leq \frac{C}{r^2}. $
 Thus, $\lambda^2 \leq \frac{C}{r^2}$.

 \end{proof}

For background we refer to \cite{Ch}. A nice variation on the proof is given in \cite{CM}, where eigenfunctions
are converted to harmonic functions as \S \ref{HARMCONE}, and Harnack's inequality on positive harmonic functions
 is used to prove Theorem \ref{COURANT}. Another use of positivity is given in \cite{H} to prove
   $\lambda_1(B(x_0, r)) \geq \lambda$:  Let $u_{r}$ denote the ground
state Dirichlet eigenfunction for $B(x_0, r)$.   Then  $u_{r} > 0$ on the interior of $B(x_0, r)$. If $B(x_0, r) \subset D_{j; \lambda}$
then also $\phi_{\lambda} > 0 $ in $ B(x_0, r)$.
Hence the ratio  $\frac{u_{r}}{\phi_{\lambda}}$
is smooth and non-negative, vanishes only on $\partial B(x_0,r)$, and must have its maximum at a point
$y$ in the interior of $B(x_0, r)$. At this point (recalling that our $\Delta$ is minus the sum of squares),
$$\nabla \left(\frac{u_{r}}{\phi_{\lambda}} \right) (y) = 0, \;\; - \Delta \left(\frac{u_{r}}{\phi_{\lambda}} \right) (y) \leq 0, $$
so at $y$,
$$0 \geq - \Delta \left( \frac{u_{r}}{\phi_{\lambda}} \right) = - \frac{\phi_{\lambda} \Delta u_{r} - u_r \Delta \phi_{\lambda}}{\phi_{\lambda}^2}
= -  \frac{( \lambda_1^2(B(x_0, r)) - \lambda^2)\phi_{\lambda}  u_{r}}{\phi_{\lambda}^2}. $$
Since $\frac{ \phi_{\lambda}  u_{r}}{\phi_{\lambda}^2} >0,$ this is possible  only if
$\lambda_1(B(x_0, r)) \geq \lambda$.

We recall that the nodal set of an eigenfunction $\phi_{\lambda}$  is its zero set. When zero is a regular value
of $\phi_{\lambda}$ the nodal set is a smooth hypersurface. This is a generic property of eigenfunctions \cite{U}.
It is pointed out in \cite{Bae} that eigenfunctions can always be locally represented  in the form
$$\phi_{\lambda} (x) = v(x) \left(x_1^k + \sum_{j = 0}^{k - 1} x_1^j u_j(x') \right),$$
in suitable coordinates $(x_1, x')$ near $p$,  where $\phi_{\lambda}$ vanishes to order $k$ at $p$, where
$u_j(x')$ vanishes to order $k - j$ at $x' = 0$,  and where
$v(x) \not= 0$ in a ball around $p$. It follows that the nodal set is always countably $n-1$ rectifiable when
$\dim M = n$.

For a general $C^{\infty} (M, g)$ of dimesion $n$,  S.T. Yau \cite{Y1,Y2} has conjectured that
\begin{equation} \label{YAUCON} c_{M,g}  \lambda \leq \hcal^{n-1}(Z_{\phi_{\lambda}}) \leq
C_{M, g}  \lambda. \end{equation}  Here, $\hcal^k$ is the $k$ dimensional Haussdorf measure. The conjecture was proved in \cite{DF} in the real analytic case, which will
be discussed in detail in \S \ref{ANALYTIC}; see Theorem \ref{DFNODAL}.  In the $C^{\infty}$ case, the lower bound was proved in dimension $2$
by J. Br\"uning \cite{Br} and S.T. Yau. Because of this result, and lower bounds in some other calculable cases,  Yau conjectured
the same lower bound in all dimensions. It remains an open problem in dimensions $\geq 3$.

Regarding upper bounds, in dimension two one has
\begin{theo} \cite{DF4,Dong} Suppose $(M, g)$ is $C^{\infty}$ and that  $\dim M = 2$. Then,
$$\hcal^1(Z_{\phi_{\lambda}}) \leq C_{M, g} \lambda^{3/2}. $$

\end{theo}

Dong's proof was based on the following integral formula:

\begin{theo} \cite{Dong} Let $q = |\nabla u|^2 + \frac{\lambda^2}{n}
u^2$. Then
$$\hcal^{n-1}(Z_u) = \frac{1}{2} \int_M \frac{(\Delta +
\lambda^2) |u|}{q} dV. $$
\end{theo}

In higher dimensions, the  best estimate to date is the following:
\begin{theo}  \cite{HS} For any $C^{\infty}$ $(M, g)$ of dimension $n$  one has  $$\hcal^{n-1}(Z_{\phi_{\lambda}}) \leq C_{M, g}
e^{c_{M,g} \lambda \log \lambda}. $$
\end{theo}

\subsection{Courant and Pleijel bounds on nodal domains}

Another well-known result is Courant's nodal domain theorem:

\begin{theo} \label{COURANTS} The number $n_k$ of nodal domains
 of the $k$ satisfies $n_k \leq k$.  \end{theo} Here, a nodal
domain is a component of $M \backslash Z_{\phi_{\lambda_k}}$. This estimate is not sharp.  Pleijel \cite{P} proved:
\begin{theo}\label{PLEIJEL}
For any plane domain with Dirichlet boundary conditions, $\limsup_{k \to \infty} \frac{n_k}{k} \leq \frac{4}{j_1^2} \simeq 0. 691...$,
where $j_1$ is the first zero of the $J_0$ Bessel function. \end{theo} He conjectured that the same result should be true for a free
membrane, i.e. for Neumann boundary conditions. This was recently proved in the real analytic case
 by I. Polterovich \cite{Po}, using a result \cite{TZ3}
counting the number of nodal lines which touch the boundary. Another recent result is the proof that there exists
an asymptotic mean number of nodal domains of random spherical harmonics \cite{NS}; see \S \ref{NS}.

\subsection{Critical and singular sets of eigenfunctions on
$C^{\infty}$ Riemannian manifolds}

Critical points are points where $\nabla \phi_{\lambda}(x) = 0$.
We denote the critical set by $ \Sigma = \nabla
\phi_{\lambda}^{-1}(0) $.  Singular points are critical nodal points, i.e. critical points
lying on the nodal hypersurface, $\Sigma_0 = \Sigma \cap \ncal$. The reader is warned that the
term `critical point' is sometimes used for singular points.
In general, the singular set is simpler to study than the critical
set.

There exist simple examples such as surfaces of revolution where  the critical point sets of
eigenfunctions are   of codimension one. For instance, the rotationally invariant eigenfunctions on a surface
of revolution have $S^1$ invariant critical point sets, and not all consist of fixed points. Thus, zonal spherical
harmonics have critical point sets consisting of unions of lattitude circles.

For generic metrics, all of the eigenfunctions are Morse functions and consequently their critical point
sets are discrete \cite{U}. However, there are no known bounds on the number of critical points. In the analogy
of eigenfunctions of eigenvalue $\lambda^2$ to polynomials of degree $\lambda$, a very naive application
of  B\'ezout's theorem   suggests  that
the number of critical points should be bounded above by $C \lambda^n $, since the critical point
equation is a system of   $n$ equations
of degree $\lambda - 1$ in $n$ unknowns.  Even for real analytic metrics, no rigorous results in this direction
are known. Any such bounds would have to reflect the non-degeneracy of the critical points; a simple count could
be unstable if a small perturbation of the metric had  a sequence of eigenfunctions with   codimension one critical point sets.
On the other hand, no lower bound on the number of critical points exists: it is proved in \cite{JN}
that there exists a Riemannian surface  possessing a sequence of eigenfunctions with a fixed finite number of critical points,
answering a question of S.T. Yau \cite{Y3},
which asks if the number of critical point for the eigenfunction grows
when the eigenvalue grows (Yau believes that the answer is positive for most 
metrics,  and that it is interesting to understand it and  to give some estimate of the growth.)
In \cite{Y3}, Yau  also proved the existence of at least one nontrivial critical point  for surfaces
that can not be guaranteed by standard Morse theory.  Up to now,
this is the only result of proving existence of nontrivial critical points.

Singular sets have been studied for harmonic functions, eigenfunctions, and more general solutions of elliptic equations.
The singular set is somewhat simpler than the critical point set.
In the real analytic case, the singular set is known to be at most
of codimension $2$ and $\hcal^{m-2}(\Sigma) < \infty$ (see
\cite{Fed}, 3.4.8). This was extended to $C^{\infty}$ metrics in \cite{HHL}. One has:

\begin{theo} \cite{HNOO,HHL} Let $\dim M = n$. There exists a constant $C_{N_R}$ depending only on the
frequency function $N_R$ of $\phi$ in $B_R(0)$ such that the  singular  set $\Sigma_0$ satisfies:
$$\hcal^{n-2} (\Sigma_0 \cap B_{R/2}) \leq C_{ N_R}. $$
\end{theo}

A key point in the proof is that the complex critical set of a
homogeneous harmonic polynomial in dimension $n$ is of dimension
$\leq n - 2$.

Lin has conjectured:
\begin{conj} If $\dim M = n, $ the singular  set $\Sigma$ satisfies:
$$\hcal^{n-2} (\Sigma \cap B_{R/2}) \leq C N_R^2. $$
\end{conj}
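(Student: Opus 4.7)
The plan is to extend the argument behind the previous theorem for $\Sigma_0$ to the full critical set $\Sigma$ by combining a Bers-type rescaling with a quantitative algebraic-geometric estimate. I would begin by decomposing
$$\Sigma \cap B_{R/2} = \bigl(\Sigma_0 \cap B_{R/2}\bigr) \cup \bigl((\Sigma \setminus \ncal) \cap B_{R/2}\bigr),$$
noting that $\Sigma_0$ is already controlled by the earlier theorem (with a qualitative dependence on $N_R$ that would need to be sharpened to quadratic). At any critical point $x_0 \notin \ncal$, expanding in normal coordinates as $\phi_\lambda(x_0+y)=\phi_\lambda(x_0)+P_2^{x_0}(y)+P_3^{x_0}(y)+\cdots$ and matching the degree-zero part of the eigenvalue equation forces $\mathrm{tr}(P_2^{x_0})=\lambda^2\phi_\lambda(x_0)\neq 0$, so the Hessian at a non-nodal critical point can never vanish identically. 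This rigidity is what permits a uniform local model near every point of $\Sigma\setminus\ncal$.

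Next I would cover $B_{R/2}$ by balls $B(a,r_a)$ chosen so that on each one the frequency of the harmonized lift $r^\alpha \phi_\lambda$ on the cone $\R_+\times M$ of \S\ref{HARMCONE} is bounded by $CN_R$. Applying the Bers dilation $D_\lambda^a$ to such a ball, the rescaled eigenfunction is approximated in $C^1$ by a harmonic polynomial $P_a$ on $\R^n$ of degree $k_a\le CN_R$: at nodal critical points this is literal Bers scaling, while at non-nodal ones it is the constant-plus-quadratic leading jet described above. The proof then reduces to the algebraic estimate
\begin{equation*}
\hcal^{n-2}\bigl(\{\nabla P = 0\}\cap B_1\bigr)\le C_n (\deg P)^2
\end{equation*}
for harmonic polynomials $P$ on $\R^n$. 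Granting this, summing $r_a^{n-2}k_a^2$ over the cover, together with $\sum r_a^n \le \mathrm{Vol}(B_{R/2})$ extracted by a Vitali argument, would yield the desired $CN_R^2$ bound after the Hausdorff rescaling cancels the appropriate powers of $r_a$.

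The main obstacle is the algebraic estimate. My approach would be a Cauchy--Crofton slicing: express $\hcal^{n-2}$ of $\{\nabla P = 0\}$ as an integral over the affine Grassmannian of $2$-planes $\pi$ of the cardinality of $\{\nabla P = 0\}\cap\pi\cap B_1$. For a generic $\pi$ this slice is a real algebraic $0$-cycle cut out by the restrictions of the $n$ polynomial equations $\partial_i P = 0$ of degree $k-1$, which by Bezout's theorem carries at most $(k-1)^2$ points counted with multiplicity; integrating against a bounded density on the Grassmannian produces the claimed $C k^2$. Harmonicity of $P$ is essential to rule out positive-dimensional exceptional components of the critical variety (one uses that $\partial_1 P,\dots,\partial_n P$ form a system of harmonic polynomials without nontrivial common factor, enforced by the absence of nonzero-mean subharmonics). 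The most delicate step, however, is the $C^1$-stability needed to pass from the critical set of the polynomial model $P_a$ back to that of $\phi_\lambda$ itself: small perturbations of a polynomial by lower-order terms can create long, thin $(n-2)$-dimensional components in the critical set, and controlling these stably requires an effective transversality statement combining the constraint $\mathrm{tr}(P_2^{x_0})\neq 0$ off the nodal set with an iterated frequency-function bound on the gradient components $\partial_i\phi_\lambda$, viewed as approximate eigenfunctions of eigenvalue $\lambda^2$.
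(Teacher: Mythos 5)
First, a point of orientation: this statement appears in the paper as an open \emph{conjecture} attributed to F.-H.\ Lin, not as a theorem, and the paper gives no proof of it. The preceding theorem of \cite{HNOO,HHL} yields only a bound $C_{N_R}$ with unspecified dependence on the frequency, obtained essentially by compactness; the whole content of the conjecture is the explicit quadratic dependence. So there is no argument in the paper to compare yours against, and the question is whether your outline closes the gap. It does not.

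Two concrete problems. First, the set you are bounding. The conjecture concerns the singular set $\Sigma_0 = \Sigma \cap \ncal$ (the displayed ``$\Sigma$'' notwithstanding, the text says ``singular set''); your decomposition adds the non-nodal critical points $\Sigma \setminus \ncal$, and for that set no bound of this form can hold: as the paper itself notes, zonal eigenfunctions on a surface of revolution have critical sets containing unions of latitude circles, i.e.\ codimension-one components disjoint from the nodal set, so $\hcal^{n-2}(\Sigma \cap B_{R/2}) = +\infty$ in such examples. Your observation that ${\rm tr}(P_2^{x_0}) = \lambda^2 \phi_{\lambda}(x_0) \neq 0$ off the nodal set does not prevent the Hessian from degenerating in $n-1$ directions, which is exactly what happens there. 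Second, and more fundamentally, the step you yourself flag as ``most delicate'' --- passing from the critical set of the tangent polynomial model back to that of $\phi_{\lambda}$ --- is the entire difficulty, and the paper explicitly warns that no analogue of the Bers scaling seems to exist at critical points. The Hartman--Wintner/Bers expansion controls the leading jet, but the critical set is not stable under perturbation by the higher-order remainder: small perturbations can create or destroy $(n-2)$-dimensional pieces, and no doubling or frequency-function machinery is available for the vector of gradient components. Your Crofton--Bezout lemma also needs care: the $(k-1)^2$ count for a generic $2$-plane slice requires the \emph{complex} critical variety to have codimension at least two with its codimension-two part of controlled degree; the cited proof of the qualitative theorem uses precisely that this holds for \emph{homogeneous} harmonic polynomials, and your non-nodal models (constant plus quadratic plus higher) are not homogeneous. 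What the conjecture demands is an effective, quantitative replacement for the compactness step --- some form of quantitative stratification of $\Sigma_0$ --- and your outline does not supply one.
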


\begin{theo} \cite{Dong} Suppose that $\dim M = 2$ and let $m(\lambda_j)$ denote the number of
singular points of $\phi_{\lambda_j}$ counted with multiplicity
(i.e. $m(\lambda_j) $ equals the sum over all singular points of
the  order of vanishing minus one). Then
$$m(\lambda_j) \leq \frac{1}{4 \pi} \left( \lambda^2 Vol(M, g) - 2 \int_M \min(K, 0) dV_g \right), $$
where $K$ is the Gaussian curvature.
\end{theo}

\section{\label{WAVEKERNEL} The wave kernel of a compact Riemannian manifold}

Global properties of  eigenfunctions often arise from the fact
that global eigenfunction are eigenfunction of the wave group
$U_t = e^{i t \sqrt{\Delta}}$. We begin by reviewing some basic theory
of the wave equation on a compact Riemannian manifold.

The wave group of a Riemannian manifold is the unitary group $U_t
= e^{i t \sqrt{\Delta}}$ is defined by the spectral theorem,
\begin{equation} U(t, x,y) = \sum_j e^{it \lambda_j} \phi_j(x) \phi_j(y). \end{equation}
Closely related but  simpler wave kernels are   the even part of
the wave kernel, $\cos t\sqrt{\Delta}$ which solves the initial
value problem
\begin{equation} \left\{ \begin{array}{ll} (\frac{\partial}{\partial t}^2 - \Delta) u = 0& \\
u|_{t=0} = f & \frac{\partial}{\partial t} u |_{t=0} = 0
\end{array}\right .\end{equation} Similar, the odd part of the
wave kernel, $\frac{\sin t\sqrt{\Delta}}{\sqrt{\Delta}}$ is the
operator solving
\begin{equation} \left\{ \begin{array}{ll} (\frac{\partial}{\partial t}^2 - \Delta) u = 0& \\
u|_{t=0} = 0 & \frac{\partial}{\partial t} u |_{t=0} = g
\end{array}\right .\end{equation}

To employ wave kernels in spectral geometry it is indispensible to
have approximations defined in terms of geometric data. They were
first constructed by Hadamard and Riesz and are usually referred
to as Hadamard(-Riesz) parametrices. There are alternative
parametrices due to Lax and  H\"ormander.

We begin with the small-time  Hadamard Riesz parametrices for
$\cos t \sqrt{\Delta}, \frac{\sin t \sqrt{\Delta}}{\sqrt{\Delta}}.
$ These kernels only  involve $\Delta$ and their kernels can be
constructed in the form
\begin{equation}   \int_{0}^{\infty} e^{i \theta (r^2-t^2)}
\sum_{j=0}^{\infty} W_j(x,y) \theta_{reg}^{\frac{n-1}{2} - j}
d\theta
 \;\;\;\mbox{mod}\;\;C^{\infty}  \end{equation}
where $W_j$ are the Hadamard-Riesz coefficients determined
inductively by the transport equations
\begin{equation}\begin{array}{l}
 \frac{\Theta'}{2 \Theta} W_0 + \frac{\partial W_0}{\partial r} = 0\\ \\
4 i r(x,y) \{(\frac{k+1}{r(x,y)} +  \frac{\Theta'}{2 \Theta})
W_{k+1} + \frac{\partial W_{k + 1}}{\partial r}\} = \Delta_y W_k.
\end{array}\end{equation}
Here, $r = r(x,y)$ is the geodesic distance and $\theta_{reg}^s$ is a regularization of $\theta^s$
at $\theta = 0$;  $t^{-n}$ is the distribution defined by $t^{-n} =
Re (t + i0)^{-n}$ (see \cite{Be}).  We recall
that $(t+i0)^{-n} =  e^{-i\pi \frac{n}{2}}\frac{1}{\Gamma(n)}
\int_0^{\infty} e^{itx} x^{n-1} dx$ and also that
$\int_0^{\infty}e^{itx} x^{n-1} dx$ has precisely the same
singularity at $t=0$ as the sum $\sum_{k=0}^{\infty}
e^{it(k+\frac{\beta}{4})} (k+\frac{\beta}{4})^{n-1}$.

The solutions are given by:
\begin{equation}\label{HR} \begin{array}{l} W_0(x,y) = \Theta^{-\half}(x,y) \\ \\
W_{j+1}(x,y) =  \Theta^{-\half}(x,y) \int_0^1 s^k \Theta(x,
x_s)^{\half} \Delta_2 W_j(x, x_s) ds
\end{array} \end{equation}
where $x_s$ is the geodesic from $x$ to $y$ parametrized
proportionately to arc-length and where $\Delta_2$ operates in the
second variable.

Performing the integrals, one finds that
\begin{equation} \cos t\sqrt{\Delta}(x,y) \sim
 C_o |t| \sum_{j=0}^{\infty}(-1)^j w_j(x,y)\frac{
(r^2-t^2)_{-}^{j-\frac{d -3}{2} - 2}}{4^j \Gamma(j - \frac{d-3}{2}
- 1)}
 \;\;\;\mbox{mod}\;\;C^{\infty} \end{equation}
where $C_o$ is a universal constant and where $W_j=\tilde{C}_o
e^{-ij\frac{\pi}{2}} 4^{-j}w_j(x,y),$. Similarly
\begin{equation} \frac{\sin t\sqrt{\Delta}}{\sqrt{\Delta}}(x,y)
\sim
 C_o sgn(t) \sum_{j=0}^{\infty}(-1)^j w_j(x,y)\frac{
(r^2-t^2)_{-}^{j-\frac{d - 3}{2} - 1}}{4^j \Gamma(j -
\frac{d-3}{2})}
 \;\;\;\mbox{mod}\;\;C^{\infty} \end{equation}
 Here, $\sim $ means that the difference of the two sides is a
 $C^{\infty}$ function on $M \times M$, or more precisely, that if
 one truncates the sum after a number $n_R$ of terms, the difference lies in
 $C^{R}(M \times M)$. The formulae are  only valid for times $t < inj(M,g)$ and for this
 reason are called small-time parametrices. When the metric is real
analytic, the series for the ampltiude converges for $t$
sufficiently small and $(x, y)$ sufficiently near the diagonal
\cite{Be}.

  To obtain truly global results on eigenfunctions,
 one actually needs large time parametrices. These are very complicated and in many respects are
 not understood.  The simplest way to
 obtain one is to use
the group property of $U(t) = U(t/N)^N$ to  determine the wave
kernel for all times from the wave kernel at a small time.  It
shows that for fixed $(x,t)$ the kernel $U(t)(x,y)$ is singular
along the distance sphere $S_t(x)$ of radius $t$ centered at $x$,
with singularities propagating along geodesics.

By a similar but more complicated calculation (using the action of
$\sqrt{\Delta}$ on the oscillatory integral for $\frac{\sin t
\sqrt{\Delta}}{\sqrt{\Delta}}$, one has
\begin{equation} U(t, x,y) = \int_0^{\infty} e^{ i \theta (r^2(x,y) - t^2)} \sum_{k =
0}^{\infty} W_k (x,y) \theta^{\frac{d-3}{2} - k}
d\theta\;\;\;\;\;\;\;\;\;(t < {\rm inj}(M,g)) \end{equation} where
$U_0(x,y) = \Theta^{-\half}(x,y)$ is the volume 1/2-density, where
the higher coefficients are determined by transport equations, and
where  again $\theta^r$ is regularized at $0$.

An alternative parametrix has  the form
\begin{equation} \label{PARAONE} U(t, x, y) = \int_{T^*_y M} e^{i
t |\xi|_{g_y} } e^{- i \langle \xi, \exp_y^{-1} (x) \rangle} A(t, x,
y, \xi) d\xi
\end{equation} where $|\xi|_{g_x} $ is the metric norm function at
$x$, and where $A(t, x, y, \xi)$ is a polyhomogeneous amplitude of
order $0$. The expression $\exp_x^{-1}(y)$ is again only defined
in a sufficiently small neighborhood of the diagonal.  For
background, we refer to \cite{HoI-IV,D.G}.

The existence of these parametrices is sufficient to prove:

\begin{theo} $U(t, x, y) \in \dcal'(\R \times M \times M)$ is a
a Fourier intetgral operator of order $-\frac{1}{4}$ associated to
the canonical relation,
$$\Gamma = \{(t, \tau, x, \xi, y, \eta): \tau + |\xi|_g = 0, g^t(x,
\xi) = (y, \eta)\} \subset T^*(\R \times M \times M). $$
\end{theo}

This means that $U(t, x, y)$ can  be locally written as a
finite sum of oscillatory integrals $\int_{\R^M} e^{i \phi(t, x,
y, \xi)} a(t, x, y, \xi) d \xi$ whose phases $\phi$ locally
parametrize $\Gamma$ in the following sense: Define the critical
set along the fibers by
$$C_{\phi} = \{(t, x, y, \xi) \in \R \times M \times M \times \R^M : d_{\xi} \phi = 0 \}$$
and define the immersion $i_{\phi}: C_{\phi} \to \Gamma_{\phi}
\subset  T^*(\R \times M \times M)$ by
$$i_{\phi}(t, x, y, \xi) = (t, d_{t} \phi, x, d_x \phi, y, -
d_y \phi). $$ For instance, when $$\phi(t, x, y, \xi) =  - t
|\xi|_{g_y} +  \langle \xi, \exp_y^{-1} (x) \rangle$$ one has
$$C_{\phi} = \{(t, x, y, \xi) : \exp_y ( t
\frac{\xi}{|\xi|_g}) = x\}, $$ so that $x, y$ are linked by a
geodesic segment of length $t$ and
$$\Gamma_{\phi} = \{(t, - |\xi|_{g_y}, x, \xi, g^t(x, \xi) ) \}. $$

\subsubsection{\label{MWOCP} Manifolds without conjugate points}

The Riemannian manifolds with the simplest wave groups are those
without conjugate points. A clear exposition is given in \cite{Be}. We recall that a  Riemannian manifold
$(M, g)$ is without conjugate points if the exponential maps
$\exp_x: T_x M \to M$ have no singular points. In this case, the
universal Riemannian cover $\tilde{M}$, the total space of covering map $\pi: (\tilde{M}, \tilde{g}) \to (M,
g)$, is diffeomorphic to $\R^n$ ($n = \dim M$) and the exponential maps $exp_x :
T_x \tilde{M} \to \tilde{M}$ are diffeomorphisms for all $x$. The
distance function $\tilde{r}$ is globally well defined and real
analytic away from the diagonal $\Delta_{\tilde{M}}$ of $\tilde{M}
\times \tilde{M}$.

 On a manifold  without conjugate
points, the Hadamard and Lax-H\"ormander parametrices for the wave
kernel $\tilde{U}(t, x, y)$ on $\tilde{M} \times \tilde{M}$ are
well-defined for all $t$.
 The wave kernel on the
quotient $M$  can be expressed in the form
\begin{equation} \label{GLOBALHD} U(t, x, y) = \sum_{\gamma \in
\Gamma} \tilde{U}(t, x, \gamma y)
\end{equation}  where $\Gamma$ is the deck transformation group of $\tilde{M}
\to M$ and where we implicitly identify $ x, y$ with one of their
lifts to $\tilde{M}$. The series converges when one takes for
$\tilde{U}$ the $\cos t \sqrt{\Delta} $ or $\frac{\sin t
\sqrt{\Delta}}{\sqrt{\Delta}}$: by finite propagation speed,
there are only a finite number of terms for each $t$ namely terms
where $\tilde{r}(x, \gamma y) \leq t$. The unitary kernel $e^{i t
\sqrt{\Delta}}(t, x, y)$ does not have finite propagation speed,
but outside the light cone the kernel is smooth and the same
finite number of terms of the sum determine the singularity
completely.

\section{\label{METHODS} Methods for global analysis}

The global analysis is based on properties of eigenfunctions which
derive from the fact that they extend globally to $M$ and hence
solve the wave equation eigenvalue problem $e^{i t \sqrt{\Delta}}
\phi_{\lambda} = e^{i \lambda t} \phi_{\lambda}. $ The wave operator is very
useful because it propagates singularities along geodesics.
Studying singularities of various expressions involving
eigenfunctions and eigenvalues gives further information that the
local methods do not provide.

To take full advantage of the wave group it is important to use
pseudo-differential operators, which transform well under
conjugation by the wave group. We briefly review some of the key
properties of  pseudo-differential operators in this section. For
a detailed treatment, we refer to \cite{HoI-IV,EZ,DSj,GSj,T2}.

We denote by $\Psi^m(M)$ the space of pseudo-differential operators of order $m$ on $M$.
The principal symbol of $A$ is denoted $\sigma_A$.

\subsection{Egorov's theorem}

Egorov's theorem for the wave group  concerns the conjugations
\begin{equation} \label{ALPHAT} \alpha_t (A) : = U_t A U_t^*, \;\;\; A \in
\Psi^m(M). \end{equation} Such a conjugation defines the quantum
evolution  of observables  in the Heisenberg picture.  Egorov's
theorem  is the following:

\begin{theo}  $\alpha_t$ defines an order-preserving automorphism of
$\Psi^*(M)$, i.e. $\alpha_t(A) \in \Psi^m(M)$ if $A \in
\Psi^m(M)$, and that
\begin{equation} \sigma_{U_t A U_t^*} (x, \xi)  =  \sigma_A
(\Phi^t(x, \xi)) : = V_t (\sigma_A), \;\;\; (x, \xi) \in T^*M
\backslash 0,
\end{equation}
where $V_t$ is the unitary operator (\ref{VT}).

\end{theo}

\subsection{Sharp Garding inequality}

The Garding inequality addresses the question: To what extent is
the quantization $Op(a)$ of a positive symbol a positive
pseudo-differential operator? Can one give lower bounds for
expressions $\langle Op(a) \phi_j, \phi_j \rangle$ in terms of
$a$?

The answer depends on the precise definition of $Op(a)$. There is
a somewhat complicated quantization $Op^F(a)$ due to Friedrichs in
the case of $\R^m$ such that $Op^F(a) \geq 0$ if $a \geq 0$.  But
for a general definition of $Op(a)$ one can only expect the
following sharp Garding  inequality: \begin{theo} For any $f \in
C^{\infty}(M)$, we have  $$\langle Op(a) f, f \rangle \geq (\inf a
) ||f||^2 - C ||\Delta^{-1/2} f||^2, $$ where $||f|| $ is the
$L^2$-norm of $f$.
\end{theo}

This immediately implies the
\begin{cor} If $a \geq 0$, then
$$\langle Op(a)  \phi_{\lambda},  \phi_{\lambda} \rangle \geq  - C \lambda^{-1}. $$
\end{cor}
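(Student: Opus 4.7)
The plan is to derive the corollary by applying the sharp Garding inequality directly to the normalized eigenfunction $\phi_\lambda$ and then simplifying each term using the eigenfunction equation.

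First, I would substitute $f = \phi_\lambda$ into the conclusion of the sharp Garding inequality, namely
\begin{equation*}
\langle Op(a) \phi_\lambda, \phi_\lambda \rangle \;\geq\; (\inf a)\,\|\phi_\lambda\|^2 \;-\; C\,\|\Delta^{-1/2}\phi_\lambda\|^2.
\end{equation*}
The hypothesis $a \geq 0$ immediately gives $\inf a \geq 0$, so the first term on the right is nonnegative (assuming $\phi_\lambda$ is $L^2$-normalized, which is our convention), and the inequality reduces to $\langle Op(a)\phi_\lambda,\phi_\lambda\rangle \geq -C\|\Delta^{-1/2}\phi_\lambda\|^2$.

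Next I would evaluate the remaining term via the eigenfunction equation. Since $\Delta\phi_\lambda = \lambda^2\phi_\lambda$ with $\lambda > 0$ (we may discard the finitely many zero-frequency modes), the spectral theorem gives $\Delta^{-1/2}\phi_\lambda = \lambda^{-1}\phi_\lambda$, hence $\|\Delta^{-1/2}\phi_\lambda\|^2 = \lambda^{-2}\|\phi_\lambda\|^2 = \lambda^{-2}$. Substituting this in yields $\langle Op(a)\phi_\lambda,\phi_\lambda\rangle \geq -C\lambda^{-2}$, which is in fact stronger than (and therefore implies) the asserted bound $-C\lambda^{-1}$ once $\lambda \geq 1$; the constant $C$ can be absorbed to handle the finite set of small eigenvalues.

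There is no substantive obstacle: the entire content of the corollary is the specialization of the sharp Garding inequality to an eigenfunction, combined with the trivial calculation of the Sobolev norm $\|\phi_\lambda\|_{H^{-1}}$ in terms of $\lambda$. The only mild point to check is the handling of the bottom of the spectrum, where $\Delta^{-1/2}$ is not defined on constant modes; this is addressed by restricting attention to $\phi_\lambda$ with $\lambda$ bounded away from $0$, which is automatic in the semi-classical limit $\lambda \to \infty$ that is the setting of interest.
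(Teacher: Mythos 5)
Your proof is correct and is exactly the argument the paper intends, since the corollary is stated as an immediate consequence of the sharp Garding inequality applied to $f=\phi_{\lambda}$ with $\inf a\geq 0$ and the spectral identity for $\Delta^{-1/2}\phi_{\lambda}$. Your observation that the theorem as literally stated yields the stronger remainder $-C\lambda^{-2}$ is accurate; the $-C\lambda^{-1}$ in the corollary matches the standard form of sharp Garding (remainder controlled by the $H^{-1/2}$ norm), so the discrepancy lies in the paper's statement of the theorem rather than in your deduction.
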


\subsection{Operator norm and symbol norm}

Another natural question comparing the properties of the classical
observable $a$ to its quantization $Op(a)$ concerns traces and
norms. For instance,

\begin{theo}

For any $A \in \Psi^0$, $||\sigma_A||_{L^{\infty}} = \inf_K ||A +
K||$ where the infimum is taken over the set of compact operators $K$.

\end{theo}

\subsection{\label{QULIMITS} Quantum Limits (Microlocal defect measures)}

One of the principal problems in the global analysis of
eigenfunctions is to determine the possible weak limits of the
Wigner measures, or microlocal defect measures, which arise from
matrix elements.

To define them, we regard diagonal matrix elements as  linear
functionals on $\Psi^0$
\begin{equation} \label{RHOJ} \rho_k(A) = \langle  A \phi_k, \phi_k
\rangle. \end{equation} We observe that $\rho_k(I) = 1$, that
$\rho_k(A) \geq 0$ if $A = \geq 0$ and that
\begin{equation}\label{INVAR}  \rho_k(U_t A U_t^*) = \rho_k(A). \end{equation}
Indeed, if $A \geq 0$ then $A = B^*B$ for some $B \in \Psi^0$ and
we can move $B^*$ to the right side. Similarly (\ref{INVAR}) is
proved by moving $U_t$ to the right side and using the fact that
the eigenvalues of $U_t$ are of modulus one. In quantum
statistical mechanics, these properties are summarized by saying
that $\rho_j$ is an {\it invariant state} on the algebra $\Psi^0$,
or more precisely, on its  closure  in the operator norm. An
invariant state is the analogue in quantum statistical mechanics
of an invariant probability measure.

We denote by  $\mcal_I$ the convex set of invariant probability
measures for the geodesic flow. Further, we say that a measure is
time-reversal invariant if it is invariant under the
anti-symplectic involution  $(x, \xi) \to (x, - \xi)$ on $T^*M$.
We denote the time-reversal invariant elements of $\mcal_I$ by
$\mcal_I^+$.

\begin{prop}
Any weak limit of the sequence $\{\rho_k\}$ on $\Psi^0$ is a
time-reversal invariant, $g^t$ invariant probability  measure on
$S^*M$, i.e. is an element  of $\mcal_I^+$.
\end{prop}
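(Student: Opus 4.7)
The plan is to extract all four properties from the three structural facts that just appeared: normalization and positivity of each $\rho_k$, $\alpha_t$-invariance of each $\rho_k$, and the identity $\|\sigma_A\|_{L^\infty}=\inf_K\|A+K\|$ with $K$ ranging over compact operators. First I would show that any weak-$*$ limit $\rho_\infty$ descends to a bounded linear functional on $C(S^*M)$ via the principal symbol map $\sigma:\Psi^0\to C(S^*M)$. The key observation is that $\{\phi_k\}$ is orthonormal, hence $\phi_k\rightharpoonup 0$ weakly in $L^2$, so $K\phi_k\to 0$ strongly for every compact $K$, and therefore $\rho_k(K)\to 0$. Combined with the trivial bound $|\rho_k(A)|\le\|A\|$, this yields
$$|\rho_\infty(A)|=|\rho_\infty(A+K)|\le\|A+K\|\qquad\text{for every compact }K,$$
so that $|\rho_\infty(A)|\le\|\sigma_A\|_{L^\infty(S^*M)}$. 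Since principal symbols of order-zero operators are homogeneous of degree zero and so descend to $S^*M$, and the image of $\sigma$ is dense in $C(S^*M)$, the functional $\rho_\infty$ extends uniquely to a bounded linear functional on $C(S^*M)$.

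Next I would verify that $\rho_\infty$ is a $g^t$-invariant probability measure. For positivity: if $a\in C^\infty(S^*M)$ satisfies $a\ge 0$, the sharp G{\aa}rding inequality gives $\rho_k(Op(a))\ge -C\lambda_k^{-1}\to 0$, whence $\rho_\infty(a)\ge 0$; the Riesz representation theorem then realizes $\rho_\infty$ as a positive Radon measure on the compact space $S^*M$. Taking $A=I$ in the defining pairing gives $\rho_\infty(1)=\lim_k\rho_k(I)=1$, so the measure is a probability measure. For flow invariance, Egorov's theorem provides $\sigma_{U_tAU_t^*}=\sigma_A\circ g^t$, and the invariance $\rho_k(U_tAU_t^*)=\rho_k(A)$ passes to the weak-$*$ limit to give
$$\int_{S^*M}(\sigma_A\circ g^t)\,d\rho_\infty=\int_{S^*M}\sigma_A\,d\rho_\infty,$$
i.e., $(g^t)_*\rho_\infty=\rho_\infty$.

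Finally, for time-reversal invariance I would use that $\Delta_g$ has real coefficients, so one may choose the orthonormal basis $\{\phi_k\}$ to be real-valued. Then
$$\rho_k(A)=\int_M(A\phi_k)\phi_k\,dV_g=\int_M\phi_k(A^t\phi_k)\,dV_g=\rho_k(A^t),$$
where $A^t$ is the bilinear transpose defined by $\int(Af)g\,dV_g=\int f(A^tg)\,dV_g$. A standard symbol calculation yields $\sigma_{A^t}(x,\xi)=\sigma_A(x,-\xi)$, so in the limit $\rho_\infty(a)=\rho_\infty(a\circ\iota)$ with $\iota(x,\xi)=(x,-\xi)$, placing $\rho_\infty$ in $\mcal_I^+$. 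The step I expect to require the most care is the first one, the descent to $C(S^*M)$: one must verify that $\rho_\infty$ truly extends continuously via the symbol map and that there is no residual contribution from the behaviour of symbols at $\xi=0$ or from non-homogeneous correction terms. Homogeneity of zeroth-order principal symbols resolves this cleanly, but it is worth spelling out carefully, especially if one later wishes to work with more general (e.g.\ semiclassical) symbol classes.
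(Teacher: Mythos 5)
Your proof is correct and follows essentially the same route as the paper's: compact perturbations vanish in the limit so the functional is controlled by $\|\sigma_A\|_{L^\infty}$ and descends to $C(S^*M)$, positivity and normalization pass to the limit, Egorov gives flow invariance, and real-valuedness of the eigenfunctions gives time-reversal invariance. Your added detail (sharp G{\aa}rding for positivity of the limit on non-negative symbols, and the transpose computation $\sigma_{A^t}(x,\xi)=\sigma_A(x,-\xi)$ for the time-reversal step) merely makes explicit what the paper leaves terse.
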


\begin{proof}

For any compact operator $K$,  $\langle K \phi_j, \phi_j \rangle
\to 0$. Hence,  any limit of $\langle A \phi_k, \phi_k \rangle$ is
equally a limit of  $\langle (A + K) \phi_k, \phi_k \rangle$. By
the norm estimate, the limit  is bounded by $\inf_K ||A + K||$
(the infimum taken over compact operators).   Hence any weak limit
is bounded by a constant times $||\sigma_A||_{L^{\infty}} $ and is
therefore continuous on $C(S^*M)$. It is a positive functional
since each $\rho_j$ is and hence any limit is a probability
measure. By Egorov's theorem and the invariance of the $\rho_k$,
any limit of $\rho_k(A)$ is a limit of $\rho_k(Op (\sigma_A \circ
\Phi^t))$ and hence the limit measure is invariant. It is also
time-reversal since the eigenfunctions are real-valued, i.e.
complex conjugation invariant.
\end{proof}

\begin{prob}\label{Q} Determine the set  ${\mathcal Q}$  of `quantum limits', i.e.
weak* limit points of the sequence $\{\Phi_k\}$ of distributions
on the classical phase space $S^*M$, defined by
$$\int_X a d\Phi_k := \langle Op(a) \phi_{\lambda_k}, \phi_{\lambda_k} \rangle$$
where $a \in C^{\infty}(S^*M)$.\end{prob}

The set $\qcal$ is independent of the definition of $Op$. The
simplest examples are the exponentials on  a flat torus
$\R^m/\Z^m$. By definition of pseudodifferential operator, $A e^{2
\pi i \langle k, x \rangle} = a(x, k) e^{2 \pi i \langle k, x
\rangle}$ where $a(x, k)$ is the complete symbol. Thus, \begin{equation}
\label{MDMTORUS} \langle
A e^{2 \pi i \langle k, x \rangle}, e^{2 \pi i \langle k, x
\rangle} \rangle = \int_{\R^n/\Z^n} a(x, k) dx \sim
\int_{\R^n/\Z^n} \sigma_A(x, \frac{k}{|k|}) dx .
\end{equation}
A subsequence  $e^{2 \pi i \langle k_j, x \rangle}$ of
eigenfunctions has a weak limit if and only if $\frac{k_j}{|k_j|}$
tends to a limit vector $\xi_0$ in the unit sphere in $\R^n$. In
this case, the associated weak* limit is $\int_{\R^n/\Z^n}
\sigma_A(x, \xi_0) dx $, i.e. the delta-function on the invariant
torus  $T_{\xi_0} \subset S^*M$ for $g^t$,  defined by the
constant momentum condition $\xi = \xi_0.$ The eigenfunctions are
said to localize on this invariant torus. Given $\xi_0$, we can
always define a sequence $k_j$ so that $\frac{k_j}{|k_j|} \to
\xi_0,$ and thus, every invariant torus measure arises as a
quantum limit.

 In general, there are many possible limit measures. The most
 important are:

\begin{enumerate}

\item Normalized Liouville measure. In fact, the functional
$\omega$ of (\ref{OMEGA}) is also a state on $\Psi^0$ for the
reason explained above. A subsequence $\{\varphijk\}$ of
eigenfunctions is considered diffuse if $\rho_{j_k} \to \omega$.

\item A periodic orbit measure $\mu_{\gamma}$ defined by
$\mu_{\gamma}(A) = \frac{1}{L_{\gamma}} \int_{\gamma} \sigma_A ds$
where $L_{\gamma}$ is the length of $\gamma$. A sequence of
eigenfunctions for which $\rho_{k_j} \to \mu_{\gamma}$ obviously
concentrates (or strongly `scars') on the closed geodesic.

\item A finite sum of periodic orbit measures.

\item A delta-function along an invariant Lagrangian manifold
$\Lambda \subset S^*M$. The associated eigenfunctions are viewed
as {\it localizing} along $\Lambda$.

\item A more general measure which is singular with respect to
$d\mu$.

\end{enumerate}

All of these possibilities arise as $(M, g)$ varies among
Riemannian manifolds. Indeed, the standard sphere provides an extreme example:

\begin{theo} \cite{JZ} For
the standard round sphere $S^n$, $\qcal = \mcal_I^+$. \end{theo}

An important case is when  $\rho_{k_j} \to \omega$, i.e. the limit
measure is Liouville measure. The corresponding eigenfunctions
become uniformly distributed on the energy surface $S^*_g M$ and
are sometimes called 'diffuse'.  By testing against multplication
operators, one gets
$$\frac{1}{Vol(M)} \int_E |\phi_{k_j}(x)|^2 dVol \to
\frac{Vol(E)}{Vol(M)} $$ for any measurable set $E$ whose boundary
has measure zero. In the interpretation of $|\phi_{k_j}(x)|^2
dVol$ as the probability density of finding a particle of energy
$\lambda_k^2$ at $x$, this says that the sequence of probabilities
tends to uniform measure. However,  $\rho_{k_j} \to \omega$ is
much stronger since it says that the eigenfunctions become
uniformly distributed on  $S^*M$ and not just on the configuration
space $M$. For instance, on   the flat torus $\R^n/\Z^n$,  the
standard exponentials $e^{2 \pi i \langle k, x \rangle}$ satisfy
$|e^{2 \pi i \langle k, x \rangle}|^2 = 1$, and are thus uniformly
distributed in configuration space. On the other hand, as seen
above, in phase space  they localize on invariant Lagrange tori in
$S^*M$.

The flat torus is a model of a completely integrable system, on
both the classical and quantum levels. On the other hand, if the
geodesic flow is ergodic one would expect the eigenfunctions to be
diffuse in phase space. The statement that the all eigenfunctions
are diffuse, i.e.  $\qcal = \{\omega\}$,  is known as {\it quantum
unique ergodicity}. It will be discussed in \S \ref{QuE}.

Off-diagonal matrix elements
\begin{equation} \rho_{jk} (A) = \langle A \phi_{\lambda_j}, \phi_{\lambda_k} \rangle \end{equation} are also important
as  transition amplitudes between states. They  no longer define
states since $\rho_{jk}(I) = 0$, are no longer positive, and are
no longer invariant. Indeed, $\rho_{jk}(U_t A U_t^*) = e^{i t
(\lambda_j - \lambda_k)} \rho_{jk}(A),  $ so they are eigenvectors
of the automorphism $\alpha_t$ of (\ref{ALPHAT}). A sequence of
such matrix elements cannot have a weak limit unless the spectral
gap $\lambda_j - \lambda_k$ tends to a limit $\tau \in \R$. In
this case, by the same discussion as above, any weak limit of the
functionals $\rho_{jk}$ will be a time-reversal invariant
eigenmeasure of the geodesic flow which transforms by $e^{i \tau
t}$ under the action of $g^t$. Examples of such eigenmeasures are
orbital Fourier coefficients $\frac{1}{L_{\gamma}}
\int_0^{L_{\gamma}} e^{- i \tau t} \sigma_A(g^t(x, \xi)) dt$ along
a periodic orbit. Here $\tau \in \frac{2 \pi}{L_{\gamma}} \Z.$ We
denote by $\qcal_{\tau}$ such eigenmeasures of the geodesic flow.
Problem $1$ has the following extension to off-diagonal elements:
\medskip

 \begin{prob} Determine the set
${\mathcal Q}_{\tau} $ of `quantum limits', i.e.  weak* limit
points of the sequence $\{\rho_{kj}\}$  on the classical phase
space $S^*M$. \end{prob}

As will be discussed in \S \ref{QWMS}, the asymptotics of
off-diagonal elements depends on the weak mixing properties of the
geodesic flow and not just its ergodicity.

\section{Singularities pre-trace formulae}

One of the principal methods for relating eigenfunctions and
geodesic flow are the {\it local Weyl laws}. One form of the local
Weyl law is to find the asymptotics and remainder for the
pointwise sums,
$$N(\lambda; x) = \sum_{j: \lambda_j \leq \lambda}
|\phi_{\lambda_j}(x)|^2. $$ Another is to find, for $A \in
\Psi^0$, the asymptotics and remainder for
$$N_A (\lambda) = \sum_{j: \lambda_j \leq \lambda}
\langle A \phi_{\lambda_j}, \phi_{\lambda_j} \rangle.  $$ In both
cases, the asymptotics are determined by a Fourier Tauberian
method, by studying the singularities of the dual trace,
$$S(t,  x) = \sum_{j} e^{i t \lambda_j}
|\phi_{\lambda_j}(x)|^2,$$ resp.
$$S_A(t) = \sum_{j} e^{i t \lambda_j}
\langle A \phi_{\lambda_j}, \phi_{\lambda_j} \rangle. $$

\subsection{Duistermaat-Guillemin short time pre-trace formula}

By a pre-trace formula one means a `geometric' singularities
formula for the wave kernels $$U(t, x, x) = \sum_j e^{i t
\lambda_j} \phi_{\lambda_j}(x)^2 $$ on the diagonal. The actual trace
formula is the integral over $M$ of the pre-trace formula. The
trace formula clearly gives only spectral information, while the
pre-trace formula gives information on eigenfunctions.

\begin{prop} \label{DGNX} \cite{D.G} Let $(M, g)$ be a $C^{\infty}$ compact Riemannian manifold
of dimension $n$.  Then there exists a sequence $\omega_1,
\omega_2, \dots$ of real valued smooth densities on $M$ such that,
for every $\rho \in \scal(\R)$ with supp $\hat{\rho}$ contained in
a sufficiently small neighborhood of $0$ and $\hat{\rho} \equiv 1$
in a small neighborhood of $0$,
$$\sum_j \rho(\lambda - \lambda_j) |\phi_{\lambda_j}(x)|^2 \sim
\sum_{k = 0}^{\infty} \omega_k \lambda^{n - k - 1}$$ as $\lambda
\to \infty$ (and rapidly decaying as $\lambda \to - \infty$) with
$$\omega_0(x) = Vol(S^*_x M), \;\; \omega_1 = 0 = \omega_n;
\omega_k = 0 \;\; \mbox{for odd}\; k. $$
\end{prop}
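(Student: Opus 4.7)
\medskip

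\noindent\textbf{Proof plan for Proposition \ref{DGNX}.} The plan is to pass to the Fourier side and extract the asymptotics from the singularity of the wave kernel at $t=0$ using the small-time parametrix. Since $\hat\rho \equiv 1$ near $0$ and $\rho \in \scal(\R)$, the spectral sum can be written via Fourier inversion as
\begin{equation*}
\sum_j \rho(\lambda - \lambda_j) |\phi_{\lambda_j}(x)|^2
= \frac{1}{2\pi}\int_{\R} \hat\rho(t)\, e^{-it\lambda}\, U(t,x,x)\, dt,
\end{equation*}
where $U(t,x,y)$ is the wave kernel. I choose the support of $\hat\rho$ strictly smaller than $\mathrm{inj}(M,g)$ so that on the conic neighborhood of the diagonal where $U(t,x,y)$ is singular we may replace it by the H\"ormander parametrix (\ref{PARAONE}) modulo a $C^\infty$ error; the latter contributes an $O(\lambda^{-\infty})$ term to the sum.

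On the diagonal $y=x$ we have $\exp_x^{-1}(x)=0$, so the parametrix reduces to $U(t,x,x)\equiv \int_{T^*_xM} e^{it|\xi|_{g_x}} A(t,x,x,\xi)\,d\xi\bmod C^\infty$, with $A$ polyhomogeneous of order $0$, $A\sim\sum_{k\ge 0} A_{-k}$ and $A_0(0,x,x,\xi)\equiv 1$ (the principal symbol of the identity at $t=0$). Substituting and passing to polar coordinates $\xi = r\omega$ with $r=|\xi|_{g_x}$, $\omega\in S^*_xM$, gives
\begin{equation*}
\sum_j \rho(\lambda-\lambda_j)|\phi_{\lambda_j}(x)|^2
\;\sim\; \frac{1}{2\pi}\int_{S^*_xM}\!\!\int_0^\infty\!\!\int_\R \hat\rho(t)\, e^{it(r-\lambda)} \tilde A(t,x,r,\omega)\, r^{n-1}\, dt\, dr\, d\omega.
\end{equation*}
The next step is to Taylor expand $\tilde A$ in $t$ at $t=0$ and to use the identity $\int t^j \hat\rho(t) e^{it(r-\lambda)}dt = 2\pi(-i)^j\rho^{(j)}(r-\lambda)$, converting each power of $t$ into a derivative of $\rho$. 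Since $\rho \in \scal$, localizing $r$ near $\lambda$ by writing $r=\lambda+s$ with $|s|=O(1)$ and expanding $r^{n-1-k}=\lambda^{n-1-k}(1+s/\lambda)^{n-1-k}$ produces, after integration against $\rho^{(j)}(s)$, an asymptotic series $\sum_{k\ge 0}\omega_k(x)\lambda^{n-1-k}$. The densities $\omega_k(x)$ are polynomials in $\int s^\ell \rho^{(j)}(s)\,ds$ (finite numbers independent of $\lambda$) with coefficients built from the amplitude $A$ and its $t$-derivatives at the origin along $S^*_xM$.

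For the leading coefficient, only $A_0(0,x,x,\xi)\equiv 1$ contributes, giving $\omega_0(x)=\mathrm{Vol}(S^*_xM)$ after integrating $d\omega$ on $S^*_xM$ (with the normalization $\hat\rho(0)=1$ absorbing the $2\pi$). For the vanishing of $\omega_k$ at odd $k$, the key observation is that $\phi_{\lambda_j}$ can be chosen real, so $U(-t,x,x) = \overline{U(t,x,x)}$; equivalently the symbol of the parametrix has a parity under $t\mapsto -t$ and $\xi\mapsto -\xi$ inherited from the symmetry of the wave equation. Splitting the Hadamard expansion into its even/odd parts in $t$ (equivalently, using that the subprincipal transport equations produce $W_j$ whose value on the diagonal depends only on $g$ and its even-order derivatives) kills every odd power of $\lambda^{-1}$ relative to the leading order; in particular $\omega_1=\omega_n=0$, and $\omega_k=0$ for all odd $k$.

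The main obstacles are twofold. First, one must justify that only the singularity of $U(t,x,x)$ at $t=0$ contributes, and that the remainder from replacing $U$ by its parametrix is $O(\lambda^{-\infty})$; this follows from the choice of support of $\hat\rho$ (smaller than $\mathrm{inj}(M,g)$ and separated from the length spectrum at $x$), together with the non-stationary phase lemma applied away from $t=0$. Second, and more delicate, is tracking the homogeneity degrees carefully so that the transport equations for the Hadamard coefficients translate into the claimed structure of $\omega_k$; here the parity argument for the vanishing of the odd terms is the subtlest point and is most cleanly seen by working with $\cos(t\sqrt\Delta)$ (which is automatically even in $t$) and relating $U(t,x,x)$ to it modulo smoothing errors.
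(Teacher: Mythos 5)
Your proposal follows essentially the same route as the paper's proof: Fourier inversion reduces the smoothed spectral sum to $\int\hat\rho(t)e^{-it\lambda}U(t,x,x)\,dt$, one substitutes the short-time Lax--H\"ormander parametrix on the diagonal, passes to polar coordinates in $\xi$, and evaluates the $dt\,dr$ integral (your device of converting $t^j$ into $\rho^{(j)}(r-\lambda)$ is just the exact form of the stationary phase computation the paper invokes). One caveat: the reality of the eigenfunctions, i.e.\ $U(-t,x,x)=\overline{U(t,x,x)}$, holds for any self-adjoint operator and by itself does not force the odd coefficients to vanish; the actual mechanism is the one you mention second, namely that the Hadamard expansion on the diagonal is a sum of homogeneous distributions in $t$ whose degrees increase in steps of two (powers of $r^2-t^2$ evaluated at $r=0$), so their Fourier transforms contribute only $\lambda^{n-1-2j}$.
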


\begin{proof} One uses a short-time parametrix,
$$U(t, x, y) = (2\pi)^{-n}\int_{\R^{n}}e^{i\varphi(t, x,
y, \eta)} a(t, x, y, \eta) d\eta$$ where $\alpha$ is a classical
symbol of order $m$ and where $\varphi(t, x, y, \eta) = \psi(x, y,
\eta) - t|\eta|$, with $\psi(x, y, \eta) = 0$ if $\langle x-y,
\eta\rangle = 0$. Hence,
$$\begin{array}{lll}
\rho*dN(\lambda ,x)&=&\displaystyle{\int_\R\; e^{i\lambda
t}\hat \rho
(t)\:U(t, x, x)dt}\\[12pt]
&=&(2\pi)^{-n}\displaystyle{\;\int_{\R}\;\int_{\R^{n}}e^{i\lambda
t} \hat \rho (t) e^{-it|\eta|} a(t, x, x, \eta)
\:d\eta\:dt\;\;.}\end{array}$$ One now changes variables $\eta
\to \lambda \eta, $ puts the $d\xi$ integral into polar
coordinates $\xi = r \omega, |\omega| = 1$ and carries out the $dt
dr $ integral by the method of stationary phase.

\end{proof}

The same kind of argument applies to $N_A(\lambda)$:

\begin{prop}\label{DGNA}  For $A\in \Psi^m(M)$, let $N_A(\lambda) = \sum_{\lambda _j\leq \lambda }(A\varphi_j,
\varphi_j)$.  Then for any $\rho \in \scal(\R)$ with $\hat \rho
\in C^\infty_0(\R)$, $\supp\hat\rho \cap \:\mbox{Lsp}\Mg = \{0\}$
and with $\hat \rho\equiv 1$ in some interval around 0, we have:
$$\rho*dN_A(\lambda)\sim\sum^\infty_{k=0} \alpha_k\lambda
^{n+m-k-1}\;\;(\lambda \rightarrow +\infty)$$ where:
$$\begin{array}{l}
n= \dim M, \;\;
\alpha_0 =  \displaystyle{\int_{S^{*}M}\sigma_Ad\mu}, \;\;
\alpha_k =  \displaystyle{\int_{S^{*}M}\omega_kd\mu}
\end{array}$$ where $\omega_k$ is determined from the $k$-jet of
the complete symbol $a$ of $A$. \end{prop}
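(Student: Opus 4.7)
The plan is to repeat the argument of Proposition \ref{DGNX} with the wave kernel $U(t,x,y)$ replaced by the Schwartz kernel of $A\,U(t) = A\,e^{it\sqrt{\Delta}}$, and then to take the trace instead of restricting to the diagonal at a single point. Concretely, the smoothed counting function can be written as
\[
\rho * dN_A(\lambda) = \int_\R e^{i\lambda t}\hat\rho(t)\,\mathrm{Tr}\bigl(A\,e^{it\sqrt{\Delta}}\bigr)\,dt,
\]
so the task is to extract the singularity of $t \mapsto \mathrm{Tr}(A\,U(t))$ at $t=0$ under the support hypothesis on $\hat\rho$, and then apply a Fourier Tauberian argument exactly as in the proof of Proposition \ref{DGNX}.

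First I would write $U(t)$ locally as the oscillatory integral $(2\pi)^{-n}\int_{\R^n} e^{i\varphi(t,x,y,\eta)}a(t,x,y,\eta)\,d\eta$ with $\varphi = \psi(x,y,\eta)-t|\eta|$ as above, and compose on the left with $A\in\Psi^m(M)$. By the standard symbol calculus for the action of a pseudodifferential operator on a Lagrangian distribution (or equivalently by the composition formula for FIOs), the product $A\,U(t)$ is again a Fourier integral operator with the same phase $\varphi$ and a new classical amplitude $\tilde a(t,x,y,\eta)$ of order $m$ whose leading symbol on the diagonal equals $\sigma_A(x,\eta)\cdot a_0(t,x,x,\eta)$; lower order terms in $\tilde a$ are determined by the jets of the full symbol of $A$ together with the amplitude $a$. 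Setting $y=x$ and integrating $dx$ gives
\[
\mathrm{Tr}(A\,U(t)) = (2\pi)^{-n}\int_M\!\!\int_{\R^n} e^{i\varphi(t,x,x,\eta)}\tilde a(t,x,x,\eta)\,d\eta\,dx.
\]

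Second, the key support hypothesis $\mathrm{supp}\,\hat\rho\cap\mathrm{Lsp}(M,g) = \{0\}$ together with the propagation-of-singularities description of $U(t)$ (only singular when some closed geodesic of length $t$ exists, or at $t=0$ on the diagonal) guarantees that inserting $\hat\rho(t)$ localizes the $t$-integral to a small neighborhood of $0$, where $\varphi(t,x,x,\eta) = -t|\eta|$ since $\psi(x,x,\eta)=0$. I would then change variables $\eta = \lambda r\omega$ with $r>0$, $\omega\in S^{n-1}$, collect the factor $\lambda^n$ from the Jacobian, and apply stationary phase in $(t,r)$ to the phase $\lambda(1-r)t$: the unique critical point is $(t,r)=(0,1)$ and it is nondegenerate, producing a complete asymptotic series $\sum_k \alpha_k\lambda^{n+m-k-1}$ whose $k$-th coefficient is a universal differential operator in $(t,r,\omega,x)$ applied to $\tilde a$ at $(0,x,x,\omega)$ and integrated $dx\,d\omega$ over $S^*M$. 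The leading term is $\int_{S^*M}\sigma_A\,d\mu$, and each $\omega_k$ is manifestly determined by the $k$-jet of the complete symbol of $A$ at $S^*M$.

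Finally, the passage from the asymptotics of $\rho*dN_A$ to the statement is just the Fourier Tauberian theorem already implicit in Proposition \ref{DGNX}. The main obstacle is purely bookkeeping: carrying the full symbolic expansion of $A\,U(t)$ through the stationary phase to extract the higher coefficients $\omega_k$ in terms of the jets of $\sigma(A)$, and verifying that the non-diagonal contributions to $\mathrm{Tr}(A U(t))$ indeed vanish modulo $C^\infty$ near $t=0$ so that the local parametrix is sufficient. Both points are handled by the standard FIO composition and propagation-of-singularities machinery reviewed in \S\ref{METHODS} and \S\ref{WAVEKERNEL}, so no new ingredient beyond Proposition \ref{DGNX} is required.
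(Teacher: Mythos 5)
Your proposal is correct and follows essentially the same route as the paper: compose $A$ with the wave parametrix to obtain an oscillatory integral with the same phase and a classical amplitude of order $m$, restrict to the diagonal, pass to polar coordinates $\eta=\lambda r\omega$, and perform stationary phase in $(t,r)$ at $(0,1)$, with the hypothesis $\supp\hat\rho\cap\mathrm{Lsp}(M,g)=\{0\}$ ensuring that only the $t=0$ singularity contributes. The paper states this even more tersely ("apply $A$ to the parametrix... now proceed as before"), so your write-up is simply a fuller account of the identical argument.
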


\noindent {\it Proof.} The only new step is to apply $A$ to the
parametrix for $U_t$. Applying a  $\pdo$ to $a e^{i \phi}$
produces an expression $\alpha e^{i \phi}$ with the same phase and
only a change in the amplitude.  Hence,
$$AU(t, x, y) = (2\pi)^{-n}\int_{\R^{n}}\alpha(t, x, y, \eta)e^{i\varphi(t, x,
y, \eta)}d\eta$$ where $\alpha$ is a classical symbol of order
$m$. Now proceed as before.

\subsection{Long time pre-trace formulae} We now discuss pre-trace
formulae for $U(t, x, x)$ for long times $t$. The main difficulty
is that there is no better  parametrix for $U(t, x, x)$ than to
use the group formula $U(t) = U(t/N)^N$ to reduce to times $<
inj(M, g)$. But this will require $N \times m$ integral signs and
amplitudes that are difficult to control. Moreover, the
singularities become very difficult to control.

One can get a good idea of the difficulties by studying the
simplest case, that  of manifolds without conjugate points. This
case is a rather straightforward generalization of the
 Selberg pre-trace formula
for  compact hyperbolic manifolds.  The key feature  is that there
exists a global in time parametrix on the universal cover,  so it
is not necessary to assume that the suppport of $\hat{\rho}$ is
small. As in the classical trace formula, one organizes  the
elements $\gamma \in \Gamma$ (the deck transformation group)  into
conjugacy classes $\hat{\gamma}$. One then has
$$U(t, x, x) = \sum_{\hat{\gamma}} U_{\hat{\gamma}}(t, x, x), $$ where $$ U_{\hat{\gamma}}(t, x, y) =
\sum_{\alpha \in \Gamma/\Gamma_{\gamma}} \tilde{U}(t, x,
\alpha^{-1} \gamma \alpha x) = \sum_{\alpha \in \Gamma/\Gamma_{\gamma}} \tilde{U}(t, \alpha  x,  \gamma
\alpha x),
$$
since $\alpha$ is an isometry. Here, $\Gamma_{\gamma}$ is the
stabilizer in $\Gamma$ of $\gamma$.

In this case, it is best to use the Hadamard parametrix. Since we
are interested in the long time singularities, we can use the
phase function $r - t$ instead of $r^2 - t^2$. Either one
parameterizes the graph of the geodesic flow away from $r = 0$.
One then has,
$$\begin{array}{lll}
\rho*dN(\lambda ,x)&=& \sum_{\gamma \in \Gamma}
\displaystyle{\int_\R\;e^{i\lambda t}\hat \rho
(t)\:\tilde{U}(t, x, \gamma x)dt}\\[12pt]
&=&(2\pi)^{-n} \sum_{\hat{\gamma}}
 \sum_{\alpha \in
\Gamma/\Gamma_{\gamma}}
 \displaystyle{ \int_{\R}\;\int_{\R_+}e^{i\lambda t} \hat
\rho (t) e^{i \theta (r (\alpha x,  \gamma \alpha x) - t))}  a(
\alpha x, \gamma \alpha x, \theta) \:d\theta \:dt}\\
[12pt] &=&(2\pi)^{-n}  \sum_{\hat{\gamma}}
 \sum_{\alpha \in
\Gamma/\Gamma_{\gamma}}
 \displaystyle{ \int_{\R}\;\int_{\R_+} \hat
\rho (t) e^{i \lambda (t +  \theta (r (\alpha x,  \gamma \alpha x)
- t))} a( \alpha x, \gamma \alpha x, \theta) \:d\theta
\:dt\;\;.}\end{array}$$ To determine the asymptotics as $\lambda
\to \infty$ one again applies stationary phase. The phase function
is critical when $\theta = 1, t = r (\alpha x,  \gamma \alpha x)$.
This corresponds to the times $t$ when there exists a geodesic
loop at $x \in M$ (i.e. at $\alpha x \in \tilde{M}$). The geodesic
loops are in one-one correspondence with conjugacy classes in
$\Gamma$ and hence form a countable set. However, the growth rate
of this set as $t \to \infty$ is often exponentially large. Thus,
if supp $\hat{\rho} \subset [- T, T]$, then there are often $e^{C
T}$ terms in the sum over $\gamma$. This makes it difficult to
control the remainder terms.

\subsection{Safarov trace formula}

Safarov added some precision to the Duistermaat-Guillemin
singularities pre-trace formula. For fixed $x$,  Given $x\in M$, we let $\lcal_x$ denote the set
of loop directions at $x$:
\begin{equation}\label{LCALX}  \lcal_x = \{\xi \in S^*_x M : \exists T: \exp_x T
\xi = x \}.
\end{equation}
 We  let $T_x: S^*_x M \to \R_+ \cup
\{\infty\}$ denote the return time function to $x$,
$$T_x (\xi) = \left\{ \begin{array}{ll} \inf \{ t > 0: \exp_x t \xi
= x\}
, & \; \mbox{if}\; \xi \in \lcal_x; \\ & \\
+ \infty, & \mbox{if no such} \;t \; \mbox{exists}. \end{array}
\right. $$
We then define the  first return map by
$\Phi_x = g^{T_x}_x : \lcal_x \to S^*_x M. $ We also define $T^{(k)}(\xi)$ to be the time
of $k$th return for directions which loop back at least $k$ times.

We then consider the
positive partially unitary operator (the Perron-Frobenius
operator)
$$U_x: L^2(\lcal_x, |d\omega|) \to L^2(S^*_x, |d\omega|),
\;\;\;U_x f(\xi)  = \left\{ \begin{array}{ll} f((\Phi_x)(\xi))
\sqrt{J_x(\xi)}, &
\xi  \in  \lcal_x, \;\;\\ & \\
0, & \xi \notin \lcal_x.
\end{array} \right.$$

Here, $J_x$ is the Jacobian of the map $\Phi_x$, i.e.
$\Phi_x^*|d\xi| = J_x(\xi) |d\xi|$. We have:
$$\ker U_x = \{f \in L^2(S^*_x): \;\supp f \cap  \Phi_x ( \lcal_x) =
\emptyset\} ;\;\; \Im U_x =  \{f \in L^2(S^*_x): \supp f \subset
\lcal_x \}. $$ We  further define
$$U_x^{\pm} (\lambda) = e^{i \lambda T_x^{\pm}} U_x^{\pm}. $$

Let $\rho_T$ be the dilated test function satisfying $\widehat{\rho_T}(\tau) = \hat{\rho}(\frac{\tau}{T})$.
The pre-trace formula then has the form,

\begin{equation} \label{REPLACE2} \rho_T * d N(\lambda, x)  = a_0(x) \lambda^{n-1} + \lambda^{n-1} \int_{\lcal_x} \sum_{k =
1}^{\infty} \hat{\rho} (\frac{T_x^{(k)}(\xi)}{T}) U_x(\lambda)^k
|d\xi| + o_{T, x}(\lambda^{n - 1}). \end{equation}

A key point in the proof is that  the phase of the oscillatory integral for the left side  only has a
stationary phase point at $\xi \in \lcal_x$. That  reduces the integral  to
one over $\lcal_x$ modulo an  error
$o_T(\lambda^{n-1})$. For the details, we refer to    Proposition
4.1.16 of \cite{SV}.

\section{Weyl law and local Weyl law}

The classical  Weyl law asymptotically counts the number of
eigenvalues less than $\lambda,$
\begin{equation}\label{WL} N(\lambda ) = \#\{j:\lambda _j\leq \lambda \}= \frac{|B_n|}{(2\pi)^n} Vol(M, g) \lambda ^n
+O(\lambda ^{n-1}). \end{equation} Here, $|B_n|$ is the Euclidean
volume of the unit ball and $Vol(M, g)$ is the volume of $M$ with
respect to the metric $g$. Equivalently,
\begin{equation} Tr E_{\lambda} = \frac{Vol(|\xi|_g
\leq \lambda)}{(2\pi)^n} +O(\lambda ^{n-1}) , \end{equation} where
$Vol$ is the symplectic volume measure relative to the natural
symplectic form $\sum_{j=1}^n dx_j \wedge d\xi_j$ on $T^*M$. Thus,
the dimension of the space where $H = \sqrt{\Delta} $ is $\leq
\lambda$ is asymptotically the volume where its symbol $|\xi|_g
\leq \lambda$.

The Weyl law with remainder is proved by  using the integrated
version of  Proposition \ref{DGNX} together with the Fourier
Tauberian  estimate $N(\lambda) - \rho *  N(\lambda) =
O(\lambda^{n-1})$, valid when $\rho * dN(\lambda) =
O(\lambda^{n-1})$. The latter is visibly true from Proposition
\ref{DGNX}.  See the Appendix \S \ref{APPENDIX}, Theorem \ref{ET}.

Proposition \ref{DGNX} assumes that $\hat{\rho}$ is supported in a
small interval around $t = 0$. Thus it only takes into account the
generic singularity at $t = 0$. An improved, two-term Weyl law has
been proved which takes into account the singularities of $Tr \cos
t \sqrt{\Delta}$ for larger values of $t$. The singular $t \not=
0$ are the lengths of the closed geodesics $\gamma$ of $g^t$. The
size of the remainder reflects the measure of closed geodesics.

\begin{enumerate}

\item  In the {\it aperiodic} case, Ivrii's two term Weyl law
states
$$N(\lambda ) = \#\{j:\lambda _j\leq \lambda \}=c_m \;
Vol(M, g) \; \lambda^m +o(\lambda ^{m-1})$$
 where $m=\dim M$ and where $c_m$ is a universal constant.

\item  In the {\it periodic} case,
 the spectrum of $\sqrt{\Delta}$ is a union of eigenvalue clusters $C_N$ of the form
$$C_N=\{(\frac{2\pi}{T})(N+\frac{\beta}{4}) +
 \mu_{Ni}, \; i=1\dots d_N\}$$
with $\mu_{Ni} = 0(N^{-1})$.   The number $d_N$ of eigenvalues in
$C_N$ is a polynomial of degree $m-1$.
\end{enumerate}

We refer to \cite{D.G,HoI-IV,SV,Z1} for background and further
discussion.

The  integrated Weyl law is relevent to spectral theory, while the
pointwise local Weyl law is relevant to eigenfunctions. By the
same Fourier Tauberian theorem, one has (\cite{D.G}),

\begin{equation} \label{PLWL} \sum_{\lambda _{j} \leq
\lambda } |\varphi_j(x)|^2 = \frac{1}{(2 \pi)^n} |B^n| \lambda^n +
R(\lambda, x),
\end{equation}
where $R(\lambda, x) = O(\lambda^{n-1})$ uniformly in $x$. In this
case one obtains a $o(\lambda^{n-1})$ remainder if the set of
geodesic loops at $x$ has measure zero. Such refinements will be
discussed in \S \ref{LP}.

The other  {\it local Weyl law} concerns the traces $Tr A
E(\lambda)$ where $A \in \Psi^m(M).$ It asserts that
\begin{equation} \label{LWL} \sum_{\lambda _{j} \leq
\lambda }\langle A\varphi_j, \varphi_j \rangle = \frac{1}{(2
\pi)^n}  \left(\int_{B^*M} \sigma_A dx d\xi \right)  \lambda^n +
O(\lambda^{n-1}).
\end{equation}
When the periodic geodesics form a set of measure zero in $S^*M$,
one could average over the shorter interval $[\lambda, \lambda +
1].$ Combining the Weyl and local Weyl law, we find the surface
average of $\sigma_A$ is a limit of traces:
\begin{equation} \label{OMEGA} \begin{array}{lll} \omega(A)
&: =&\displaystyle{ \frac{1}{\mu (S^{*}M)}\int_{S^*M}\sa d\mu} \\
& & \\
 &=& \displaystyle{ \lim_{\lambda
\rightarrow\infty}\frac{1}{N(\lambda )}\sum_{\lambda _{j} \leq
\lambda }\langle A\varphi_j, \varphi_j \rangle}
\end{array} \end{equation}
Here,  $\mu$ is the {\it Liouville measure} on $S^*M$.

\section{\label{LP} Local and global $L^p$ estimates of eigenfunctions}

One of the applications of pointwise and local Weyl laws is to
obtain bounds on $L^p$ norms of $L^2$-normalized eigenfunctions, when $p$ is
sufficiently large.
 Estimates of  the $L^{\infty}$ norms
can be obtained from  the local Weyl law (\ref{LWL}). We use the notation
$f(\lambda) = O(g(\lambda))$ if there exists a constant $C$ such that
$f(\lambda) \leq C g(\lambda)$, and $f(\lambda) = \Omega (g(\lambda))$ if there exists a constant $C$ such that
$f(\lambda_j) \geq C g(\lambda_j)$ for some sequence $\lambda_j \to \infty$ (in other words, the negation
of $f = o(g(\lambda))$.

\begin{prop} Let $(M, g)$ be a compact $m$-dimensional $C^{\infty}$ Riemannian
manifold. Then $||\varphi_{\lambda_j}||_{L^{\infty} } = O(\lambda_j^{\frac{m-1}{2}})$.

\end{prop}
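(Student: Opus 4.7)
The plan is to extract the $L^\infty$ bound from the pointwise local Weyl law (\ref{PLWL}), which is available to us from the preceding section. The key input is that the remainder
\[
R(\lambda,x) \;=\; \sum_{\lambda_j \le \lambda} |\varphi_j(x)|^2 \;-\; \frac{|B^m|}{(2\pi)^m}\lambda^m
\]
satisfies $R(\lambda,x) = O(\lambda^{m-1})$ with the implied constant \emph{independent of} $x \in M$. This uniformity is the crucial feature and comes from the fact that the Duistermaat--Guillemin singularity expansion in Proposition \ref{DGNX} is uniform in $x$ (the densities $\omega_k$ are smooth on the compact manifold $M$), and the Fourier Tauberian theorem then converts this into a uniform-in-$x$ Weyl remainder.

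Given this, I would isolate a single eigenfunction by a short spectral window. Since every term in $\sum_{\lambda_j \le \lambda} |\varphi_j(x)|^2$ is non-negative,
\[
|\varphi_j(x)|^2 \;\le\; \sum_{\lambda_j - 1 < \lambda_k \le \lambda_j} |\varphi_k(x)|^2 \;=\; \bigl(N(\lambda_j,x) - N(\lambda_j -1,x)\bigr),
\]
where $N(\lambda,x)$ denotes the left-hand side of (\ref{PLWL}). Applying the pointwise Weyl law at $\lambda_j$ and at $\lambda_j - 1$ and subtracting, the main terms contribute
\[
\frac{|B^m|}{(2\pi)^m}\bigl(\lambda_j^m - (\lambda_j-1)^m\bigr) \;=\; O(\lambda_j^{m-1}),
\]
by the mean value theorem, while the two remainder contributions are each $O(\lambda_j^{m-1})$, uniformly in $x$. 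Hence $|\varphi_j(x)|^2 \le C \lambda_j^{m-1}$ for a constant $C$ independent of $x$, and taking square roots and sup over $x$ yields $\|\varphi_{\lambda_j}\|_{L^\infty} = O(\lambda_j^{(m-1)/2})$, which is the desired bound.

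There is no serious obstacle here: the bound is a direct consequence of (\ref{PLWL}) once one notices the positivity of the individual summands and the uniformity of the remainder. The only mild subtlety is that one must invoke the pointwise local Weyl law (not the integrated one), so that the $O(\lambda^{m-1})$ remainder is available at each $x$ with a constant not depending on $x$. This uniformity is where the $C^\infty$ hypothesis on $(M,g)$ and the compactness of $M$ enter, ensuring uniform control of the Hadamard--Riesz parametrix and of the amplitudes appearing in the stationary phase analysis.
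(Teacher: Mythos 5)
Your proof is correct and follows essentially the same route as the paper: both arguments rest on the positivity of the terms in the pointwise local Weyl law together with the uniform-in-$x$ remainder $R(\lambda,x)=O(\lambda^{m-1})$. The only cosmetic difference is that the paper bounds the jump of $N(\lambda,x)$ at $\lambda=\lambda_j$ (a window of length zero), whereas you bound the increment over the unit window $(\lambda_j-1,\lambda_j]$; both yield the same conclusion.
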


\begin{proof}

Since the jump in the the left hand side of (\ref{PLWL})  at
$\lambda$ is $\sum_{j: \lambda_j = \lambda}
|\varphi_{\lambda_j}(x)|^2$ and the jump in the right hand side is
the jump of $R(\lambda, x)$, we have
\begin{equation} \label{SUP} \sum_{j: \lambda _{j} =
\lambda } |\varphi_{\lambda_j}(x)|^2 = O(\lambda_j^{m-1}) \implies \;
||\varphi_{\lambda_j}||_{L^{\infty} } = O(\lambda_j^{\frac{m-1}{2}}).
\end{equation}

\end{proof}

We note that this estimate is stronger by the factor $\lambda^{-\frac{1}{2}}$ than the estimate obtained by
Sobolev estimates of eigenfunctions: If $\dim M = m$, then
$$||\phi_{\lambda}||_{L^p} \leq \lambda^{m (\frac{1}{q} - \frac{1}{p})} ||\phi_{\lambda}||_{L^q}. $$
Thus, when $p = \infty$, the Sobolev estimate
gives  $||\phi_{\lambda}||_{L^{\infty}} \leq \lambda^{ \frac{m}{2}} ||\phi_{\lambda}||_{L^2}$.

For general $L^p$-norms, the following  bounds hold on any compact
Riemannian manifold

\begin{theo}  \cite{Sog}:
\begin{equation}
\frac{\|\phi_{\lambda}\|_p}{\|\phi_{\lambda}\|_2}=O(\lambda^{\delta(p)}), \quad 2\le
p\le \infty.
\end{equation}
where
\begin{equation} \delta(p)=
\begin{cases}
n(\tfrac12-\tfrac1p)-\tfrac12, \quad \tfrac{2(n+1)}{n-1}\le p\le
\infty
\\
\tfrac{n-1}2(\tfrac12-\tfrac1p),\quad 2\le p\le
\tfrac{2(n+1)}{n-1}.
\end{cases}
\end{equation}
\end{theo}

These estimates are sharp for some $(M, g)$. For instance,  on the
unit sphere $S^{n} \subset \R^{n+1}$, the zonal spherical
harmonics achieve the maximal $L^{\infty}$ norm. Moreover,  the
highest weight spherical harmonics on $S^2$ saturate the bounds
for small $L^p$ norms: using Gaussian integrals, one easily finds
that $||(x + i y)^k||_{L^2(S^2)} \sim k^{-1/4}. $ For instance,
$$\begin{array}{lll} \int_{\R^3} (x^2 + y^2)^{k} e^{-(x^2 + y^2 +
z^2)} dx dy dz & = & ||(x + i y)^k||_{L^2(S^2)}^2  \int_0^{\infty}
r^{2k} e^{-
r^2} r^2 dr, \\&&  \\
 &
\implies & ||(x + i y)^k||_{L^2(S^2)}^2 = \frac{\Gamma(k +
1)}{\Gamma(k + \frac{3}{2})} \sim k^{-1/2}. \end{array}$$

Thus, the $L^2$ normalized highest weight vector has the form
$k^{1/4} (x + i y)^k$.  It achieves its $L^{\infty}$ norm at
$(1,0, 0)$ where it has size $k^{1/4}. $ More importantly,  it is
an extremal for $L^p$ for $2 \leq p \leq 6$. For instance,
$$\begin{array}{lll} \int_{\R^3} (x^2 + y^2)^{3k} e^{-(x^2 + y^2 +
z^2)} & = & ||(x + i y)^k||_{L^6(S^2)}^6 \int_0^{\infty} r^{6k}
e^{- r^2} r^2 dr, \\ && \\ & \implies &  ||(x + i
y)^k||_{L^6(S^2)}^6 = \frac{\Gamma(6k + 1)}{\Gamma(6k +
\frac{3}{2})} \sim k^{-1/2}.
\end{array}$$
Hence, the $L^6$ norm of $k^{1/4} (x + i y)^k$ equals
$$k^{1/4} k^{-1/12} = k^{1/6}. $$
Since $\lambda_k \sim k$ and $\delta(6) = \frac{1}{6}$ in
dimension $2$, we see that it is an extremal.

It is natural to ask if extremals (in order of magnitude) for the $L^p$ norms
necessarily resemble these examples. This motivates the problems:

\begin{enumerate}

\item {\it What is the structure
 of eigenfunctions which are maximal for the $L^p$ norms?}

\item {\it Determine the
$(M,g)$ for which $L^{\infty}(\lambda,g) =
\Omega(\lambda^{\frac{n-1}{2}}).$}

\item   {\it At the other extreme, determine the structure of eigenfunctions
which minimize the $L^p$ norms, and determine the  $(M, g)$
with a uniformly bounded orthonormal basis of eigenfunctions. Irrational flat tori are
examples.}

\end{enumerate}

\subsection{Sketch of proof of the Sogge $L^{p}$ estimate }

We now sketch the proof of the Sogge estimates, following the appraoch in  \cite{KTZ}.

The estimates are proved by interpolation from three estimates: $p = \infty, \tfrac{2(n+1)}{n-1}$ (for $n \not= 1$).
The $L^2$ estimate is of course trivial. The $L^{\infty}$ estimate already followed from the un-integrated local Weyl
law with remainder estimated by the Fourier Tauberian method. We will nevertheless sketch an alternative proof which
has more in common with the  $L^{\tfrac{2(n+1)}{n-1}}$
estimate.

\subsubsection{The $L^{\infty}$ estimate}

\begin{proof} The first observation is that the estimates are local in phase space $T^*  M$.
That is,  the estimate holds for $\phi_{\lambda}$ if it holds for $\chi(x, \lambda^{-1} D) \phi_{\lambda}$
for any   microlocal cutoff $\chi(x, \lambda^{-1} D) = Op_{\lambda^{-1}} (\chi)$, where $\chi$ is a smooth function on $T^*M\backslash 0$
 which is supported in a  small conic subset (i.e. the cone through
a small open subset of $S^*_g M$.)

The semi-classical symbol of $\lambda^{-2}\Delta - 1$ equals $p(x, \xi) = |\xi|_g^2 -1$. We may choose coordinates
in the support of $\chi$ so that $\partial_{\xi_1} (|\xi|_g^2 - 1) \not= 0 $.
Locally write $p = p(x, \xi', \xi_1), $ where $\xi' = (\xi_2, \dots, \xi_m)$.  By the implicit function theorem, there exists
 a local function $a(x, \xi')$ such that $\xi_1 = a(x, \xi')$ when $p = 0$, and there exists a symbol $q(x, \xi) > 0, $
on the support of $\chi$
so that \begin{equation} \label{FACTOR} p(x, \xi) = q(x, \xi) (\xi_1 -  a(x, \xi')). \end{equation}  Quantizing the right
factor and using the support properties of the symbol, we get
\begin{equation} \label{RIGHTEST} \begin{array}{lll} \left(q(x, \lambda^{-1}D)  (\lambda^{-1}D_{x_1}) - a(x, \lambda^{-1} D') \right) (\chi(x, \lambda^{-1}D) \phi_{\lambda} )
 &=  &(\lambda^{-2} \Delta - 1) \chi(x, \lambda^{-1} D) \phi_{\lambda} \\ && \\
&  =&
[\lambda^{-2} \Delta, \chi(x, \lambda^{-1} D) ] \phi_{\lambda} = O(\lambda^{-1}). \end{array} \end{equation}
Since $q(x, \lambda^{-1}D)$ is elliptic we may invert it to obtain,
\begin{equation} \label{f} f_{\lambda}(x_1, x') : = ( \lambda^{-1}D_{x_1} - a(x, \lambda^{-1}D')) \chi(x, \lambda^{-1} D) \phi_{\lambda}   = O(\lambda^{-1}). \end{equation}
Let $E(t)$ be the unitary evolution operator which solves
\begin{equation} \label{ODE} ( \lambda^{-1}D_{t} - a(t, x', \lambda^{-1}D')) \chi(t, x', \lambda^{-1} D) u = f \end{equation}
with $E(0) = Id$.
The $L^{\infty}$ estimate then follows from the Sobolev bound in dimension $m-1$ as long as
\begin{equation} \label{O(1)} ||\chi(x, \lambda^{-1} D) \phi_{\lambda}(x_1, \cdot)  ||_{L^2(\R^{m-1})}  =  O(1). \end{equation}
This follows by solving the ODE (\ref{ODE}) in the $x_1$ variable. If the right side were equal to zero,
it would be solved by a unitary evolution operator $E(t) (\chi(x, D) \phi_{\lambda} )  \phi_{\lambda}(0, x')$.
 One then solves the inhomogeneous equation
by the Duhamel formula,
$$(\chi(x, \lambda^{-1} D) \phi_{\lambda} ) (x_1, x') = E(x_1) (\chi(x, \lambda^{-1} D) \phi_{\lambda} ) (0, x') + i \lambda \int_0^{x_1} E(t - s) O(\lambda^{-1}) ds,$$
proving (\ref{O(1)}) and completing the proof of the Theorem.

\end{proof}

\subsubsection{The $L^{\tfrac{2(n+1)}{n-1}}$ estimate}

The estimate to be proved is that
\begin{equation} \begin{array}{lll} || \phi_{\lambda}||_{L^{\frac{2(n + 1)}{n - 1}}} & \leq &
C \lambda^{\frac{n-1}{2(n + 1)}} . \end{array} \end{equation}

We start again with the symbol factorization (\ref{FACTOR}) and the estimate (\ref{RIGHTEST}), which implies
\begin{equation} \int_{\R} ||f(x_1, \cdot)||_{L^2(\R^{n-1})} dx_1 \leq C ||f||_{L^2(\R^n)} = O(\lambda^{-1}).  \end{equation}
We now use the evolution operator $E(t)$ of (\ref{ODE}). The equation is time dependent, so the solution operator solves an operator
ODE of the form
\begin{equation} \label{ET}  \lambda^{-1} D_t E(t, s) + A(t) E(t, s) = 0, \;\; E(t,t) = Id. \end{equation}
Then there exists a parametrix for $E(t)$ of the type (\ref{PARAONE}). More generally, suppose that $F(t, s)$ solves the equation
(\ref{ET}) with initial condition $F(s,s) = G(s)(x, \lambda^{-1} D)$, a smoothing operator. Then for small times $t \in (0, t_0)$
there exists an oscillatory integral parametrix
$$F(t, s) u(x) = \lambda^{-k} \int_{\R^k} e^{i \lambda (\phi(t, s, x, \eta) - \langle y, \eta \rangle)} b(t, x, \eta; \lambda) dy d\eta
+ R(t, s) u(x), $$
where $R(t, x)$ is a smoothing operator.

Let $U(t, s) = \psi(t) F(t, s) \chi(x, \lambda^{-1}D )$ where $\chi \in C_c^{\infty}(\R)$. Then the main point is to prove,
\begin{equation} \label{MAINST}
\sup_{s \in I} \left( \int_{\R} ||U(t, s) f||^p_{L^q((\R^n)} dt \right)^{\frac{1}{p}} \leq C \lambda^{- \frac{1}{p}} ||f||_{L^2(\R^n)}.
\end{equation}
For the application to the $L^{\frac{2 (n + 1)}{n-2}}$ estimate one takes $ p = q = \frac{2(n + 1)}{n - 2}$, and we assume this
from now on.
The idea of the proof is to use the oscillatory integral parametrix for $F$ and $U(t, s)$ and the stationary phase method to prove,
\begin{equation} \label{U(t,r)} ||U(t, r) U(s, r)^* f||_{L^{\infty}} \leq C \lambda^{(n-1)/2} (\lambda^{-1} + |t - s|)^{- (n-1)/2}. \end{equation}
Then (\ref{MAINST}) follows by an abstract Strichartz estimate (see \cite{KTZ} for references).

One can choose $\psi$ and $\chi$ so that $$\chi(x, \lambda^{-1} D) \phi_{\lambda}(x_1, x')
= i \lambda \int_0^{x_1} U(x_1, s) f_{\lambda}(s, x') ds + O(\lambda^{-\infty}).$$
Then,
\begin{equation} \begin{array}{lll} ||\chi(x, \lambda^{-1} D) \phi_{\lambda}(x_1, x') ||_{L^{\frac{2(n + 1)}{n - 1}}} & \leq &
\lambda \lambda^{\frac{n-1}{2(n + 1)}} \int_{\R} ||f_{\lambda}(x, \cdot)||_{L^2(\R^{n-1})} ds + O(\lambda^{-\infty})
\\ && \\
& \leq & C \lambda^{\frac{n-1}{2(n + 1)}} . \end{array} \end{equation}

\subsection{Generic non-sharpness of Sogge estimates}

As the proof indicates, the Sogge bounds  are  `(micro-) local results', and
do not take the global dynamics of the geodesic
flow into account. The dynamics of the
geodesic flow has a strong impact on the growth of $L^p$ norms of
eigenfunctions.
In \cite{SoZ}, it   is shown that the Sogge $L^p$  estimates for large $p$ are  very rarely
sharp. To state the result, we need some notation.
Let $V_{\lambda} := \{\phi: \Delta \phi_\lambda = \lambda \phi_\lambda\}$
denote the $\lambda$-eigenspace for $\lambda \in Sp (\Delta)$  and
define
 \begin{equation} \label{L} L^{\infty}(\lambda, g) = \sup_{\phi\in V_{\lambda} \atop ||\phi||_{L^2}
= 1 }
 ||\phi||_{L^{\infty}},\;\;\;\;\;\;\;\ell^{\infty}(\lambda, g) = \inf_{ONB \{\phi_j\}
\in V_{\lambda}} (\sup_{j = 1, \dots, \dim V_{\lambda}}
 ||\phi_j||_{L^{\infty}}). \end{equation}
Thus, $L^{\infty}(\lambda, g) = O(\lambda^{\frac{n-1}{2}})$ for
any $(M, g)$.

 We define
the  measure $|\lcal_x|$ of loops at $x$ (see (\ref{LCALX}) for the definition)  as the surface measure on $S^*_g M$
induced by the metric $g_x$ on $T^*_xM$.  For instance, the poles
$x_N, x_S$ of a surface of revolution $(S^2, g)$ satisfy
$|\lcal_x| = 2 \pi$.

\begin{theo} \label{SOGZ}If $L^{\infty}(\lambda, g) = \Omega
(\lambda^{\frac{n-1}{2}})$, then there exists a point $x_0$ such
that $|\lcal_{x_0}| > 0$. In particular, if $g$ is analytic, then
all geodesics leaving $x_0$ must return to $x_0$ at the same time.
\end{theo}

The proof is based on a study of the remainder term in the local Weyl law. The main
step is:

\begin{lem}\label{maintheorem} Let $R(\lambda,x)$ denote
the remainder for the local Weyl law at $x$.  Then
\begin{equation}\label{M20}
R(\lambda,x)= o_x(\lambda^{n-1}) \, \, \text{if } \, \, |{\mathcal
L}_x|=0.
\end{equation}
Additionally, if $|{\mathcal L}_x|=0$ then, given $\varepsilon>0$,
there exists  a neighborhood ${\mathcal N}$ of $x$ and  $\Lambda
<\infty$, both depending on $\varepsilon$ so that
\begin{equation}\label{M3}
|R(\lambda,y)|\le \varepsilon \lambda^{n-1}, \, \, y\in {\mathcal
N}, \, \, \lambda\ge \Lambda.
\end{equation}
\end{lem}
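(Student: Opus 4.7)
\medskip\noindent\textbf{Proof plan.} The plan is to derive the pointwise remainder estimate from Safarov's pre-trace formula \eqref{REPLACE2} combined with a Fourier Tauberian argument, and then obtain the uniform version by checking that every ingredient in that argument can be made locally uniform in $y$.

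\emph{Step 1 (pointwise statement).} Fix $T>0$ and apply the Safarov pre-trace formula \eqref{REPLACE2} at $x$. Under the hypothesis $|\lcal_x|=0$, the integrand $\sum_k \hat\rho(T_x^{(k)}(\xi)/T)\,U_x(\lambda)^k$ is supported in $\lcal_x$ (only looping directions contribute to returns), and each $U_x(\lambda)^k$ has operator norm bounded by $1$. Hence the integral term on the right side of \eqref{REPLACE2} vanishes identically, and
\[
\rho_T * dN(\lambda,x) \;=\; a_0(x)\,\lambda^{n-1} + o_{T,x}(\lambda^{n-1})
\qquad (\lambda\to\infty)
\]
for every $T>0$. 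I would then invoke the Fourier Tauberian theorem in the sharp form used in \cite{SV} and recalled in the Appendix: if $\rho_T * dN(\lambda,x)\le C_T\lambda^{n-1}$ for all $T$ and if, for some sequence $T_j\to\infty$, the small-scale error $o_{T_j,x}(\lambda^{n-1})$ can be absorbed, then $N(\lambda,x)-c_n\lambda^n/n = o(\lambda^{n-1})$. Letting $T\to\infty$ after $\lambda\to\infty$ yields $R(\lambda,x)=o(\lambda^{n-1})$, which is \eqref{M20}.

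\emph{Step 2 (uniformity in a neighborhood).} For the uniform bound \eqref{M3} the key observation is an upper semicontinuity property of the Safarov integral: as $y\to x$, the function
\[
y\longmapsto \int_{\lcal_y}\sum_{k=1}^{\infty} \hat\rho\!\left(\tfrac{T_y^{(k)}(\xi)}{T}\right) U_y(\lambda)^k |d\xi|
\]
satisfies $\limsup_{y\to x}\,\big|\cdot\big|\le \int_{\lcal_x}(\cdots)\,|d\xi|=0$ for each fixed $T$. This upper semicontinuity reduces to the elementary fact that $\limsup_{y\to x}|\lcal_y|\le |\lcal_x|$, which follows because a geodesic loop at $y$ of length $\le T$ persists as a near-loop at $x$ by continuous dependence of $\exp_y$ on $y$; one formalizes this by an $\epsilon$-fattening argument (cover $\lcal_x$ by a small open set in $S^*_xM$ of measure $<\epsilon$, and use the flow to push this to a covering of all loops at nearby $y$ of period $\le T$).

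\emph{Step 3 (putting it together).} Given $\epsilon>0$, first pick $T$ so large that the Tauberian passage from the $\rho_T$-smoothing to $N(\lambda,y)$ introduces an error $\le \frac\epsilon 3 \lambda^{n-1}$ uniformly for $y$ in a compact neighborhood of $x$ (this uses only that the leading Weyl coefficient $a_0(y)$ depends smoothly on $y$, together with standard Tauberian remainder estimates applied uniformly). Next, using Step 2, shrink the neighborhood $\ncal$ so that the Safarov integral at any $y\in\ncal$ is bounded by $\frac\epsilon 3$ for this fixed $T$. Finally, choose $\Lambda$ large enough that the $o_{T,y}(\lambda^{n-1})$ remainder coming from \eqref{REPLACE2} is $\le \frac\epsilon 3 \lambda^{n-1}$ uniformly for $y\in\ncal$ and $\lambda\ge\Lambda$; this uniformity is built into the construction of the Hadamard/H\"ormander parametrix used in deriving \eqref{REPLACE2}, since the amplitudes depend smoothly on the base point. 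Summing the three $\tfrac\epsilon 3$ errors gives \eqref{M3}.

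\emph{Main obstacle.} The routine part is Step 1; it is the classical application of \eqref{REPLACE2}. The nontrivial point is the uniformity in Step 2: one must check that the loop measure $|\lcal_y|$ is upper semicontinuous and, more importantly, that the entire Safarov integral (which involves the Perron–Frobenius operator $U_y$, hence the Jacobian of first return) depends semicontinuously on $y$. Controlling this requires handling high iterates $k$ where the return map may become very singular as $y$ varies, and is where most of the technical work will go.
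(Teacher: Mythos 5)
Your proposal is correct and follows essentially the route the paper intends: the survey states this lemma without proof (quoting \cite{SoZ}), but the surrounding material — the Safarov pre-trace formula \eqref{REPLACE2}, whose integral term dies when $|\lcal_x|=0$, together with the Fourier Tauberian theorems of the Appendix applied with the $1/T$ gain — is exactly the argument you outline, and your quantifier order (first $T$, then the neighborhood $\ncal$ via upper semicontinuity of $y\mapsto|\lcal_y^T|$, then $\Lambda$) is the right one for the uniform statement \eqref{M3}. The technical worry you flag about high iterates $k$ is in fact mild, since loops have length bounded below by twice the injectivity radius (so only finitely many $k$ contribute for fixed $T$) and $U_y$ is partially unitary.
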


Thus, there are topological conditions on $M$ which are necessary
for $M$ to possess a real analytic metric such that some sequence
of eigenfunctions has the maximally growing $L^{\infty}$ norms. In
dimension two, only the sphere possesses such a metric.  Moreover,  the maximal growth rate of
$L^{\infty}$ exhibited by zonal spherical harmonics can never
occur for a metric on $S^2$  with ergodic geodesic flow.

Theorem \ref{SOGZ} is certainly not the end of the story. In all known cases where `maximal eigenfunction
growth' occurs, all geodesics leaving the `pole' $x_0$ return to it at the same time $T$, not just a set of positive measure.
Further, the `first return map' $G^T: S_{x_0}^* M \to S_{x_0}^* M$ is the identity map.  One would hope to prove that
the second property holds at least on a set of positive measure.

When the geodesic flow is `chaotic' (i.e. highly mixing), it is
expected that the eigenfunctions resemble Gaussian random
functions. The random wave model (see \S \ref{RWONB}) then predicts
that eigenfunctions of Riemannian manifolds with chaotic geodesic
flow should have the bounds $||\phi_{\lambda}||_{L^p} = O(1)$ for
$p < \infty$ and that $||\phi_{\lambda}||_{L^{\infty}} <
C \sqrt{\log \lambda}.$ But no rigorous PDE methods to date have done
better than  $ O(\frac{\lambda^{n-1}}{\log
\lambda})$. There also exist counterexamples to the logarithmic estimate on  special arithmetic hyperbolic quotients (see \cite{RS,IS,Don2}).
As  in the case of zonal spherical harmonics on surfaces of revolution (but using Hecke operators in place of the rotations), there exist special eigenfunctions
which take large values at `fixed points' of the Hecke action or theta-corresopondence.
 In general, the  exponential growth of the
geodesic flow is a huge obstacle to improving  the estimate
beyond the logarithm.  Further discussion of $L^{\infty}$-norms, as well as
zeros,  will be given at the end of \S \ref{QuE} for ergodic
systems.

\section{\label{QMGEO} Gaussian beams and quasi-modes associated to stable closed geodesics}

We noted above that highest weight spherical harmonics maximized
$L^p$ norms for $2 \leq p \leq \frac{2(n + 1)}{n-1} $, and we
noted that they had the shape of an oscillating bump along the
equator with Gaussian decay in the transverse direction. Such
eigenfunctions are only known in a few cases, all of them quantum
completely integrable: convex surfaces of revolution, ellipsoids
and ellipses. What they have in common is the existence of a
stable elliptic closed geodesic along which to construct a
Gaussian beam.

The  construction of Gaussian beams  is possible on any compact
Riemannian manifold with a stable elliptic closed geodesic.
However, in general it only produces an approximate eigenfunction
or quasi-mode. We review them in this section since they are a
good introduction to quantum Birkhoff normal forms and local
models. In the following section we will consider quantum
integrable systems where the construction produces true
eigenfunctions. References for Gaussian beams and more general
quasi-modes are  \cite{BB,Ra,Ra2,TZ2,W} among many other places.
 We follow \cite{Z8} and \cite{BB} Chapter 9.

Quasi-modes along stable elliptic orbits are approximate
eigenfunctions (sometimes exact) which have the form of Gaussian
beams along geodesics. A Gaussian beam is a simple oscillatory
function $e^{i k s}$ along the geodesic times a transverse
Gaussian or (more generally) Hermite function. Hence, the
quasi-modes are very localized along closed geodesics and in a
sense are the best localized approximate eigenfunctions.

Let $(M, g)$ be a Riemannian manifold of dimension $m = n + 1$ with a
stable elliptic closed geodesic $\gamma $ of length $L$.  We assume $\gamma$ is an
embedded (non self-intersecting) curve with an orientable normal
bundle, and use Fermi normal coordinates $(s, y)$ along $\gamma$.
That is, we fix an origin $\gamma(0)$ and let $s$ denote
arc-length on $\gamma$. The exponential map $\exp: N_{\gamma} \to
T_{\epsilon}(\gamma)$ from the $\epsilon$-ball in the normal
bundle along $\gamma$ to a tubular neighborhood of radius
$\epsilon$ is a diffeomorphism and any choice of linear
coordinates on $N_{\gamma}$ endows $T_{\epsilon}(\gamma)$ with
Fermi normal coordinates.

We denote the  eigenvalues of the linear Poincar\'e map by
$\{e^{i \alpha_j}\}$. For $q \in \N^m$ we put
$$r_{kq} = \frac{1}{L} (2 \pi k + \sum_{j=1}^n (q_j + \frac{1}{2}) \alpha_j) $$
In this section, Planck's constant takes the form,
$$h = r_{kq}^{-1}. $$

\subsection{Local model}

The model space is the normal bundle $N_{\gamma}$ along $\gamma$,
which may be identified with $\R^{n} \times S^1$. On the
quantum level, the model Hilbert space is isomorphic to
${\mathcal H} = H^2(S^1_L) \otimes L^2(\R^n)$,  where $H^2(S^1_L)$
is the Hardy space of the circle of length $L = L_{\gamma}$.  An
orthonormal basis of $L^2(\R^n)$ of joint eigenfunctions of
Harmonic oscillators $$\hat{I}_j= \hat{I}_j(y,D_y) := \frac{1}{2} (D_{y_j}^2 +
y_j^2) $$ is
 provided by the Hermite
functions $D_{ q}, q \in \N^n.$ Here, $D_0$  is the Gaussian
$D_0(y)= e^{-\half |y|^2}$, while
   $ D_{ q}:= C_q A_1^{* q_1}...A_n^{*q_n}
D_0 (q \in {\bf N}^n),$ with $C_q = (2 \pi)^{-n/2} (q!)^{-1/2},
q!=q_1!...q_n!$ and where
$$A_j := y_j + iD_{y_j}  \;\;\;\;\;\;\;\; A_j^* = y_j - i D_{y_j}$$
are the creation/annihiliation operators. An orthonormal basis of
$H^2(S^1_L) \otimes L^2(\R^n)$ of  joint eigenfunctions of
$\frac{d}{ds}$ and the $\hat{I}_j$  is then furnished by
$$\phi_{kq}^o(s,y):= e_k(s)\otimes D_{ q}(y),\;\;\;\;\;\;e_k(s):= e^{\frac{2 \pi}{L}iks}. $$

This model needs to be adapted to $\gamma$ in the sense that $\R^n
\times S^1$ needs to be converted to the normal bundle
$N_{\gamma}$. Without explaining this in detail, it may be
illustrated by the fact that the adapted model transverse ground
state Hermite function is
$$U_o(s,u) = (det Y(s))^{-1/2} e^{i\half \langle \Gamma(s)
u,u\rangle}, $$ where $Y = \left( y_{j k} \right)$ is the matrix
whose columns are a basis of vertical Jacobi fields along $\gamma$
and where $\Gamma(s) := \frac{dY}{ds} Y^{-1}.$ Higher transverse
Hermite functions are obtained by applying standard
creation/annihilation operators
$$\begin{array}{ll}
\Lambda_j =\sum_{k=1}^n (i y_{jk} D_{u_k} -\frac{dy_{jk}}{ds} u_k)
& \Lambda_j^*=\sum_{k=1}^{n} (-i \overline{y}_{jk}D_{u_k} -
\overline{\frac{dy_{jk}}{ds}}u_k) \end{array} $$  (adapted to
$\gamma$):
$$U_q = \Lambda_1^{q_1}...\Lambda_n^{q_n} U_o. $$
Roughly speaking, the  metaplectic representation $\mu$  maps the
model
 eigenfunctions on $\R^n \times S^1$  to the normal bundle $N_{\gamma}$.
  Introducing the  symplectic matrix
$$a_s:= \left( \begin{array}{ll} Im\dot{Y}(s)^* \;\;\;&ImY(s)^*\\
Re\dot{Y}(s)^*\;\;\;&Re Y(s)^*  \end{array} \right).
$$
with $s \in S^1_L$, we introduce the unitary metaplectic  operator
(depending on the parameter $s$)
$\mu(a):=\int_{\gamma}^{\oplus}\mu(a_s)ds $ on
$\int_{S^1_L}^{\oplus} L^2(\R^n) ds$.
 Then $\mu(a)$
conjugates standard creation/annihilation operators to the adapted
ones, and thus explains why the full quasi-mode is obtained by
applying adapted creation/annihilation operators. We refer to
\cite{Z8,BB} for further details.

\subsection{WKB ansatz for a Gaussian beam}

The most direct way to  construct the Gaussian beam  is to try to
construct an approximate solution  $u = e^{i s \lambda} U(s, y_1,
\dots, y_n)$ of $(\Delta - \lambda^2)u = 0$. Assuming that $U$ is
localized in a $\lambda^{-1/2}$ neighborhood of $0$, one obtains a
parabolic transport equation for $U$,
\begin{equation} \label{LCAL} \lcal U : = 2 i \lambda \frac{\partial U}{\partial s} +
\sum_{j = 1}^n \frac{\partial^2 U}{\partial y_j^2} - \lambda
\sum_{j, h = 1}^n K_{j h} (s) y_j y_h U = 0, \end{equation}  where
$K_{jk}$ are components of $R(T, Y) T$ where $R$ is the curvature
tensor and where $T = \dot{\gamma}$.

 The detailed ansatz
$\{\Phi_{kq}(s,\sqrt{r_{kq}}y)\}$ of \cite{BB} is defined by
\begin{equation} \label{PHI} \Phi_{kq}(s,\sqrt{r_{kq}}y) =e^{ir_{kq}s} \sum_{j=0}^{\infty}
r_{kq}^{-\frac{j}{2}} U_q^{\frac{j}{2}}(s,
\sqrt{r_{kq}}y,r_{kq}^{-1}) \end{equation}  with $U_q^o = U_q$ and
where $U^j_q$ are constructed inductively so that
\begin{equation} \label{QEQUATION} \Delta  \;  \Phi_{kq}(s,
\sqrt{r_{kq}}y,r_{kq}^{-1}) \sim \lambda_{kq}^2
 \Phi_{kq}(s, \sqrt{r_{kq}}y), \end{equation}
where \begin{equation} \label{LAMBDA} \lambda_{kq} \equiv r_{kq} +
\frac{p_1(q)}{r_{kq}} + \frac{p_2(q)}{r_{kq}^2} +
....\end{equation} The numerators $p_j(q)$ are polynomials of
degrees $j + 1$.  Thus one simultaneously constructs approximate
eigenfunctions and eigenvalues around a stable elliptic geodesic.
For each $q$ and large $k$, the right side solves the eigenvalue
problem to high approximation with a sequence of functions
concentrated near $\gamma$.

Instead of scaling the WKB quasi-mode, one can scale the
Laplacian.
 More precisely (see \cite{Z8}), one
rescales the Laplacian around $\gamma$ in $T_{\epsilon}(\gamma)$
and conjugates by $e^{i r_{qk} s}$ using the operators (with $h =
r_{kq}^{-1})$,
$$T_h (f(s,u)):= h^{-n/2} f(s, h^{-\half}u), \;\;
M_h(f(s,u)):= e^{\frac{i}{hL}s} f(s,y).$$ We  define the scaling of
an operator $A$ as $A_h := T_h^*M_h^*AT_hM_h$. Then a simple
computation gives
$$-\Delta_{h} =  -(hL)^{-2} g^{oo}_{[h]} + 2i(hL)^{-1}g^{oo}_{[h]}\partial_s + i(hL)^{-1}\Gamma^o_{[h]}+
 h^{-1}( \sum_{ij=1}^n g^{ij}_{[h]}\partial_{u_i}\partial_{u_j}) + h^{-\half}(\sum_{i=1}^{n} \Gamma^{i}_{[h]}
\partial_{u_i}) + (\sigma)_{[h]},$$ where
 the subscript $[h]$ indicates to dilate the coefficients of the operator in the form,
$f_h(s, u):=f(s, h^{\half} u).$ One then has,
$$\Delta_h \sim \sum_{m=0}^{\infty} h^{(-2 +m/2)}{\mathcal L}_{2-m/2}$$
where ${\mathcal L}_2 = L^{-2},$ ${\mathcal L}_{3/2}=0$ and where
$${\mathcal  L}_1 = 2 L^{-1}[i  \frac{\partial}{\partial s} + \half \{\sum_{j=1}^{n}
\partial_{u_j}^2 -
\sum_{ij=1}^{n} K_{ij}(s) u_i u_j\}].$$ One then plugs into the
ansatz (\ref{PHI}) to get
$$\left(\sum_{m=0}^{\infty} h^{(-2 +m/2)}{\mathcal L}_{2-m/2}
\right) \sum_{j = 0}^{\infty}  h^{j/2} U_q^{j/2} \sim (h L)^{-2}
\sum_{j = 0}^{\infty}  h^{j/2} U_q^{j/2} $$ and solves recursively
for $U_q^{j/2}. $ A key point is that the metaplectic operator
$\mu$ intertwines $D_s$ to the operator
  $${\mathcal L}:= \mu(a)^{*} D_s \mu(a) = D_s - \frac{1}{2}(\sum_{j=1}^n D_{u_j}^2 + \sum_{ij=1}^n K_{ij}(s)
u_iu_j)$$ of (\ref{LCAL}).

\subsection{\label{INTOP} Quantum Birkhoff normal form: intertwining to the
model}

A second approach,  which perhaps originates in \cite{W} and which
has been developed in many articles including \cite{Sj,Z8,TZ,TZ2},
is the method of interwining operators.  In this approach, the
construction of quasi-modes follows from the dual  construction of
an interwining operator $W_{\gamma}$ to a normal form of $\Delta$
around $\gamma$. The intertwining operator  maps the model
eigenfunctions to the detailed quasi-mode,
$$W_{\gamma} \phi_{kq}(s,y) = \Phi_{kq}(s, \sqrt{r_{kq}}y)$$ where
$\Phi_{kq}$ solve (\ref{QEQUATION}). Rather than constructing the
quasi-mode, one instead constructs $W_{\gamma}$ so that it
conjugates $\sqrt{\Delta}$ to a function of the model operators on
the model space. We are assuming that $\gamma$ is elliptic, so the model operators
are harmonic oscillators.

 This function is the quantum Birkhoff normal
form.  The normal form  is implicit in (\ref{QEQUATION}), i.e. the
normal form is the same as  the quasi-eigenvalues (\ref{LAMBDA})
interpreted on the operator level.  The precise statement is:

\begin{theo} \label{QBNFTHM}  There exists a microlocally unitary Fourier integral
operator $W_{\gamma}$ defined near the cone  $R^+\gamma $
generated by $\gamma$  in $T^*(S^1_L \times \R^n)$ such that
   $$W_{\gamma}^{-1} \sqrt{\Delta}  W_{\gamma} \equiv
 P_{1}({\mathcal L},I_{\gamma1},...,I_{\gamma n})
+ P_{o}({\mathcal L},I_{\gamma1},...,I_{\gamma n}) + \dots,$$
where
$$P_1({\mathcal L}, I_{\gamma1},...,I_{\gamma n})\equiv {\mathcal L} +
\frac{ p_{1}^{[2]}(I_{\gamma1},...,I_{\gamma n})}{L {\mathcal L}}
+ \frac{p_{2}^{[3]}(I_{\gamma1},...,I_{\gamma n})}{(L{\mathcal
L})^2} + \dots$$
$$P_{-m}({\mathcal L}, I_{\gamma1},...,I_{\gamma n})\equiv \sum_{k=m}^{\infty}
 \frac{p_{k}^{[k-m]}(I_{\gamma1},...,I_{\gamma n})}{(L{\mathcal L})^j}$$
with $p_{k}^{[k-m]}$, for m=-1,0,1,..., homogenous of degree k-m
in the variables $(I_{\gamma1},. ..,I_{\gamma n}) $ and of weight
-1. The  kth remainder term lies in the space
$\bigoplus_{j=o}^{k+2} O_{2(k+2-j)}\Psi^{1-j}$
\medskip

  \end{theo}

  Here, $O_n \Psi^r$ is the space of pseudodifferential operators of order
r whose complete symbols vanish to order n at $(y,\eta)=(0,0).$  Thus, the
remainder terms are `small' in that they combine in some mixture  a low
pseudodifferential order or a high vanishing  order along $\gamma$.

If one plugs in the eigenvalues $ \in \Z^n$ for the harmonic
oscillators $(I_{\gamma }$, the operator expansion in Theorem
(\ref{QBNFTHM}) becomes the eigenvalue expansion in
(\ref{QEQUATION}).

\section{\label{BNF} Birkhoff normal forms around closed geodesics}

The classical Birkhoff normal form of a Hamiltonian around a closed geodesic (or an invariant torus) is
a local approximation around $\gamma$ of the Hamiltonian by  completely integrable Hamiltonians modulo errors which
vanish to higher order around the orbit. One constructs local action variables $I_j$ (Hamiltonians with
$2 \pi$-perodic flows)  and approximates the
Hamiltonian by a function $H_k(I)$ up to an error $I^k$.

On the quantum level one has two notions of order: `order in $\hbar$' and vanishing order at $\gamma$. As in
Theorem \ref{QBNFTHM}, one constructs local `action operators' and expresses $\hat{H} = \sqrt{\Delta}$ as
a polyhomogeneous function of the action operators modulo small remainders.

  In the general non-degenerate case, the normal
form  involves a greater variety of quadratic normal
forms or `action operators' than in Theorem \ref{QBNFTHM}.  In addition to the elliptic action operator $\hat{I}^e_j$
there  can also occur the real hyperbolic action operators $\hat{I}^h_j$ and
complex hyperbolic (or loxodromic) action operators $\hat{I}_j^{ch, Re},
\hat{I}_j^{ch, Im}.$

 In the elliptic
case  $P_{\gamma}$  was a direct sum of rotations, and the quantum normal form
of ${\mathcal L}$ had the form
$${\mathcal R}^e = D_s + \frac{1}{L} H_{\alpha},\;\;\;\;\;\;\; H_{\alpha} =
\sum_{j=1}^n \alpha_j \hat{I}_j^e$$
where the spectrum $\sigma(P_{\gamma}) = \{e^{\pm i \alpha_j}\}$. In the general
non-degenerate case the normal form will similarly depend on   the spectral
decomposition of
$P_{\gamma}$. Recall that,
 since $P_{\gamma}$ is
symplectic, its eigenvalues $\rho_j$  come in three types: (i) pairs
$\rho, \bar{\rho}$ of conjugate eigenvalues of modulus 1; (ii) pairs $\rho,
\rho^{-1}$ of inverse real eigenvalues; and (iii) 4-tuplets $\rho, \bar{\rho},
\rho^{-1} \bar{\rho}^{-1}$ of
complex eigenvalues. We will often write them in the forms: (i) $e^{ \pm i\alpha_j} $,
(ii)$e^{\pm \lambda_j}$, (iii)  $e^{\pm  \mu_j \pm i \nu_j}$ respectively
 (with $\alpha_j, \lambda_j, \mu_j, \nu_j \in \R$), although
a pair of inverse real eigenvalues $\{-e^{\pm \lambda}\}$ could  be negative.  Here,
and throughout, we make the assumption that $P_{\gamma}$ is {\it non-degenerate}
in the sense that
$$\Pi_{i=1}^{2n} \rho_i^{m_i} \not= 1,\;\;\;\;\;\;\;\;\;(\forall \rho_i \in
\sigma(P_{\gamma}),\;\;\;\;\; (m_1,\dots,m_{2n}) \in {\bf N}^{2n}).$$
Each type of eigenvalue then determines a different type of quadratic
action, both on the classical and quantum levels (cf. [Ho, Theorem 3.1],[Ar]):
\medskip

\begin{tabular}{l|l|l} Eigenvalue type & Classical Normal form & Quantum normal form
\\
\hline   (i) Elliptic type  & $I^e = \half \alpha (\eta^2 + y^2)$
&
$\hat{I}^e:=
\half \alpha (D_y^2 +y^2)$
\\
$\{e^{\pm i \alpha}\}$ & & \\
\hline (ii) Real hyperbolic type & $I^{h} = 2 \lambda y \eta$ & $\hat{I}^{h}:=
\lambda (y D_y + D_y y)$
\\
$\{e^{\pm \lambda}\}$ & & \\
\hline(iii) Complex hyperbolic   & $I^{ch,Re} = 2 \mu (y_1 \eta_1 + y_2 \eta_2)$ &
 $\hat{I}^{ch, Re} = \mu  (y_1D_{y_1} + D_{y_1} y_1 + y_2 D_{y_2} + D_{y_2} y_2),$\\
(or loxodromic type) & $I^{ch,Im} = \nu ( y_1 \eta_2 - y_2 \eta_1)$ & $\hat{I}^{ch,Im}
=   \nu (y_1D_{y_2}  - y_2 D_{y_1} )$
\\
$\{e^{\pm \mu + \pm i \nu}\}$ & & \\
\end{tabular}
\medskip

  In the case where  the Poincare map $P_{\gamma}$ has p
pairs of complex conjugate eigenvalues of modulus 1, q pairs of inverse real
eigenvalues and c quadruplets of complex hyperbolic eigenvalues,
the linearized $\sqrt{\Delta}$ will have the form:
$${\mathcal R} =  D_s +  \frac{1}{L}[\sum_{j=1}^p \alpha_j
\hat{I}_j^e + \sum_{j=1}^q \lambda_j \hat{I}_j^h + \sum_{j=1}^c \mu_j \hat{I}^{ch,
Re}_j +
\nu_j \hat{I}_j^{ch, Im}]. $$
The full
quantum Birkhoff normal form is then given by the analogue of Theorem B
of [Z.1]:
\medskip

\noindent{\bf Theorem B} ~~~~{\it Assuming $\gamma$ non-degenerate, there exists a
microlocally elliptic Fourier integral operator $W$ from the conic neighborhood of
$\R^+ \gamma$ in $T^*(N_{\gamma})$ to  the corresponding cone in $T_+^*S^1$ in
$T^*(S^1
\times
\R^n)$ such that
$$W \sqrt{\Delta_{\psi}} W^{-1} \equiv D_s + \frac{1}{L}[\sum_{j=1}^p \alpha_j
\hat{I}_j^e + \sum_{j=1}^q \lambda_j \hat{I}_j^h + \sum_{j=1}^c \mu_j \hat{I}^{ch,
Re}_j +
\nu_j \hat{I}_j^{ch, Im}] +$$
$$ +  \frac{p_1(\hat{I}_1^e,\dots,
\hat{I}_p^e, \hat{I}_1^h, \dots, \hat{I}_q^h, \hat{I}_1^{ch,
Re},\hat{I}_1^{ch,Im},\dots, \hat{I}_c^{ch,Re}, \hat{I}_c^{ch, Im})}{D_s} +\dots$$
$$+ \frac{p_{k+1}(\hat{I}_1^e,\dots, \hat{I}_c^{ch, Im})}{D_s^k} +\dots $$
where the numerators $p_j(\hat{I}_1^e,\dots, \hat{I}_p^e, \hat{I}_1^h, \dots,
\hat{I}_c^{ch, Im})$ are polynomials of degree j+1 in the variables
$(\hat{I}_1^e,\dots,  \hat{I}_c^{ch, Im})$ and where the kth remainder term lies in
the space $\oplus_{j=o}^{k+2} O_{2(k+2-j)}\Psi^{1-j}$}
\medskip

Here, $O_n \Psi^r$ is the space of pseudodifferential operators of order
r whose complete symbols vanish to order n at $(y,\eta)=(0,0).$  Thus, the
remainder terms are `small' in that they combine in some mixture  a low
pseudodifferential order or a high vanishing  order along $\gamma$.

\subsection{Local quantum Birkhoff normal forms}

Readers  unfamiliar with microlocal Birkhoff normal forms might
keep in mind a comparison to the local models used in \S
\ref{KROGER} to study gradient estimates and ranges of
eigenfunctions. The local models were spaces of constant
curvature, a classical kind of comparison in geometric analysis.
The quantum Birkhoff normal forms are of a different nature: they
are more global, since they are local around closed orbits rather
than points. Moreover, they are dynamical and geodesic-based
rather than curvature-based. The local models around regular
orbits will be used in \S \ref{LOCALIZENF} to obtain localization
results for quantum limits and in \S \ref{LPQCI} normal forms
around singular closed orbits will be used to deduce $L^p$
estimates on quantum integrable eigenfunctions. The local models
are enumerated and described in \S \ref{MODELS}. Note that they
model operators are often Schr\"odinger operators with quadratic
potentials rather than Laplacians for special metrics.

Throughout this section,
 the parameter
$h = \lambda^{-1}$  plays the role of Planck's constant.

\subsection{\label{MODELS}Model eigenfunctions around closed geodesics}

 Model quantum
completely integrable systems are direct sums of
 the quadratic Hamiltonians:

\begin{itemize}

\item $\hat{I}^{h} := \hbar (D_{y} y + y D_{y}) $ on
$L^2(\R_+)$\,\,(hyperbolic \,\, Hamiltonian),

\item $\hat{I}^{e} := \hbar^{2}D_{y}^{2} + y^{2}$ on $L^2(\R)$
\,\,(elliptic \,\, harmonic oscillator Hamiltonian),

\item $ \hat{I}^{ch} := \hbar  \, [ ( y_{1} D_{y_{1}} + y_{2}
D_{y_{2}}) + \sqrt{-1} (y_{1} D_{2}- y_{2}D_{y_{1}}) ] $\,\,on
$L^2(\R^2)$  (complex \,\, hyperbolic \,\, Hamiltonian),

\item $ \hat{I} := \hbar \,  D_{\theta } $, \,\,on $S^1$  (regular
\,\, Hamiltonian).

\end{itemize}

\vspace{2mm}

\noindent The corresponding  model eigenfunctions are:

\begin{itemize}

\item  $u_{h}(y; \lambda, \hbar) =  |\log \hbar|^{-1/2} \, [
c_{+}(\hbar)  Y(y)\,|y|^{-1/2 + i\lambda(\hbar)/\hbar} +
c_{-}(\hbar)
 Y(-y) \, |y|^{-1/2 + i\lambda(\hbar)/\hbar} ];\, |c_{-}(\hbar)|^{2} + |c_{+}(\hbar)|^{2} =1; \, \, \lambda(\hbar) \in {\Bbb R}$.

\item $u_{e}(y;n, \hbar) = \hbar^{-1/4} \exp ( - y^{2}/\hbar )
\,\,\Phi_{n}(\hbar^{-1/2}y) ; \,\, n \in {\Bbb N}.$

\item $u_{ch}(r, \theta ; t_{1}, t_{2}, \hbar) =  |\log
\hbar|^{-1/2} r^{(-1 + it_{1}(\hbar) )/\hbar} \,e^{it_{2}(\hbar)
\theta } ; \,\, t_{1}(\hbar), t_{2}(\hbar) \in {\Bbb R}.$

\item $u_{reg}( \theta ;m,  \hbar) = e^{i m \theta}; \,\, m \in
{\Bbb Z}.$

\end{itemize}

\noindent Here, $Y(x)$ denotes the Heaviside function,
$\Phi_{n}(y)$ the $n$-th Hermite polynomial  and $(r,\theta )$
polar variables in the $(y_{1}, y_{2})$ complex hyperbolic plane.

The important part of a model eigenfunctions is its
microlocalization to a neighborhood of $x = \xi = 0$, so we put:
$$ \psi(x;\hbar):= Op_{\hbar}( \, \chi(x) \chi(y) \chi(\xi) \, )  \cdot  u(y;\hbar),$$
where $\epsilon >0$ and  $\chi \in C^{\infty}_{0}([-\epsilon,
\epsilon])$. In the hyperbolic, complex hyperbolic, elliptic  and
regular cases, we write  $\psi_{h}(y;\hbar),  \psi_{ch}(y ;\hbar),
\psi_{e}(y ;\hbar)$ and $ \psi_{reg}(y ;\hbar)$ respectively. A
straightforward computation \cite{T2} shows that when
$t_{1}(\hbar), t_{2}(\hbar), n \hbar , m \hbar = {\mathcal
O}(\hbar)$  the model quasimodes are $L^{2}$-normalized; that is
\begin{equation} \label{OP} \| Op_{\hbar}( \chi(x) \chi(y) \chi(\xi) \, ) \, u(y;\hbar)
\|_{L^{2}} \sim 1 \end{equation}  as $\hbar \rightarrow 0$. Note
that, although the model eigenfunctions above are not in general
smooth functions, the microlocalizations are $C^{\infty}$ and
supported near the origin.

\section{\label{QCI} Quantum integrable Laplacians}

We now go into more detail on (globally) integrable Laplacians.
As discussed in \S \ref{EXPLICIT}, quantum integrable Laplacians
are the only examples where one has explicit formulae for
eigenfunctions. They are also examples where one can explicitly
determine the $L^p$ norms of the eigenfunctions. Moreover, there
are geometric properties of the geodesic flow which account for
the behavior of the $L^p$ norms.

Quantum integrable Laplacians are fundamental for two reasons
besides the relatively explicit computability of their
eigenfunctions.
 The first reason,   which is rigorous and now well-understood, is that
quantum integrable Laplacians provide local models for all
Laplacians around closed geodesics. The sense in which they are
local models is made precise by quantum Birkhoff normal forms. The
second reason why they are fundamental is heuristic: to date, all
extremals for $L^p$ norms occur in integrable systems and are
explained by the geometry of the geodesic flow. It is possible
that eigenfunction sequences which achieve maximal $L^p$ growth
rates must resemble sequences of eigenfunctions of integrable
systems. At least, one hopes that they must do so locally, e.g. in
the vicinity of a closed geodesic.

In this section, we define quantum complete integrability and give
some examples. In succeeding sections, we survey results on the
norms and concentration properties of the joint eigenfunctions.

\subsection{Quantum integrability and ladders of eigenfunctions}

\begin{defin} \label{SCQCI}
We say that the operators $P_{j} \in \Psi^1(M); \,\,j=1,...,n$,
generate a semiclassical quantum completely integrable system if
$$ [P_{i}, P_{j}]=0; \,\,\, \forall{1 \leq i,j \leq n},$$
\noindent and the respective semiclassical principal symbols
$p_{1},...,p_{n}$ generate a classical integrable system with
$dp_{1} \wedge dp_{2} \wedge \cdot \cdot \cdot \wedge dp_{n} \neq
0$ on a dense open set $\Omega \subset T^{*}M \backslash 0$.
\end{defin}

Semiclassical limits are taken along ladders in the joint
spectrum. For fixed $b=(b_{1},b_{2},...,b_{n}) \in {\Bbb R}^{n}$,
we define a ladder of  joint eigenvalues of the original
homogeneous problem  $P_{1} = \sqrt{\Delta},P_{2},...,P_{n}$ by:
\begin{equation} \label{QCI1} \Sigma_b: =
\{ (\lambda_{1k},...,\lambda_{nk}) \in Spec(P_{1},...,P_{n});
\,\forall j=1,..,n, \,  \lim_{k\rightarrow
\infty}\frac{\lambda_{jk}}{|\lambda_{k}|}  \, = \, b_{j}\},
\end{equation}
\noindent where $|\lambda_{k}|:= \sqrt{ \lambda_{1k}^{2} + ... +
\lambda_{nk}^{2} }$.

We define the joint eigenspace corresponding to $\Sigma_{b}$ as
follows: For $b \in B: = {\mathcal P}(T^{*}M \backslash 0),$ where $\pcal$ is the classical moment map, define
\begin{equation} \label{efn}
 V_{b} := \{ \phi_{\mu}; \|\phi_{\mu} \|_{L^{2}} =1 \, \mbox{with} \, \mu \in \Sigma_{b} \}.
\end{equation}

\subsection{Geometric examples}

\subsubsection{The round sphere}

On the quantum level we define $\hat{I}_1 =
\frac{\partial}{\partial \theta}$ and $\hat{I}_2 = \sqrt{\Delta +
\frac{1}{4}}$, so that $\hat{I}_2 Y^k_m = k + \frac{1}{2}.$ The
joint spectrum of $\hat{I}_1, \hat{I}_2$ equals $\{(m, k +
\frac{1}{2}): |m| \leq k\}$.

Let us compare classical and quantum values. The image of the
classical moment map $(p_{\theta}(x, \xi), |\xi|)$ is the triangular region  of points $(x,
y)$  satisfying $|x| \leq y, y \geq 0$.  It is
easy to see that the inverse image of any point in the interior is
invariant under the $x_3$-axis rotations and under the geodesic
flow; this is a Lagrangian torus. The boundary of the image
corresponds to singular points of the moment map, namely the
equatorial geodesic, traversed in either of its two orientations.

The joint eigenfunction $Y^k_m$, with joint eigenvalue $(m, k +
\frac{1}{2})$,  corresponds to the Lagrangian torus with
$p_{\theta}(x, \xi) = m $ and $|\xi| = k.$ If we rescale back to
$S^*S^2$ we obtain the Lagrangian torus  $p_{\theta} = m/k$.  The
Lagrangian corresponding  to  zonal spherical harmonics is the
meridian torus  $p_{\theta} = 0$, i.e. to longitudinal great
circles which depart from the north pole, converge at the south
pole and then return to the north pole.  The highest weight
spherical harmonics  $Y^k_k$ correspond  to the boundary points of
the triangular image of the moment map, hence to the equatorial
great circle. Indeed, $Y_k^k$ is the restriction of the harmonic
polynomial $(x_1 + i x_2)^k$ (up to normalization). This
polynomial is independent of $x_3$ and is a holomorphic function
of $x_1, x_2$ so it is certainly a harmonic homogeneous
polynomial. Also, by its form it clearly is largest on the unit
circle in the $(x_1,x_2)$ plane and tends to zero as $(x_1, x_2) \to
(0,0)$. Hence on the sphere it defines a spherical harmonic which
is large on the equator and tends to zero at the poles.

\subsubsection{\label{QCISIMPLE} Simple surfaces of revolution \cite{CV3}}

The classical action variables can be quantized to produce quantum
action variables  $\hat{I}_j$, which  are first order
pseudodifferential operators with the property that $e^{2\pi i
\hat{I}_j} = C_j Id$ for some constant $C_j$ of modulus one. From
the fact that $e^{2 \pi i \hat{I}_j} = C_j Id$ for a quantum torus
action, it follows that the joint spectrum of the quantum moment
map
$$Sp({\mathcal I}) \subset \Z^2 \cap \Gamma + \{\mu\}$$
is the set of integral   lattice points, translated by $\mu$, in
the closed convex conic subset $\Gamma \subset \R^2$. The vector
$\mu = (\mu_1, \mu_2)$ can be identified with the Maslov indices
of the homology basis of the invariant torii.  In the case of
$\sqrt{\Delta_g}$, $\mu = (0, 1/2)$ \cite{CV3}.

The operator $\hat{I}_1  = \frac{1}{i} \frac{\partial }{\partial
\theta}$ is the standard infinitesimal generator of the $S^1$
action, but the second action operator $\hat{I}_2$ is a rather
unexpected operator that does not appear in the classical
`separation of variables' approach to eigenfunction. We may define
spectral projectors $\Pi_N$ projecting onto the eigenspace of
$\hat{I}_2$ of eigenvalue $N$. In the case of the round sphere it
is the projection onto the eigenspace $\hcal_N$ of $\Delta$ of
spherical harmonics of degree $N$, but in general it does not
project onto eigenspaces of the Laplacian and is a rather novel
feature of the pseudo-differential approach.

One may express $\sqrt{\Delta_g}$ as a function
$\hat{H}(\hat{I}_1, \hat{I}_2)$ where $\hat{H}(\xi_1, \xi_2)$ is a
polyhomogeneous function on $\Gamma$. Its principal symbol is the
function  $|\xi|_g = H(I)$ expressing the metric norm function in
terms of   the action variables. These expressions are known as
the Birkhoff normal forms of the classical, resp. quantum,
Hamiltonian. In generic situations, the action variables are only
locally defined and there exists only a local Birkhoff normal
form. The interesting feature of simple surfaces of revolution is
that the Birkhoff normal forms are global.

One can then characterize the the packet of eigenvalues of
$\sqrt{\Delta}$ which corresponds to the eigenfunctions with
eigenvalue $N$ for $\hat{I}_2$: the eigenvalues form the set
$\{\hat{H}(m, N): \; |m| \leq N\}$.

\subsubsection{Hyperbolic cylinders with boundary \cite{CVP}}

A hyperbolic cylinder with boundary is a metric tube $X$  around
the unique closed geodesic $\gamma$ of a hyperbolic cylinder $\H
/\Z$ where $\Z$ acts by the cyclic group generated by a hyperbolic
element. Its axis then projects to the closed geodesic $\gamma$.

As with any surface of revolution, the Laplacian commutes with the
generator $\frac{\partial}{\partial \theta}$ of the $S^1$ action.
Here, one puts either Dirichlet or Neumann boundary conditions on
the boundary of the tube $X$.  The novel feature is that the
closed geodesic is hyperbolic and lies on a singular level of the
moment map and the normal form of $\sqrt{\Delta}$ around $\gamma$
is given by $\hat{I}^{h} := \hbar (D_{y} y + y D_{y}) $. The (not
very close) analogue of the Gaussian beam or highest weight
spherical harmonic is the sequence of eigenfunctions where the
weight, i.e. $\frac{\partial}{i \partial \theta}$ -eigenvalue,  is
asymptotically the same as the $\sqrt{\Delta}$ eigenvalue.
Equivalently, if one separates variables to obtain Sturm-Liouville
operators $L_m$ for each weight, then the sequence is obtained by
taking the smallest $L_m$-eigenvalue for each weight.

\subsection{Localization of integrable eigenfunctions }

In this section, we consider the construction of highly localized
or concentrated  eigenfunctions (and quasi-modes).

\subsubsection{Toric integrable systems}

Let $A \in \Psi^o(M)$ denote any zeroth order   pseudodifferential
operator  and $d\mu_{\lambda}$ denote Lebesgue measure on the
Lagrangian torus $T_{\lambda}$.  In the toric case we have the
following localization theorem:

\begin{prop} \cite{Z1} \label{TORASYMP} For any ladder $\{k \lambda + \nu: k = 0, 1, 2, \dots\} $ of joint eigenvalues, we have:
$$ (A \phi_{k \lambda}, \phi_{k \lambda})   =
\int_{T_{\lambda}} \sigma_A d\mu_{\lambda} +
O(k^{-1}).$$\end{prop}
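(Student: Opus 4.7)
The plan is to exploit the quantum torus action generated by the action operators $\hat{I}_1,\dots,\hat{I}_n$ to replace $A$ by a torus-invariant observable whose matrix elements on joint eigenfunctions can be read off from the symbol. First I would form the averaged operator
\begin{equation*}
\bar A := \frac{1}{(2\pi)^n}\int_{\mathbb T^n} e^{it\cdot\hat I}\,A\,e^{-it\cdot\hat I}\,dt,
\end{equation*}
which is well-defined in the toric case because $e^{2\pi i \hat I_j}$ is a scalar operator (up to the Maslov phase absorbed into $\nu$), so conjugation by $e^{it\cdot\hat I}$ descends to a genuine $\mathbb T^n$-action on $\Psi^0(M)$. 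By Egorov's theorem, $\bar A \in \Psi^0(M)$ and its principal symbol $\sigma_{\bar A}$ is the $\Phi_t$-orbit average of $\sigma_A$; on each regular Lagrangian fiber $T_b$ this average is exactly the normalized Liouville integral $\int_{T_b}\sigma_A\,d\mu_b$.

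Next I would observe that the joint eigenfunction $\phi_{k\lambda+\nu}$ is a simultaneous eigenfunction of every $e^{it\cdot\hat I}$, with eigenvalue of modulus one, so averaging leaves the diagonal matrix element invariant:
\begin{equation*}
(A\phi_{k\lambda+\nu},\phi_{k\lambda+\nu}) = (\bar A\phi_{k\lambda+\nu},\phi_{k\lambda+\nu}).
\end{equation*}
Now $\bar A$ commutes exactly with each $\hat I_j$ by construction. The key step is then to show that in a conic neighborhood of the regular Lagrangian torus $T_\lambda$ there is a classical symbol $f(\xi) \sim f_0(\xi) + f_{-1}(\xi)+\cdots$ of order $0$ such that
\begin{equation*}
\bar A \equiv f(\hat I_1,\dots,\hat I_n)\quad \text{modulo } \Psi^{-1},
\end{equation*}
with $f_0(I) = \int_{T_I}\sigma_A\,d\mu_I$. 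Given this, applying the right-hand side to $\phi_{k\lambda+\nu}$ and using $\hat I_j\phi_{k\lambda+\nu}=(k\lambda_j+\nu_j)\phi_{k\lambda+\nu}$ together with the degree-zero homogeneity of $f_0$ yields
\begin{equation*}
(\bar A\phi_{k\lambda+\nu},\phi_{k\lambda+\nu}) = f_0(k\lambda+\nu) + O(k^{-1}) = f_0(\lambda) + O(k^{-1}),
\end{equation*}
which is the desired asymptotic.

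The step I expect to be the main obstacle is the symbolic representation $\bar A \equiv f(\hat I)$ modulo $\Psi^{-1}$. A naive functional calculus only handles smooth functions of $\hat I$, whereas what is needed is a quantum Birkhoff-type normal form on regular fibers that converts the torus-invariance of $\bar A$ into an angle-independent symbol depending only on the action variables. Away from the singular set of the moment map $\pcal$ the Liouville-Arnold theorem supplies smooth action-angle coordinates, and the toric hypothesis ensures these coordinates, and the corresponding Fourier-integral conjugation, extend globally in the angle variables; then a standard iterative solution of the cohomological equation $\{p_j,\cdot\}=0$ at each order produces $f$. Once this local-in-$b$, global-in-angles normal form is in place, the remainder of the argument is symbolic bookkeeping.
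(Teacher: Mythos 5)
Your proposal is correct in outline and begins exactly as the paper does: conjugate by the quantum torus action, use the fact that a joint eigenfunction is an eigenvector of each $e^{it\cdot\hat I}$ so the diagonal matrix element is unchanged, and identify the principal symbol of the averaged operator $\bar A$ with the orbit average of $\sigma_A$ via Egorov. Where you diverge is at the step you flag as the main obstacle. The paper does not construct a quantum Birkhoff normal form here at all: it observes that $\bar A$ commutes exactly with the action operators $(\hat I_1,\dots,\hat I_n)$, and that in the toric case these have \emph{simple joint spectrum} (each joint eigenspace is one-dimensional), so any operator commuting with them is automatically a function $F(\hat I_1,\dots,\hat I_n)$ of them -- an exact, global, purely spectral-theoretic fact requiring no microlocal machinery. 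The matrix element is then literally $F(k\lambda+\nu)$, and the polyhomogeneity of $\bar A$ as a $\Psi$DO of order zero gives $F\sim F_0+F_{-1}+\cdots$ with $F_0$ homogeneous of degree zero and equal on the moment image to the fiber average, whence the $O(k^{-1})$. Two remarks on your route. First, your cohomological-equation step is not actually needed: since $\bar A$ is the average over the torus action, Egorov applied order by order already makes its full symbol angle-independent, so there is nothing left to solve; the real issue is only passing from ``symbol depends on the actions alone'' to ``operator is a function of the action operators,'' and that is precisely what the simple joint spectrum delivers for free. Second, your normal-form approach, while heavier than necessary in the toric case, is essentially the argument the paper is forced to use in the general $\R^n$-integrable setting (Proposition \ref{LWL} and \S\ref{LOCALIZENF}), where the joint spectrum is no longer simple, the level sets have several components, and one only gets localization on the level set with component weights $c_l(\hbar)$ rather than on an individual torus. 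So your method buys generality at the cost of losing the clean exact identity and the single-torus conclusion that the toric hypothesis provides.
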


We thus have:

\begin{cor} \label{LOC} For any invariant torus $T_{\lambda} \subset S^*M$, there exists
a ladder $\{\phi_{k \lambda}, k = 0, 1, 2, \dots\}$ of
eigenfunctions localizing on $T_{\lambda}.$ \end{cor}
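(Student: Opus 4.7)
The plan is to deduce the corollary directly from Proposition \ref{TORASYMP} together with the lattice structure of the joint spectrum in the toric case. First, I would recall that in a toric integrable system the quantum moment map $\hat{\mathcal I} = (\hat I_1, \dots, \hat I_n)$ has joint spectrum $\mathrm{Sp}(\hat{\mathcal I}) = (\mathbb Z^n \cap \Gamma) + \{\mu\}$, sitting inside the closed convex cone $\Gamma$ which is the image of the classical moment map. Hence joint eigenfunctions are parametrized by lattice points in $\Gamma$.

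Next I would construct the ladder. Given an invariant torus $T_\lambda \subset S^*M$, its moment map value $\mathcal P(T_\lambda)$ lies in $\Gamma$; by replacing $\lambda$ with a rational multiple if necessary (and rescaling $T_\lambda$ to a torus on the same radial ray in $\Gamma$), I may assume $\lambda$ is a lattice vector, so that $k\lambda + \mu \in \mathrm{Sp}(\hat{\mathcal I})$ for every $k \in \mathbb N$. For each such $k$, let $\phi_{k\lambda}$ denote the corresponding $L^2$-normalized joint eigenfunction. This produces the desired ladder.

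Finally, to verify that this ladder localizes on $T_\lambda$, I would appeal directly to Proposition \ref{TORASYMP}: for every $A \in \Psi^0(M)$,
\begin{equation*}
\langle A \phi_{k\lambda}, \phi_{k\lambda}\rangle \;=\; \int_{T_\lambda} \sigma_A \, d\mu_\lambda + O(k^{-1}),
\end{equation*}
so that as $k \to \infty$ the Wigner distributions $\rho_{k\lambda}$ defined in (\ref{RHOJ}) converge weak-$*$ to the normalized Lebesgue measure $d\mu_\lambda$ on $T_\lambda$. By the discussion in \S\ref{QULIMITS}, this is precisely the statement that the sequence $\{\phi_{k\lambda}\}$ has $d\mu_\lambda$ as its unique microlocal defect measure, i.e.\ that it localizes on $T_\lambda$.

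The only genuine issue is the rationality/lattice step: a generic torus $T_\lambda$ corresponds to an irrational direction in $\Gamma$ for which $k\lambda + \mu$ will not lie in the lattice $\mathbb Z^n + \mu$. This is easily handled because the statement of the corollary is about the existence of \emph{some} ladder localizing on each invariant torus, and the set of Lagrangian tori $\{T_\lambda : \lambda \in \mathbb Q_+^n \cap \Gamma\}$ (after normalization) is dense in the family of all invariant tori; moreover, the tori $T_\lambda$ and $T_{c\lambda}$ coincide in $S^*M$ after rescaling, so any ray through the origin in $\Gamma$ meets the lattice in infinitely many points up to the cone structure. Once this rescaling is made explicit, the proof is just an application of Proposition \ref{TORASYMP}.
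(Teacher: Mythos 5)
Your overall strategy is the one the paper uses -- read off the corollary from Proposition \ref{TORASYMP} applied to a ladder of lattice points in the image of the quantum moment map -- and for a torus $T_{\lambda}$ whose ray in $\Gamma$ actually passes through lattice points your argument is complete. The gap is in the last step, where you dispose of the irrational case. The claim that ``any ray through the origin in $\Gamma$ meets the lattice in infinitely many points up to the cone structure'' is false: a ray in an irrational direction contains no points of $\Z^n + \{\mu\}$ at all, so for such a torus there is literally no exact ladder $\{k\lambda + \mu\}$ in the joint spectrum to which Proposition \ref{TORASYMP} applies. Density of the rational tori does not rescue this as stated: it gives you, for each nearby rational torus, a sequence localizing on \emph{that} torus, and to pass to the limit torus you would need a diagonal extraction together with some uniformity in $\lambda$ of the $O(k^{-1})$ remainder and weak-$*$ continuity of $\lambda \mapsto d\mu_{\lambda}$ -- none of which you supply.

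The mechanism that actually closes this gap (and is the content of the paper's illustration on $S^2$) is the averaging argument: averaging $Op(a)$ over the quantum torus action $U(t_1,\dots,t_n)=e^{i\sum t_j \hat I_j}$ produces an operator commuting with all the $\hat I_j$, hence (by simplicity of the joint spectrum) a function $F(\hat I_1,\dots,\hat I_n)$ which is a zeroth order pseudo-differential operator with symbol homogeneous of degree zero on the image of the moment map, and whose value at a point $b$ of the unit cross-section is $\int_{\pcal^{-1}(b)}\sigma_A\,d\mu_b$. Since the matrix element at a joint eigenvalue $\mu_j$ is exactly $F(\mu_j)$, continuity and homogeneity of $F$ give convergence to $\int_{T_\lambda}\sigma_A\,d\mu_\lambda$ along \emph{any} sequence of lattice points whose normalized directions converge to the direction of $T_\lambda$; such sequences exist for every direction, rational or not. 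You should either restrict your corollary to lattice directions or replace the exact ladder by such an approximating sequence and invoke this continuity.
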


We illustrate the localization result on $S^2$. The proof is the
same in all toric quantum integrable cases, but is harder for
general integrable systems for reasons explained in the next
section.

The image of $T^*S^2 - 0$ under the moment map $\mu(x, \xi) =
(p_{\theta}(x, \xi), |\xi|)$ is a vertical triangular wedge. It is
a cone, reflecting that $\mu(x, r \xi) = r \mu(x, \xi)$ is
homogeneous. We can break the homogeneity by taking a base for the
cone with $|\xi| = 1$, i.e. by considering points $(x, 1)$. This
corresponds to looking at $p_{\theta}: S^*S^2 \to \R$.

Thus, we consider pairs $(m_j, k_j)$ in the joint spectrum of
$D_{\theta}, A = \sqrt{\Delta + 1/2} - 1/2$ whose projection to
the base of the cone has a limit $(c, 1)$.

\begin{theo} Suppose that $m_j/k_j \to c$. Then
$$\langle Op(a) Y^k_m, Y^k_m \rangle \to \int_{\mu^{-1}(c, 1)} a_0
dx, $$
where $a_0 $ is the principal symbol of $Op(a)$.
\end{theo}

Thus, the eigenfunctions in this ray localize on the invariant
torus $p_{\theta}^{-1}(c)$.

We define $U(t_1, t_2) = e^{i(t_1 D_{\theta} + t_2 A)}$ and note
that it is a unitary representation of the 2-torus $T^2$ on
$L^2(S^2)$. Further
$$\langle Op(a) Y^k_m, Y^k_m \rangle = \langle U(t_1, t_2)^* Op(a) U(t_1, t_2) Y^k_m, Y^k_m
\rangle. $$ Indeed, the eigenvalues cancel out. Average this
formula over $T^2$. We note that
$$\langle A \rangle := \int_{T^2} U(t_1, t_2)^* Op(a) U(t_1, t_2) dt_1 dt_2 $$
commutes with both $D_{\theta}$ and $A$. Indeed, the commutator
with $A$ gives $\frac{d}{dt_2}$ under the integral sign, and the
integral of this derivative equals zero.

But $D_{\theta}, A$ have a simple joint spectrum: the dimension of
the joint eigenspace equals one. Hence, any operator which
commutes with them is a function of them. Thus,
$$\langle A \rangle = F(D_{\theta}, A). $$
The function $F$ must be homogeneous of degree zero. Also, the
right side is a $\Psi$D0. Its symbol is
$$\langle a_0 \rangle : \int_{T^2} a_0 (\Phi^{t_1, t_2}(x, \xi))  dt_1 dt_2. $$

It follows first that
$$\begin{array}{lll}\langle Op(a) Y^k_m, Y^k_m \rangle& = &\langle \langle Op(a) \rangle Y^k_m, Y^k_m
\rangle\\ & & \\
& = & F(m, k). \end{array}$$

Secondly, as $(m_j, k_j) \to \infty$ with $m_j/k_j \to c$, we have
$ F(m_j, k_j) \to F(c, 1). $
But also, the limit is the integral of $a_0$ against an invariant
measure. The principal symbol of $F$ is $\langle a_0 \rangle$,
which is a function on the image of the moment map. Its value at
$(c, 1)$ is by definition $\int_{\mu^{-1}(c, 1)} a_0 dx,$
concluding the proof.

\subsubsection{\label{LOCAL} {\bf $\R^n$-integrable systems} }

It is a surprisingly large step  from toric integrable systems such as the torus, sphere or simple surface
of revolution to a general $\R^n$ integrable system, even such as a surface of revolution with a hyperbolic
orbit to  a `peanut' shaped surface in $\R^3$ or an undulating surface of revolution with several
 `waists'. From the dynamical point of view, there now exists a hyperbolic
closed orbit (the waist of the peanut), and a cylinder of geodesics which asymptotically spiral toward the hyperbolic
closed geodesic. Together they form a singular level set with several
components. Eigenfunctions on this singular level Proposition (\ref{TORASYMP})  localize on level sets of the
moment map rather than on individual tori or components. We prove this and further  analyze the
 degree of concentration by putting the Laplacian into a quantum normal form around each component. We follow
 \cite{TZ2} and use its semi-classical notation where $h = \lambda^{-1}$.

Let
 $b$ be a regular value of the moment map $\pcal$,  let
$$ \pcal^{-1}(b) = \Lambda^{(1)}(b) \cup \dots \cup  \Lambda^{(m_{cl})}(b),$$
\noindent where the $\Lambda^{(l)}(b); l=1,...,m$ are
$n$-dimensional Lagrangian tori, and  $d\mu_{\Lambda^{(j)}(b)}$
denote  the normalized Lebesgue measure on the torus
$\Lambda^{(j)}(b)$.
Define the  {\it semiclassical ladders}
\begin{equation} \label{LAD}
 {\bf L}_{b; \delta}(\hbar):= \{  b_{j}(\hbar):= (b_{j}^{(1)}(\hbar), b_{j}^{(2)}(\hbar), \, ... , \, b_{j}^{(n)}(\hbar)) \in \mbox{Spec}(P_{1},...,P_{n}); \,\, | b_{j}(\hbar) - b | \leq C \hbar^{1-\delta} \, \}.
\end{equation}
Taking a sequence  $\hbar \to 0$, the joint eigenvalues in ${\bf L}_{b; \delta}(\hbar)$ form a  sequence tending to $b$ which is the analogue of a homogeneous ladder.

 Define
\begin{equation} \label{c_j}
c_{l}(\hbar; b_j(\hbar)):= \langle Op_{\hbar}(\chi_{l})
\phi_{b_j(\hbar)}, \phi_{b_j(\hbar)} \rangle ;
\,\,\,\,l=1,...,m_{cl}(b).
\end{equation}
\noindent Here,   $\chi_{l}$ is  a cutoff function which is
equal to 1 in a neighbourhood $\Omega^{(l)}(b)$ of the torus
$\Lambda^{(l)}(b)$ and vanishes on $\cup_{k\neq l}
\Omega^{(k)}(b)$.
\begin{prop} \label{LWL} Let $b \in B_{reg}$, and  let $\{ \phi_{b_{j}(\hbar)} \}$ be a sequence of $L^{2}$-normalizeed joint eigenfunctions of $P_{1},...,P_{n}$ with joint eigenvalues in
the ladder  ${\bf L}_{b, \delta}(\hbar)$.   Then, for any $a \in
S^{0,-\infty }$,  we have that as $\hbar \to 0$:
$$\langle Op_{\hbar}(a) \phi_{b_{j}(\hbar)}, \phi_{b_{j}(\hbar)} \rangle = \left( \sum_{l= 1}^m c_l(\hbar; b_{j}(\hbar)) \right)
\int_{\Lambda^{(j)}(b)} a \,\, d\mu_{\Lambda^{(j)}(b)} + {\mathcal
O}(\hbar^{1-\delta}). $$ \noindent Here, $d\mu_{\Lambda^{(j)}(b)}$
denotes Lebesgue measure on $\Lambda^{(j)}(b)$.
\end{prop}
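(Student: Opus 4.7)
The plan is to reduce the $\R^n$-integrable statement to the toric case treated earlier, by localizing to a neighborhood of each Lagrangian torus $\Lambda^{(l)}(b)$ where the Liouville--Arnold theorem provides local action-angle variables with $2\pi$-periodic Hamiltonian flows. Concretely, I would first insert the partition of unity $1 = \sum_l \chi_l + \chi_\infty$ (with $\chi_\infty$ supported away from $\pcal^{-1}(b)$) into the matrix element, writing
\begin{equation*}
\langle Op_\hbar(a) \phi_{b_j(\hbar)}, \phi_{b_j(\hbar)} \rangle = \sum_{l=1}^{m_{cl}} \langle Op_\hbar(\chi_l a) \phi_{b_j(\hbar)}, \phi_{b_j(\hbar)} \rangle + \langle Op_\hbar(\chi_\infty a) \phi_{b_j(\hbar)}, \phi_{b_j(\hbar)} \rangle.
\end{equation*}
The last term is $O(\hbar^\infty)$ because $\phi_{b_j(\hbar)}$ has joint eigenvalues within $O(\hbar^{1-\delta})$ of $b$, so it is microlocally concentrated on $\pcal^{-1}(b)$ by semiclassical functional calculus applied to $\chi_\infty(P_1,\dots,P_n)$.

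Next, on each neighborhood $\Omega^{(l)}(b)$ of $\Lambda^{(l)}(b)$ I would use the local action operators $\hat I^{(l)}_1,\dots,\hat I^{(l)}_n$ (the quantizations of the Liouville--Arnold actions), which generate a genuine $T^n$-action, and average $Op_\hbar(\chi_l a)$ over this torus action:
\begin{equation*}
\langle Op_\hbar(\chi_l a) \rangle := (2\pi)^{-n} \int_{T^n} e^{-it \cdot \hat I^{(l)}/\hbar} \, Op_\hbar(\chi_l a) \, e^{it \cdot \hat I^{(l)}/\hbar} \, dt.
\end{equation*}
Since $\phi_{b_j(\hbar)}$ is a joint eigenfunction of the $P_j$'s, and since locally $P_j = F_j^{(l)}(\hat I^{(l)};\hbar) + O(\hbar^\infty)$ microlocally on $\Omega^{(l)}(b)$ (the local semiclassical normal form near a regular torus), it is also, up to a negligible error, a joint eigenfunction of the $\hat I^{(l)}_k$. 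Hence the averaging is invisible on the matrix element: $\langle Op_\hbar(\chi_l a) \phi_{b_j(\hbar)}, \phi_{b_j(\hbar)} \rangle = \langle \langle Op_\hbar(\chi_l a) \rangle \phi_{b_j(\hbar)}, \phi_{b_j(\hbar)}\rangle + O(\hbar^\infty)$. By Egorov's theorem, $\langle Op_\hbar(\chi_l a) \rangle$ is again a semiclassical pseudodifferential operator whose principal symbol is the $\Phi_t$-orbit average of $\chi_l a$, which on the torus $\Lambda^{(l)}(b)$ equals exactly $\chi_l\bigl|_{\Lambda^{(l)}(b)} \cdot \int_{\Lambda^{(l)}(b)} a\, d\mu_{\Lambda^{(l)}(b)}$ (using $\chi_l \equiv 1$ there).

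From here I would apply the standard semiclassical symbol calculus: the averaged operator is, modulo $O(\hbar)$, multiplication by a symbol that equals the constant $\int_{\Lambda^{(l)}(b)} a\, d\mu_{\Lambda^{(l)}(b)}$ times $\chi_l$ on a neighborhood of $\Lambda^{(l)}(b)$. Pairing against $\phi_{b_j(\hbar)}$ therefore produces the factor $c_l(\hbar;b_j(\hbar)) = \langle Op_\hbar(\chi_l)\phi_{b_j(\hbar)},\phi_{b_j(\hbar)}\rangle$ times this torus integral, with an error of order $\hbar$ from the subprincipal symbol. The semi-classical ladder condition $|b_j(\hbar)-b|\le C\hbar^{1-\delta}$ contributes an additional error of size $O(\hbar^{1-\delta})$ because the torus integral depends smoothly on $b$ and its derivative in $b$ is $O(1)$.

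The main obstacle is the step that replaces the original commuting operators $P_1,\dots,P_n$ by the local action operators $\hat I^{(l)}_k$ on each $\Omega^{(l)}(b)$. Globally the $\hat I^{(l)}_k$ need not exist as honest pseudodifferential operators on $M$, and different components $\Lambda^{(l)}(b)$ have different Liouville--Arnold charts whose period lattices generally do not match. The rigorous construction requires a microlocal semiclassical Birkhoff normal form near each regular torus, together with a careful check that joint eigenfunctions of $P_1,\dots,P_n$ with eigenvalues in ${\bf L}_{b;\delta}(\hbar)$ are, microlocally on $\Omega^{(l)}(b)$, joint quasi-eigenfunctions of the $\hat I^{(l)}_k$ to the required precision $O(\hbar^{1-\delta})$. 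Once this microlocal matching is established, the torus-averaging argument proceeds exactly as in the toric example worked out earlier, and the stated asymptotic follows by summing over $l$.
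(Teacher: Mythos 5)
Your proposal is correct, and it reaches the conclusion by a route that is recognizably different from the one the paper takes, although both rest on the same key input: the microlocal semiclassical Birkhoff normal form (\ref{QBNF}) near each regular torus $\Lambda^{(l)}(b)$. What you do is transplant the torus-averaging argument that the paper uses in the \emph{toric} case (the proof of Proposition \ref{TORASYMP} on $S^2$, where one averages $Op(a)$ over the $T^2$-action and identifies the average as a function of the action operators) into each Liouville--Arnold chart: you average the operator $Op_\hbar(\chi_l a)$ over the locally defined torus action, use that $\phi_{b_j(\hbar)}$ is microlocally a joint quasi-eigenfunction of the local action operators so the average is invisible in the matrix element, and then read off the constant $\int_{\Lambda^{(l)}(b)} a\, d\mu_{\Lambda^{(l)}(b)}$ from the Egorov orbit-average, with the $O(\hbar^{1-\delta})$ error coming from the width of the ladder. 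The paper instead conjugates the \emph{eigenfunction} rather than the operator: it uses the one-dimensionality of the microlocal solution space to write $\phi_{b_j(\hbar)} =_{\Omega^{(l)}(b)} \sqrt{c_l(\hbar;b_j(\hbar))}\, U^{(l)}_{b;\hbar}(e^{i(n_j + \pi\gamma/4)\theta})$ as in (\ref{CHAR}), pulls $Op_\hbar(a)\, Op_\hbar(\chi_l)$ back to the model torus by $U^{(l)*}_{b;\hbar}(\cdot)U^{(l)}_{b;\hbar}$ via Egorov, and evaluates the resulting matrix element on exponentials using the explicit computation (\ref{MDMTORUS}). Your version is slightly more economical in that it never needs to identify the microlocal solution explicitly -- only that it is a joint quasi-eigenfunction of the actions -- while the paper's route yields the explicit quasi-mode representation (\ref{CHAR}) as a byproduct, which is then reused elsewhere (e.g.\ in the mass-concentration and $L^p$ arguments of \S \ref{LPQCI}). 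The technical obstacle you flag -- that the local action operators and their $2\pi$-periodic quantum flows exist only microlocally and chart by chart, with incompatible period lattices across components -- is real, and it is precisely what the microlocally unitary Fourier integral operators $U^{(l)}_{b,\hbar}$ of (\ref{QBNF}) are constructed to handle; granting that construction, as the paper does, your argument closes.
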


This result says that a sequence of eigenfunctions of a quantum integrable system localizes on the level
set of the moment map corresponding to the limit point in the image of the clsssical
moment map,  but it is not very precise about the weight attached to each component.

\subsection{\label{LOCALIZENF} Conjugation to normal form around torus orbits}

We sketch the proof of Proposition \ref{LWL} as an example of the use of normal forms.

Let  $\Omega^{(l)}$ be a  small neighbourhood of $\Lambda^{(l)}$
on which there exist action-angle variables $(\theta^{(l)},
I^{(l)})$. Then there exists a `microlocal' conjugation to quantum
Birkhoff normal form. That is, for $\ell=1,...,k$ and $j=1,...,n$,
there exist microlocally unitary  $\hbar$-Fourier integral operators,
$U^{(\ell)}_{b,\hbar}: C^{\infty}(M) \rightarrow C^{\infty}({\Bbb
T}^{n}; {\mathcal L}^{(l\ell)})$, together with  $C^{\infty}$ symbols,
$f_{j}^{(l)}(x;\hbar) \sim \sum_{k=0}^{\infty} f^{(l)}_{jk}(x)
\hbar^{k}$, with $f_{j0}(0)=0$ such that:
\begin{equation} \label{QBNF}
U^{(\ell)*}_{b,\hbar}
f_{j}^{(l)}(P_{1}-b^{(1)},...,P_{n}-b^{(n)};\hbar)U^{(\ell)}_{b,\hbar}
=_{\Omega_{0}^{(l)}} \frac{\hbar}{i} \frac{\partial}{\partial
\theta_{j}}.
\end{equation}
The simple operators on the right side are the normal forms.

Conjugation to  normal form shows that the space of (microlocal)  solutions of equations
\begin{equation}\label{MICRO}
 P_{k} \phi_{b_{j}(\hbar)} =_{\Omega^{(l)}(b)}  b^{(k)}_{j}(\hbar) \phi_{b_j(\hbar)}
\end{equation}
is one-dimensional. Indeed, such solutions are the same as
solutions of
$$f_{k}^{(l)}(P_{1}-b^{(1)},...,P_{n}-b^{(n)};\hbar) \phi_{j} =_{\Omega^{(l)}(b)} f_{k}^{(l)}(b^{(1)}_{j}-b^{(1)},...,b^{(n)}_{j}-b^{(n)};\hbar)  \phi_{j}. $$
After conjugation to  Birkhoff normal form (\ref{QBNF})
the equation becomes
$$ \frac{\hbar}{i} \frac{\partial}{\partial \theta_j} u_{j} = m_{j} \,u_{j}$$
\noindent and the solutions  are just multiples of $\exp [ i ( n + \pi \gamma/4)
\theta ]$, where $\gamma$ is the Maslov index and $n \in \Bbb Z$.
Thus, the joint eigenfunctions $\phi_{b_{j}(\hbar)}$ are given
microlocally by
\begin{equation} \label{CHAR}
\phi_{b_{j}(\hbar)} =_{\Omega^{(l)}(b)} \,
\sqrt{c_{l}(\hbar;b_{j}(\hbar))} \,\, U^{(l)}_{b;\hbar} (
e^{i(n_{j}+\pi \gamma/4) \theta} ).
\end{equation}
 The right sides  of (\ref{CHAR}) are the usual quasimodes or semiclassical Lagrangian distributions [CV2]

We further use the normal form to prove the localization statement. Let
  $\chi_{l}(x,\xi) \in C^{\infty}_{0}(T^{*}M);
l=1,....,m_{cl}(b)$ be a cutoff function which is identically
equal to one on the neighborhood $\Omega^{(l)}(b)$ and vanishes
on $\Omega^{(k)}(b)$ for $k \neq l$. For  $\hbar$ sufficiently
small, we then have
$$ \langle Op_{\hbar}(a) \phi_{b_{j}(\hbar)}, \phi_{b_{j}(\hbar)} \rangle
 = \sum_{l=1}^{m_{cl}(b)} \langle Op_{\hbar}(a) \circ Op_{\hbar}(\chi_{l})
 \phi_{b_{j}(\hbar)}, \phi_{b_{j}(\hbar)} \rangle + {\mathcal O}(\hbar^{\infty}).$$
In each term, we now conjugate to normal form. Recalling the definition (\ref{c_j}) of
the weights $c_j(\hbar, b_j(\hbar))$, we have
\begin{equation}\begin{array}{lll} \label{sec452}
 \langle Op_{\hbar}(a) \circ Op_{\hbar}(\chi_{l}) \phi_{b_{j}(\hbar)},  \phi_{b_{j}(\hbar)} \rangle &
= & c_l(\hbar;b_{j}(\hbar))
 \langle Op_{\hbar}(a) \circ Op_{\hbar}(\chi_{l}) U^{(l)}_{b;\hbar} ( e^{i(n_{j}+\pi \gamma/4) \theta} ) ,
 U^{(l)}_{b;\hbar} ( e^{i(n_{j}+\pi \gamma/4) \theta} ) \rangle \\ & & \\
& = &  c_l(\hbar;b_{j}(\hbar)) \langle U^{(l) * }_{b;\hbar}
Op_{\hbar}(a) \circ Op_{\hbar}(\chi_{l}) U^{(l)}_{b;\hbar}
e^{i(n_{j}+\pi \gamma/4) \theta}  ,    e^{i(n_{j}+\pi \gamma/4)
\theta}  \rangle \\ &  & \\ &= & (2\pi)^{-n} \,
c_{l}(\hbar;b_{j}(\hbar)) \, \left( \int_{\Lambda^{(l)}} a \,
\,d\mu_{l} \right) + {\mathcal O}(\hbar^{1-\delta}),
\end{array} \end{equation}
   In the last step we used the Egorov theorem to recognize $ U^{(l) * }_{b;\hbar}
Op_{\hbar}(a) \circ Op_{\hbar}(\chi_{l}) U^{(l)}_{b;\hbar}$ as a pseudo-differential operator on the standard
torus and then used the calculation in (\ref{MDMTORUS}) to complete the proof. For the details of the remainder
estimate we refer to \cite{TZ2}.

\section{\label{CNC} Concentration and non-concentration for
general $(M, g)$}

In this section we go over results about concentration on, and $L^p$ norms
along,  submanifolds of general Riemannian manifolds. Then we again compare
the results to the quantum integrable case.

\subsection{\label{BGT} $L^p$ norms for restrictions to  submanifolds}

A measure of concentration near a submanifold is given by the  $L^p$ norms of the restrictions
of eigenfunctions to the submanifold. Early results  were obtained by
Reznikov \cite{R} for curves on  hyperbolic surfaces. The most
general results are due to  Burq-Gerard-Tzvetkov \cite{BGT} and
pertain to  submanifolds of general Riemannian manifolds.
 On curves, the estimates in \cite{BGT} are as follows:
 \begin{theo}
  \begin{itemize} \label{bgt} \cite{BGT} Let $||\phi_{\lambda_j}||_{L^2} = 1$. Then
 \item (i) If $\gamma$ is a unit-length geodesic, then
 $$ \int_{\gamma} |\phi_{\lambda_j}(s)|^{p} ds = {\mathcal O}(\lambda_j^{\tilde{\delta}(p)}),$$
 where
\begin{equation} \tilde{\delta}(p)=
\begin{cases}
\tfrac12-\tfrac1p, \quad 4 \le p\le \infty
\\
\frac{1}{4},\quad 2\le p\le 4.
\end{cases}
\end{equation}

 \item(ii) If $\gamma$ is a curve with strictly-positive geodesic curvature,
 $$ \int_{\gamma} |\phi_{\lambda_j}(s)|^{p} ds =
 {\mathcal O}(\lambda_j^{\frac{p}{3} - \frac{1}{3 } }) .$$
  \end{itemize}
  \end{theo}

\subsection{Non-concentration in tubes around hyperbolic closed geodesics}

We now consider results which give upper bounds on the concentration of eigenfunctions
in tubes around hyperbolic closed orbits and other invariant sets. The article \cite{CVP} gave such
an  bound and a general principle was then proved in  \cite{BZ} (see also \cite{Chr}) . It is stated in terms of lower bounds
of eigenfunction mass outside of a tube around a hyperbolic closed geodesic.

\begin{theo} \cite{BZ,Chr} Let $(M, g)$ be a compact Riemannian manifold, and let
$\gamma$ be a hyperbolic closed geodesic. Let $U$ be any tubular
neighborhood of $\gamma$ in $M$.  Then for any eigenfunction
$\phi_{\lambda}$, there exists a constant $C$ depending only on
$U$ such that
$$\int_{M \backslash U} |\phi_{\lambda}|^2 dV_g  \geq \frac{C}{\log \lambda}
||\phi_{\lambda}||^2_{L^2}. $$

More generally, let  $A \in \Psi^0(M)$
be a pseudo-differential orbit whose symbol equals one in a
neighborhood of $\gamma$ in $S^*_g M$ and equals zero outside
another neighborhood. Then for any eigenfunction $\phi_{\lambda}$
$$||(I - A) \phi_{\lambda}||_{L^2} \geq \frac{C}{\sqrt{\log \lambda}}
||\phi_{\lambda}||_{L^2}. $$
\end{theo}

These theorems are proved in \cite{BZ} using a rather abstract {\it observability estimate}:

\begin{theo} Let $P(h)$ be a family of self-adjoint operators on a Hilbert space $\hcal$, with
a fixed domain $\dcal$. Let $\hcal_1$ be a second Hilbert space and suppose there exists a family
of bounded operators $A(h): \dcal \to \hcal_1$ satisfying
$$||u||_{\hcal} \leq \frac{G(h)}{h} \; ||(P(h) + \tau) u ||_{\hcal} + g(h) ||A(h) u||_{\hcal_1}, $$
for all $\tau \in (- b, -a)$ and with $1 \leq G(h) \leq C h^{-N_0}$ for some $N_0$. Let $\chi \in C_c^{\infty}(-b, -a)$.
Then there exist constants $c_0, C_0$ and $h_0 > 0$ so that for any function $T(h)$ satifying $\frac{G(h)}{T(h)} < c_0$,
$$||\chi(P) u||_{\hcal}^2 \leq C_0 \frac{g(h)^2}{T(h)} \int_0^{T(h)} ||A(h) e^{- i t P(h)/h} \chi(P(h)) u||^2_{\hcal_1} dt. $$
\end{theo}

This formulation has applications to non-concentration of eigenfunctions in the Bunimovich stadium as well.

\subsection{Non-concentration around closed geodesics on compact
hyperbolic surfaces}

 We sketch the proof in \cite{CVP} in the
case of hyperbolic closed geodesics on a compact hyperbolic
surface $X = \H \backslash \Gamma$.

Suppose $\gamma$ is a simple closed geodesic whose normal bundle
is trivial. Then it possesses a tubular (collar) neighborhood
$X_0$  which is isometric to a collar around a closed geodesic
$\gamma$  in a hyperbolic surface of revolution $\H \backslash
\Gamma_{\gamma} \simeq S^1_{\theta} \times \R_y$. We noted above
that such a collar is quantum integrable and that the Birkhoff
normal form of $\sqrt{\Delta}$ around $\gamma$ is the hyperbolic
operator $y D_y + D_y y$. Using the explicit formulae for the
eigenfunctions in the model domain, one can calculate the amount
of mass of the model eigenfunction near $\gamma$, and finds that
the mass increases logarithmically in $\lambda$ (cf. \cite{CVP},
Proposition 16). The mass for the singular energy level $E$ is
explicitly given by
\begin{equation} \label{modelmassa}\begin{array}{l}
\int_0^{\frac{1}{h}} \frac{d\eta}{\eta} \left| \int_0^{\infty}
e^{- i (y - \frac{E}{h} \ln y)} \chi(\frac{y}{\eta}) \frac{d
y}{\sqrt{y}} \right|^2 \sim \left| \Gamma(\frac{1}{2} + i
\frac{E}{h}) \right|^2 e^{\frac{E\pi}{h}} |\ln h|.  \end{array}
\end{equation}
The mass of the sequence of eigenmodes of the hyperbolic surface
of revolution with boundary grows at the same logarithmic rate,
since it is microlocally unitarily equivalent to the model
eigenfunctions.

Now consider the eigenfunctions of the compact hyperbolic surface
without boundary. Unlike the hyperbolic cylinder with boundary,
its geodesic flow is ergodic and far from completely integrable.
Let $\phi_{\lambda_j}$ denote a sequence of eigenfunctions, and
let
$$\phi_{\lambda_j}(y, \theta) = \sum_k a_{k; \lambda_j}(y) e^{i k \theta}$$
be the Fourier series expansion of $\phi_{\lambda_j}$ for each $y$. Let $||a_{k; \lambda_j}||_{[-1, 1]}$ denote the $L^2$
norm of the $k$ component in $[-1, 1]$ and let $||\phi_j||_{X_0}$ denote the $L^2$ norm in the collar. Then we have
$$||\phi_{\lambda_j}||_{X_0}^2 = \sum_k ||a_{k; \lambda_j}||_{[-1, 1]}^2. $$

Suppose now that the sequence $\{\phi_{\lambda_j}\}$ concentrates in $X_0$ in the sense that
$$||\phi_{\lambda_j}||_{X_0}^2 \geq C_0 > 0, \;\; ||\phi_{\lambda_j} ||_{K}^2 \to 0, \;\; \forall K \subset \subset X \backslash \gamma. $$
Then,
\begin{equation} \label{SERIESLB} \sum_k ||a_{k; \lambda_j}||_{[-1, 1]}^2 \geq C_0 > 0 \end{equation}
uniformly in $j$. Then the only possible quantum limit measures of the sequence must have the form $C \mu_{\gamma} + \nu$
 where $\nu = 0$ in the tube around $\gamma$ and $C > 0$.  For a time-reversal symmetric symbol $a$ with essential
 support in the collar,  we must have
 $$\langle Op(a)  \phi_{\lambda_j}, \phi_{\lambda_j} \rangle \to C \int_{\gamma} a_0 ds.$$
 Now let $P = \Delta^{-1} \frac{\partial^2}{\partial \theta^2} + 1$, a pseudo-differential operator of order $0$ whose
principal symbol equals $1 - \frac{p_{\theta}^2}{|\xi|_g^2}$. The symbol vanishes along $\gamma$.
 Then
 \begin{equation} \label{SERIES0} \int_{X_0}  \phi_{\lambda_j} P \phi_{\lambda_j} dV =  \sum_k (1 - \frac{k^2}{\lambda_j^2})  ||a_{k; \lambda_j}||_{[-1, 1]}^2 \to 0. \end{equation}
 Let $Z_j = \{k :  \; |\frac{k^2}{\lambda_j^2} - 1| \leq \frac{1}{2} \}.$ Combining (\ref{SERIESLB}) and (\ref{SERIES0}), we have
  \begin{equation} \label{SERIESB} \sum_{k \in Z_j} ||a_{k; \lambda_j}||^2 \geq C >
  0.\end{equation}
 Thus,  the Fourier coefficients of a
  sequence of eigenfunctions concentrating near $\gamma$  concentrate around  the joint eigenvalues of the
  modes on the hyperbolic surface of revolution which concentrate
  on the central geodesic. The mass around $\gamma$ of the associated modes can
  only grow logarithmically must charge  the complement of any
  collar around $\gamma$ with mass at least of order
  $\frac{1}{\log \lambda}$. It follows from (\ref{SERIESB}) that
  any sequence of eigenfunctions on the compact surface without
  boundary must also charge the complement of the collar by at
  least  $\frac{1}{\log \lambda}$.

\section{\label{LPQCI} $L^p$ norms and concentration  in the Quantum integrable case}

$L^p$ norms of eigenfunctions in the quantum integrable case have a much more geometric theory than in general.
Explicit examples show that the they are often extremals for $L^p$ norm and concentration inequalities.

One extremal problem raised in \cite{Y1,Y2} is to determine the
 Riemannian manifolds which possess
orthonormal bases of eigenfunctions with uniformly bounded
$L^{\infty}$ norms. An obvious example is a flat torus. The
question arises whether any others exist. The following result
shows that flat tori are the unique minimizers in the class of
quantum integrable Laplacians.

\begin{theo} \label{RM} \cite{TZ}  Suppose that $\Delta$ is a quantum completely
integrable Laplacian on a compact Riemannian manifold $(M, g)$.
Then
\medskip

(a) If $L^{\infty}(\lambda, g) = O(1)$  then $(M,g)$ is flat.

(b) If $\ell^{\infty}(\lambda, g) = O(1)$,  then $(M,g)$ is flat.

\end{theo}

It is possible that flat tori are the only
compact Riemannian manifolds with a uniformly bounded orthonormal basis of
eignfunctions. But at this time, it is not even known if the standard sphere possesses such
an orthonormal basis. The idea of the proof of Theorem \ref{RM} is that the joint eigenfunctions
concentrate on level sets of the moment map, and therefore develop singularities at points where the
projection of the level set to the base manifold is singular. The only case where no singularities occur
is when $(M, g)$ is a torus  without conjugate points, and in this case  Burago-Ivanov proved (the Hopf conjecture) that
$(M, g)$ must be flat.

There also exists a quantitative improvement of Theorem \ref{RM} which gives blow-up rates for $L^p$ norms
for quantum integrable eigenfunctions concentrating on singular level sets, i.e. level sets which
are not regular in the sense of (\ref{CI1}). These eigenfunctions are the extremals
for $L^p$ blow-up and mass concentration.  In
the following there is an additional technical assumption
(Eliasson non-degeneracy) which we omit for simplicity.

\begin{theo} \label{LP} \cite{TZ2} Suppose that $(M, g)$ is a compact  Riemannian manifold
whose  Laplacian $\Delta$ is quantum completely integrable as in
(\ref{QCI}).  Then, unless  $(M, g)$ is a flat torus, this action
must have a singular orbit of dimension $<n$. If the minimal
dimension of the singular orbits is $ \ell$, then
    for every $\epsilon >0$, there exists  a sequence of
eigenfunctions satisfying:
$$\left\{ \begin{array}{l}   \| \phi_{k} \|_{L^{\infty}} \geq C(\epsilon) \lambda_{k}^{\frac{n - \ell}{4} -
\epsilon}. \\ \\
 \| \phi_{k} \|_{L^{p}} \geq C(\epsilon) \lambda_{k}^{ \frac{(n -
\ell) ( p - 2)}{ 4p } - \epsilon }, \,\,\, 2 < p < \infty.
\end{array} \right.$$
   \end{theo}

Here,

\begin{itemize}

\item A point $(x, \xi)$ is called a singular point of the moment map ${\mathcal
P}$ (\ref{MM})  if  $dp_{1} \wedge \cdot \cdot \cdot \wedge dp_{n}(x,\xi) =
0$.

\item  A level set  ${\mathcal P}^{-1}(c)$ of the moment map is called  a singular level if
it contains a singular point $(x, \xi) \in {\mathcal P}^{-1}(c)$.

\item An orbit $\R^n \cdot (x, \xi) $ of $\Phi_{t}$ (\ref{PHIT}) is  singular if it is non-Lagrangean, i.e.   has dimension $<n$;

\end{itemize}

The  idea in the proof is to consider singular orbits and to  conjugate to a quantum Birkhoff normal form around
the orbit. Hence, one calculates the mass   in the normal form space.  Conjugation to normal form does not preserve
$L^p$ norms, and what  is really calculated are not $L^p$
norms but rather $L^2$ norms in shrinking tubes. Interestingly, this method produces optimal results.
The proof  does not determine the minimal  dimension $\ell$. By
taking products of lower dimensional manifolds,  it is easy to
construct examples with any value of $\ell = 1, \dots, n - 1$.

\subsection{Mass concentration on  small length scales}

We sketch the proof  of  Theorem \ref{LP} as an illustration of mass estimates in shrinking tubes.
We follow the semi-classical notation $\hbar = \lambda^{-1}$ of \cite{TZ2}.

 Let $\Lambda := {\Bbb R}^{n} \cdot v$ be a compact, $k < m$-dimensional singular orbit of the
 Hamiltonian
${\Bbb R}^{n}$-action generated by $(p_{1},...,p_{n}).$ In this
section, we study mass concentration of modes in shrinking tubes
of radius  $\sim \hbar^{\delta}$ for $0< \delta < 1/2$ around
$\pi(\Lambda)$ in $M$, where $\pi: T^{*}M \longrightarrow M$
denotes the canonical projection map.

We denote by $T_{\epsilon}(\pi(\Lambda))$ the set of points of
distance $< \epsilon$ from $\pi(\Lambda)$. For $0 < \delta < 1/2$,
we introduce a  cutoff $\chi_{1}^{\delta} (x;\hbar) \in
C^{\infty}_{0}(M)$ with $0 \leq \chi_{1}^{\delta} \leq 1,$
satisfying
\begin{itemize} \label{tube3}
\item (i) supp $\chi_{1}^{\delta} \subset T_{\hbar^{\delta} }
(\pi(\Lambda))$ \item (ii)
 $\chi_{1}^{\delta} = 1$ on $ T_{3/4 \hbar^{\delta}           }  (\pi(\Lambda))$.
\item (iii) $|\partial_x^{\alpha} \chi_1^{\delta}(x; \hbar)|\leq
 C_{\alpha} h^{- \delta |\alpha|}. $
\end{itemize}
Under the assumption that  $\Lambda$ is an embedded submanifold of
$M$, the functions
\begin{equation} \label{cutoff1}
\chi_{1}^{\delta}(x;\hbar) = \zeta_{1}( \hbar^{-2\delta}
d^{2}(x,\pi(\Lambda)) )
\end{equation}
are smooth on  $ T_{\epsilon } (\pi(\Lambda)) $ and satisfy the
conditions. Here,  $d(.,.)$ is the Riemannian distance function.
Also, $\zeta_{1} \in C^{\infty}_{0}({\Bbb R})$ with $0 \leq
\zeta_{1} \leq 1, \zeta_{1}(x)=1$ for $|x| \leq 3/4$ and supp
$\zeta_{1} \subset (-1, 1).$

\begin{theo} \label{SSM} Let $\phi_{\mu} \in V_{c}(\hbar)$ satisfy the bounds in Lemma \ref{LAMBDA}. Then for any   $ 0 \leq \delta <1/2$,
$ (Op_{\hbar}(\chi_{1}^{\delta}) \phi_{\mu}, \phi_{\mu}) \gg |\log
\hbar|^{-m}.$
\end{theo}

We briefly sketch the proof.
 Let  $\chi_{2}^{\delta}(x,\xi;\hbar)
\in C^{\infty}_{0}(T^{*}M ; [0,1])$ be a
 second cutoff supported in a radius $\hbar^{\delta}$ tube,  $\Omega (\hbar)$, around $\Lambda$ with
$
\Omega(\hbar) \subset supp \chi_{1}^{\delta}$
 and such that
$
\chi_{1}^{\delta}  = 1 \,\,\mbox{on} \,\, supp \chi_{2}^{\delta}.
$
Then, clearly
\begin{equation} \label{dom1}
\chi_{1}^{\delta}(x,\xi) \geq \chi_{2}^{\delta}(x,\xi),
\end{equation}
for any $(x,\xi) \in T^{*}M$. By Garding's    inequality, (\ref{dom1})
 implies
\begin{equation} \label{dom2}
 ( \, Op_{\hbar}(\chi^{\delta}_{1}) \phi_{\mu}, \phi_{\mu} \, ) \gg ( \, Op_{\hbar}(\chi^{\delta}_{2}) \phi_{\mu}, \phi_{\mu} \, ).
\end{equation}

We now conjugate the right side to the model by the $\hbar$-
Fourier integral operator $F$ of Lemma (\ref{QBNF}).  Since $F$ is a microlocally elliptic
$\hbar$-Fourier integral operator associated to a canonical
transformation $\kappa$, it follows by Egorov's theorem
\begin{equation} \label{bound}
(Op_{\hbar}(\chi_{2}^{\delta}) \phi_{\mu}, \phi_{\mu}) =
|c(\hbar)|^2  (Op_{\hbar}(\chi_{2}^{\delta} \circ \kappa) u_{\mu},
u_{\mu}) - C_{3} \hbar^{1-2\delta}
\end{equation}
\noindent where $c(\hbar) u_{\mu} (y,\theta;\hbar)$ is the
microlocal normal form  for the eigenfunction
$\phi_{\mu}$.  Since $\phi_{\mu} \in V_{c}(\hbar)$ satisfies the
bounds in Lemma (\ref{LAMBDA}) it follows that $|c(\hbar)| ^{2} \gg |\log \hbar|^{-m}$ and from (\ref{bound}) we are left with estimating the matrix elements $(Op_{\hbar}(\chi_{2}^{\delta} \circ \kappa) u_{\mu}, u_{\mu})$ from below.
  As in the localization proof, the matrix elements are now in terms of elementary model eigenfunctions
  and the calculation has become easy. The normal form eigenfunctions separate into a product of factors
  and one only has to calculate one (or two) dimensional integrals. As an example, in  the hyperbolic case
the integral has the form
\begin{equation} \label{modelmass1}\begin{array}{l}
M_{h} = \frac{ 1} { \log
\hbar} \, \left( \int_{0}^{\infty} \chi(\hbar \xi/\hbar^{\delta})
\left| \int_{0}^{\infty} e^{-ix} x^{-1/2 + i \lambda/\hbar}
\chi(x/\hbar^{\delta}\xi) dx \right|^{2} \frac{d\xi}{\xi} \right) \\ \\
\geq
\frac{1}{C_{0}} \, (\log \hbar)^{-1} \, \int_{0}^{\hbar^{\delta
-1} } \frac{d\xi}{\xi} \left| \int_{0}^{\hbar^{\delta} \xi}
e^{-ix} x^{-1/2 + i \lambda/\hbar} dx \right|^{2} + {\mathcal
O}(|\log \hbar|^{-1})\\ \\ \gg \,  |\Gamma (1/2 + i \lambda/\hbar)|^{2}\, (1-2\delta) +
{\mathcal O}(|\log \hbar|^{-1})  \geq C(\epsilon) >0\end{array} \end{equation}
 uniformly
for $\hbar \in (0,\hbar_{0}(\epsilon)]$.

\subsubsection{Completion of the proof of Theorem \ref{LP}}

The small scale mass estimates immediately imply lower bounds on $L^{\infty}$ norms and $L^p$
norms due to the shrinking volumes of the tubes. For instance,
\begin{equation} \label{IES} \begin{array}{lll}
\int_{M}  | \phi_{\mu}(x)  |^{2} \chi^{\delta}_{1} (x;\hbar ) \,
\, d vol(x) & \leq &  \; \sup_{x \in T_{h^{2
\delta}}(\pi(\Lambda))} | \phi_{\mu}(x)  |^{2} \int_{M}
\chi^{\delta}_{1} (x;\hbar ) \, \, d vol(x) \\ & & \\ & \leq &  \|
\phi_{\mu} \|^{2}_{L^{\infty}} \cdot  \int_{M}
\chi^{\delta}_{1} (x;\hbar ) \, \, d vol(x)
\end{array} \end{equation} and  it follows  from Lemma \ref{SSM} that
\begin{equation} \label{lower2}   \| \phi_{\mu} \|^{2}_{L^{\infty}} \cdot \left(
\int_{M} \chi^{\delta}_{1} (x;\hbar ) \, \, d vol(x) \right) \geq
C(\epsilon) |\log \hbar|^{-m} ,\end{equation}
 \noindent uniformly for  $\hbar \in (0,\hbar_{0}(\epsilon)]$.
Since \begin{equation} \label{TUBEINT} \int_{M}
\chi_{1}^{\delta}(x;\hbar) \,\, d vol(x) = {\mathcal
O}(\hbar^{\delta(n-\ell)}), \end{equation}
 (\ref{lower2}) implies
$$ \| \phi_{\mu} \|^{2}_{L^{\infty}} \geq C(\epsilon) \hbar^{-\frac{1}{2}(n- \ell) + \epsilon} |\log \hbar|^{-m}.$$
Recalling that  $\hbar^{-1} \in \{ \lambda_{j}; \lambda_{j} \in Spec -\sqrt{\Delta} \} $, this gives:
$$\| \phi_{\lambda_{j} } \|_{L^{\infty}} \geq C(\epsilon)
\lambda_{j}^{\frac{n-\ell}{4} - \epsilon}.$$

\subsubsection{Concentration of quantum integrable eigenfunctions on submanifolds}

Similar methods were used in   \cite{To2} to obtain   sharp  bounds on $L^2$ norms
for restrictions to submanifolds in the quantum integrable case, making more precise the
results of \cite{BGT} in this special case.  For simplicity, let us consider
curves on surfaces.  First is the generic upper bound:

\begin{theo} \cite{To2} \label{corollary}
Let $\phi_{\lambda_j}; j=1,2,3,...$ be the $L^{2}$-normalized joint
Laplace eigenfunctions of the commuting operators $P_{1}= -\Delta
$ and $P_{2}$ on a Riemannian surface $(M^{2},g)$. Then for a generic curve $\gamma$ such that
$\iota^{*}p_{2}|_{S^*_{\gamma}M}$ is Morse, we have
$$ \int_{\gamma} |\phi_{\lambda_j}|^{2} ds = {\mathcal O}_{|\gamma|} \left( \log \lambda_j \right). $$
\end{theo}

When  the curve is a geodesic, the bounds depend on the type of level set the geodesic lies on:

\begin{theo} \label{mainthm2} \cite{To2}
Let  $P_j(\hbar);  j=1,2$ be a non-degenerate quantum integrable system
system on a surface, $(M,g)$. Then,
\begin{itemize}
\item (i) \, When $\gamma$ is the projection of a geodesic
segment  contained in $ {\mathcal P}^{-1}({\mathcal
B}_{reg}),$
$$ \int_{\gamma}|\phi_{\lambda_j}(s)|^{2} ds = {\mathcal O}_{|\gamma|}(1),$$
\item (ii) \, When $\gamma$ is the projection of a singular joint
orbit in ${\mathcal P}^{-1}({\mathcal B}_{sing}),$
$$ \int_{\gamma} |\phi_{\lambda_j}(s)|^{2} ds = {\mathcal O}_{|\gamma|}(\lambda_j^{1/2} ).  $$
\noindent Moreover,
 there exists a constant $c_{\gamma} >0$ depending only on the curve $\gamma,$ and a subsequence of joint eigenfunctions, $\phi_{\lambda_{j_k}=}; k=1,2,...$ such that
 $$ \int_{\gamma} |\phi_{\lambda_{j_k}}(s)|^{2} ds \geq  c_{\gamma} \lambda_{j_k}^{1/2}  \,\,\, \mbox{when} \, \gamma \,\mbox{ is stable},$$
$$ \int_{\gamma} |\phi_{\lambda_{j_k}}(s)|^{2} ds \geq c_{\gamma} \lambda_{j_k}^{1/2} |\log \lambda_{j_k}|^{-1}\,\,\, \mbox{when} \, \gamma \, \mbox{is unstable}.$$
\end{itemize}
\end{theo}

Thus the exact bound depends on the nature
 of the geodesic.
In the general quantum integrable  case, most geodesics lie on regular Lagrangian tori in ${\mathcal P}^{-1}(B_{reg})$ and these
 geodesics do not support large $L^{2}$-bounds. But as in Theorems \ref{RM} and \ref{LP},
  there always exists  a subsequence of joint
eigenfunctions of $P_1$ and $P_{2}$ with mass concentrated along
(singular) orbits  contained in ${\mathcal P}^{-1}({\mathcal B}_{sing})$, and the
 associated  eigenfunctions saturate the upper  bounds. For instance
 in the case of a simple  surface of revolution,  the equator is the projection
of a singular orbit of the $\R^2$ action generated by geodesic flow and rotation.
 The  corresponding joint eigenfunctions (the analogs of highest weight spherical harmonics) satisfy
$\int_{\gamma} |\phi_{\lambda_j}|^{2} ds
 \sim \lambda_j^{1/2}$ along the equator, $\gamma$.  The  equatorial geodesic is
singular and the $L^2$ norms along it had singular blowup. In the case of the
 meridian great circles, the closed
geodesic lies in the base space projection of a maximal Lagrangian
torus. The zonal harmonics have $\hbar$-microsupport on this torus
and have $L^2$-restriction bound $\sim \log \lambda$
along
 any meridian great circle.

\section{\label{QuE} Delocalization in quantum ergodic systems, I}

In this section, we discuss general  results on eigenfunctions
when the geodesic flow of  $(M, g)$ is assumed to be ergodic (see  \S \ref{BASIC} and \S \ref{QULIMITS}
for  definition and notation). The study of eigenfunctions and eigenvalues
of Laplacians on manifolds with ergodic (or more highly mixing) geodesic flows
is generally known as `quantum chaos'.  The basic question is, what impact do dynamical
properties of the geodesic flow $g^t$ have on eigenvalues and eigenfunctions of its quantization
$U_t$?  This question has been studied over the last three decades  by a large collection of mathematicians
and physicists, using both theoretical and computational methods.  In this section, we largely follow
our recent survey \cite{Z3}. Another exposition with emphasis on arithmetic hyperbolic quotients is \cite{Sar2}.
For recent computational results, we refer to \cite{Bar}.

One of the basic and most studied problem is Problem \ref{Q} for quantizations of classically ergodic systems.
The main result is   that there exists a subsequence $\{\phi_{j_k}\}$ of
eigenfunctions whose indices $j_k$ have counting density one for
which $\rho_{j_k}(A): = \langle A \phi_{j_k}, \phi_{j_k}\rangle \to \omega(A)$ (where as above
$\omega(A) = \frac{1}{\mu(S^*M)} \int_{S^*M} \sigma_A d\mu $ is the normalized Liouville average of $\sigma_A$).  Such a
 sequence of  eigenfunctions is called a sequence of  `ergodic
eigenfunctions'. The key quantities to study are the quantum variances
\begin{equation} \label{diag} V_A(\lambda) : =
\frac{1}{N(\lambda)} \sum_{j:  \lambda_j \leq \lambda} |\langle A
\phi_j, \phi_j \rangle - \omega(A)|^2.
\end{equation}

\begin{theo} \label{QE} \cite{Sh.1,Sh.2,Z2,CV,Su,ZZw,GL,Z3}   Let $(M,g)$ be a compact
Riemannian manifold (possibly with boundary), and let
$\{\lambda_j, \phi_j\}$ be the spectral data of its Laplacian
$\Delta.$ Then the geodesic flow
 $G^t$ is ergodic  on $(S^*M,d\mu)$ if and only if, for every
$A \in \Psi^o(M)$,  we have:
\medskip

\begin{enumerate}

 \item $\lim_{\lambda \rightarrow \infty} V_A(\lambda) =0.$
\medskip

 \item $(\forall \epsilon)(\exists \delta)
\limsup_{\lambda \rightarrow \infty} \frac{1} {N(\lambda)}
\sum_{{j \not= k: \lambda_j, \lambda_k \leq \lambda}\atop {
|\lambda_j - \lambda_k| < \delta}} |( A \phi_j, \phi_k )|^2 <
\epsilon $
\end{enumerate}

\end{theo}

Since all the terms in (1)
are positive, no cancellation is possible, hence  (1)  is
equivalent to the existence of a subset ${\mathcal S} \subset \N$
of density one such that ${\mathcal Q}_{{\mathcal S}} := \{ d
\Phi_k : k \in {\mathcal S}\}$ has only $\omega$ as a weak* limit
point.

As explained in  \cite{Z9},
this ergodicity of eigenfunctions may be viewed as  a convexity theorem:   By the Banach-Alaoglu theorem,
 the set of invariant probability measures for the geodesic
flow is a compact convex set $\mcal_I$.
An invariant measure is ergodic if it is   an extreme point of
the compact convex set. The same is true on the quantum level: The set of  ${\mathcal E}_{\R}$ of invariant states for
$\alpha_t$ is a convex set. A classical invariant measure is an invariant state, and if it is ergodic
classically it is also ergodic quantum mechanically, i.e. it is an extreme point of ${\mathcal E}_{\R}$.
Hence Liouville measure $\omega$  is an extreme point of this convex set.
 But the local Weyl law says that $\omega$ is  the limit of the
convex combination $\frac{1}{N(E)} \sum_{\lambda_j \leq E}
\rho_j.$  An extreme point cannot be written as a convex
combination of other states unless all the states in the
combination are equal to it. In our case, $\omega$ is only a limit
of an infinite sequence of convex combinations, and the result is that   almost all terms in the sequence  tend to
$\omega$, and that is equivalent to (1).
\medskip

\noindent{\bf Sketch of Proof of (1)}~~~

Let  \begin{equation} \langle A \rangle_T : = \frac{1}{2T} \int_{-T}^T
U_t^* A U_t dt. \end{equation}
Then,
\begin{equation} \sum_{\lambda_j \leq \lambda} |\langle A \phi_{\lambda_j} ,  \phi_{\lambda_j} \rangle
- \omega(A)|^2  = \sum_{\lambda_j \leq \lambda} |\langle \langle A
\rangle_T - \omega(A)  \phi_{\lambda_j},  \phi_{\lambda_j}  \rangle |^2. \end{equation}  Apply the Schwartz inequality
for states,
$$\sum_{j: \lambda_j \leq \lambda}  |(B \phi_{\lambda_j}, \phi_{\lambda_j}))|^2  \leq {\rm Tr\,} \Pi_{[0, \lambda]} B^*B,$$
where  $\Pi_{[0, \lambda]}$ is the
spectral projection for $\sqrt{\Delta}$ corresponding to the interval
$[0, \lambda]$, to the operator
 $B = \Pi_{[0, \lambda]}
 [\langle A \rangle_T - \omega(A)] \Pi_{[0, \lambda]} $. We then have
\begin{equation}\begin{array}{lll}  \sum_{\lambda_j \leq \lambda} |\langle \langle A \rangle_T -
\omega(A)  \phi_{\lambda_j},  \phi_{\lambda_j}\rangle  |^2 &  \leq & {\rm Tr\,} (\Pi_{[0, \lambda]}
[\langle A \rangle_T - \omega(A) \Pi_{[0, \lambda]} )^*
[\Pi_{[0, \lambda]} \langle A \rangle_T - \omega(A)] \Pi_{[0, \lambda]} ) \\ && \\
 &  \leq & {\rm Tr\,} (\Pi_{[0, \lambda]}
[\langle A \rangle_T - \omega(A))^*
[\langle A \rangle_T - \omega(A)] \Pi_{[0, \lambda]} )\\ && \\
& = &   \omega((\langle A
\rangle_T - \omega(A))^*(\langle A \rangle_T - \omega(A))).
 \end{array} \end{equation}
Here,  we used the Jensen  inequality,
$$\frac{1}{N(\lambda)} {\rm Tr\,} \phi (\Pi_{[0, \lambda]} [\langle A \rangle_T - \omega(A)] \Pi_{[0, \lambda]} ) \leq
\frac{1}{N(\lambda)}  {\rm Tr\,} \Pi_{[0, \lambda]} \phi ([\langle A \rangle_T -
\omega(A)]) \Pi_{[0, \lambda]}, $$
 valid for any convex function $\phi$ in the case $\phi(x) = x^2$.
  By the local Weyl law,  we get
$$\lim_{\lambda \rightarrow \infty}\frac{1}{N(\lambda)} \sum_{\lambda_j \leq \lambda} |
(\langle A  \phi_{\lambda_j},  \phi_{\lambda_j}  \rangle  - \omega(A))^2  \leq \int_{S^*M } |\langle \sigma_A \rangle_T - \omega(A))|^2
d\mu.$$ As $T \rightarrow \infty$ the right side
approaches $0$ by the dominated convergence theorem and by
Birkhoff's ergodic theorem.  Since the left hand side is
independent of $T$, this implies the stated theorem. \qed

\subsection{Quantum ergodicity in terms of operator time and space
averages}

To explain the term `quantum ergodicity', we reformulate the result in terms of space
and time averages. We assume for simplicity the generic condition that all eigenvalues
are of multiplicity one (cf. \cite{U}). The space and time averages are defined as follows:

\noindent{\bf Definition}~~~{\it Let $A \in \Psi^0$ be an
observable and define its time average to be:
 $$\langle A \rangle := w- \lim_{T \rightarrow \infty} \frac{1}{2T} \int_{-T}^T
U_t^* A U_t dt$$ and its space average to be scalar operator
$$\omega (A) \cdot I$$ }
The  limit is taken in the weak operator topology, i.e. in the sense of matrix elements.
 To verify it we observe that
$$( \frac{1}{2T} \int_{-T}^T U_t^* A U_t dt \phi_i, \phi_j) =
\frac{\sin  T(\lambda_i - \lambda_j)}{ T(\lambda_i - \lambda_j)}
(A \phi_i, \phi_j),$$ hence the
matrix element tends to zero as $T \rightarrow \infty$ unless
$\lambda_i = \lambda_j$.  The limit only occurs in the weak sense since the rate   is clearly not uniform,
as the spacings $\lambda_i -
\lambda_j$ could be arbitrarily small.

Quantum ergodicity can thus be reformulated as the condition,
\begin{equation} \langle A \rangle = \omega(A) I +
K,\;\;\;\;\;\;\mbox{where}\;\;\;\;\; \lim_{\lambda \rightarrow
\infty} \omega_{\lambda}(K^*K) \rightarrow 0, \end{equation} where
$\omega_{\lambda}(A) = Tr E(\lambda) A. $ Thus, the time average
equals the space average moduli a term $K$ whose Hilbert-Schmidt norm
in the range of $\Pi_{\lambda}$ is $o(N(\lambda)).$
Note that  $\langle A \rangle$ commutes with
$\sqrt{\Delta}$,  hence is diagonal in the basis $\{\phi_j\}$ of
joint eigenfunctions of $\langle A \rangle$ and of $U_t$. Also,
$K$ is the diagonal matrix with entries $\langle A \phi_k,
\phi_k\rangle - \omega(A)$. The condition is therefore equivalent
to
$V_A(\lambda) \to 0$

\subsection{Quantum unique ergodicity and converse quantum ergodicity}

A Laplacian is said to be QUE (quantum uniquely ergodic) if Liouville measure
is the only weak * limit point of the semi-classical Wigner measures.  The terminology was
introduced in \cite{RS}.

The condition
may be reformulated in terms of $K$: Namely, QUE is equivalent to the compactness of $K$.
Indeed, this  would imply that $\langle K \phi_k,  \phi_k \rangle \to
0$, hence $\langle A \phi_k, \phi_k \rangle \to \omega(A)$ along
the entire sequence.  A key  difficulty of settling the question whether $K$ is compact is
that the time averaged operator $\langle A \rangle$ no longer belongs to the class of pseudo-differential
operators, due to the very weak nature of the weak operator limit.

 It is widely conjectured there exist non QUE $(M, g)$ with ergodic geodesic flow. We refer
 to \cite{Bar,BZ2,Z9} for some recent discussions and references to the literature.
  The simplest example is that of the Bunimovich stadium, which possesses
quasi-modes concentrating   on
the invariant Lagrangian cylinder (with boundary) formed by
bouncing ball orbits in the middle rectangle. An analogue among Riemannian manifolds without
boundary is that of a non-positively curved surface with a flat cylindrical part and ergodic geodesic flow  \cite{Don1},
which also carries product quasi-modes of the same kind.    The existence of such quasi-mode
suggests that there are nearby modes which are not quantum ergodic (see \cite{Z9}
for the precise suggestion).    Faure-Nonnenmacher-de
Bi\`evre \cite{FNB} have shown that QUE does not hold for the hyperbolic
system defined by a quantum cat map on the torus, and since
the  methods available for studying  eigenfunctions of  quantum maps and of
Laplacians are very similar,
 this negative result shows that there cannot exist a universal structural proof of
 QUE. A QUE result has been proved by
E. Lindenstrauss, namely the QUE property for the orthonormal basis
of Laplace-Hecke eigenfunctions eigenfunctions on arithmetic
hyperbolic surfaces (see \cite{LIND}).

To the author, an interesting and almost completely open problem is the converse of quantum ergodicity: does quantum ergodicity
imply classical ergodicity? More precisely, are there natural sufficient conditions for this? For instance, if
$\langle A \rangle
= \omega(A) + K$ where $K$ is compact, is the geodesic flow ergodic?
Very  little is known on this converse problem at present. One may imagine a system which is
non-ergodic but in which `tunnelling' causes eigenfunctions to become uniformly distributed in phase space.
A simple model is the one-dimensional  Schr\"odinger operator with the  even double well potential $V(x) = (1 - x^2)^2$.
Level sets  $\xi^2 + V(x) = E$ with $E$ near zero have two components and hence the Hamiltonian flow is not ergodic on
this level; yet the eigenfunctions are either even or odd and hence they are quantum ergodic. No such example is known
in the case of Laplacians  $\Delta$ of compact Riemannian manifolds. However, the example shows that no general `abstract' proof
of $QE \implies CE$ is possible.

 In \cite{Z5,MOZ} it is shown
that if there exists an  open set in $S^*M$ filled by
periodic orbits, then the Laplacian cannot be quantum ergodic. But it has not even
been proved that
 KAM systems, which have Cantor-like positive measure  invariant sets, are not
 quantum ergodic although there exists a positive density of quasi-modes concentrating
 on invariant tori \cite{Pop}. The problem is to prove that eigenfunctions are linear combinations
 of not too many quasi-modes.

\subsection{\label{QWMS} Quantum weak mixing}

Quantum weak mixing concerns the off-diagonal matrix elements.

\begin{theo} \label{QWM} (see \cite{Z3} for references)  The geodesic flow  $\Phi^t$ of $(M, g)$
is weak mixing if and only if the conditions (1)-(2) of Theorem
\ref{QE}  hold and additionally, for any $A \in \Psi^o(M)$,
$$(\forall \epsilon)(\exists \delta)
\limsup_{\lambda \rightarrow \infty} \frac{1} {N(\lambda)}
\sum_{{j\not= k: \lambda_j, \lambda_k \leq \lambda}\atop {
|\lambda_j - \lambda_k-\tau| < \delta}} |( A \phi_j, \phi_k )|^2 <
\epsilon \;\;\;\;\;\;\; (\forall \tau  \in \R )$$
\end{theo}

The restriction $j\not =k$ is of course redundant unless $\tau =
0$, in which case the statement coincides with quantum ergodicity.
This result follows from the general asymptotic formula, valid for
any compact Riemannian manifold $(M, g)$, that \begin{equation}
\label{QMF} \begin{array}{l}  \frac{1}{N(\lambda)}  \sum_{i \not=
j, \lambda_i, \lambda_j \leq \lambda} |\langle A \phi_i, \phi_j
\rangle|^2 \left|\frac{\sin T(\lambda_i -\lambda_j -\tau)}
{T(\lambda_i -\lambda_j -\tau)}\right|^2 \\ \\
 \sim ||\frac{1}{2T}
\int_{- T}^T e^{i t \tau} V_t(\sigma_A) ||_2^2 - |\frac{\sin T
\tau}{T \tau}|^2 \omega(A)^2. \end{array} \end{equation}  In the
case of weak-mixing geodesic flows, the right hand side $\to 0$ as
$T \to \infty$.

\subsection{Spectral measures and matrix elements}

Theorem \ref{QWM}  is based on expressing the spectral measures of
the geodesic flow in terms of matrix elements. The main limit
formula is:

\begin{equation} \label{SPECMEAS} \int^{\tau +\varepsilon }_{\tau-\varepsilon
} d\mu_{\sigma_A}:=\lim_{\lambda \rightarrow
\infty}\frac{1}{N(\lambda )}\sum_{i,j: \;\;\lambda_i,  \lambda _j\leq
\lambda,  \;\; |\lambda _i-\lambda _j-\tau|<\varepsilon
\\}\;
 |\langle A\varphi_i,
\varphi_j \rangle|^2\;\;,  \end{equation} where $d\mu_{\sigma_A}$
is the spectral measure for the geodesic flow corresponding to the
principal symbol of $A$,  $\sigma_A \in C^{\infty} (S^*M, d\mu)$.
Recall that the spectral measure of $V_t$ corresponding to $f\in
L^2$ is the measure $d\mu_f$ defined by
$$\langle V_tf,f \rangle_{L^2(S^*M)}  = \int_{\R} e^{\oit\tau} d\mu_f(\tau)\;.$$

The limit formula (\ref{SPECMEAS})  is equivalent to the dual
formula (under the Fourier transform)
\begin{equation} \label{QM2}\lim_{\lambda \to \infty} \frac{1}{N(\lambda)}   \sum_{i, j: \lambda_j \leq \lambda}
e^{i t (\lambda_i - \lambda_j)} |\langle A \phi_i, \phi_j
\rangle|^2 = \langle V_t \sigma_A, \sigma_A \rangle_{L^2(S^*M)}.
\end{equation}
The proof of (\ref{QM2}) is to consider, for $A\in \Psi^\circ$,
 the operator $A^*_tA\in \Psi^\circ$ with $A_t =
U^*_tAU_t$. By the local Weyl law,
$$\lim_{\lambda \rightarrow \infty}\frac{1}{N(\lambda )}
\Tr E(\lambda) A^*_tA = \langle V_t \sigma_A,\sigma_A
\rangle_{L^2(S^*M)}\;.$$ The right side of (\ref{SPECMEAS})
defines a measure $dm_A$ on $\R$ and (\ref{QM2}) says
$$\int_\R e^{it \tau}dm_A(\tau) = \langle V_t \sigma_A,\sigma_A
\rangle_{L^2(S^*M)}\;\;=\int_\R e^{it
\tau}d\mu_{\sigma_A}(\tau).$$

Since weak mixing systems are ergodic, it is not necessary to
average in both indices along an ergodic subsequence:

\begin{equation} \label{SPECMEASII} \lim_{\lambda_j \to \infty} \langle A_t^* A \phi_j, \phi_j \rangle =
 \sum_{j}
e^{i t (\lambda_i - \lambda_j)} |\langle A \phi_i, \phi_j
\rangle|^2 = \langle V_t \sigma_A, \sigma_A \rangle_{L^2(S^*M)}.
\end{equation}
Dually, one has

\begin{equation} \label{SPECMEASI} \lim_{\lambda_j \to \infty}
\sum_{i\; : \;  |\lambda _i-\lambda _j-\tau|<\varepsilon
\\}\;
 |\langle A\varphi_i,
\varphi_j \rangle|^2\;\; = \int^{\tau +\varepsilon
}_{\tau-\varepsilon } d\mu_{\sigma_A}.  \end{equation} For QUE
systems, these limit formulae are valid for the full sequence of
eigenfunctions.

\subsection{Rate of quantum ergodicity and mixing}

A quantitative refinement of quantum ergodicity is to  ask at what
rate the sums in Theorem \ref{QE}(1) tend to zero, i.e. to
establish a rate of quantum ergodicity.
In the off-diagonal case one may view $|\langle A\varphi_i,
\varphi_j \rangle|^2$ as analogous to $|\langle A \phi_j, \phi_j)
- \omega(A)|^2$. However, the sums in (\ref{SPECMEAS}) are double
sums while those of (\ref{diag}) are single. One may also average
over the shorter intervals $[\lambda, \lambda + 1].$

The only  rigorous result valid on general
Riemannian manifolds with hyperbolic geodesic flow  is the
logarithmic decay:

\begin{theo} \cite{Z5} (see also \cite{Schu2} for $p = 2$)   For any $(M, g)$ with hyperbolic geodesic flow,
$$ \frac{1}{N(\lambda)}
 \sum_{\lambda_j \leq \lambda}
|(A\phi_j,\phi_j)-\omega(A)|^{2p} = O \left( \frac{1}{(\log \lambda)^p} \right). $$
\end{theo}
The proof uses the central limit theorem for geodesic flows of M. Ratner.
The logarithm  reflects the exponential blow up in time of
remainder estimates for traces involving the wave group associated
to hyperbolic flows. It would be surprising if the logarithmic
decay is sharp for Laplacians. It was shown by  R.
Schubert \cite{Schu}  that the estimate is sharp in the case of
two-dimensional hyperbolic quantum cat maps. Hence the estimate
cannot be improved by semi-classical arguments that hold in both
settings.

A stunning asymptotic formula for $V_A(\lambda)$ was proved by Luo-Sarnak for holomorphic
forms of arithemetic hyperbolic quotients. Before stating the result, we review some conjectures
in the physics literature.

\subsection{Quantum chaos conjectures}

First, consider off-diagonal matrix elements. One conjecture is
that it is not necessary to sum in $j$ in (\ref{SPECMEASI}): each
individual term has the asymptotics consistent with
(\ref{SPECMEASI}). This is implicitly conjectured by
Feingold-Peres  in \cite{FP} (11) in the form
\begin{equation} \label{FPCONJ} |\langle A \phi_i, \phi_j
\rangle|^2 \simeq \frac{C_A (\frac{E_i - E_j)}{\hbar})}{2 \pi
\rho(E)},
\end{equation} where $C_A(\tau) = \int_{- \infty}^{\infty} e^{- i
\tau t} \langle V_t \sigma_A, \sigma_A \rangle dt. $ In our
notation, $\lambda_j = \hbar^{-1} E_j$ and $\rho(E) dE \sim d
N(\lambda)$. There are  $\sim C \lambda^{n-1}$ eigenvalues
$\lambda_i$ in the interval $[\lambda_j - \tau - \epsilon,
\lambda_j - \tau + \epsilon],$ so (\ref{FPCONJ}) says that
individual terms have the asymptotics of (\ref{SPECMEASI}).

On the basis of the analogy between $|\langle A\varphi_i,
\varphi_j \rangle|^2$ and  $|\langle A \phi_j, \phi_j\rangle -
\omega(A)|^2$, it is  conjectured in \cite{FP}  that
$$\ V_A(\lambda)  \sim \frac{ C_{A - \omega(A) I}(0) }{\lambda^{n-1} vol(\Omega)}.
$$
The idea is that $\phi_{\pm} = \frac{1}{\sqrt{2}} (\phi_i \pm
\phi_j)$ have the same  matrix element asymptotics as
eigenfunctions when $\lambda_i - \lambda_j$ is sufficiently small.
But then $2 \langle A \phi_+, \phi_- \rangle = \langle A \phi_i,
\phi_i \rangle - \langle A \phi_j, \phi_j \rangle$ when $A^* = A$.
Since we are taking a difference, we  may replace each matrix
element by $\langle A \phi_i, \phi_i \rangle $ by $\langle A
\phi_i, \phi_i \rangle - \omega(A)$ (and also for $\phi_j$). The
conjecture then assumes that $ \langle A \phi_i, \phi_i \rangle -
\omega(A)$ has the same order of magnitude as $ \langle A \phi_i,
\phi_i \rangle - \langle A \phi_j, \phi_j \rangle$.  The order
of magnitude is predicted by some natural random wave models, as
discussed below in \S \ref{RWONB}.

\subsection{Rigorous results}

At this time, the strongest variance result is an  asymptotic
formula for the diagonal variance proved by Luo-Sarnak for special
Hecke eigenfunctions on the quotient $\H^2/SL(2, \Z)$ of the upper
half plane by the modular group \cite{L.S.2}. What they prove is an
asymptotic variance formula for
 holomorphic Hecke eigenforms.  One expects that their proof, suitably
 modified, extends to  smooth Maass-Hecke eigenfunctions. Therefore,
 as in \cite{Z3}, we describe the statement   for smooth eigenfunctions  as a
Theorem/Conjecture -- i.e. it is a Theorem for holomorphic forms,
but still a conjecture for non-holomorphic forms.  Note that $\H^2/SL(2, \Z)$ is a non-compact
finite area surface whose Laplacian $\Delta$ has both a discrete
and a continuous spectrum. The discrete Hecke eigenfunctions are
joint eigenfunctions of $\Delta$ and the Hecke operators $T_p$.

\begin{theoconj} \cite{L.S.2} \label{LS} Let $\{\phi_k\}$ denote the orthonormal basis of
Hecke eigenfunctions for $\H^2/SL(2, \Z)$. Then there exists a
quadratic form $B(f) $ on $C_0^{\infty}(\H^2/SL(2, \Z))$ such that
$$ \frac{1}{N(\lambda)}
 \sum_{\lambda_j \leq \lambda}
|\int_X f |\phi_j|^{2} dvol - \frac{1}{Vol(X)} \int_X f dVol|^2 =
\frac{B(f, f) }{\lambda} + o(\frac{1}{\lambda}).
$$
\end{theoconj}

 When the multiplier $f =
\phi_{\lambda}$ is itself an eigenfunction, Luo-Sarnak have shown
that
$$B(\phi_{\lambda}, \phi_{\lambda}) = C_{\phi_{\lambda}}(0)
L(\frac{1}{2}, \phi_{\lambda})$$ where $L(\frac{1}{2},
\phi_{\lambda})$ is a certain $L$-function. Thus, the conjectured
classical variance is multiplied by an arithmetic factor depending
on the multiplier which has no dynamical significance. At this time, it is unknown
whether variance asymptotics exist in the non-arithmetic case. From numerical experiments,
it is believed that arithmetic settings behave somewhat differently from non-arithmetic
ones (see \cite{Sar2}), and this could be another example of the non-generic behavior of
arithmetic quantum chaos.

\subsection{\label{PS} Quantum limits on a hyperbolic surface and Patterson-Sullivan distributions}

In this section, we mention
a curious link between quantum limits and classical dynamics
 on a hyperbolic quotient that was  observed in \cite{AZ}. It is related to the invariant triple
 products studied in \cite{Sar,BR,MS}.

  We write $G =
 PSU(1,1) := SU(1,1)/\pm I \equiv PSL(2, \R), K = PSO(2)$
and identify the quotient $G/K$ with the hyperbolic disc $\D$.
 We let $\Gamma \subset G$ denote a co-compact discrete group and
 let
  $\X = \Gamma \backslash \D$ denote the associated hyperbolic
 surface.  In this context it is standard to denote eigenvalues
by $\lambda_j^2=s_j(1-s_j)=\frac14+r_j^2$ ($s_j=\frac12+ir_j$) and
eigenfunctions  by $\{\phi_{ir_j}\}_{j = 0, 1, 2, \dots}$.

We recall that on a  hyperbolic quotients there exists a quantization  $a \to Op(a)$
 of symbols which is adapted to the non-Euclidean
Fourier transform of Helgason (see (\ref{HYPOPa}) and \cite{AZ} for references and details). We then define the Wigner
distributions by
\begin{equation} \label{WIGDEF} \langle a, W_{ir_j} \rangle  = \int_{S^*\X} a(g)W_{ir_j}(dg) :=\langle Op(a)\phi_{ir_j},
\phi_{ir_j}\rangle_{L^2(\X)},\;\;\; a \in C^{\infty}(S^* \X)
\end{equation}
On the other hand, one can define a second sequence of phase space
distributions, the Patterson-Sullivan distributions
$\{PS_{ir_j}\}$ associated to the eigenfunctions
$\{\phi_{ir_j}\}$,  by the expression
\begin{equation} \label{PATSULDEF} PS_{ir_j}(dg)= PS_{ir_j}(db', db, dt) := \frac{T_{ir_j}(db)
T_{ir_j}(db')}{|b - b'|^{1 +  2i r_j}} \otimes |dt|.
\end{equation} In this definition, $T_{ir_j}$ is the boundary values
of $\phi_{ir_j}$ in the sense of Helgason (see (\ref{HELBV})).
The parameters $(b',b)$ ($b\not=b'$) vary in $B\times B$, where $B
=
\partial \D$ is the boundary of the hyperbolic disc, and $t$
varies in $\R$; $(b', b)$ parametrize the space of oriented
geodesics, $t$ is the time parameter along geodesics, and the
three parameters $(b', b, t)$ are used to parametrize the unit
tangent bundle $S\D$. The Patterson-Sullivan distributions
${PS}_{ir_j}$  are by construction  invariant under the geodesic
flow $(g^t)$ on $S \D$, i.e.
\begin{equation} \label{CLINV} (g^t)_* {PS}_{ir_j} = {PS}_{ir_j}, \end{equation}
 and by using (\ref{CONFORMAL}) they can be shown to be  $\Gamma$-invariant. Hence to each eigenfunction
 one obtains a  geodesic-flow
invariant distributions on $S \X$. We also introduce  normalized
Patterson-Sullivan distributions
\begin{equation} \label{NORMPATSULDEF} \widehat{PS}_{ir_j} : = \frac{1}{\langle 1,
PS_{ir_j} \rangle_{S\X}} \; PS_{ir_j}, \end{equation} which
satisfy the same normalization condition $\langle 1,
\widehat{PS}_{ir_j} \rangle = 1$ as $W_{ir_j}$ on the quotient
$S\X$. In \cite{AZ} the following is proved:

\begin{theo} \label{mainintro} For any $a \in C^{\infty}(\Gamma
\backslash G)$,
$$\int_{S\X} a(g)W_{ir_j}(dg)= \int_{S\X} a(g) \widehat{PS}_{ir_j}(dg)+ O(r_j^{-1}).$$
\end{theo}

It follows  that the Wigner distributions are equivalent to the
Patterson-Sullivan distributions in  the study of quantum
ergodicity. Yet, the Patterson-Sullivan distributions have a
purely classical dynamical definition: Define the classical
dynamical zeta functions,
\begin{equation} \label{LFUNDEFa} \left\{ \begin{array}{ll} (i) & \lcal_2(a,s)=
\sum_\gamma  \frac{e^{ -(s-1)L_{\gamma}}}{|\sinh(L_{\gamma}/2)
|^2} \left(\int_{\gamma_0}a \right), \\ &\\(ii) &\lcal(s; a): =
\sum_\gamma \frac{e^{-s L_{\gamma}}}{1-e^{-L_{\gamma}}}
\left(\int_{\gamma_0} a \right), \;\;\; (\Re e\;s > 1)
\end{array} \right. \end{equation}  where the sum runs over all
closed orbits, and $\gamma_0$ is the primitive closed orbit traced
out by $\gamma$. The sum converges absolutely for $\Re e\;s
> 1$.

\begin{theo} \label{main2}Let $a$ be a real analytic function on the unit tangent
bundle. Then $\lcal(s; a)$ and
   $\lcal_2(s; a)$  admit  meromorphic extensions to $\C$. The poles in the critical strip $0 <\Re e\;s < 1$, appear at
$s=1/2+ir$, where as above $1/4+r^2$ is an  eigenvalue of $\Lap$.
For each zeta function, the residue is
$$ \sum_{j: r_j^2 = r^2} \langle a, \widehat{PS}_{ir_j}\rangle_{S\X},$$
where $\{\widehat{PS}_{ir_j}\}$ are the normalized
Patterson-Sullivan distributions associated to an orthonormal
eigenbasis $\{\phi_{ir_j}\}$.

\end{theo}
Thus, the quantum limit problem is the same (for compact
hyperbolic surfaces) as the problem of finding the limiting
behavior of the residues of  classical dynamical zeta functions as
the pole moves up the critical line.

\section{\label{ENTROPY} Delocalization of eigenfunctions: II: Entropy of quantum limits on manifolds with
Anosov geodesic flow}

We now describe the recent results of  Anantharaman \cite{A},
 Anantharaman- Nonnenmacher \cite{AN} (see also \cite{ANK}) on entropy of quantum
limits for $(M, g)$ with Anosov geodesic flow. These articles give lower
bounds on entropies of the quantum limit measures that arise from
sequences of eigenfunctions. We closely follow
the presentation in \cite{AN,ANK}, and refer in particular to the partly expository
article \cite{ANK} and to the recent Bourbaki seminar report \cite{CV4} for an exposition of the results.

As discussed above, it is known that quantum limits are invariant
probability measures for $g^t$, but many such measures exist for
any geodesic flow. To pin down the possible quantum limits, one
needs to add constraints on the possible limit measures. The
entropy bounds of Anantharaman et al provide almost the only
additional constraints known at this time. The lower bound on
entropy rules out such possible limit measures as periodic orbit
measures $\mu_{\gamma}$ (which have entropy zero) or finite sums of such
measures. However, the entropy results leave open the possibility that a
sequence of eigenfunctions could tend to a limit of the form $a \mu_{\gamma} + (1 - a) d\mu$
if $a$ is small enough (here, $\mu$ is Liouville measure).

We recall (see \S \ref{BASIC} that   a geodesic flow $g^t$ is Anosov on on $S^*_g M$
if
 the tangent bundle $T S^*_g M$
splits into $g^t$ invariant sub-bundles
$E^u(\rho)\oplus E^s(\rho) \oplus
\IR\,X_H(\rho)\,
$
where $E^u$ is the unstable subspace and $E^s$ the stable
subspace. The unstable Jacobian $J^u(\rho)$ at $\rho$ is
defined by
$J^u(\rho)=\det\big(dg^{-1}_{|E^u(g^1\rho)}\big)$.

The goal is to give a lower bound for entropy of quantum limits.
 Entropy is complicated to define, and we only provide a brief
 sketch here. Classically, entropies
are defined for an  invariant probability measure $\mu$ for the
geodesic flow and measures the average complexity of $\mu$-typical
orbits. In the Kolmogorov-Sinai entropy, one starts with a
partition $\pcal = (E_1, \dots, E_k)$ of $S^*_g M$ and defines the
Shannon entropy of the partition by $h_{\pcal}(\mu) =  \sum_{j =
1}^k \mu(E_j) \log \mu(E_j). $ Under iterates of  the time one map
$g$ of the geodesic flow, one refines the partition to
$$\pcal^{v n} = \{ E_{\alpha_0} \cap g^{-1} E_{\alpha_1} \cap
\cdots \cap g^{-n + 1}  E_{\alpha_{n-1}} \}. $$ One defines $h_n(\pcal,
\mu)$ to be the Shannon entropy of this partition and then defines
$h_{KS}(\mu, \pcal) = \lim_{n \to \infty} \frac{1}{n} h_n(\mu, \pcal)$. Then
$h_{KS}(\mu) = \sup_{\pcal} h_{KS}(\mu, \pcal)$.

The main result of \cite{AN,ANK} is the following
\begin{theo}\label{thethm}
Let $\mu$ be a semiclassical measure associated to the
eigenfunctions of the Laplacian on $M$. Then its metric entropy
satisfies
\begin{equation}\label{e:main1}
h_{KS}(\mu)\geq  \left|\int_{S^*M} \log J^u(\rho)d\mu(\rho)
\right|- \frac{(d-1)}{2} \lambda_{\max}\,,
\end{equation}
where $d=\dim M$ and $\lambda_{\max}=\lim_{|t| \to
\infty}\frac{1}t \log \sup_{\rho\in \cE} |dg^t_\rho|$ is the
maximal expansion rate of the geodesic flow on $\cE$.

In particular, if $M$ has constant sectional curvature $-1$, this
means that
\begin{equation}\label{e:main2}
h_{KS}(\mu)\geq \frac{d-1}2.
\end{equation}
\end{theo}

The proof is based on a quantum analogue of the metric entropy,  and
in particular on a development of  the  `entropic
uncertainty principle' of Maassen- Uffink. There are several
notions of quantum or non-commutative entropy, but for
applications to eigenfunctions it is important to find one with
good semi-classical properties.

 Let $(\hcal, \langle.,.\rangle)$ be a complex
Hilbert space, and let
$\norm{\psi}=\sqrt{\langle\psi,\psi\rangle}$ denote the associated
norm. The quantum notion of partition is a family
$\pi=(\pi_k)_{k=1,\ldots,\ncal}$  of operators on $\hcal$ such
that $ \sum_{k=1}^{\hcal}\pi_{k}\pi_{k}^{*}=Id.$ If
$\norm{\psi}=1$,  the entropy of $\psi$ with respect to the
partition $\pi$ is define by
$$
h_{\pi}(\psi)=-\sum_{k=1}^{\ncal} \norm{\pi_k^*
\psi}^2\log\norm{\pi_k^*\psi}^2\,.
$$
We note that the quantum analogue of an invariant probability
measure $\mu$ is an invariant state $\rho$, and the direct
analogue of the entropy of the partition would be $\sum
\rho(\pi_{k}\pi_{k}^{*}) \log \rho(\pi_{k}\pi_{k}^{*}). $ If the state is $\rho(A) = \langle
A \psi, \psi \rangle$ then $\rho(\pi_k \pi_k^*) = ||\pi^*_k
\psi||^2. $

 The dynamics is generated by a unitary
operator $\cU$ on $\cH$. We now state a simple version of the  entropy uncertainty inequality of Maasen-Uffink.
A more elaborate version in \cite{A,AN,ANK} gives a lower bound for a certain `pressure'.

\begin{theo}
\label{t:WEUP}
 For any $\epsilon \geq 0$, for any normalized $\psi\in\cH$,
$$
h_{\pi}\big({\mathcal U} \psi \big) +
h_{\pi}\big(\psi\big) \geq - 2 \log c({\mathcal U})\,,
$$
where
$$ c({\mathcal U}) = \sup_{j, k} |\langle e_k, \; {\mathcal U} e_j \rangle|$$
is the supremum of all matrix elements in the orthonormal basis $\{e_j\}$. In particular,
$h_{\pi}(\psi) \geq - \log c({\mathcal U})$ if $\psi$ is an eigenfunction of ${\mathcal U}$.
\end{theo}

In the application to eigenfunctions, one fixes a partition $\{M_k\}$ of $M$ and a corresponding
partition $T^* M_k$ of $T^*M$. One then defines a smooth quantum partition of unity $P_k$ by smoothing
out the characteristic functions of $M_k$. The partition is refined by
\begin{equation} \label{PALPHA} P_{\alpha} = P_{\alpha_{n-1}} (n-1) P_{\alpha_n-2}(n-2) \cdots P_{\alpha_0}, \end{equation}
where $\alpha = (\alpha_0, \dots, \alpha_{n-1})$ and where $P(k) = {\mathcal U}^{*k} P {\mathcal U}^{k}$.
One then specifies:

\begin{enumerate}

\item ${\mathcal U} = e^{i T_E \sqrt{\Delta}}$ is the wave operator at the
`Ehrenfest time' $T_E = \frac{\log \lambda}{\lambda_{\max}}$.  Or from a semi-classical
(where $h = \frac{1}{\lambda}$, where the Hamiltonian is $H = h^2 \Delta$ and where the time
evolution is $e^{i \frac{t}{h} H}$),  ${\mathcal U} =  e^{i n_E(h)  h \Delta}$ with $n_E(h)
= \frac{\log \frac{1}{h}}{\lambda_{\max}}. $

\item $\psi_h$ is an eigenfunction of ${\mathcal U}$;

\item $h_{\pi}(\psi_h) = \sum_{|\alpha| = n_E} ||P_{\alpha}^* \psi_h||^2 \log ||P_{\alpha}^* \psi_h||^2. $

\end{enumerate}

With these specifications,
\begin{equation} \label{c} c({\mathcal U}) = \max_{|\alpha| = |\alpha'| = n_E} ||P_{\alpha'} U^{n_E} P_{\alpha} Op(\chi^{(n_E)})|| \end{equation}
where $\chi^{(n_E)}$ is a very sharp energy cutoff supported in a tubular neighborhood $\ecal^{\epsilon} := H^{-1}(1 - \epsilon, 1 + \epsilon)$
  of $\ecal = S^*M$ of width
$2 h^{1 - \delta} e^{n \delta}$
for a given $\delta> 0$.

We now give a very sketchy outline of how the entropy uncertainty inequality (Theorem \ref{t:WEUP}) is used
to prove the lower bound on the entropy of the limit measure (Theorem \ref{thethm}). The  argument is
 technical and difficult and the outline only gives the flavor of the estimates; the interested  reader should consult \cite{ANK} for
a complete exposition.

 The next  step is to link $c({\mathcal U})$ to the classical dynamics. The authors introduce a discrete `coarse-grained'
 unstable Jacobian
 $$J_1^u(\alpha_0, \alpha_1) : = \sup \{J^u(\rho): \rho \in T^* \Omega_{\alpha_0} \cap \ecal^{\epsilon}: \; g^t \rho \in T^* \Omega_{\alpha_1} \}, $$
 for $\alpha_0, \alpha_1 = 1, \dots, K$. Here, $\Omega_{j}$ are small open neighborhoods of the partition sets $M_j$. For a sequence
 ${\bf \alpha} = (\alpha_0, \dots, \alpha_{n-1})$ of symbols of length $n$, one defines
 $$J_n^u({\bf \alpha}) : = J_1^u(\alpha_0, \alpha_1) \cdots J_1^u(\alpha_{n-2}, \alpha_{n-1}). $$
 \begin{theo} \label{t:main}
Given a partition $\pcal^{(0)}$ and $\delta, \delta'>0$ small
enough, there exists $\hbar_{\cP^{(0)},\delta,\delta'}$ such that,
for any $\hbar\leq \hbar_{\pcal^{(0)},\delta,\delta'}$, for any
positive integer $n\leq n_E(\hbar)$, and any pair of sequences
${\bf \alpha}$, ${\bf \alpha'}$ of length $n$,
\begin{equation}\label{e:main}
\norm{P_{{\bf \alpha}'}^*\, U^n\, P_{{\bf \alpha}} \Op(\chi^{(n)})}
 \leq C\,\hbar^{-(d-1+c\delta)}\, \sqrt{J^u_n({\bf \alpha}) J^u_n({\bf \alpha}')}\,.
\end{equation}
Here, $d = \dim M$ and the  constants $c$, $C$ only depend on
$(M,g)$.
\end{theo}

To prove this, one shows  that any state of the form
$\Op(\chi^{(*)})\Psi$  can be decomposed as a superposition of
essentially $\hbar^{-\frac{(d-1)}2}$ normalized Lagrangian states,
supported on Lagrangian manifolds transverse to the stable leaves
of the flow.  The action of the operator
$P_{{\bf \alpha}}$ on such
Lagrangian states is intuitively as follows: each application of
$U$ stretches the Lagrangian in the unstable direction (the rate
of elongation being described by the unstable Jacobian) whereas
each multiplication by $P_{\alpha_j}$ projects onto a small piece of the Lagrangian.
This iteration of stretching and cutting accounts for the
exponential decay.

Combined with the entropy uncertainty inequality, one obtains

\begin{prop}\label{p:WEUP}
Let $d = \dim M$ and let  $(\psi_\hbar)_{\hbar\to 0}$ be a sequence of eigenfunctions.
Then there exist $\delta, \delta'$ so that,   at
time $n=n_E(\hbar)$,
\begin{equation}\label{e:ineg}
h_{n}(\psi_\hbar)\geq
2(d-1+c\delta)\log \hbar+\cO(1) \geq
-2\frac{(d-1+c\delta)\lambda_{\max}}{(1-\delta')}\; n +\cO(1)\,.
\end{equation}
\end{prop}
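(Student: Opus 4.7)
The plan is to combine the entropy uncertainty inequality (Theorem \ref{t:WEUP}) with the semiclassical operator-norm bound of Theorem \ref{t:main} applied to the refined partition $\{P_\alpha\}_{|\alpha|=n}$ of (\ref{PALPHA}), then specialize to the Ehrenfest time $n = n_E(\hbar)$.

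First, I would exploit that $\psi_\hbar$ is an eigenfunction of $\cU$: writing $\cU\psi_\hbar = e^{i\theta_\hbar}\psi_\hbar$, we have $\norm{P_\alpha^*\,\cU\psi_\hbar} = \norm{P_\alpha^*\psi_\hbar}$ for every multi-index $\alpha$, hence $h_\pi(\cU\psi_\hbar) = h_\pi(\psi_\hbar) = h_n(\psi_\hbar)$. Applying Theorem \ref{t:WEUP} (or its sharper pressure version from \cite{AN,ANK}, which supplies the correct factor of $2$) to the refined partition and the unitary linking it to its $\cU$-translate yields
\[
2\,h_n(\psi_\hbar) \;\geq\; -2\log c(\cU_n),
\]
with $c(\cU_n)$ given by the supremum (\ref{c}) over pairs $(\alpha,\alpha')$ of length $n$ of $\norm{P_{\alpha'}^*\,U^n\,P_\alpha\,\Op(\chi^{(n)})}$. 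The cutoff $\Op(\chi^{(n)})$ can be inserted essentially for free because $\psi_\hbar$ is microlocally supported on the energy shell $\cE^\epsilon$ up to $O(\hbar^\infty)$ errors.

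Next, I would invoke Theorem \ref{t:main}: for every pair of symbol sequences $(\alpha,\alpha')$ of length $n \leq n_E(\hbar)$,
\[
\norm{P_{\alpha'}^*\,U^n\,P_\alpha\,\Op(\chi^{(n)})} \;\leq\; C\,\hbar^{-(d-1+c\delta)}\,\sqrt{J_n^u(\alpha)\,J_n^u(\alpha')}.
\]
The coarse-grained unstable Jacobians are bounded uniformly by $1$ for an Anosov flow, so taking the supremum gives $c(\cU_n) \leq C\hbar^{-(d-1+c\delta)}$, and consequently $-\log c(\cU_n) \geq -(d-1+c\delta)\log(1/\hbar) + O(1) = (d-1+c\delta)\log\hbar + O(1)$. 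Substituting into the previous inequality yields (\ref{e:ineg}); the second half of (\ref{e:ineg}) then follows by evaluating at $n = n_E(\hbar)$, where $\log\hbar = -\lambda_{\max} n/(1-\delta')$ by the definition of the Ehrenfest time with the $(1-\delta')$ safety margin.

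The main obstacle is Theorem \ref{t:main} itself, which the proof uses as a black box. Its derivation requires decomposing $\Op(\chi^{(n)})\psi_\hbar$ into roughly $\hbar^{-(d-1)/2}$ normalized Lagrangian states supported on submanifolds transverse to the stable foliation, and then tracking how, under $n$ applications of $U$ interlaced with smooth cutoffs $P_{\alpha_k}$, each such state is simultaneously \emph{stretched} along the unstable direction (producing the $J_n^u$ factor by non-stationary phase along the expanding leaves) and \emph{sliced} by the partition (producing the $\hbar^{-(d-1+c\delta)}$ prefactor). At $n = n_E(\hbar)$ these two competing growth rates both have order $\hbar^{-(d-1)}$, so one must choose $\pcal^{(0)}$ fine enough that $J_1^u(\alpha_0,\alpha_1)$ faithfully records pointwise expansion, yet also control the commutators $[P_{\alpha_k}(k), \Op(\chi^{(n)})]$ so that the $n$-fold product does not accumulate extra powers of $\hbar^{-1}$ beyond the advertised $\hbar^{-c\delta}$ loss. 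Verifying that the pressure version of the entropy uncertainty inequality applies to the refined partition, with the correct factor of $2$ in front of $(d-1+c\delta)$, is the other technical point one must handle carefully.
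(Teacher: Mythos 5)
Your proof is correct and follows essentially the same route as the paper's (much terser) sketch: apply the entropic uncertainty principle to the refined partition, using that $\psi_\hbar$ is an eigenfunction of $\mathcal{U}$ so the two entropy terms coincide, bound $c(\mathcal{U})$ via Theorem \ref{t:main} together with $J^u_n(\alpha)\le 1$ for an Anosov flow, and substitute the Ehrenfest time. Note that your chain of inequalities actually yields the sharper bound $h_n(\psi_\hbar)\ge (d-1+c\delta)\log\hbar+\cO(1)$, which implies the stated one because $\log\hbar<0$; the factor of $2$ is precisely the bookkeeping point you flag, and the survey itself is loose about it.
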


Now suppose  that  the Wigner measures  $W_{\psi_{\hbar}}$ of a
subsequence $(\psi_{\hbar})_{\hbar\to 0}$ of eigenfunctions
converges to the semiclassical measure $\mu$ on $\cE$.
Consider the limit $\hbar\to 0$ of  $h_{\pi}(\psi_h)$(so that $n_E(h)\to \infty$).  For
any sequence $\alpha$ of length $n_E$,  each
$\norm{P^*_{\alpha}\,\psi_{\hbar}}^2$ converges to
$\mu(\{{\bf \alpha}\})$, where $\{{\bf \alpha} \}$ is the function
$P^2_{\alpha_0}\,(P^2_{\alpha_1}\circ g^1)\ldots (P^2_{\ep_{n_E}}\circ
g^{n_E})$ on $T^*M$. Then for any $n_0 \leq n_E$,  $h_{n_0}(\psi_{\hbar})$ semiclassically
converges to the classical entropy
$$
h_{n_0}(\mu, \pcal_{sm} )=h_{n_0}(\mu,(P^2_k))=-\sum_{|{\bf \alpha}|=n_0}\mu(\{{{\bf 1}^{sm}_{M_{\bf \alpha}}}\})^2 \log
J_n^u({\bf \alpha})\,,
$$
where $${{\bf 1}^{sm}_{M_{\bf \alpha}}} = ({\bf 1}^{sm}_{M_{\alpha_{n_0} - 1}} \circ g^{n_0 - 1} ) \cdots
({\bf 1}^{sm}_{M_{\alpha_{1}}} \circ g ) {\bf 1}^{sm}_{M_{\alpha_0}}.$$
Here, ${\bf 1}^{sm}_{M_{\alpha_0}}$ is a smoothing of the characteristic function of the indicated set.

Using Proposition \ref{p:WEUP},  one obtains the lower bound
\begin{equation}\label{e:classicentropy}
\frac{ h_{n_o}(\mu, \pcal_{sm})}{n_o} \geq -\frac{n_0 - 1}{n_0}  -\sum_{\alpha_0, \alpha_1}\mu(\{{{\bf 1}^{sm}_{M_{\bf \alpha}}}\})^2 \log
J_1^u(\alpha_0, \alpha_1)
-\frac{(d-1+c\delta)\lambda_{\max}}{(1-\delta')}-2\frac{R}{n_o}\,.
\end{equation}
Here, $\delta$ and $ \delta'$ could be taken arbitrarily small, and at
this stage they can be set equal to zero.

The Kolmogorov--Sinai entropy of $\mu$ is by definition the limit
of the left side of (\ref{e:classicentropy}) when $n_o \to \infty$.
 Then let  $n_o \to \infty$, and let the diameter
$\dia/2$ of the partition tend to $0$. Then  the first term in the
right hand side of \eqref{e:classicentropy} converges to the
integral $-\int_{\cE} \log J^u(\rho)d\mu(\rho)$ as $\dia\to
0$, proving \eqref{e:main1}.

$\hfill\square$

\medskip

\section{\label{ANALYTIC} Real analytic manifolds and their complexifications}

In this section, we consider eigenfunctions on a real analytic
compact Riemannian manifold $(M, g)$. The advantage of real
analyticity is that one can complexify  the manifold and
analytically continue  the eigenfunctions to the complexification
of $M$. This allows one to use methods of holomorphic and
pluri-subharmonic function theory to  obtain sharper results on
volumes and distribution of nodal hypersurfaces than are possible
for $C^{\infty} (M, g)$. The gain in simplicity is two fold,
reflecting the relative simplicity of real polynomials over smooth
functions, and of complex zeros of polynomials over real zeros.
This point of view has been taken in \cite{DF,Lin,Z6} among other
articles.

A real analytic manifold $M$ always possesses a
unique complexification $M_{\C}$ generalizing the complexification
of $\R^m$ as $\C^m$. The complexification is  an open  complex
manifold in  which $M$ embeds $\iota: M \to M_{\C}$  as a totally
real submanifold (Bruhat-Whitney). As examples, we have:

\begin{itemize}

\item  $M = \R^m/\Z^m$ is $M_{\C} = \C^m/\Z^m$.

\item   The unit sphere $S^n$ defined by  $x_1^2 + \cdots +
x_{n+1}^2 = 1$ in $\R^{n+1}$ is complexified as the complex
quadric $S^2_{\C} = \{(z_1, \dots, z_n) \in \C^{n + 1}: z_1^2 +
\cdots + z_{n+1}^2 = 1\}. $

\item  The hyperboloid model of hyperbolic space is the
hypersurface in $\R^{n+1}$ defined by
$$\Hh^n = \{ x_1^2 + \cdots
x_n^2 - x_{n+1}^2 = -1, \;\; x_n > 0\}. $$ Then,
$$H^n_{\C} = \{(z_1, \dots, z_{n+1}) \in \C^{n+1}:  z_1^2 + \cdots
z_n^2 - z_{n+1}^2 = -1\}. $$

\item Any real algebraic subvariety of $\R^m$ has a similar
complexification.

\item Any  Lie group $G$ (or symmetric space) admits a
complexification $G_{\C}$.
\end{itemize}

The Riemannian metric determines a special kind of distance
function on $M_{\C}$ \cite{GS1, GS2, LS1, LS2, GLS}. The  metric
$g$ determines a  plurisubharmonic function $\sqrt{\rho} =
\sqrt{\rho}_g$ on $M_{\C}$ as the unique solution of the
Monge-Amp\`ere equation
$$(\ddbar \sqrt{\rho})^m = \delta_{M_{\R}, dV_g}, \;\; \iota^*
(i \ddbar \rho) = g. $$ Here, $\delta_{M_{\R}, dV_g}$ is the
delta-function on the real $M$ with respect to the volume form
$dV_g$, i.e. $f \to \int_M f dV_g$. In fact, $\sqrt{\rho}(\zeta) =
i \sqrt{ r^2(\zeta, \bar{\zeta})}$ where $r^2(x, y)$ is the
squared distance function in a neighborhood of the diagonal in $M
\times M$.

One defines the Grauert tubes $M_{\tau} = \{\zeta \in M_{\C}:
\sqrt{\rho}(\zeta) \leq \tau\}$. There exists a maximal $\tau_0$
for which $\sqrt{\rho}$ is well defined, known as the Grauert tube
radius. For $\tau \leq \tau_0$, $M_{\tau}$ is a strictly
pseudo-convex domain in $M_{\C}$.

The complexified exponential map $(x, \xi) \to exp_x i \xi$
defines a diffeomorphism from $B_{\tau}^* M$ to $M_{\tau}$ and
pulls back $\sqrt{\rho}$ to $|\xi|_g$. The one-complex dimensional
null foliation of $\ddbar \sqrt{\rho}$, known as the
`Monge-Amp\`ere' or Riemann foliation,  are the complex curves $t
+ i \tau \to \tau \dot{\gamma}(t)$, where $\gamma$ is a geodesic,
where $\tau > 0$ and where  $\tau \dot{\gamma}(t)$ denotes
multiplication of the tangent vector to $\gamma$ by $\tau$. We
refer to \cite{LS1} for further discussion.

\subsection{Analytic Continuation of eigenfunctions}

Let $A(\tau)$ denote the operator of analytic continuation of a
function on $M$ to the Grauert tube $M_{\tau}$. It is simple to
see that $A(\tau) = U_{\C}(i \tau) e^{\tau \sqrt{\Delta}}$ where
  $U(i\tau , x, y) = e^{-
\tau \sqrt{\Delta}}(x, y)$ is the Poisson operator of $(M, g)$,
i.e. the wave operator at positive imaginary time,  and $U_{\C}(i
\tau, \zeta, y)$ is its analytic continuation in $x$ to
$M_{\tau}$. In terms of the eigenfunction expansion, one has
\begin{equation} \label{UI} U(i \tau, \zeta, y) = \sum_{j = 0}^{\infty} e^{-
\tau  \lambda_j} \phi_{ j}^{\C} (\zeta) \phi_j(y),\;\;\; (\zeta,
y) \in M_{\epsilon}  \times M.  \end{equation}

To understand the analytic continuability of the wave kernel, we
first consider Euclidean $\R^n$ and its wave kernel  $U(t, x, y) =
\int_{\R^n} e^{i t |\xi|} e^{i \langle \xi, x - y \rangle} d\xi$
which  analytically continues  to $t + i \tau, \zeta = x + i p \in
\C_+ \times \C^n$ as the integral
$$U_{\C} (t + i \tau , x + i p , y) = \int_{\R^n} e^{i (t + i \tau)  |\xi|} e^{i \langle \xi, x + i p - y
\rangle} d\xi. $$ The integral clearly converges absolutely for
$|p| < \tau.$

Exact formulae of this kind exist for $S^m$ and $\H^m$. For a
general real analytic Riemannian manifold, there exists an
oscillatry integral expression for the wave kernel of the form,
\begin{equation} \label{PARAONE} U (t, x, y) = \int_{T^*_y M} e^{i
t |\xi|_{g_y} } e^{i \langle \xi, \exp_y^{-1} (x) \rangle} A(t, x,
y, \xi) d\xi
\end{equation} where $A(t, x, y, \xi)$ is a polyhomogeneous amplitude of
order $0$. For background,  we refer to \cite{Be,D.G, T}.  The
holomorphic extension of (\ref{PARAONE}) to the Grauert tube
$|\zeta| < \tau$ in $x$ at time $t = i \tau$ then has the form
\begin{equation} \label{CXPARAONE} U_{\C} (i \tau,
\zeta, y) = \int_{T^*_y} e^{- \tau  |\xi|_{g_y} } e^{i \langle
\xi, \exp_y^{-1} (\zeta) \rangle} A(t, \zeta, y, \xi) d\xi
\;\;\;(\zeta = x + i p).
\end{equation}

Since
\begin{equation} U_{\C} (i \tau) \phi_{\lambda} = e^{- \tau \lambda}
\phi_{\lambda}^{\C}, \end{equation} the analytic continuability of
the Poisson operator to $M_{\tau}$  implies that  every
eigenfunction analytically continues to the same Grauert tube. It follows that
the analytic continuation operator to $M_{\tau}$ is given by  \begin{equation} \label{ACO} A_{\C}(\tau) =
U_{\C}(i \tau) \circ e^{\tau \sqrt{\Delta}}. \end{equation}
Thus, a function   $f \in C^{\infty}(M)$ has a holomorphic extension to the closed  tube
$\sqrt{\rho}(\zeta) \leq \tau$ if and only if $f \in Dom(e^{\tau
\sqrt{\Delta}}), $ where $e^{\tau \sqrt{\Delta}}$ is the backwards
`heat operator' generated by $\sqrt{\Delta}$ (rather than $\Delta$).
 That is, $f = \sum_{n = 0}^{\infty}
a_n \phi_{\lambda_n}$ admits an analytic continuation to the open
Grauert tube $M_{\tau}$ if and only if $f$ is in the domain of
$e^{\tau \sqrt{\Delta}}$, i.e. if $\sum_n |a_n|^2  e^{2 \tau
\lambda_n} < \infty $. Indeed, the analytic continuation is
$U_{\C}(i \tau) e^{\tau \sqrt{\Delta}} f$. The subtlety is in the
nature of the restriction to the boundary of the maximal  Grauert
tube.

This result generalizes one of the classical Paley-Wiener theorems to real analytic
Riemannian manifolds \cite{Bou,GS2}.
In the simplest case
of $M = S^1$, $f \sim \sum_{n \in \Z} a_n e^{in \theta} \in
C^{\omega}(S^1)$ is the restriction of a holomorphic function $F
\sim \sum_{n \in \Z} a_n z^n $ on the annulus $S^1_{\tau} =
\{|\log |z| | < \tau \}$ and with $F \in L^2(\partial S^1_{\tau})$
if and only if $\sum_n |\hat{f}(n)|^2 \; e^{2 |n| \tau} < \infty$.
The case of $\R^m$ is more complicated since it is non-compact. We
are mainly concerned with compact manifolds and so the
complications are not very relevant here. But we recall that one
of the classical Paley-Wiener theorems states that a
 real analytic function $f$ on $\R^n$ is the restriction of a holomorphic
 function on the closed  tube $|\Im \zeta| \leq \tau$ which satisfies
$\int_{\R^m} |F(x + i \xi)|^2 dx \leq C$ for $\xi \leq  \tau$ if and
only if $\hat{f} e^{\tau |\Im \zeta|} \in L^2(\R^n)$.

Let us consider examples of holomorphic continuations of
eigenfunctions:

\begin{itemize}

\item On  the flat torus $\R^m/\Z^m,$   the real eigenfunctions
are $\cos \langle k, x \rangle, \sin \langle k, x \rangle$ with $k
\in 2 \pi \Z^m.$ The complexified torus is $\C^m/\Z^m$ and the
complexified eigenfunctions are $\cos \langle k, \zeta \rangle,
\sin \langle k, \zeta \rangle$ with $\zeta  = x + i \xi.$

\item On the unit sphere $S^m$, eigenfunctions are restrictions of
homogeneous harmonic functions on $\R^{m + 1}$. The latter extend
holomorphically to holomorphic harmonic polynomials on $\C^{m +
1}$ and restrict to holomorphic function on $S^m_{\C}$.

\item On $\H^m$, one may use the hyperbolic plane waves $e^{ (i
\lambda + 1) \langle z, b \rangle}$, where  $\langle z, b \rangle$
is the (signed) hyperbolic distance of the horocycle passing
through $z$ and $b$ to $0$. They  may be holomorphically extended
to the maximal tube of radius $\pi/4$.

\item
 On  compact hyperbolic quotients ${\bf
H}^m/\Gamma$, eigenfunctions can be then represented by Helgason's
generalized Poisson integral formula \cite{H},
$$\phi_{\lambda}(z) = \int_B e^{(i \lambda + 1)\langle z, b
\rangle } dT_{\lambda}(b). $$ Here, $z \in D$ (the unit disc), $B
=
\partial D$, and $dT_{\lambda}
\in \dcal'(B)$ is the boundary value of $\phi_{\lambda}$, taken in
a weak sense along circles centered at the origin $0$.
 To  analytically continue $\phi_{\lambda}$ it suffices  to
analytically continue $\langle z, b\rangle. $ Writing the latter
as  $\langle \zeta, b \rangle,  $ we have:
\begin{equation} \label{HEL} \phi_{\lambda}^{\C} (\zeta) = \int_B e^{(i \lambda +
1)\langle \zeta, b \rangle } dT_{\lambda}(b). \end{equation}

\end{itemize}

The results of Sarnak \cite{Sar} on the exponential decay of
integrals $\int_{M} \phi_{\lambda} \phi_{\mu}^2 dV_g$  and
subsequent results of Miller-Schmidt \cite{MS}  and
Bernstein-Reznikov \cite{BR} are closely related to the
Paley-Wiener theory and this analytic continuation formula for
eigenfunctions on hyperbolic quotients.

\subsection{Maximal plurisubharmonic functions and growth of $\phi_{\lambda}^{\C}$}

There are natural analogues in the setting of Gruaert tubes for the basic notions of pluripotential theory
on domains in $\C^m$.  One may view the Grauert tube function $\sqrt{\rho}$ as the analogue of the pluri-complex
Green's function or Siciak maximal PSH (pluri-subharmonic) function.

In the case of domains $\Omega \subset \C^m$, we
recall that the maximal PSH function (or pluri-complex Green's
function) relative to a subset $E \subset \Omega$ is defined by
$$V_{E}(\zeta) = \sup\{u(z): u \in PSH(\Omega), u|_{E}  \leq 0, u
|_{\partial \Omega} \leq 1\}. $$ This maximal function controls the Bernstein constant
$B(f, E, \Omega) =  \frac{\max_{\Omega} |f|}{\max_E |f|}$ for any holomorphic $f$.

An alternative construction of the  maximal PSH function due to  Siciak is defined
by taking the supremum only with respect to polynomials $p$. We
denote by $\pcal^N$ the space of all complex analytic polynomials
of degree $N$ and put  $\pcal_K^N = \{p \in \pcal^N: ||p||_K \leq
1, \;\; ||p||_{\Omega} \leq e\}. $
Then define
$$\log \Phi_E^N(\zeta) = \sup\{ \frac{1}{N} \log |p_N(\zeta)|: p \in \pcal_E^N \}, \;\;\; \log \Phi_E = \limsup_{N \to \infty} \log \Phi_E^N. $$
Here, $||f||_K = \sup_{z \in K} |f(z)|$.
 Siciak proved  that $\log \Phi_E = V_E$ (see \cite{K}, Theorem
5.1.7).  Intuitively, there are enough polynomials that one can
obtain the sup by restricting to polynomials.

On a real analytic Riemannian manifold, the natural analogue of $\pcal^N$ is the space
$$\hcal^{\lambda} = \{p =  \sum_{j: \lambda_j \leq \lambda} a_j
\phi_{\lambda_j}, \;\; a_1, \dots, a_{N(\lambda)} \in \R  \} $$
spanned by eigenfunctions with frequencies $\leq \lambda$. Rather than using the sup norm,
it is convenient  to work with $L^2$ based norms than sup
norms, and so we define
$$ \hcal^{\lambda}_M = \{p =  \sum_{j: \lambda_j \leq \lambda} a_j
\phi_{\lambda_j}, \;\;||p||_{L^2(M)} =  \sum_{j = 1}^{N(\lambda)}
|a_j|^2 = 1 \}. $$ We
define the   $\lambda$-Siciak extremal function by
 $$ \Phi_M^{\lambda} (z) = \sup \{|\psi(z)|^{1/\lambda} \colon
\psi \in \hcal_{\lambda};   \|\psi \|_M \le 1 \},  $$ and the extremal function by
$$\Phi_M(z) = \sup_{\lambda} \Phi_M^{\lambda}(z). $$

We may also define a natural analogue of the pluri-complex Green's function by putting
$E = M$ and $\Omega = M_{\tau}$ and defining
$$V_g(\zeta; \tau) =  \sup\{u(z): u \in PSH(M_{\tau}), u|_{M}  \leq 0, u
|_{\partial M_{\tau}} \leq \tau\}. $$
Although it does not seem to have been proved at this time, it is
easy to guess that  $V_g = \sqrt{\rho}$ since the latter  solves the homogeneous Monge-Amp\`ere equation
$(\ddbar \sqrt{\rho})^m = 0$ on $M_{\tau} \backslash M$.
Moreover, it is not hard to prove that \begin{equation} \label{SICIAK} \Phi_M = V_g, \end{equation}  generalizing the so-called Siciak-Zaharjuta theorem
in the special case where the boundary conditions are placed on all of $M$.
To see this, we consider the analytic continuation of the spectral projections kernels $\Pi_{[0, \lambda]}(x,y) =
\sum_{j: \lambda_j \in [0, \lambda]} \phi_j(x) \phi_j(y)$. Its
complexification evaluated on the anti-diagonal equals,
\begin{equation}\label{CXSP} \Pi_{[0, \lambda]}(\zeta, \bar{\zeta}) =
 \sum_{j: \lambda_j \in 0, \lambda]}
|\phi_j^{\C}(\zeta)|^2.  \end{equation} By using a Bernstein-Walsh inequality
$$\frac{1}{N(\lambda)} \leq \frac{\Pi_{[0, \lambda]}(\zeta,
\bar{\zeta})}{\Phi_M^{\lambda}(\zeta)^2} \leq C N(\lambda)\;
e^{\epsilon N(\lambda)}, $$ it is not hard to show that
\begin{equation} \Phi_M(z) = \lim_{\lambda \to \infty} \frac{1}{\lambda} \log  \Pi_{[0,
\lambda}(\zeta, \bar{\zeta}). \end{equation}
 To evaluate the logarithm, one can show that the kernel is essentially $e^{\lambda \sqrt{\rho}}$ times
the temperate projection defined by the Poisson operator,
\begin{equation}\label{CXDSPa}  P_{[0, \lambda]}(\zeta, \bar{\zeta}) =
 \sum_{j: \lambda_j \in [0, \lambda]}  e^{- 2 \sqrt{\rho}(\zeta)\lambda_j}
|\phi_j^{\C}(\zeta)|^2.
\end{equation}
The equality (\ref{SICIAK}) follows from the fact that
 $\lim_{\lambda \to \infty} \frac{1}{\lambda} \log   P_{[0, \lambda]}(\zeta, \bar{\zeta}) = 0$.

\subsection{Analytic continuation and nodal hypersurfaces}

We now survey some methods and  results on nodal hypersurfaces of
eigenfunctions on real analytic compact Riemannian manifolds $(M,
g)$ of dimension $m$. The principal result on volumes is due to
 Donnelly-Fefferman \cite{DF}:
\begin{theo} \label{DFNODAL} \cite{DF} (see also \cite{Lin})  Let $(M, g)$ be a compact real analytic  Riemannian
manifold, with or without boundary. Then there exist $c_1, C_2$ depending only on $(M, g)$ such that
\begin{equation} \label{DF} c_1 \lambda \leq {\mathcal
H}^{m-1}(Z_{\phi_{\lambda}}) \leq C_2 \lambda, \;\;\;\;\;\;(\Delta
\phi_{\lambda} = \lambda^2 \phi_{\lambda}; c_1, C_2 > 0).
\end{equation}
\end{theo}

A very readable exposition of the proof is contained in \cite{H}. The upper and lower bounds
require rather different arguments. The upper bound is simpler and may be sketched as follows:
By a local Crofton's formula,
the real volume of the nodal hypersurface equals   the mean number of intersections it has with a random line
in a coordinate chart, i.e. by the number of zeros of $\phi_{\lambda}$ along each line. This number is obviously
bounded above by the number of complex zeros of $\phi_{\lambda}^{\C}$ on the complexification of the line.
Hence, the upper bound is reduced to bounding the number of complex zeros of a family of complex analytic
functions of one variable in a disc.  For each disc, the
number of zeros is in principle  estimated by   Jensen's formula, which bounds the number of zeros in a disc
of a holomorphic function by  the logarithm of the  modulus of the holomorphic function. One can then use
the frequency function estimates, Carleman estimates or Bernstein-Walsh type inequalities to obtain
 the  doubling estimates in Theorem \ref{DOUBLE}, and they bound the growth of the log modulus
of $\phi_{\lambda}^{\C}$
 by $C \lambda$. Jensen's formula
does not directly apply, since it concerns the growth of a single holomorphic function
in an expanding family of domains, while we are interested in a family of holomorphic functions
$\phi_{\lambda}^{\C}$ in a single domain $M_{\tau}$. Donnelly-Fefferman find a suitable replacement using Blashcke product
factorizations. Lin gives an alternative bound on zeros in terms of growth using the frequency function
for functions of one complex variable.  Thus, the upper bound reflects the growth rate
of $\phi_{\lambda}$ combined with the upper bound on the number of complex zeros by the growth rate.

The lower bound is of a somewhat different nature.  A general analytic function of exponential growth (e.g. $e^z$) need not have any
zeros, i.e. zero might be an exceptional value. Jensen's formula equates the growth of the log modulus to the sum
of the number of zeros and the `proximity to zero', i.e. in our setting  to values where $|\phi_{\lambda}(\zeta)| \leq e^{- \lambda \epsilon}$.
It is necessary to rule out the possibility that zero is an exceptional value of complexified eigenfunctions.  In effect, this is possible because
real eigenfunction must have a zero in any ball of radius $\frac{C}{\lambda}$ by Theorem \ref{COURANT}. But then one needs
a lower bound on the hypersurface volume of the nodal set in such a small ball.  More precisely, in \cite{DF}
 a lower bound for the hypersurface volume  is proved for a certain $\lambda$-independent proportion  of a covering by  small balls of
radius $\frac{C}{\lambda_j}$.  The global result then follows by summing the volume in the these small balls. Besides \cite{DF},
we refer to \cite{HL} for a detailed discussion.

\subsection{Nodal hypersurfaces in the case of ergodic geodesic flow}

We now consider global results when hypotheses are made on the
dynamics of the geodesic flow.
 Use of the global wave operator brings into
play the relation between the geodesic flow and the complexified
eigenfunctions, and this allows one to prove gobal
results on nodal hypersurfaces that reflect the dynamics of the geodesic flow.
In some cases, one can determine
not just the volume, but the limit distribution of complex nodal
hypersurfaces.

The complex nodal hypersurface of an eigenfunction is   defined by
\begin{equation} Z_{\phi_{\lambda}^{\C}} = \{\zeta \in
B^*_{\epsilon_0} M: \phi_{\lambda}^{\C}(\zeta) = 0 \}.
\end{equation}
There exists  a natural current of integration over the nodal
hypersurface in any ball bundle $B^*_{\epsilon} M$ with $\epsilon
< \epsilon_0$ , given by
\begin{equation}\label{ZDEF}  \langle [Z_{\phi_{\lambda}^{\C}}] , \phi \rangle =  \frac{i}{2 \pi} \int_{B^*_{\epsilon} M} \ddbar \log
|\phi_{\lambda}^{\C}|^2 \wedge \phi =
\int_{Z_{\phi_{\lambda}^{\C}} } \phi,\;\;\; \phi \in \dcal^{ (m-1,
m-1)} (B^*_{\epsilon} M). \end{equation} In the second equality we
used the Poincar\'e-Lelong formula. The notation $\dcal^{ (m-1,
m-1)} (B^*_{\epsilon} M)$ stands for smooth test $(m-1,
m-1)$-forms with support in $B^*_{\epsilon} M.$

The nodal hypersurface $Z_{\phi_{\lambda}^{\C}}$ also carries a
natural volume form $|Z_{\phi_{\lambda}^{\C}}|$ as a complex
hypersurface in a K\"ahler manifold. By Wirtinger's formula, it
equals the restriction of $\frac{\omega_g^{m-1}}{(m - 1)!}$ to
$Z_{\phi_{\lambda}^{\C}}$. Hence, one can regard
$Z_{\phi_{\lambda}^{\C}}$ as defining  the  measure
\begin{equation} \langle |Z_{\phi_{\lambda}^{\C}}| , \phi \rangle
= \int_{Z_{\phi_{\lambda}^{\C}} } \phi \frac{\omega_g^{m-1}}{(m -
1)!},\;\;\; \phi \in C(B^*_{\epsilon} M).
\end{equation}
 We prefer to state results in terms of the
current $[Z_{\phi_{\lambda}^{\C}}]$ since it carries more
information.

We will say that a sequence $\{\phi_{j_k}\}$ of $L^2$-normalized
eigenfunctions  is {\it quantum ergodic} if
\begin{equation} \label{QEDEF} \langle A \phi_{j_k}, \phi_{j_k} \rangle \to
\frac{1}{\mu(S^*M)} \int_{S^*M} \sigma_A d\mu,\;\;\; \forall A \in
\Psi^0(M). \end{equation} Here, $\Psi^s(M)$ denotes the space of
pseudodifferential operators of order $s$, and $d \mu$ denotes
Liouville measure on the unit cosphere bundle $S^*M$ of $(M, g)$.
More generally, we denote by $d \mu_{r}$ the (surface) Liouville
measure on $\partial B^*_{r} M$, defined by
\begin{equation} \label{LIOUVILLE} d \mu_r = \frac{\omega^m}{d |\xi|_g} \;\; \mbox{on}\;\; \partial B^*_r
M.
\end{equation}
We also denote by $\alpha$ the canonical action $1$-form of
$T^*M$.

\begin{theo}\label{ZERO}  Let $(M, g)$ be  real analytic, and let $\{\phi_{j_k}\}$ denote a quantum ergodic sequence
of eigenfunctions of its Laplacian $\Delta$.  Let
$(B^*_{\epsilon_0} M, J)$ be the maximal Grauert tube around $M$
with complex structure $J_g$ adapted to $g$. Let $\epsilon <
\epsilon_0$. Then:
$$\frac{1}{\lambda_{j_k}} [Z_{\phi_{j_k}^{\C}}] \to  \frac{i}{ \pi} \ddbar \sqrt{\rho}\;\;
\mbox{weakly in }\;\;  \dcal^{' (1,1)} (B^*_{\epsilon} M), $$
in the sense that,   for any continuous test form $\psi \in \dcal^{
(m-1, m-1)}(B^*_{\epsilon} M)$, we have
$$\frac{1}{\lambda_{j_k}} \int_{Z_{\phi_{j_k}^{\C}}} \psi \to
 \frac{i}{ \pi} \int_{B^*_{\epsilon} M} \psi \wedge \ddbar
\sqrt{\rho}. $$ Equivalently, for any  $\phi \in C(B^*_{\epsilon} M)$,
$$\frac{1}{\lambda_{j_k}} \int_{Z_{\phi_{j_k}^{\C}}} \phi \frac{\omega_g^{m-1}}{(m -
1)!}  \to
 \frac{i}{ \pi} \int_{B^*_{\epsilon} M} \phi  \ddbar
\sqrt{\rho}  \wedge \frac{\omega_g^{m-1}}{(m - 1)!} . $$
\end{theo}

\begin{cor}\label{ZEROCOR}  Let $(M, g)$ be a real analytic with ergodic  geodesic
flow.  Let $\{\phi_{j_k}\}$ denote a full density ergodic
sequence. Then for all $\epsilon < \epsilon_0$,
$$\frac{1}{\lambda_{j_k}} [Z_{\phi_{j_k}^{\C}}] \to  \frac{i}{ \pi} \ddbar \sqrt{\rho},\;\;
 \mbox{weakly in}\;\; \dcal^{' (1,1)} (B^*_{\epsilon} M). $$
\end{cor}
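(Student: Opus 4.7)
The plan is to deduce Corollary \ref{ZEROCOR} as an immediate consequence of Theorem \ref{ZERO} combined with Theorem \ref{QE}. Under the hypothesis that the geodesic flow of $(M,g)$ is ergodic, Theorem \ref{QE}(1) produces a subsequence $\{\phi_{j_k}\}$ of density one in $\N$ such that $\langle A\phi_{j_k},\phi_{j_k}\rangle\to \omega(A)$ for every $A\in\Psi^0(M)$; this is exactly the notion of quantum ergodicity (\ref{QEDEF}). Applying Theorem \ref{ZERO} to this subsequence then yields $\frac{1}{\lambda_{j_k}}[Z_{\phi_{j_k}^{\C}}]\to\frac{i}{\pi}\ddbar\sqrt{\rho}$ weakly in $\dcal'^{(1,1)}(B^*_{\epsilon}M)$ for every $\epsilon<\epsilon_0$, which is the statement of the corollary. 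So there is nothing to verify beyond citing the two prior results, and my remaining task is to plan the proof of Theorem \ref{ZERO}.

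For Theorem \ref{ZERO} the natural approach is via the Poincar\'e--Lelong formula already used in (\ref{ZDEF}): writing
$$
\frac{1}{\lambda_{j_k}}[Z_{\phi_{j_k}^{\C}}]=\frac{i}{2\pi\lambda_{j_k}}\ddbar\log|\phi_{j_k}^{\C}|^2,
$$
it suffices to establish that
$$
u_{j_k}(\zeta):=\frac{1}{\lambda_{j_k}}\log|\phi_{j_k}^{\C}(\zeta)|\;\longrightarrow\;\sqrt{\rho}(\zeta)\qquad\text{in }L^1_{\mathrm{loc}}(B^*_{\epsilon}M),
$$
because $\ddbar$ is continuous on $L^1_{\mathrm{loc}}$ in the weak topology on currents. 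The upper bound $u_{j_k}\le\sqrt{\rho}+o(1)$ is a \emph{Bernstein--Walsh estimate} that requires no dynamics: from the intertwining formula (\ref{ACO}), $\phi_\lambda^{\C}=e^{\lambda\sqrt{\rho}}\cdot\psi_\lambda$ where the Fourier integral parametrix (\ref{CXPARAONE}) for $U_\C(i\tau)$ shows $\|\psi_\lambda\|_{L^\infty(K)}$ grows at most polynomially in $\lambda$ on compacta $K\subset M_{\epsilon}$, essentially because $\int_0^{\infty}e^{i\langle\xi,\exp_y^{-1}\zeta\rangle-\tau|\xi|}A\,d\xi$ is a classical symbol of finite order. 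Equivalently, the asymptotics of the complexified spectral projector (\ref{CXDSPa}) control $|\phi_{\lambda}^{\C}(\zeta)|^2e^{-2\lambda\sqrt{\rho}(\zeta)}$ term-by-term.

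The matching lower bound is where quantum ergodicity is essential. The idea is that quantum ergodicity, applied to the ``tempered'' factor $\psi_{j_k}=e^{-\lambda_{j_k}\sqrt{\rho}}\phi_{j_k}^{\C}$, forces $|\psi_{j_k}|^2$ to converge in the sense of distributions on each Grauert tube boundary $\partial M_{\tau}$ to a positive smooth density coming from $\omega$ pushed forward by the adapted complex structure (via the exponential map $(x,\xi)\mapsto\exp_x i\xi$ identifying $\partial M_\tau$ with $\partial B^*_\tau M$ and with Liouville measure $d\mu_\tau$ of (\ref{LIOUVILLE})). This uses the fact that $U_\C(i\tau)^*U_\C(i\tau)=U(2i\tau)$ is smoothing of a definite order, so matrix elements of $|\psi_{j_k}|^2$ against continuous test functions on $\partial M_\tau$ can be rewritten as $\langle A\phi_{j_k},\phi_{j_k}\rangle$ for explicit $A\in\Psi^{*}(M)$ built from the Poisson operator; by Theorem \ref{QE} these tend to the Liouville average, which is strictly positive.

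The main obstacle, and the step where I expect most of the work, is upgrading the weak convergence $|\psi_{j_k}|^2/\!\lambda_{j_k}^{\alpha}\to\text{positive smooth}$ (in the sense of measures or distributions) to the $L^1_{\mathrm{loc}}$ convergence $u_{j_k}\to\sqrt{\rho}$ of the logarithms. The $u_{j_k}$ are plurisubharmonic on $B^*_{\epsilon}M$ and uniformly bounded above by $\sqrt{\rho}+o(1)$, so by the standard compactness theorem for PSH functions every subsequence has a further subsequence converging in $L^1_{\mathrm{loc}}$ to some PSH limit $u_\infty\le\sqrt{\rho}$. One then shows $u_\infty\ge\sqrt{\rho}$ almost everywhere by the \emph{Hartogs lemma} together with the quantum-ergodicity lower bound: if $u_\infty(\zeta_0)<\sqrt{\rho}(\zeta_0)-\delta$ on a set of positive measure, then by Hartogs the averages $\int|\phi_{j_k}^{\C}|^2$ over a small ball around $\zeta_0$ would decay like $e^{-\delta'\lambda_{j_k}}$ relative to $e^{2\lambda_{j_k}\sqrt{\rho}}$, contradicting the quantum-ergodic positivity of Liouville-averaged matrix elements in any open set. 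Pinning down this Hartogs/QE contradiction cleanly, and verifying that the symbol calculus identifies the limit density on $\partial M_\tau$ as strictly positive (not merely nonnegative), is the technical heart of the argument; once completed, Poincar\'e--Lelong delivers Theorem \ref{ZERO} and hence the corollary.
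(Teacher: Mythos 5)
Your proposal is correct and follows the paper's own route: the corollary is indeed an immediate consequence of Theorem \ref{ZERO} once Theorem \ref{QE} supplies a density-one quantum ergodic subsequence, and your sketch of Theorem \ref{ZERO} (Poincar\'e--Lelong reduction, the a priori upper bound $\frac{1}{\lambda}\log|\phi_\lambda^{\C}|\le\sqrt{\rho}+o(1)$, $L^1$ compactness of the normalized PSH logarithms, quantum ergodicity of the complexified eigenfunctions on the Grauert tube boundaries, and a Hartogs-type argument to identify the unique limit) is exactly the four-step outline the paper gives. No gaps to report.
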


The proof consists of three ingredients:

\begin{enumerate}

\item By the Poincar\'e-Lelong formula, $[Z_{\phi_{\lambda}^{\C}}] = i \ddbar \log |\phi_{\lambda}^{\C}|. $
This reduces the theorem to determining the limit of $\frac{1}{\lambda} \log |\phi_{\lambda}^{\C}|$.

\item $\frac{1}{\lambda} \log |\phi_{\lambda}^{\C}|$ is a sequence of PSH functions which are uniformly bounded
above by $\sqrt{\rho}$. By a standard compactness theorem, the sequence is pre-compact in $L^1$: every sequence
from the family has an $L^1$ convergent subsequence.

\item $|\phi_{\lambda}^{\C}|^2$, when properly $L^2$ normalized on each $\partial M_{\tau}$ is a quantum
ergodic sequence on $\partial M_{\tau}$. This property implies that the $L^2$ norm of $|\phi_{\lambda}^{\C}|^2$
  on $\partial \Omega$ is asymtotically $\sqrt{\rho}$.

  \item Ergodicity and the calculation of the $L^2$ norm imply that  the only possible $L^1$ limit
of $\frac{1}{\lambda} \log |\phi_{\lambda}^{\C}|$. This concludes the proof.

\end{enumerate}

We note that the first two steps are valid on any real analytic $(M, g)$. The difference is that
the $L^2$ norms of $\phi_{\lambda}^{\C}$ may depend on the subsequence and can often not equal
$\sqrt{\rho}$. That is,  $\frac{1}{\lambda} |\phi_{\lambda}^{\C}|$ behaves like the maximal PSH function
in the ergodic case, but not in general. For instance, on a flat torus, the complex zero sets of ladders
of eigenfunctions concentrate on a real hypersurface in $M_{\C}$. This may be seen from the complexified  real
eigenfunctions $ \sin \langle k, x + i \xi \rangle$, which vanish if and only if $\langle k, x \rangle  \in 2 \pi \Z$
and $\langle k, \xi \rangle = 0$. Here, $k \in \N^m$ is a lattice point. The exact limit distribution
depends on which ray or ladder of lattice points one takes in the limit.  The result reflects the quantum integrability
of the flat torus, and a similar (but more complicated) description of the zeros exists in all quantum integrable cases.
The fact that $\frac{1}{\lambda} \log |\phi_{\lambda}^{\C}|$ is pre-compact on a Grauert tube of any real analytic
Riemannian manifold  confirms the upper bound on complex  nodal hypersurface volumes.

\subsection{Analytic domains with boundary}

Many of the basic estimates on nodal hypersurfaces, such as Theorem \ref{DF}, apply to real analytic manifolds with
boundary as well as boundaryless manifolds. We are concentrating on manifolds without boundary, but mention one result
on nodal lines where the boundary effects are central. Namely, in the case of real analytic plane domains, it is visible from
computer graphics (see \cite{FGS} for references) that only a small proportion of the nodal components touch the boundary.
Most are small nodal loops in the interior.

We thus consider  Neumann (resp. Dirichlet) eigenfunctions
$\phi_{\lambda_j}$ on (piecewise) real analytic plane domains $\Omega
\subset \R^2$, i.e. solutions of (\ref{EP}) with boundary conditions
\begin{equation}
\partial_{\nu} \phi_{\lambda_j}
 = 0 \;\;(\mbox{resp.} \; \phi_{\lambda_j} = 0) \; \mbox{on} \;\; \partial \Omega,
 \end{equation}

\begin{theo} \label{COR} \cite{TZ3} Let $\Omega$ be a piecewise analytic domain
and  let  $n_{\partial \Omega}(\lambda_j)$ be the number of
components of the nodal set of the $j$th Neumann or Dirichlet
eigenfunction which intersect $\partial \Omega$.  Then there
exists  $C_{\Omega}$   such that $n_{\partial \Omega}(\lambda_j)
\leq C_{\Omega} \lambda_j.$
\end{theo}

In the Dirichlet case, we remove the boundary before counting components.
 For q generic piecewise analytic
plane domains, zero is a regular value of all  eigenfunctions
$\phi_{\lambda_j}$, i.e. $\nabla \phi_{\lambda_j} \not= 0$ on
$Z_{\phi_{\lambda_j}}$ \cite{U}, and the nodal set is  a disjoint union
of connected components which are homeomorphic either to circles
contained in the interior $\Omega^o$ of $\Omega$ or to intervals
intersecting the boundary in two points. We call the former
`closed nodal loops' and the latter `open nodal lines'. Thus, the theorem states that the  number of open nodal lines is $O(\lambda_j)$.
 As mentioned above, the Courant nodal domain theorem implies that the number
of nodal componants is of order $O(\lambda_j^2)$.
 When the upper bound is achieved,  the number
of open nodal lines in dimension $2$ is of one lower order in
$\lambda_j$
 than the  number of closed nodal loops. This effect is known from
 numerical experiments of eigenfunctions and random waves \cite{FGS}.

The proof of  Theorem \ref{COR} is to complexify the Cauchy data  (i.e. $\phi_{\lambda} |_{\partial \Omega}$ in
the Neumann case or $\partial_{\nu} \phi_{\lambda} |_{\partial \Omega}$ in the Dirichlet case) as holomorphic
functions on the complexification of the boundary,  and to
count the number of its complex zeros. This number is bounded by the growth rate of the log modulus, which can
be directly estimated in terms of the eigenvalue. Thus, the proof is of a similar form to the upper bound half of \cite{DF}.
On the other hand, no comparable lower bound exists:  in the case of the disc (see \S \ref{DISC}), the boundary values are of the form $\sin m \theta$
or $\cos m \theta$, where $m$ is the angular momentum rather than the frequency $\lambda$. So the number of boundary zeros
remains bounded for some sequences of eigenfunctions as the eigenvalue tends to infinity.

As previously mentioned, I. Polterovich \cite{Po} used Theorem \ref{COR} to resolve an old conjecture of Pleijel,
to the effect that Pleijel's bound  on the number of nodal domains (see Theorem \ref{PLEIJEL}) is valid for Neumann
as well as Dirichlet boundary conditions. Polterovich observes that Pleijel's argument applies to any nodal domain
that does not touch the boundary. By Theorem \ref{COR}, the nodal domains which do touch the boundary are of lower order
than the bound, concluding the proof.

\section{\label{RWONB} Riemannian random waves}

We have mentioned that the random wave model provides a kind of
guideline for what to conjecture about eigenfunctions of quantum
chaotic system.  In this final section, we briefly discuss random
wave models and what they predict.

To define Riemannian random waves, we first consider the standard
sphere. We choose an orthonormal basis $\{\phi_{N j}\}_{j =
1}^{d_N}$ for $\hcal_N$. We endow the real vector space
$\mathcal{H}_N$ with the Gaussian probability measure
 $\gamma_N$ defined by
\begin{equation}\label{gaussian}\gamma_N(s)=
\left(\frac{d_N}{\pi }\right)^{d_{N}/2}e^ {-d_{N}|c|^2}dc\,,\qquad
\psi=\sum_{j=1}^{d_{N}}c_j \phi_{N j}, \,\;\; d_{N} = \dim
\hcal_{N}.
\end{equation} Here,  $dc$
is $d_{N}$-dimensional real Lebesgue measure. The normalization is
chosen so that $\E_{\gamma_N}\; \langle \psi, \psi \rangle=1$,
where $\E_{\gamma_N}$ is the expected value with respect to
$\gamma_N$. Equivalently,  the $d_N$ real variables $\ c_j$
($j=1,\dots,d_{N}$) are independent identically distributed
(i.i.d.) random variables with mean 0 and variance
$\frac{1}{2d_{N}}$; i.e.,
$$\E_{\gamma_N} c_j = 0,\;\; \quad  \E_{\gamma_N} c_j c_k =
\frac{1}{ d_N}\de_{jk}\,.$$ We note  that the Gaussian ensemble is
equivalent to picking $\psi_N \in \hcal_{N}$ at random from the
unit sphere in $\hcal_{N}$ with respect to the $L^2$ inner
product. The latter description is more intuitive but it is
technically more convenient to work with Gaussian measures. In the
cutoff ensemble, we put the  product  Gaussian measure $\Pi_{n =
1}^N \gamma_n$ on $\bigoplus_{n = 1}^N \hcal_n$.

On a general compact Riemannian manifold $(M, g)$ of dimension
$m$,  the analogue of the space $\hcal_N$ of spherical harmonics
of degree $N$ is played by the space $\hcal_{I_{N}}$ of linear
combinations of eigenfunctions  with frequencies in an interval
$I_N : = [N, N + 1]$. The precise decomposition of $\R$ into
intervals is not canonical on a generic Riemannian manifold and
the results do not depend on the choice. We choose $I_N = [N, N +
1]$ only for notational simplicity.  In the special case of Zoll
manifolds (all of whose geodesics are closed), there is a
canonical choice where the intervals are centered in the middle of the
eigenvalue clusters, i.e. the points $\frac{2 \pi}{L} N + \frac{\beta}{4}$
where $L$ (resp. $\beta$) is the common length (resp. Morse index) of the
geodesics.   Henceforth we abbreviate $\hcal_N =
\hcal_{I_N}$ on general Riemannian manifolds. We continue to
denote by $\{\phi_{N j} \}_{j = 1}^{d_N}$ an orthonormal basis of
$\hcal_N$  where $d_N = \dim \hcal_N$.   We equip it with the
Gaussian measure (\ref{gaussian}) and again denote the expected
value with respect to $(\hcal_N, \gamma_N)$ by $\E_{\gamma_N}$.

\subsection{Levy concentration of measure}  Levy concentration of measure occurs
when Lipschitz  continuous functions $f$ on a metric probability
space $(X, d, \mu)$ of large dimension $d$
 are highly concentrated around their median values $\mcal_f$.  In the
fundamental  case
 where $X$ is the unit $N$-sphere $S^N$ with the usual
distance function, and $\mu$ is the SO$(N +1)$-invariant
probability measure,
  the concentration of measure inequality says that
\begin{equation} Prob \left\{ x \in S^{N} : |f(x) - \mcal_f| \geq r \right\} \leq
\exp \left(-\frac  {(N - 1) r^2 }{2 \|f\|_{Lip}^2}\right),
\label{Levy}\end{equation} where $$\|f\|_{Lip} = \sup_{d(x, y) >
0} \frac{|f(x) - f(y)|}{d(x, y)|}
$$ is the Lipschitz norm. (See, e.g.
\cite{L}.)

A key point in the proof is the following well-known and simple fact that the mass
the sphere concentrates in a small tube around any `equatorial' sphere.

\begin{lem}\label{calculus} Let $A\in S^{2d-1}\subset\C^d$, and give
$S^{2d-1}$ Haar probability measure.  Then
$$Prob\left\{P\in S^{2d-1}: |\langle
P,A\rangle|>\la\right\} \le e^{-(d-1)\la^2}\;.$$
\end{lem}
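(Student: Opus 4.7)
The plan is to exploit the unitary invariance of Haar measure on $S^{2d-1}$ and reduce to an elementary one-variable calculation involving the Beta distribution. Since both sides of the inequality are invariant under replacing $A$ by $UA$ for $U \in U(d)$ (the Haar measure on the sphere being $U(d)$-invariant), I may assume without loss of generality that $A = (1, 0, \ldots, 0) = e_1 \in \C^d$. In that case $\langle P, A \rangle = P_1$ where $P = (P_1, \ldots, P_d)$, so it suffices to estimate $\text{Prob}\{|P_1| > \lambda\}$.

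The next step is to identify the distribution of $|P_1|^2$ explicitly. A standard way to sample uniformly on $S^{2d-1} \subset \C^d$ is to take $P = Z/\|Z\|$ where $Z = (Z_1, \ldots, Z_d)$ has i.i.d.\ standard complex Gaussian entries; then $|P_1|^2 = |Z_1|^2/(|Z_1|^2 + \cdots + |Z_d|^2)$ is a ratio of independent $\chi^2_2$ and $\chi^2_{2d-2}$ variables (suitably normalized), which by a standard computation has the Beta$(1, d-1)$ distribution. Equivalently, the pushforward of Haar measure under $P \mapsto |P_1|^2$ has density $(d-1)(1-t)^{d-2}$ on $[0,1]$.

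With the density in hand, the tail probability becomes a direct integration:
\begin{equation*}
\text{Prob}\{|P_1| > \lambda\} = \int_{\lambda^2}^1 (d-1)(1-t)^{d-2}\, dt = (1 - \lambda^2)^{d-1}
\end{equation*}
(and equals $0$ if $\lambda \geq 1$, in which case the bound is trivial). Finally, applying the elementary inequality $1 - x \leq e^{-x}$ with $x = \lambda^2$ gives $(1 - \lambda^2)^{d-1} \leq e^{-(d-1)\lambda^2}$, which is the desired bound.

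No step here is a serious obstacle; the main thing to check carefully is the normalization constant in the density of $|P_1|^2$, since getting the exponent $d-2$ (rather than, say, $2d-2$) is what produces the correct constant $d-1$ in the exponential. An alternative route, avoiding the explicit density, would be to slice $S^{2d-1}$ by the level sets $\{|P_1| = r\}$ (which are products $S^1 \times S^{2d-3}$ of radius $\sqrt{1-r^2}$) and compute the induced measure on $[0,1]$ via the co-area formula; this gives the same $(1-r^2)^{d-1}$ tail without invoking the Beta distribution by name.
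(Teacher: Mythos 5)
Your proof is correct. The reduction to $A=e_1$ by unitary invariance of Haar measure, the identification of the law of $|P_1|^2$ as $\mathrm{Beta}(1,d-1)$ with density $(d-1)(1-t)^{d-2}$ on $[0,1]$ (via the Gaussian representation $P=Z/\|Z\|$), the resulting exact tail $(1-\lambda^2)^{d-1}$, and the final step $1-x\le e^{-x}$ are all accurate; the degenerate case $d=1$ is trivial since then $|P_1|\equiv 1$ and the claimed bound is $1$. The paper states this lemma without proof, as a well-known concentration fact, so there is no in-paper argument to compare against; your route through the Beta distribution (equivalently, the slicing/co-area computation you mention) is the standard one and in fact yields the sharper exact identity $\mathrm{Prob}\{|\langle P,A\rangle|>\lambda\}=(1-\lambda^2)^{d-1}$ for $\lambda<1$.
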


Another basic fact about concentration of Gaussian measures is the following Levy
concentration of Gaussian measure result due to Sudakov-Tsirelson and Borell:

\begin{lem}\label{ST} \cite{ST}  Let $F \subset \hcal_N$ and let $d_N = \dim \hcal_N$. Let  $F_{+ \rho}$ be the $\rho$-tube around $F$.
Then $$\PP(F_{+ \rho}) \leq \frac{3}{4} \implies \PP(F) \leq 2 e^{- C \;\rho \; d_N}. $$
\end{lem}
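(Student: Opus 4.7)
The plan is to deduce this directly from the classical Borell--Sudakov--Tsirelson Gaussian isoperimetric inequality, once one accounts for the non-standard variance $1/d_N$ of $\gamma_N$ in the definition \eqref{gaussian}. First I would normalize: the dilation $\psi \mapsto \sqrt{d_N}\,\psi$ pushes $\gamma_N$ forward to the standard Gaussian $\gamma$ on $\R^{d_N}$ and rescales the $L^2$-distance on $\hcal_N$ by $\sqrt{d_N}$. Setting $\tilde F := \sqrt{d_N}\,F$, the hypothesis becomes
\[
\gamma\bigl(\tilde F_{+\sqrt{d_N}\,\rho}\bigr) = \gamma_N(F_{+\rho}) \le \tfrac34,
\]
and the target becomes an upper bound on $\gamma(\tilde F) = \gamma_N(F)$.

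Next I would apply the Borell inequality, which asserts that for every Borel set $A \subset \R^{d_N}$ and every $r \ge 0$,
\[
\Phi^{-1}\bigl(\gamma(A_{+r})\bigr) \;\ge\; \Phi^{-1}\bigl(\gamma(A)\bigr) + r,
\]
where $\Phi$ is the standard normal distribution function. Taking $A = \tilde F$ and $r = \sqrt{d_N}\,\rho$ and using monotonicity of $\Phi^{-1}$,
\[
\Phi^{-1}\bigl(\gamma_N(F)\bigr) \;\le\; \Phi^{-1}(3/4) - \sqrt{d_N}\,\rho \;=:\; c_0 - \sqrt{d_N}\,\rho,
\]
so $\gamma_N(F) \le \Phi\bigl(c_0 - \sqrt{d_N}\,\rho\bigr)$.

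Finally, the elementary one-sided Gaussian tail bound $\Phi(-t) \le \tfrac12 e^{-t^2/2}$ for $t \ge 0$ converts this into
\[
\gamma_N(F) \;\le\; \tfrac12\,\exp\!\Bigl(-\tfrac12\bigl(\sqrt{d_N}\,\rho - c_0\bigr)^2\Bigr),
\]
which in the regime where $\rho$ stays bounded away from $0$ is dominated by $2 e^{-C\,\rho\,d_N}$ for a suitable constant $C$ (and in fact gives the sharper $2 e^{-C\rho^2 d_N}$); the factor $2$ in the statement comfortably absorbs the constant $c_0 = \Phi^{-1}(3/4)$.

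There is essentially no obstacle beyond this bookkeeping of the $\sqrt{d_N}$-rescaling between the intrinsic $L^2$ metric on $\hcal_N$ used to form the tube $F_{+\rho}$ and the Euclidean metric of the standard Gaussian. The substantive input is the Borell--Sudakov--Tsirelson inequality itself, whose proof via Ehrhard symmetrization (or an Ornstein--Uhlenbeck semigroup argument) I would simply cite from \cite{ST} rather than reproduce.
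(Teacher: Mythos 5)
The paper states this lemma without proof, citing \cite{ST}, so there is no internal argument to compare against; your proposal supplies the standard proof, and it is correct in substance. The reduction to the Borell--Sudakov--Tsirelson inequality by dilating onto the standard Gaussian, followed by $\Phi(-t)\le\frac12 e^{-t^2/2}$, is exactly the right route. Two bookkeeping points: (i) the density in (\ref{gaussian}) is proportional to $e^{-d_N|c|^2}$, so each coordinate has variance $\frac{1}{2d_N}$ and the isometry onto the standard Gaussian is the dilation by $\sqrt{2d_N}$ rather than $\sqrt{d_N}$ (this only changes the constant $C$); and (ii) you should dispose of the range $\sqrt{d_N}\,\rho < \Phi^{-1}(3/4)$ separately, where the trivial bound $\PP(F)\le \PP(F_{+\rho})\le \frac34$ suffices after shrinking $C$.

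The substantive issue, which you correctly sensed, is the exponent. Any argument through the Gaussian isoperimetric inequality produces $2e^{-C\rho^2 d_N}$, quadratic in $\rho$, and the linear form $2e^{-C\rho\, d_N}$ as printed is in fact false as a uniform statement: taking $F$ a half-space calibrated so that $\PP(F_{+\rho})=\frac34$ gives $\PP(F)\asymp e^{-d_N\rho^2}$ up to polynomial factors, which violates $2e^{-C\rho d_N}$ whenever $\rho\to 0$ with $\rho\sqrt{d_N}\to\infty$. The statement is a typo for $\rho^2$; the paper's own applications (the corollary following Lemma \ref{lemma}, whose conclusion is $2e^{-c\rho^2 N}$) use the quadratic form. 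So you should simply prove and record the $\rho^2$ version rather than hedging with ``in the regime where $\rho$ stays bounded away from $0$.'' Note also that your parenthetical calling the quadratic bound ``sharper'' is accurate only for $\rho\ge 1$; for the small fixed $\rho$ occurring in the applications the two forms differ only in the constant, but it is the quadratic one that is true for all $\rho$.
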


\subsection{Concentration of measure and $L^p$ norms}

The purpose of this section  is to illustrate the use of   concentration of
measure inequalities in one of its simplest applications to random waves:  to determine the asymptotic behavior  of
$\lcal^p$ norms of random spherical harmonics of degree $N$. The
main functionals we consider are the   norms on $S\hcal_N$:
$${\mathcal L}^p (\psi) = \| \psi\|_{p} \quad (2\le p\le\infty).$$
 We only consider the case $p = \infty$ in detail; for $p < \infty$ see \cite{SZ}.

\begin{theo}\label{Largesupnorm}  For each of the above complex ensembles,
there exist constants $C > 0$ such that:

 $$  \nu_N\left\{s_N\in S{\mathcal H}_N: \sup_{S^m}|\psi_N|>C\sqrt{\log
N}\right\} < O\left(\frac{1}{N^2}\right)\,.  $$

In fact, for any $k>0$, we can bound the probabilities by
$O(N^{-k})$ by choosing $C$ to be sufficiently large.

\end{theo}

As a corollary we obtain  almost sure
 bounds on the growth of  $\lcal^\infty$  norms  for
 independent random sequences of $\lcal^2$-normalized spherical
 harmonics.
To state the result, we introduce the probability sequence space
${\mathcal S} = \prod_{N=1}^{\infty} S{\mathcal H}_N$ with the
measure $\nu = \prod_{N=1}^{\infty} \nu_N$. The estimate of
Theorem  \ref{Largesupnorm} immediately implies that
$$\limsup_{N\to\infty}\frac{\sup_X|\psi_N|}{\sqrt{\log N}} \le C\qquad
\mbox{\rm almost surely}\;.$$ Hence we have:

\begin{cor} \label{CX} Sequences of sections  $\psi_N \in S{\mathcal H}_N$ satisfy:
$$\|\psi_N\|_{\infty}= O(\sqrt{\log
N})\;\;\; \mbox{ almost surely}.$$

\end{cor}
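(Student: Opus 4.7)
The plan is to deduce the almost-sure bound from Theorem \ref{Largesupnorm} by a standard Borel--Cantelli argument on the product probability space $(\scal, \nu) = \prod_{N=1}^\infty (S\hcal_N, \nu_N)$. The strategy has essentially three steps: extract a summable tail bound from Theorem \ref{Largesupnorm}, apply Borel--Cantelli on the independent factors, and then translate the resulting ``finitely often'' statement into the claimed $\limsup$ bound.

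First, I would fix $C$ large enough that Theorem \ref{Largesupnorm} supplies, say, the $k=2$ bound: there exists $N_0$ and a constant $C_0$ such that
\begin{equation*}
\nu_N\bigl\{\psi_N \in S\hcal_N : \|\psi_N\|_\infty > C\sqrt{\log N}\bigr\} \le \frac{C_0}{N^2}
\quad \text{for all } N \ge N_0.
\end{equation*}
Let $E_N \subset \scal$ denote the cylinder event $\{\|\psi_N\|_\infty > C\sqrt{\log N}\}$, which depends only on the $N$-th coordinate. Under the product measure $\nu$, the events $\{E_N\}$ are independent, and in any case $\sum_N \nu(E_N) \le \sum_N C_0/N^2 < \infty$.

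By the (first, easy direction of the) Borel--Cantelli lemma, $\nu(\limsup_N E_N) = 0$. In other words, for $\nu$-almost every sequence $(\psi_N)_{N \ge 1} \in \scal$ there exists $N_1 = N_1(\psi)$ such that $\|\psi_N\|_\infty \le C\sqrt{\log N}$ for all $N \ge N_1$. This immediately yields
\begin{equation*}
\limsup_{N \to \infty} \frac{\|\psi_N\|_\infty}{\sqrt{\log N}} \le C \quad \text{$\nu$-almost surely},
\end{equation*}
which is the statement of the corollary.

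There is no serious obstacle here; the argument is essentially a bookkeeping exercise once Theorem \ref{Largesupnorm} is in hand. The only mild subtlety is that the theorem is stated for each single degree $N$ with a fixed ensemble $(S\hcal_N, \nu_N)$, while the corollary is about the \emph{joint} distribution of a sequence; this is handled precisely by taking the product measure $\nu = \prod_N \nu_N$ on $\scal$ and observing that the events $E_N$ live on disjoint factors, so independence is automatic and Borel--Cantelli applies without any further input. If one prefers, the stronger bound $O(N^{-k})$ with $k$ arbitrary that is also asserted in Theorem \ref{Largesupnorm} makes the summability even more robust and could be used to obtain refined statements (e.g.\ $\|\psi_N\|_\infty / \sqrt{\log N} \to $ a deterministic constant), but for the corollary as stated, $k=2$ suffices.
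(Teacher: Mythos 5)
Your proof is correct and is exactly the argument the paper intends: the paper deduces the corollary from Theorem \ref{Largesupnorm} by noting that the $O(N^{-2})$ tail bounds are summable over the product space $(\scal,\nu)$, which is precisely your Borel--Cantelli step. You have merely written out the details that the paper compresses into ``the estimate of Theorem \ref{Largesupnorm} immediately implies'' the $\limsup$ bound.
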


Results of the latter  type  were first proved by Salem-Zygmund in
the case of random trigonometric polynomials on the circle , and
by Kahane for random trigonometric polynomials on tori. Vanderkam
\cite{Van} generalized the results to   the case of random
spherical harmonics by a geometric method
 that seems special to the sphere.  Neuheisel
\cite{Neu} adapted a  method of Nonnenmacher-Voros on holomorphic
sections to simplify the proof of the sup norm estimates of
\cite{Van}. Here we follow an approach of \cite{SZ} to derive them
again from Levy concentration of measure. It also gives
concentration results on $L^p$ norms:

\begin{theo}\label{largepnorm}  Let   $2\le p < \infty$.  Then  the median values of the ${\mathcal L}^p$
norm on the unit spheres $S\hcal_N$ are bounded by a constant
$\al=\al(p,m)$, and
$$  \nu_N \{\psi_N\in S{\mathcal H}_N: {\mathcal L}^p(\psi_N) >  r +\al \} \leq
\exp( - Cr^2 N^{ 2 m /p})\;,
$$ for some constant $C>0$. Hence,
sequences of sections $\psi_N \in S{\mathcal H}_N$ satisfy
$\|\psi_N\|_p = O(1)$ almost surely.
\end{theo}

\subsection{$\lcal^\infty$ norms: Proof of Theorem
\ref{Largesupnorm}}

\begin{proof} We refer to \S \ref{SPHERE} for notation, but drop the dimensional subscript.
Throughout this section we assume that $\|\psi_N\|_{\lcal^2} = 1$.

We define $\Phi_N(x) =  (\phi_{N, 1}, \dots, \phi_{N,
d_N}): S^m \to \R^{d_N}$. It is well known (and easy to prove)  that this eigenmap is an isometric minimal embedding of $(S^m, g_0)$
as a subsphere of a sphere of $(\R^{d_N}, ds^2)$ where $ds^2$ is the standard Euclidean
metric. In fact,
\begin{equation} \label{PULLBACK}  \Phi_N^* ds^2 =  \sum_{j = 1}^{d_{N}} d \phi^N_j (x) \otimes  d\phi^N_j(x) = \frac{\lambda_{N}^2 d_N}{m Vol(S^m)} g_0 \sim C_m N^{m + 1} g_0. \end{equation}
 Indeed, by $SO(m + 1)$ invariance, the metric (\ref{PULLBACK}) is a constant multiple of the standard metric $g_0$.
 The constant can be calculated using the fact that  $\Pi_N(x,x) $
is a constant function. By  integrating over $S^m$ one sees that $\Pi_N(x,x) = \frac{d_N}{Vol(S^m)} \sim \frac{1}{Vol(S^m)} N^{m-1}$, and by taking $\Delta$ of this
formula one can determine the constants in (\ref{PULLBACK})
  (see e.g. \cite{Ch} for background). Then
\begin{equation}\label{coherentstate} \Pi_N(x,y)=\sum_{j=1}^{d_N}
\phi^N_j(x)\overline{\phi^N_j(y)}=\langle
\Phi_N(x),\Phi_N(y)\rangle\,.
\end{equation}
Let $\psi_N=\sum_{j=1}^{d_N}c_j\phi^N_j\ $ ($\sum|c_j|^2=1$)
denote a random element of $S\hcal_N$, and write
$c=(c_1,\dots,c_{d_N})$. Recall that
\begin{equation}\label{sNx}\psi _N(x)=\int_X\Pi_N(x,y)s_N(y)dy=
\sum_{j=1}^{d_N}c_j  \phi^N_j(x)= c\cdot\Phi_N(x)\,.\end{equation}
Thus
\begin{equation}\label{cos}|\psi_N(x)|=\|\Phi_N(x)\|\cos \theta_x\,,\quad
\mbox{where\ } \cos \theta_x = \frac{\left| c\cdot
\Phi_N(x)\right|}{\| \Phi_N(x)\|} \,.\end{equation}
The angle  $\theta_x$ can be interpreted as the distance in
$S^{d_N-1}$ between $[\bar c]$ and $\tilde\Phi_N(x)$. Also,
$\| \Phi_N(x)\| = \sqrt{ \Pi_N(x,x)} \sim C_m N^{(m - 1)/2}$.

Now fix a point $x\in S^m$.  By Lemma \ref{calculus},
\begin{eqnarray}\label{prob1a} \nu_N\left\{\psi_N:\cos\theta_x\ge C
N^{-(m - 1)/2}\sqrt{\log N}\right\} &\le &
\exp\left(-(d_N-1)\frac{C^2\log N}{N^{m - 1}}\right)\nonumber \\ &=&\
N^{-C^2 N^{-(m - 1)}(d_N-1)}\;.\end{eqnarray}

We can cover $S^m$ by a collection of $k_N$ balls $B(z^j)$ of radius
\begin{equation}\label{RN} R_N:=\frac{1}{N^m}\end{equation}
 centered at points
$z^1,\dots,z^{k_N}$, where
$$k_N \le O(R^{-m})\le
O(N^{m^2})\,.$$ By  (\ref{prob1a}), we have
\begin{equation} \label{centers} \nu_N \left\{\psi_N:\max_{j} \cos\theta_{x^j} \ge C N^{-(m - 1)/2}\sqrt{\log N} \right\}\le
k_N N^{-C^2 N^{-(m - 1)}(d_N-1)}\;,\end{equation} where $x^j$ denotes a
point in $X$ lying above $z^j$.

Equation (\ref{centers}) together with
 (\ref{cos}) implies  that the desired sup-norm
estimate holds at the centers of the small balls with high
probability. To extend (\ref{centers}) to points within the balls,
 we consider an arbitrary point $w^j\in B(z^j, N^{-m})$.  To
estimate the distance  $\de_{N}^j$, between
$\Phi_N(z^j)$ and $\Phi_N(w^j)$ in $S^{d_N-1}$ we let $\ga$
denote the geodesic in $S^m$ from $z^j$ to $w^j$ and use (\ref{PULLBACK})
to get
\begin{eqnarray}\de_{N}^j &\le & \int_{\Phi_{N*}\ga} ds
\ =\ \int_\ga \sqrt{\Phi_{N}^*ds} \ = \ C_m N^{\frac{m+1}{2}} \int_\ga
 ds\nonumber \\
& \le&   C_m N^{\frac{m+1}{2}} N^{-m}  = C_m N^{- (m-1)/2}\,.
\label{delta} \end{eqnarray}

By the triangle inequality in $S^{d_N-1}$, we have
$|\theta_{x^j}-\theta_{y^j}|\le\de_N^j$. Therefore by
(\ref{delta}),
\begin{equation}\label{triangle}\cos\theta_{x^j} \ge
\cos\theta_{y^j}-\de_N^j \ge \cos\theta_{y^j}-
 C_m N^{- (m-1)/2}\,.\end{equation} By (\ref{triangle}),
$$\cos\theta_{y^j} \ge \frac{(C+1)\sqrt{\log N}}{N^{(m - 1)/2}}\Rightarrow
\cos\theta_{x^j} \ge \frac{(C+1)\sqrt{\log N} -C_m}{N^{(m - 1)/2}}
\ge  \frac{C\sqrt{\log N}}{N^{(m - 1)/2}}$$ and thus
$$\begin{array}{l}\left\{\psi_N\in
S\hcal_N:\sup \cos\theta \ge (C+1) N^{-(m -1)/2}\sqrt{\log N}
\right\}\\[8pt] \qquad\qquad\qquad \subset \left\{s_N\in
S\hcal_N:\max_{j} \cos\theta_{x^j} \ge C N^{-(m - 1)/2}\sqrt{\log
N} \right\}\,.\end{array}$$

Hence by  (\ref{centers}),
\begin{equation} \label{cos*} \nu_N \left\{\psi_N\in
S\hcal_N:\sup \cos\theta \ge (C+1) N^{-(m - 1)/2}\sqrt{\log N}
\right\}\le k_N N^{-C^2 N^{-(m - 1)}(d_N-1)}\;.\end{equation}

There exists $A_M > 0$ so that $d_N \sim A_m N^{m -1}$, so one has
\begin{eqnarray*}\nu_N \left\{\psi_N\in S\hcal_N:\sup_M |\psi_N|\ge  (C+2)
\sqrt{\log N}
\right\}\hspace{1.25in}\\
\le k_N N^{-C^2 N^{-(m - 1)}(d_N-1)}\le O\left( N^{m^2 - C^2 A_m}\right)\;.\end{eqnarray*} Choosing $C $
so that $C^2 A_m - m^2 > 0$ is sufficiently large concludes the proof.

\end{proof}

\subsection{Sup norms on small balls}
We use the same method to prove a claim in \cite{NS} on the sup norm
in small balls on $S^2$. This corresponds to $m  = 1$

\begin{cor}[Estimate of the maximum]\label{claim11}
Given $\rho>0$, there exists $A$ such that, for any $x_0\in\Ss$,
$\displaystyle \PP \big\{ \max_{\mathcal B(x_0, \rho/N)} |f|   \ge
A\big\} \le \tfrac13$.
\end{cor}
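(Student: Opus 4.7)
The plan is to reduce this Planck-scale sup-norm estimate to a second-moment computation via elliptic rescaling. Define the rescaled function $\tilde f(u):=f\bigl(\exp_{x_0}(u/N)\bigr)$ for $u$ in the Euclidean disk $D:=\{|u|<2\rho\}\subset T_{x_0}\Ss\cong\mathbb R^2$. The eigenvalue equation $\Delta_g f=N(N+1)f$ transforms into
\[
-\tilde\Delta_N\,\tilde f=\bigl(1+O(1/N)\bigr)\,\tilde f\quad\text{on }D,
\]
where $\tilde\Delta_N$ is a second-order operator whose coefficients converge in $C^k(\overline D)$ to those of the flat Laplacian and which is uniformly elliptic on $\overline D$. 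Interior $L^\infty$-to-$L^2$ estimates for this uniformly elliptic family yield a constant $C_\rho$, independent of $N$ and of the center $x_0$ (the latter by rotational symmetry of $\Ss$), with
\[
\|\tilde f\|_{L^\infty(\{|u|<\rho\})}\le C_\rho\,\|\tilde f\|_{L^2(D)}.
\]
Undoing the scaling (Jacobian $N^{-2}$) converts this into the key pointwise-to-$L^2$ bound
\[
\max_{\mathcal B(x_0,\rho/N)}|f|\le C_\rho\,N\,\|f\|_{L^2(\mathcal B(x_0,2\rho/N))}.
\]

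Next I would take the second moment of the right-hand side. For the Gaussian ensemble $\gamma_N$ (and equally for the uniform measure on the unit sphere of $\hcal_N$) the coefficients satisfy $\E\,c_j c_k=d_N^{-1}\delta_{jk}$, so by the reproducing-kernel identity \eqref{coherentstate},
\[
\E|f(x)|^2=\frac{\Pi_N(x,x)}{d_N}=\frac{1}{|\Ss|}\qquad\text{for every }x\in\Ss.
\]
Fubini's theorem then gives $\E\,\|f\|_{L^2(\mathcal B(x_0,2\rho/N))}^2=|\mathcal B(x_0,2\rho/N)|/|\Ss|\le C\rho^2/N^2$, and combining with the rescaled elliptic estimate above,
\[
\E\Bigl(\max_{\mathcal B(x_0,\rho/N)}|f|\Bigr)^2\le C_\rho^2\cdot C\rho^2=:K_\rho,
\]
uniformly in $N$ and in $x_0$. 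A one-line Markov bound finishes the proof:
\[
\PP\Bigl\{\max_{\mathcal B(x_0,\rho/N)}|f|\ge A\Bigr\}\le\frac{K_\rho}{A^2}\le\frac13
\]
as soon as $A\ge\sqrt{3K_\rho}$.

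The only technical point is justifying the interior $L^\infty$-to-$L^2$ estimate with an $N$-independent constant. This is routine: after the $1/N$ rescaling the perturbed Helmholtz operator $-\tilde\Delta_N-(1+O(1/N))$ on the fixed disk $D$ has uniformly smooth, uniformly elliptic principal part and bounded zeroth-order term, so standard interior Schauder (or De Giorgi--Nash--Moser) estimates combined with the usual sub-solution bound for eigenfunctions supply the $N$-independent $C_\rho$. No concentration-of-measure input beyond the second moment is needed here, which reflects the fact that on the Planck-scale ball $\mathcal B(x_0,\rho/N)$ a random spherical harmonic fluctuates by $O(1)$ rather than by $O(\sqrt{\log N})$; the global $\sqrt{\log N}$ factor of Theorem~\ref{Largesupnorm} arose precisely from taking a union over $O(N^{m^2})$ such Planck cells, which is absent here.
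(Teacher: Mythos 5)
Your proof is correct, but it is not the argument the paper uses. The paper's proof is purely probabilistic-metric: it applies the pointwise tail bound of Lemma \ref{calculus} at the centers of a \emph{bounded} number $k_0(\epsilon)$ of balls of radius $\epsilon/N$ covering $\mathcal B(x_0,\rho/N)$, then uses the derivative bound (\ref{PULLBACK}) on the coherent-state embedding $\Phi_N$ (hence Lipschitz continuity of $x\mapsto\theta_x$ on $S^{d_N-1}$) to pass from the net points to the whole ball, and finishes with a union bound, obtaining the Gaussian tail $k_0(\epsilon)e^{-A^2}$. You instead prove the deterministic wavelength-scale elliptic estimate $\max_{\mathcal B(x_0,\rho/N)}|f|\le C_\rho N\|f\|_{L^2(\mathcal B(x_0,2\rho/N))}$ by rescaling, compute the second moment $\E|f(x)|^2=\Pi_N(x,x)/d_N$ from the reproducing kernel, and apply Chebyshev. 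Both routes give constants uniform in $N$ and $x_0$, and both are legitimate. The trade-off: your argument needs only two moments of the ensemble (so it transfers verbatim to any Riemannian random wave model with bounded spectral density) but imports interior elliptic regularity, whereas the paper's argument is soft on the PDE side and yields the much stronger tail $e^{-cA^2}$ in $A$. For the corollary as stated the quadratic tail $K_\rho/A^2$ suffices, and in the Nazarov--Sodin application only the fixed constant $\tfrac13$ is used; but be aware that your bound could not be summed over the $\sim N^{m^2}$ Planck cells as in the proof of Theorem \ref{Largesupnorm}, where the exponential tail (or a chaining/concentration argument) is genuinely needed.
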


Again by  Lemma \ref{calculus}, for any $x \in  B(x_0, \rho/N)$,
\begin{eqnarray}\label{prob1} \nu_N\left\{\psi_N:\cos\theta_x\ge
N^{-1} A\right\} &\le &
\exp\left(-(2N - 2)\frac{ A^2 }{N}\right)\nonumber \;.\end{eqnarray}

We can cover $B(x_0, \rho/N)$ by a collection of  $k_0(\epsilon)$ of balls $B(x^j, \frac{\epsilon}{N})$.
By  (\ref{prob1}), we have
\begin{equation} \label{centersA} \nu_N \left\{\psi_N:\max_{j} \cos\theta_{x^j} \ge C N^{-1/2} A \right\}\le
k_0(\epsilon)  e^{-A^2}\;. \end{equation}

Again, we need that (\ref{centersA}) implies   that the desired sup-norm
estimate holds on all of $B(x_0, \frac{1}{N})$, so we
consider an arbitrary point $y^j\in B(z^j)$ and
estimate the distance  $\de_{N}^j$, between
$\Phi_N(x^j)$ and $\Phi_N(y^j)$ in $S^{d_N-1}$. As before, we have
$$\cos\theta_{y^j} \ge \frac{A}{N}\Rightarrow
\cos\theta_{x^j}
\ge  \frac{A}{N}$$ and thus
$$\begin{array}{l}\left\{\psi_N\in
S\hcal_N:\sup_{y \in (B(x, \frac{1}{N})}  \cos\theta_y \ge A/N
\right\}\\[8pt] \qquad\qquad\qquad \subset \left\{\psi_N\in
S\hcal_N):\max_{j} \cos\theta_{x^j} \ge A/N \right\}\,\end{array}$$
or
$$ \nu_N \left\{\psi_N\in
S\hcal_N: \sup_{y \in B(x, \frac{1}{N})} \theta_y  \ge A/N \right\}\le k_0(\epsilon)  e^{- A^2} \;.$$

It follows that
\begin{eqnarray*}\nu_N \left\{\psi_N\in S\hcal_N:\sup_{B(x_0, 1/N)}  |\psi_N|\ge  A
\right\}\hspace{1.25in}
\le k_0(\epsilon)  e^{- A^2} \;.\end{eqnarray*} Choosing $A$ sufficiently large will
make the right side $\leq \frac{1}{3}.$

\subsection{Relation to Levy concentration}

The estimate in Theorem  \ref{Largesupnorm} is very closely
related to Levy's estimate.  The proof shows that

\begin{enumerate}

\item [(i)] $\lcal^\infty_N$ is Lipschitz continuous with norm
$\frac{ N^{(m - 1)/2}}{\sqrt{\log N}} \leq \|\lcal^\infty_N\|_{Lip} \leq
N^{(m - 1)/2}$. ;

\item [(ii)] The median of $\lcal^\infty_N$ satisfies:
$\mcal_{\lcal^\infty_N} \leq C_m  \sqrt{\log N}$ for sufficiently
large $N$.

\end{enumerate}

Indeed, Lipschitz continuity follows from  equivalence of norms on
finite dimensional vector spaces. To estimate the Lipschitz norm,
we recall the well-known fact that the $\lcal^2$-normalized
`coherent states' $\Phi_N^w(z) = \frac{\Pi_N(z,
w)}{\sqrt{\Pi_N(w,w)}}$ are the global maxima of $\lcal^\infty_N$
on $SH^0(M, L^N)$, as follows from the Schwartz inequality applied
to the reproducing identity $s(z) = \int_{M} \Pi_N(z,w) {s(w)}
dV(w).$ Moreover, $\|\Phi_N^w(z)\|_{\lcal^\infty} =
\sqrt{\Pi_N(w,w)} \sim N^{(m - 1)/2}$. It follows that
$$\big| \| s_1 + s_2\|_{\infty} - \|s_1\|_{\infty} \big| \leq 3 N^{(m - 1)/2}.$$
Now let $s_1 $ have $\lcal^\infty$ norm $\leq C \sqrt{\log N}$ and
let $s_1 = \Phi_N^w$ for some $w$. Then we see that
$$\big| \| s_1 + s_2\|_{\infty} - \|s_1\|_{\infty} \big| \geq
\frac{ N^{(m - 1)/2}}{\sqrt{\log N}}.$$

 It obviously follows from (i)--(ii)   combined with the  Levy
estimate (\ref{Levy}) that (for any $C > 0$)
\begin{equation} \nu_N \{ \psi \in S \hcal_N : \lcal_N^{\infty}(\psi) \geq C \sqrt{\log N}  \} \leq
 \exp( - C (d_N - 1) \log N /2 N^{m-1}).
\end{equation}
Since $d_N \sim N^{m-1}$, this  is essentially the same estimate as in
Theorem \ref{Largesupnorm}.

\subsection{\label{NS} Nazarov-Sodin Theorem on the mean number of nodal domains of random spherical harmonics}

In this section we review a recent  theorem of Nazarov-Sodin \cite{NS} on the mean number
of nodal domains of random spherical harmonics on $S^2$.
We  closely follow their exposition.

We recall that a nodal domain of an eigenfunction $f$  is a connected component of $M \backslash Z_f$ where
$Z_f$ is the zero set of $f$, i.e. the nodal hypersurface. The number of nodal domains of $f$ is denoted
$N(f)$ The main result of \cite{NS} is:

\begin{theo}\label{thm.main}
There exists a constant $a>0$ such that, for every $\e>0$, we have
$$
\PP\left\{\left|\frac{N(f)}{n^2}-a\right|>\e\right\}\le
C(\e)e^{-c(\e)n}
$$
where $c(\e)$ and $C(\e)$ are some positive constants depending on
$\e$ only.
\end{theo}

\begin{rem}
 As noted above,  the total length of the
nodal line $Z(f)$ of any spherical harmonic $f\in\HH_n$ does not exceed $C n$ for some $C > 0$ \cite{DF}. Since the typical spherical
harmonic has $\sim a n^2$ nodal domains, the perimeter  of most of  its nodal domains
must be of order $\frac{1}{n}$ and  the   diameters of the nodal domains must be  comparable to
$\frac{1}{n}$.
\end{rem}

The proof of Theorem~\ref{thm.main} consists of the following steps:

\begin{enumerate}

\item Proof that  the lower bound $\E N(f) \ge
{\rm const}\, n^2$.

\item Proof of  exponential concentration of the
random variable $N(f)/n^2$ around its median. The proof uses the uniform lower continuity of the
functional $f\mapsto N(f)$ with respect to the $L^2$-norm outside
of an exceptional set $E\subset \HH$ of exponentially small
measure and  Levy's concentration of measure principle.

\item Proof of the
existence of the limit $\displaystyle \lim_{n\to\infty} \E
N(f)/n^2$. In this part, we use existence of the scaling limit for
the covariance function $\E \big\{ f(x)f(y) \big\}$.

\end{enumerate}

\subsubsection{Lower bound for $\E N(f)$}\label{section_LB}

The first step is to show that  $\E N(f) \gtrsim n^2$. By Corollary \ref{claim11},
for any $\rho>0$, there exists $A > 0$ such that, for all  $x\in\Ss$,
$$\displaystyle \PP \big\{ \max_{\mathcal D(x, \rho/n)} |f|   \ge
A\big\} \le \tfrac13. $$

Furthermore, the zonal spherical harmonic $b_x^N$ with pole at $x$, i.e. $b_x = \frac{\Pi_N(\cdot, x)}{\sqrt{\Pi_N(x, x)}}$,
has the following property: there exists $\rho$ and $c_1$ such
that, for each sufficiently large $N$
$$
\| b_x \| = 1\,, \qquad b_x(x) \ge c_1 \sqrt{N}\,, \quad
\text{and} \quad b_x\big|_{\partial \mathcal D(x, \rho/N)} \le
-c_1 \sqrt{N}\,.
$$

\bigskip\par\noindent{\em Proof of the lower bound for $N(f)$: }
Fix $x\in \Ss$. We have $f=\xi_0 b_x + f_x$ where $\xi_0$ is a
Gaussian random variable with $\E \xi_0^2 = \tfrac1{2N+1}$, and
$f_x$ is a Gaussian spherical harmonic built over the orthogonal
complement to $b_x$ in $\HH_N$ and normalized by $\E \| f_x \|^2 =
\tfrac{2N}{2N+1}$. We choose a Gaussian random variable
$\widetilde\xi_0 $ independent of $\xi_0$ and of $f_x$ with $\E
\widetilde\xi_0^2 = \E \xi_0^2 = \frac1{2N+1}$, and set $ f_{\pm}
= \pm\widetilde\xi_0 b_x + f_x$. These are Gaussian spherical
harmonics having the same distribution as $f$. Note that
\[
f = \xi_0 b_x + \frac12 \left( f_+ + f_- \right)\,,
\]
and that by Corollary ~\ref{claim11}
\[
\PP \big( \{ \max_{\mathcal D(\rho, x)} |f_+| \le A \} \cap \{
\max_{\mathcal D(\rho, x)} |f_-| \le A \} \big) \ge 1 - (
\frac13 + \frac13) = \frac13\,.
\]

Now, consider the event $\Omega_x$ that $f(x)\ge A$ and
$f\big|_{\partial \mathcal D(x; \rho/N) }\le - A$. Clearly, $\Omega_x$ must contain
a nodal component of $f$.  The event
$\Omega_x$ happens provided that
\[
\xi_0 \sqrt{N} \ge 2 c_1^{-1}A \qquad {\rm and} \qquad
\max_{\mathcal D(\rho, x)} |f_\pm| \le A\,.
\]
Since the  variance of the Gaussian random variable $\xi_0 \sqrt{N}$ is of
constant size, there exists $\kappa > 0$ such that
\begin{multline*}
\PP (\Omega_x) \ge \PP ( \xi_0 \sqrt{N} \ge 2c_1^{-1} A )\, \cdot
 \PP \big( \{ \max_{\mathcal D(\rho, x)} |f_+| \le A \}
\cap \{ \max_{\mathcal D(\rho, x)} |f_-| \le A\} \big)  \ge
\kappa > 0\,.
\end{multline*}

Now pack $S^2$ with  $\simeq N^2$ disjoint disks $D_j$   of
radius $2\rho/N$ centered at points $x_j$. Each of them contains a component of $Z(f)$ with
probability at least $\kappa$, and since the discs are disjoint,  $N(f) \geq \sum_j N(f; D_j)$ where
$N(f, D_j)$ is the number of nodal components in the disc $D_j$. Hence,
 $$\E N(f) \geq \sum_j \E N(f, D_j) \geq \sum_j \PP(\Omega_{x_j}) \gtrsim C_{\kappa, \rho} \; N^2. $$
\hfill $\Box$

\subsubsection{Exponential concentration near the median}
\label{section_concentration}

This exponential concentration follows from Levy (-Sudakov-Tsirelson) concnentration
of measure plus the delicate fact that $N(f)$ is stable under  slight perturbations of $f$
for all but an exponentially rare set  of $f$, called `unstable' spherical harmonics.

\begin{lem}\label{lemma}
For every $\e>0$, there exists $\rho>0$ and an exceptional set
$E\subset \HH_N$ of probability $\PP(E)\le C(\e)e^{-c(\e)N}$ such
that for all $f\in \HH_N\setminus E$ and for all $g\in \HH_N$
satisfying $\|g\| \le \rho$, we have $N(f+g)\ge N(f)-\e N^2$.
\end{lem}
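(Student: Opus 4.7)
The strategy is to separate the nodal domains of $f$ into \emph{robust} ones, which persist after any $L^2$-small perturbation $g \in \HH_N$, and \emph{fragile} ones, which might be lost. For a parameter $\delta=\delta(\varepsilon)>0$ to be chosen, I would call a nodal domain $\omega$ of $f$ \emph{$\delta$-robust} if $\max_\omega |f| \ge 3\delta$ and \emph{$\delta$-fragile} otherwise. The goal is to show (i) for typical $f$, the number of $\delta$-fragile domains is at most $\varepsilon N^2/2$, and (ii) for any $g$ with $\|g\|\le\rho$, at most $\varepsilon N^2/2$ of the robust domains fail to contribute a distinct nodal component to $f+g$. The exceptional set $E$ will be the union of two pieces, each of Gaussian measure $\le Ce^{-cN}$.

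For (i), I would first place into $E$ the set from Theorem \ref{Largesupnorm} where $\|f\|_\infty > C_0\sqrt{\log N}$; on the complement Bernstein's inequality on $\Ss$ yields $\|\nabla f\|_\infty \le C_1 N\sqrt{\log N}$. Each $\delta$-fragile nodal domain must contain a local extremum of $f$ at which $|f|<3\delta$. A Kac--Rice computation for the Gaussian field $f \in \HH_N$, whose two-point function is the normalized projector $\Pi_N(x,y)$, gives that the expected number of critical points of $f$ with critical value in $[-3\delta, 3\delta]$ is bounded by $C_2\,\delta\, N^2$ (with the density of critical values uniformly bounded near $0$). Choosing $\delta$ so that $4C_2\delta<\varepsilon$, I would then upgrade this moment bound to a concentration statement by applying the Sudakov--Tsirelson Lemma \ref{ST} to the sublevel set $\{f : \#\delta\text{-fragile}(f)>\varepsilon N^2/2\}$ regarded as the complement of a neighborhood of the median, using that the fragile-count functional is stable under $L^2$-perturbations of $f$ (small $L^2$-perturbations can change the small-critical-value count by at most a controlled amount via Bernstein). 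This puts atypical $f$ into an exceptional subset of exponentially small Gaussian probability.

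For (ii), fix a robust $\omega$ and a point $x_\omega\in\omega$ with $|f(x_\omega)|\ge 3\delta$. Using the gradient bound from step (i), the set $\omega_\delta := \{x\in\omega : |f(x)|\ge 2\delta\}$ contains a ball of radius $\asymp \delta/(N\sqrt{\log N})$ around $x_\omega$ and is connected in a neighborhood of $x_\omega$. Call $\omega$ \emph{$g$-preserved} if $|g|<\delta$ on the component of $\omega_\delta$ containing $x_\omega$; then on that component $\mathrm{sgn}(f+g)=\mathrm{sgn}(f|_\omega)$, so this component lies inside a single nodal domain of $f+g$. A short topological argument — using that on $\Ss$ any two robust domains of the same sign are separated by a chain of robust domains of the opposite sign, modulo an $O(\varepsilon N^2/2)$ correction for non-robust interlopers — shows that distinct preserved robust domains contribute distinct nodal domains of $f+g$. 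By Chebyshev, $|\{|g|\ge\delta\}|\le \rho^2/\delta^2$; since the sets $\omega_\delta$ are pairwise disjoint with individual area $\gtrsim \delta^2/(N^2\log N)$, at most $C_3\rho^2 N^2\log N/\delta^4$ robust domains fail to be preserved.

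The main obstacle is the parasitic $\log N$ factor in the last estimate, since the lemma requires $\rho$ to depend only on $\varepsilon$. The cleanest resolution is to refine the robustness condition: require $\omega$ to contain an absolute-scale bump, i.e.\ a ball $B(x_\omega, r/N)$ (with $r$ fixed in terms of $\varepsilon$ and independent of $N$) on which $|f|\ge\delta$. This matches the scaling-limit picture of Nazarov--Sodin in which spherical harmonics of degree $N$ locally resemble Euclidean monochromatic random waves at scale $1/N$, and most nodal domains carry bumps of unit scale in these units. Using this convergence of the local ensemble, the count of $\omega$ lacking such an $(r/N)$-bump is again $O(\delta N^2)$ in expectation and concentrates exponentially, which absorbs this requirement into the exceptional set $E$. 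With the improved bumps the area threshold for $\omega_\delta$ becomes $c(r,\delta)/N^2$, free of $\log N$, and the destroyed-robust-domain bound becomes $C_4\rho^2 N^2/(r^2\delta^2)$, so choosing $\rho^2 \le \varepsilon r^2\delta^2/(2C_4)$ gives the required $\varepsilon N^2/2$ with $\rho$ depending only on $\varepsilon$. Adding the two losses $\varepsilon N^2/2 + \varepsilon N^2/2$ and unioning the two exceptional sets completes the plan.
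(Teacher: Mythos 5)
The survey itself does not reproduce a proof of this lemma (it defers to [NS]), so I am judging your plan against the Nazarov--Sodin argument. Your architecture --- split nodal domains into robust and fragile, bound the fragile count with exponentially small failure probability, show robust domains survive an $L^2$-small perturbation --- has the right shape, and you correctly diagnose and repair the parasitic $\sqrt{\log N}$ from the sup-norm bound by insisting on bumps at the absolute scale $1/N$. But there are genuine gaps.

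The most serious one: your notion of robustness does not deliver the conclusion that \emph{distinct} preserved robust domains of $f$ produce \emph{distinct} nodal domains of $f+g$. Knowing that $f+g$ keeps the sign of $f$ on a sub-disk of each robust $\omega$ only places each such disk in \emph{some} nodal domain of $f+g$; it does not prevent many robust domains from merging into one domain of $f+g$, and a merger of $k$ domains costs $k-1$ in the count. Your topological argument via chains of opposite-sign domains does not close this: two positive robust domains may be separated by a negative region where $f$ dips only slightly below zero along some corridor (near a point where $|f|$ and $|\nabla f|$ are simultaneously small), and a $g$ of tiny $L^2$ norm can be mildly positive there and bridge the two domains while staying below $\delta$ on both chosen disks. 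This is exactly why the Nazarov--Sodin proof works with a two-sided sign structure at scale $1/N$: for all but $\e N^2$ domains one finds $x_\omega$ and $r_\omega\le\alpha/N$ with $f(x_\omega)\ge\beta$ and $f\le-\beta$ on the whole circle $\partial D(x_\omega,r_\omega)$ (or with signs reversed); if $|g|<\beta$ there, a nodal domain of $f+g$ is \emph{trapped inside} $D(x_\omega,r_\omega)$, and disjointness of these small disks makes the trapped domains distinct. Producing this trapping structure for most domains --- which requires controlling the measure of the near-degenerate set $\{|f|<\delta,\ |\nabla f|<\delta N\}$, not merely critical points with small critical values --- is the real content of the lemma and is missing from your plan.

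Two quantitative steps are also invalid as written. Your Chebyshev count of non-preserved robust domains divides the area of $\{|g|\ge\delta\}$ by the minimal area of $\omega_\delta$; but failure of preservation only requires $|g|\ge\delta$ at a \emph{single point} of $\omega_\delta$, so the non-preserved sets need not be covered by $\{|g|\ge\delta\}$ and the division proves nothing. The repair is the local elliptic (Bernstein-type) estimate $\sup_{D}|g|^2\le C\,N^2\int_{2D}|g|^2$ for $g\in\HH_N$ on disks of radius $\asymp 1/N$, which converts the pointwise lower bound into an $L^2$ lower bound on a doubled disk; bounded overlap of the doubled disks then yields a count $\le C\rho^2N^2/\beta^2$, with $\rho$ depending only on $\e$. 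Likewise, the Sudakov--Tsirelson inequality (Lemma \ref{ST}) applies to sets and their $L^2$-neighborhoods, i.e.\ effectively to $L^2$-Lipschitz functionals; the number of critical points with critical value in $[-3\delta,3\delta]$ is not $L^2$-Lipschitz in $f$, and asserting its ``stability under $L^2$-perturbations'' is essentially the statement being proved. One should instead concentrate a genuinely Lipschitz surrogate such as $\int_{\Ss}\psi_\delta(f)\,dx$ for a smoothed indicator $\psi_\delta$, whose expectation follows from the one-point Gaussian distribution, and then deduce the fragile-domain bound deterministically from it together with a gradient bound.
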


We refer to \cite{NS} for the proof.
 Lemma~\ref{lemma} in conjunction with the Sudakov-Tsirelson concentration of measure theorem (Lemma \ref{ST})
 implies  an exponential
concentration of $\dfrac{N(f)}{N^2}$ near its median $a_N$.  Recall that  $A_{+ \rho}$ is the $\rho$-tube around $A$.

\begin{cor} $\PP \{f\in\HH_N \colon |N(f) - a_N| > \e N^2\} \le 2e^{-c\rho^2 N}. $
 \end{cor}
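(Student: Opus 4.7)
The plan is to derive the two-sided concentration by handling the upper and lower tails separately, in each case combining the one-sided stability Lemma~\ref{lemma} with the Sudakov--Tsirelson inequality (Lemma~\ref{ST}). Fix $\varepsilon>0$, and apply Lemma~\ref{lemma} with parameter $\varepsilon/2$ to obtain $\rho=\rho(\varepsilon)>0$ and an exceptional set $E\subset\HH_N$ with $\PP(E)\le C(\varepsilon)e^{-c(\varepsilon)N}$ such that
\[
N(f+g)\ge N(f)-\tfrac{\varepsilon}{2}N^{2}\qquad\text{whenever }f\notin E,\ \|g\|\le\rho.
\]

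For the upper tail set $F^{+}=\{f:N(f)>(a_N+\varepsilon)N^{2}\}$, the stability estimate shows that for any $f\in F^{+}\setminus E$ and $\|g\|\le\rho$ one has $N(f+g)>(a_N+\varepsilon/2)N^{2}>a_N N^{2}$, so $(F^{+}\setminus E)_{+\rho}\subset\{f:N(f)>a_N N^{2}\}$. By the definition of the median the latter set has measure at most $1/2<3/4$, so Lemma~\ref{ST} gives $\PP(F^{+}\setminus E)\le 2e^{-c\rho^{2}N}$, hence $\PP(F^{+})\le 2e^{-c\rho^{2}N}+C(\varepsilon)e^{-c(\varepsilon)N}$.

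The lower tail is the essential difficulty, because Lemma~\ref{lemma} is one-sided: it prevents $N$ from dropping sharply under small perturbations but gives no a priori control on how much $N$ can increase. Instead of applying Sudakov--Tsirelson directly to the "small" set $F^{-}=\{f:N(f)<(a_N-\varepsilon)N^{2}\}$, I will apply it with the help of an auxiliary "majority" set. Put $H^{*}=\{f:N(f)\ge(a_N-\varepsilon/2)N^{2}\}$. By the median property $\PP(H^{*})\ge\PP(N(f)/N^{2}\ge a_N)\ge 1/2$. For $f\in H^{*}\setminus E$ and $\|g\|\le\rho$, Lemma~\ref{lemma} yields $N(f+g)\ge(a_N-\varepsilon)N^{2}$, so $(H^{*}\setminus E)_{+\rho}\subset (F^{-})^{c}$. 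Equivalently, $(F^{-})_{+\rho}\cap(H^{*}\setminus E)=\emptyset$, so
\[
\PP\bigl((F^{-})_{+\rho}\bigr)\le 1-\PP(H^{*}\setminus E)\le \tfrac{1}{2}+C(\varepsilon)e^{-c(\varepsilon)N}\le\tfrac{3}{4}
\]
for $N$ large enough. Lemma~\ref{ST} then gives $\PP(F^{-})\le 2e^{-c\rho^{2}N}$.

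Adding the two tail estimates and absorbing the exceptional probability $\PP(E)$ into the exponential (by shrinking the constant) yields $\PP(\{|N(f)-a_N N^{2}|>\varepsilon N^{2}\})\le 2e^{-c\rho^{2}N}$, as required. The main obstacle is the one-sidedness of the stability lemma; the key idea to bypass it is to use Sudakov--Tsirelson not on $F^{-}$ via a forward inclusion, but on its $\rho$-enlargement, exploiting that a set of measure $\ge 1/2-o(1)$ must lie outside this enlargement.
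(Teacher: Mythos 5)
Your proof is correct, and its overall architecture (split into two tails; on each tail combine the stability Lemma~\ref{lemma} with the Sudakov--Tsirelson inequality applied to a $\rho$-enlargement) is the same as the paper's. The upper-tail argument is identical. For the lower tail the paper takes a slightly different route to the same inclusion $\PP(G_{+\rho})\le \tfrac34$: for $f\in G_{+\rho}\setminus E$ with $f=f_0+g$, $f_0\in G$, $\|g\|\le\rho$, it applies the stability lemma \emph{at the perturbed point} $f$ with the reversed perturbation $-g$, getting $N(f_0)=N(f-g)\ge N(f)-\e N^2$ and hence $N(f)<a_N N^2$, so that $G_{+\rho}\subset\{N(f)<a_N N^2\}\cup E$ directly. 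You instead introduce the majority set $H^*=\{N(f)\ge(a_N-\e/2)N^2\}$ and use the symmetry of the metric ($A_{+\rho}\cap B=\emptyset$ iff $B_{+\rho}\cap A=\emptyset$) to show $(F^-)_{+\rho}$ misses $H^*\setminus E$. Both devices exploit exactly the same symmetry of the perturbation and cost nothing extra; yours makes the resolution of the one-sidedness of Lemma~\ref{lemma} more explicit, while the paper's is a line shorter. Your bookkeeping (using the $\e/2$-version of the lemma, the median inequalities in both directions, and absorbing $\PP(E)$ into the final exponential, as the paper itself also does loosely) is all in order.
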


\begin{proof}
Let $F=\{f\in\HH_N \colon N(f)>(a_N+\e) N^2\}$.
Then for $ f\in (F\setminus E)_{+\rho}$, we have $N(f)>a_N\;  N^2$,
and therefore, $\PP( (F\setminus E)_{+\rho} )\le \frac 12$. By the Sudakov-Tsirelson inequality (Lemma \ref{ST}),
$\PP(F\setminus E)\le 2e^{-c\rho^2 N}$, hence
$$
\PP(F)\le 2 e^{-c\rho^2 N }+C(\e)e^{-c(\e) N }\le C(\e)e^{-c(\e)N}\,.
$$

On the other hand, let   $G = \{f\in\HH_N \colon N(f)<(a_N -\e)N^2\}$.
Then
\[
G_{+\rho} \subset \{f\in \HH_N \,:\,N(f)<a_N^2\}\cup E
\]
and so
\[
\PP( G_{+\rho})\le \frac 12+ C(\e)e^{-c(\e)N}<\frac 34
\]
for large $N$. It follows  again that $\PP( G ) \le 2e^{-c\rho^2 N}$
for large $N$.

\end{proof}

Further, it is proved in \cite{NS} that
\begin{prop} The  sequence of medians $\big\{a_N
\big\}$ converges. \end{prop}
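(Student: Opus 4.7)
The plan is to deduce convergence of the medians $a_N$ from convergence of the normalized expectations $\E N(f)/N^2$, which in turn will be established via a local scaling limit. Given the exponential concentration proved just above, it suffices to show that $b_N := \E N(f)/N^2$ is a Cauchy sequence; indeed, if $|a_N - b_N| \to 0$ (which follows from the concentration estimate together with the a priori bound $N(f) \le CN^2$ from Courant) and $b_N$ converges, then $a_N$ converges.

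First I would set up the local picture. Fix a point $x_0 \in S^2$ and rescale around it: in geodesic normal coordinates write $f_{N,x_0}(u) := f(\exp_{x_0}(u/N))$ for $u$ in a disk of a fixed (large) radius $R$ in $\R^2$. Using the asymptotics of the spectral kernel on $S^2$ (explicitly, Mehler--Heine type asymptotics for Legendre polynomials, or equivalently the stationary phase analysis of $\Pi_N$), one shows that the covariance kernel of $f_{N,x_0}$ converges, uniformly on compact sets in $\R^2\times\R^2$, to the isotropic translation-invariant kernel $k_\infty(u,v) = J_0(|u-v|)$ of the Euclidean Gaussian plane wave $F_\infty$. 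Consequently $f_{N,x_0}$ converges in distribution to $F_\infty$ in $C^1_{\mathrm{loc}}(\R^2)$, and by Bulinskaya's lemma this convergence extends to convergence of the nodal count functional on any disk on which $F_\infty$ has almost surely no singular zeros.

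Second, I would perform an integral-geometric decomposition. For $R>0$ large but fixed, pack $S^2$ with a maximal family of disjoint disks $D_j = \mathcal{D}(x_j, R/N)$; their number is $K_N(R) = (1+o(1))\,N^2/R^2$ times $|S^2|/\pi$. Let $N_R(f) = \sum_j N(f, D_j)$ count only nodal components of $f$ contained strictly inside some $D_j$. By the previous step and rotational symmetry,
\[
\frac{1}{K_N(R)}\E N_R(f) \;\xrightarrow[N\to\infty]{}\; \nu(R),
\]
where $\nu(R) = \E\, N(F_\infty,\mathcal{D}(0,R))$ is the expected number of nodal components of the plane wave entirely inside a disk of radius $R$. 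Hence $\E N_R(f)/N^2 \to \nu(R)/(\pi R^2) \cdot |S^2|$.

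Third, I would control the defect $N(f) - N_R(f)$: every uncounted component must meet some $\partial D_j$. By Theorem~\ref{DFNODAL} (applicable on $S^2$, which is real analytic) the total length of $Z_f$ is $O(N)$; the number of nodal components crossing the union $\bigcup_j \partial D_j$ is therefore at most (total length)$/(R/N) = O(N^2/R)$. Thus
\[
\Bigl|\E N(f)/N^2 - \E N_R(f)/N^2\Bigr| \;\le\; C/R
\]
uniformly in $N$. Combined with step two this gives
\[
\limsup_{N\to\infty} b_N - \liminf_{N\to\infty} b_N \;\le\; 2C/R,
\]
and letting $R\to\infty$ shows $b_N$ is Cauchy, hence convergent to a limit $a$. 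The lower bound of the previous subsection forces $a>0$. Finally, the exponential concentration of $N(f)/N^2$ around $a_N$ implies $|a_N - b_N| \to 0$, so $a_N\to a$ as well.

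The main technical obstacle is step three together with the tightness needed in step two: one must rule out that many components straddle the boundaries $\partial D_j$ or that the scaling limit fails for nodal counts (e.g.\ because boundary nodal arcs of $F_\infty$ create extra phantom components in the limit). The nodal length bound of Donnelly--Fefferman handles the first issue globally, while an averaging argument over small translations of the packing centers $\{x_j\}$ (choosing a radius $R' \in [R,2R]$ at which the boundary circles generically meet $Z_f$ transversely) handles the second and gives the required continuity of $\nu$ to pass to the limit.
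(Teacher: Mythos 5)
Your opening reduction is exactly the paper's: exponential concentration of $N(f)/N^2$ around $a_N$ together with the uniform (Courant) bound $N(f)=O(N^2)$ gives $|a_N-\E N(f)/N^2|\to 0$, so everything rests on the normalized means being Cauchy. The survey stops at that point and cites Nazarov--Sodin for the Cauchy property, remarking only that the proof uses the scaling limit of the covariance kernel; your first two steps (Mehler--Heine asymptotics giving the local limit $J_0(|u-v|)$, convergence in $C^1_{\mathrm{loc}}$, Bulinskaya) are indeed the right ingredients for that part.

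The genuine gap is in your step three. If the $D_j=\mathcal D(x_j,R/N)$ form a maximal family of \emph{disjoint} disks, it is false that every component not contained in some $D_j$ must meet some $\partial D_j$: a packing leaves gaps whose total area is a fixed positive fraction of $S^2$, independent of $R$, and all components lying entirely in those gaps are uncounted. Since the expected number of components per unit area is of order $N^2$, the defect $\E N(f)-\E N_R(f)$ is a priori of order $N^2$, not $O(N^2/R)$, and the estimate $|b_N-\E N_R(f)/N^2|\le C/R$ collapses. There is a second, independent error: a component that does cross some fixed circle $\partial D_j$ need not have length $\gtrsim R/N$ (a tiny loop can straddle the circle), so ``number of crossing components $\le$ total nodal length divided by $R/N$'' is not a valid deduction; the Donnelly--Fefferman bound controls only the number of components of \emph{diameter} at least $R/N$. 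Both defects are repaired by the integral-geometric sandwich, in which one averages the local count over \emph{all} positions of the disk center with respect to the rotation-invariant measure rather than over a fixed packing:
\[
\frac{1}{|D_r|}\int_{S^2} N\bigl(f,\mathcal D(x,r)\bigr)\,dx\;\le\;N(f)\;\le\;\frac{1}{|D_r|}\int_{S^2} N^*\bigl(f,\mathcal D(x,r)\bigr)\,dx,\qquad r=R/N,
\]
where $N(f,\mathcal D)$ counts components contained in $\mathcal D$, $N^*(f,\mathcal D)$ counts components meeting $\mathcal D$, and $|D_r|$ is the area of a disk of radius $r$. A component of diameter $d_\gamma$ contributes between $1-Cd_\gamma/r$ and $1+Cd_\gamma/r$ to the two sides, so the discrepancy is at most $Cr^{-1}\sum_\gamma \mathrm{length}(\gamma)=O(N^2/R)$ by Theorem \ref{DFNODAL}; only with this substitution does your scaling-limit identification of the local expectation yield $b_N=\nu(R)/(\pi R^2)\cdot|S^2|+O(1/R)$ and hence the Cauchy property.
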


 Since the random variable $N(f)/N^2$ exponentially
concentrates near its median $a_N$ and is uniformly bounded, it
suffices to show that the sequence of means $  \E N(f_N)/N^2$ converges;  the sequence of medians $\big\{a_N\big\}$
converges to the same limit. The authors of \cite{NS} show that
$\big\{ \E N(f_n)/n^2 \big\}$ is a Cauchy sequence.

\section{\label{APPENDIX} Appendix on Tauberian Theorems}

We record here the statements of the Tauberian theorems that we
use in the article. Our main reference is \cite{SV}, Appendix B
and we follow their notation.

We denote by $F_+$ the class of real-valued, monotone
nondecreasing functions $N(\lambda)$  of polynomial growth
supported on $\R_+$. The following  Tauberian theorem uses only
the singularity at $t = 0$ of $\widehat{dN}$ to obtain a one term
asymptotic of $N(\lambda)$ as $\lambda \to \infty$:
\begin{theo} \label{ET} Let $N \in F_+$ and let $\psi \in
\scal (\R)$ satisfy the conditions:  $\psi$ is  even,
$\psi(\lambda)
> 0$ for all $\lambda \in \R$,   $\hat{\psi} \in
C_0^{\infty}$, and $\hat{\psi}(0) = 1$. Then,
$$\psi * dN(\lambda) \leq A \lambda^{\nu} \implies |N(\lambda) - N *
\psi(\lambda)| \leq C A \lambda^{\nu}, $$ where $C$ is independent
of $A, \lambda$.
\end{theo}

To obtain a two-term asymptotic formula, one needs to take into
account the other singularities of $\widehat{dN}$. We let $\psi$
be as above, and also introduce a second test function $\gamma \in
\scal$ with $\hat{\gamma} \in C_0^{\infty}$ and with the supp
$\hat{\gamma} \subset (0, \infty)$.

\begin{theo} \label{TTT} Let $N_1, N_2 \in F_+$ and assume:
\begin{enumerate}

\item $N_j * \psi (\lambda) = O(\lambda^{\nu}), (j = 1,2);$

\item $ N_2 * \psi (\lambda) = N_1 * \psi (\lambda) +
o(\lambda^{\nu}); $

\item $\gamma * d N_2 (\lambda) = \gamma * d N_1(\lambda) +
o(\lambda^{\nu}$.

\end{enumerate}

Then,

$$N_1(\lambda - o(1)) - o(\lambda^{\nu}) \leq N_2(\lambda) \leq
N_1(\lambda + o(1)) + o (\lambda^{\nu}). $$

\end{theo}

This Tauberian theorem is useful when the non-zero singularities
of $\widehat{dN_2}$ are as strong as the singularity at $t = 0$
and $N_2$ does not have two term polynomial asymptotics.

Alternatively, we may use the Tauberian lemma of \cite{HoI-IV}:

\begin{lem}\label{Tauber1}  Suppose that $\mu$ is a non-decreasing temperate function satisfying
$\mu(0)=0$ and that $\nu$ is a function of locally bounded
variation such that $\nu(0)=0$.  Suppose also that $m\ge 1$ and
that $\phi\in {\scal}({\Bbb R})$ is a fixed positive function
satisfying $\int \phi(\lambda)d\lambda =1$ and $\hat \phi(t)=0$,
$t\notin [-1,1]$.  If
$\phi_\sigma(\lambda)=\sigma^{-1}\phi(\lambda/\sigma)$,
$0<\sigma\le\sigma_0$, assume that for $\lambda\in {\Bbb R}$
\begin{equation}\label{A}
|d\nu(\lambda)|\le
\bigl(A_0(1+|\lambda|)^{m}+A_1(1+|\lambda|)^{m-1}\bigr) \,
d\lambda,
\end{equation}
and that
\begin{equation}\label{B}
|((d\mu-d\nu)*\phi_\sigma)(\lambda)|\le B(1+|\lambda|)^{-2}.
\end{equation}
Then
\begin{equation}\label{C}
|\mu(\lambda)-\nu(\lambda)|\le C_m\bigl( \,
A_0\sigma(1+|\lambda|)^m+A_1\sigma(1+|\lambda|)^{m-1}+B\, \bigr),
\end{equation}
where $C_m$ is a uniform constant depending only on $\sigma_0$ and
our $m\ge1$.
\end{lem}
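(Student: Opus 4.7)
The strategy is to telescope
\[
\mu(\lambda) - \nu(\lambda) = \bigl[\mu(\lambda) - (\mu*\phi_\sigma)(\lambda)\bigr] + \bigl[(\mu-\nu)*\phi_\sigma(\lambda)\bigr] + \bigl[(\nu*\phi_\sigma)(\lambda) - \nu(\lambda)\bigr]
\]
and bound each of the three pieces separately. Each of the three hypotheses is tailored to exactly one term: (B) controls the middle term directly via its derivative, (A) controls the third term by a standard regularization estimate, and the monotonicity of $\mu$ -- bootstrapped through (A) and (B) -- controls the first. The three bounds will scale as $CB$, $C_m\sigma(A_0(1+|\lambda|)^m+A_1(1+|\lambda|)^{m-1})$, and $C_m(\sigma\text{-term}+B)$ respectively, so their sum will give (C).

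For the third term, I will write $\nu(\lambda)-\nu(\lambda-s)=\int_{\lambda-s}^{\lambda}d\nu$, bound this by (A) on an interval of length $|s|$ about $\lambda$, and integrate against $\phi_\sigma(s)$. Since $\phi\in\mathcal{S}(\mathbb{R})$ gives $\int |s|^k\phi_\sigma(s)\,ds = O_k(\sigma^k)$, one obtains $|\mathrm{III}|\le C_m\sigma\bigl(A_0(1+|\lambda|)^m + A_1(1+|\lambda|)^{m-1}\bigr)$. For the middle term, the identity
\[
\frac{d}{d\lambda}\bigl[(\mu-\nu)*\phi_\sigma\bigr] = (d\mu-d\nu)*\phi_\sigma
\]
combined with (B) shows that this derivative is dominated by the integrable majorant $B(1+|\lambda|)^{-2}$. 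Since $\phi_\sigma$ is Schwartz and $\mu,\nu$ are temperate with $\mu(0)=\nu(0)=0$, the convolution $(\mu-\nu)*\phi_\sigma$ vanishes at $-\infty$, so integration gives $|\mathrm{II}|\le CB$ uniformly in $\lambda$.

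The first term is the main obstacle, because (A) gives no direct pointwise density bound on $d\mu$, only on $d\nu$. Here the monotonicity of $\mu$ enters through a positivity trick: taking $c_0=\inf_{|u|\le 1}\phi(u)>0$, one has $\phi_\sigma\ge c_0\sigma^{-1}\mathbf{1}_{[-\sigma,\sigma]}$, and since $d\mu\ge 0$ this implies the pointwise oscillation inequality
\[
\mu(\lambda+\sigma)-\mu(\lambda-\sigma)\le c_0^{-1}\sigma\cdot(d\mu*\phi_\sigma)(\lambda).
\]
Now I bootstrap via the splitting $d\mu*\phi_\sigma=(d\mu-d\nu)*\phi_\sigma+d\nu*\phi_\sigma$: the first summand is $O(B(1+|\lambda|)^{-2})$ by (B), and the second is $O(A_0(1+|\lambda|)^m+A_1(1+|\lambda|)^{m-1})$ by (A) pushed through the convolution (using that $\phi_\sigma$ averages on a $\sigma\le\sigma_0$ scale, so the polynomial factor only grows by a bounded amount). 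This yields a pointwise oscillation estimate for $\mu$ on any $\sigma$-interval. Finally, the Schwartz decay of $\phi_\sigma$ forces $(\mu*\phi_\sigma)(\lambda)$ to lie between $\mu(\lambda\pm T\sigma)$ up to tail corrections controllable by the temperate growth of $\mu$, so iterating the oscillation estimate on scales $\sigma,2\sigma,\dots,T\sigma$ delivers
\[
|\mathrm{I}|\le C_m\sigma\bigl(A_0(1+|\lambda|)^m+A_1(1+|\lambda|)^{m-1}\bigr)+CB.
\]
Summing the three bounds yields (C). The hard part is exactly this last step: converting a uniform bound on the smoothed density $d\mu*\phi_\sigma$ back into a pointwise bound on $\mu-\mu*\phi_\sigma$ without losing the sharp $\sigma$-scaling relies crucially on the sign constraint $d\mu\ge 0$ (there is no cancellation to invoke), and bookkeeping the passage from $\sigma$ to $T\sigma$ scales is where the constant $C_m$ picks up its dependence on $m$ through the polynomial growth of $\mu$.
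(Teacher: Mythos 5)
Your three-term decomposition $\mu-\nu=[\mu-\mu*\phi_\sigma]+[(\mu-\nu)*\phi_\sigma]+[\nu*\phi_\sigma-\nu]$, the positivity trick $\mu(\lambda+\sigma)-\mu(\lambda-\sigma)\le c_0^{-1}\sigma\,(d\mu*\phi_\sigma)(\lambda)$ with $c_0=\inf_{|u|\le1}\phi(u)>0$, and the bootstrap of that oscillation bound through (A) and (B) are exactly the standard argument for this lemma (the paper itself gives no proof; it quotes the statement from \cite{HoI-IV}, \cite{SV}). Your treatment of the first and third terms is sound.

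There is, however, a genuine gap in the middle term. You assert that $(\mu-\nu)*\phi_\sigma$ vanishes at $-\infty$ "since $\phi_\sigma$ is Schwartz and $\mu,\nu$ are temperate with $\mu(0)=\nu(0)=0$." This does not follow and is false in general: the normalization at $0$ says nothing about the behaviour at $-\infty$. For a concrete counterexample take $\mu\equiv 0$ and $\nu(\lambda)=\max(\lambda,-1)$ for $\lambda\le 0$, $\nu\equiv 0$ for $\lambda\ge 0$. Then $\mu(0)=\nu(0)=0$, (A) holds with $A_0=1$, and (B) holds with $B=O(1)$ because $(d\mu-d\nu)*\phi_\sigma=-\mathbf{1}_{[-1,0]}*\phi_\sigma$ is bounded by $1$ and rapidly decreasing away from $[-1,0]$; yet $(\mu-\nu)*\phi_\sigma(\lambda)\to 1$ as $\lambda\to-\infty$. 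Hypothesis (B) only guarantees that $(\mu-\nu)*\phi_\sigma$ has \emph{some} finite limit $L$ at $-\infty$, and integrating the derivative from $-\infty$ leaves the uncontrolled constant $L$ in your estimate. The repair is to anchor at $0$ instead: $|(\mu-\nu)*\phi_\sigma(\lambda)|\le|(\mu-\nu)*\phi_\sigma(0)|+\int_{\R}B(1+|t|)^{-2}\,dt$, and since $\mu(0)=\nu(0)=0$ the anchor term equals $[\mu*\phi_\sigma(0)-\mu(0)]-[\nu*\phi_\sigma(0)-\nu(0)]$, i.e.\ precisely (minus) your Terms I and III evaluated at $\lambda=0$, which you have already bounded by $C_m\sigma(A_0+A_1)+CB$. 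Note that this is the \emph{only} place the hypotheses $\mu(0)=\nu(0)=0$ can enter, and as written your argument never genuinely uses them -- a sign that something had to be off.

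A smaller point on Term I: you propose to control the tail of $\int\phi_\sigma(s)[\mu(\lambda)-\mu(\lambda-s)]\,ds$ by "the temperate growth of $\mu$." That growth is not quantified by the hypotheses, so any bound invoking it would carry a constant depending on $\mu$, contradicting the uniformity asserted in (C). The correct majorant, valid for all $s$, comes from chaining your own oscillation estimate over about $|s|/\sigma+1$ intervals, giving $|\mu(\lambda)-\mu(\lambda-s)|\le C_m(|s|+\sigma)\bigl(A_0(1+|\lambda|+|s|)^m+A_1(1+|\lambda|+|s|)^{m-1}\bigr)+CB$ (the $B$-contributions form a convergent Riemann sum of $(1+|t|)^{-2}$); integrating this against $\phi_\sigma(s)$, whose $k$-th absolute moments are $O_k(\sigma^k)$, yields the stated bound with constants depending only on $m$, $\sigma_0$ and the fixed $\phi$.
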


\end{document}